\begin{document}


\theoremstyle{plain}
\newtheorem{theorem}{Theorem}[section]
\newtheorem{introtheorem}{Theorem}

\theoremstyle{plain}
\newtheorem{proposition}[theorem]{Proposition}
\newtheorem{prop}[theorem]{Proposition}

\theoremstyle{plain}
\newtheorem{corollary}[theorem]{Corollary}

\theoremstyle{plain}
\newtheorem{lemma}[theorem]{Lemma}

\theoremstyle{plain}
\newtheorem{expectation}[theorem]{Expectation}

\theoremstyle{remark}
\newtheorem{remind}[theorem]{Reminder}

\theoremstyle{definition}
\newtheorem{condition}[theorem]{Condition}

\theoremstyle{definition}
\newtheorem{construction}[theorem]{Construction}

\theoremstyle{definition}
\newtheorem{definition}[theorem]{Definition}

\theoremstyle{definition}
\newtheorem{question}[theorem]{Question}

\theoremstyle{definition}
\newtheorem{example}[theorem]{Example}

\theoremstyle{definition}
\newtheorem{notation}[theorem]{Notation}

\theoremstyle{definition}
\newtheorem{convention}[theorem]{Convention}

\theoremstyle{definition}
\newtheorem{conj}[theorem]{Conjecture}

\theoremstyle{definition}
\newtheorem{assumption}[theorem]{Assumption}
\newtheorem{isoassumption}[theorem]{Isomorphism Assumption}

\newtheorem{finassumption}[theorem]{Finiteness Assumption}
\newtheorem{indhypothesis}[theorem]{Inductive Hypothesis}

\theoremstyle{definition}
\newtheorem{remark}[theorem]{Remark}

\numberwithin{equation}{section}





\newcommand{\mathscr}[1]{\mathcal{#1}}
\newcommand{\TODO}{{\color{red} TODO}}
\newcommand{\CHANGE}{{\color{red} CHANGE}}
\newcommand{\annette}[1]{{\color{red}Annette: #1 }}
\newcommand{\simon}[1]{{\color{green}Simon: #1 }}
\newcommand{\giuseppe}[1]{{\color{blue}Giuseppe: #1 }}

\newcommand{\tensor}{\otimes}
\newcommand{\Fil}{\tn{Fil}}
\newcommand{\Gh}{\mathcal{G}}
\newcommand{\Fh}{\shfF}
\newcommand{\Oh}{\mathcal{O}}
\newcommand{\GrVec}{\textnormal{GrVec}}
\newcommand{\SO}{\textnormal{SO}}
\newcommand{\REP}{\textnormal{REP}}
\newcommand{\MOD}{\textnormal{MOD}}
\newcommand{\crys}{\textnormal{crys}}
\newcommand{\GSp}{\textnormal{GSp}}
\newcommand{\Gal}{\textnormal{Gal}}
\newcommand{\Var}{\textnormal{Var}}
\newcommand{\BN}{\textnormal{BN}}
\newcommand{\modulo}{\textnormal{mod}}
\newcommand{\op}{\textnormal{op}}
\newcommand{\CH}{\mathrm{CH}}
\newcommand{\im}{\mathrm{im}}
\newcommand{\cl}{\mathrm{cl}}
\newcommand{\sing}{\mathrm{sing}}
\newcommand{\prim}{\mathrm{prim}}
\newcommand{\Clo}{\mathrm{Clo}}
\newcommand{\dR}{\mathrm{dR}}
\newcommand{\CHM}{\mathrm{CHM}}
\newcommand{\GL}{\mathrm{GL}}
\newcommand{\NUM}{\mathrm{NUM}}
\newcommand{\HK}{\mathrm{HK}}
\newcommand{\Frac}{\mathrm{Frac}}
\newcommand{\inv}{\mathrm{inv}}
\newcommand{\ab}{\mathrm{ab}}
\newcommand{\num}{\mathrm{num}}

\newcommand{\id}{\mathrm{id}}
\newcommand{\isocan}{\xrightarrow{\hspace{1.85pt}\sim \hspace{1.85pt}}}
\newcommand{\isom}{\cong}
\newcommand{\red}{\mathrm{red}}
\newcommand{\Betti}{R_B}
\newcommand{\ladic}{R_\ell}
\newcommand{\Hodge}{R_H}
\newcommand{\MHM}{\mathrm{MHM}}
\newcommand{\Hh}{\mathcal{H}}
\newcommand{\A}{\mathbb{A}}
\newcommand{\gm}{\mathrm{gm}}
\newcommand{\mot}{\mathrm{mot}}
\newcommand{\qfh}{\mathrm{qfh}}
\newcommand{\DM}{\tn{\tbf{DM}}}
\newcommand{\DA}{\tn{\tbf{DA}}}
\newcommand{\DMeff}{\tn{\tbf{DM}}^{\eff}}
\newcommand{\DAeff}{\tn{\tbf{DA}}^{\eff}}
\newcommand{\DAgm}{\tn{\tbf{DA}}_{\gm}}
\newcommand{\Bei}{\mathcyr{B}}
\newcommand{\kd}{\tn{kd}}
\newcommand{\Sm}{\tn{\tbf{Sm}}}
\newcommand{\SmCor}{\tn{\tbf{SmCor}}}
\newcommand{\cor}{\tn{\tbf{cor}}}
\newcommand{\Mor}{\textnormal{Mor}}
\newcommand{\Hom}{\textnormal{Hom}}
\newcommand{\End}{\textnormal{End}}
\newcommand{\sss}{\textnormal{ss}}
\newcommand{\Sh}{\tn{\tbf{Sh}}}
\newcommand{\et}{\tn{\'{e}t}}
\newcommand{\an}{\tn{an}}
\newcommand{\D}{\tn{D}}
\newcommand{\eff}{\tn{eff}}
\newcommand{\DMgm}{\DM_{\tn{gm}}}
\newcommand{\vp}{\varphi}
\newcommand{\Sym}{\textnormal{Sym}}
\newcommand{\OSym}{\textnormal{coSym}}
\newcommand{\Mof}{M_1}
\newcommand{\CGS}{\tn{\tbf{cGrp}}}
\newcommand{\tr}{\textnormal{tr}}
\newcommand{\one}{\mathds{1}}
\newcommand{\Spec}{\textnormal{Spec}}
\newcommand{\Sch}{\tn{\tbf{Sch}}}
\newcommand{\Nor}{\tn{\tbf{Nor}}}
\newcommand{\Lie}{\tn{Lie}}
\newcommand{\Ker}{\tn{Ker}}
\newcommand{\Frob}{\tn{Frob}}
\newcommand{\Ver}{\tn{Ver}}
\newcommand{\N}{\mathbb{N}}
\newcommand{\Z}{\mathbb{Z}}
\newcommand{\Q}{\mathbb{Q}}
\newcommand{\Ql}{\mathbb{Q}_{\ell}}
\newcommand{\R}{\mathbb{R}}
\newcommand{\C}{\mathbb{C}}
\newcommand{\G}{\mathbb{G}}

\newcommand{\xra}{\xrightarrow}
\newcommand{\xla}{\xleftarrow}
\newcommand{\sxra}[1]{\xra{#1}}
\newcommand{\sxla}[1]{\xla{#1}}

\newcommand{\sra}[1]{\stackrel{#1}{\ra}}
\newcommand{\sla}[1]{\stackrel{#1}{\la}}
\newcommand{\slra}[1]{\stackrel{#1}{\lra}}
\newcommand{\sllra}[1]{\stackrel{#1}{\llra}}
\newcommand{\slla}[1]{\stackrel{#1}{\lla}}

\newcommand{\ira}{\stackrel{\simeq}{\ra}}
\newcommand{\ila}{\stackrel{\simeq}{\la}}
\newcommand{\ilra}{\stackrel{\simeq}{\lra}}
\newcommand{\illa}{\stackrel{\simeq}{\lla}}

\newcommand{\ul}[1]{\underline{#1}}
\newcommand{\tn}[1]{\textnormal{#1}}
\newcommand{\tbf}[1]{\textbf{#1}}

\newcommand{\Spt}{\mathop{\mathbf{Spt}}\nolimits}
\newcommand{\ra}{\rightarrow}
\newcommand{\old}{\mathrm{old}}

\setcounter{tocdepth}{1}

\title[Standard conjectures for abelian fourfolds]{Standard conjectures for abelian fourfolds}

\author{Giuseppe Ancona}
\address{Institut de Recherche Math\'ematique Avanc\'ee, Universit\'e de Strasbourg}
\email{ancona@math.unistra.fr}

\begin{abstract}
Let $A$ be an abelian fourfold in characteristic $p$. We prove the standard conjecture of Hodge type for $A$, namely that the intersection product
\[\mathcal{Z}^2_{\num}(A)_{\Q}\times \mathcal{Z}_{\num}^2(A)_{\Q} \longrightarrow \Q\]
is of signature $(\rho_2 - \rho_1 +1; \rho_1 - 1)$, with
$\rho_n=\dim \mathcal{Z}_{\num}^n(A)_{\Q}.$  (Equivalently, it is positive definite when restricted to primitive classes for any choice of the polarization.) The approach consists in reformulating   this question into a $p$-adic problem and then using $p$-adic Hodge theory to solve it.

By combining this result with a theorem of Clozel we deduce that numerical equivalence on $A$ coincides with $\ell$-adic homological equivalence on $A$ for infinitely many prime numbers $\ell$. Hence, what is missing among the standard conjectures for abelian fourfolds  is $\ell$-independency of $\ell$-adic homological equivalence.
\end{abstract}

\maketitle
\begin{center}
\today
\end{center}
\tableofcontents
\section{Introduction}
In this paper we prove that the standard conjecture of Hodge type holds for abelian fourfolds. This is the first  unconditional\footnote{Milne showed that the   Hodge conjecture for complex abelian varieties  implies the standard conjecture of Hodge type for abelian varieties over any field \cite{pola}.}  result on the conjecture since its formulation. Before giving the precise statement and a sketch of the proof, we briefly recall the history of the problem.

In this introduction $X$ will be a smooth, projective and geometrically connected variety over a base field $k$ of characteristic $p\geq 0$. We denote by  $\mathcal{Z}_{\num}^n(X)_{\Q}$ the space of algebraic cycles of codimension $n$ with $\Q$-coefficients modulo numerical equivalence. This is a finite dimensional {$\Q$-vector} space and its dimension will be denoted by
\begin{equation} \label{eq:intro}
\rho_n=\dim \mathcal{Z}_{\num}^n(X)_{\Q}. \tag{$*$}
\end{equation}

\subsection*{Brief historical panorama}
The history of the problem starts with the so called Hodge index  theorem.
\begin{theorem}\label{introsegre}
Suppose that $X$ has dimension two. Then the intersection product  
\[\mathcal{Z}^1_{\num}(X)_{\Q}\times \mathcal{Z}_{\num}^1(X)_{\Q} \longrightarrow \Q\]
is of signature $(s_+ ; s_-)=(1; \rho_1 - 1)$.
\end{theorem}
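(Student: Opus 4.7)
The plan is to follow the classical, characteristic-independent route via Riemann--Roch for surfaces, which works identically over any base field.

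First, I would fix an ample divisor $H$ on $X$. Since $H^2 > 0$, the intersection pairing is positive on $\Q \cdot [H]$, yielding an orthogonal decomposition
\[\mathcal{Z}^1_{\num}(X)_\Q = \Q \cdot [H] \oplus H^\perp, \qquad H^\perp = \{D : D \cdot H = 0\}.\]
The theorem thus reduces to showing that the pairing is negative definite on $H^\perp$; if established, the signature is $(1, \rho_1 - 1)$ as claimed.

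For the main step, suppose $D \in H^\perp$ with $D^2 > 0$ and argue for a contradiction. Since ampleness is an open condition on $\mathcal{Z}^1_{\num}(X)_\R$, the class $H' = H + \epsilon D$ is ample for small rational $\epsilon > 0$, and satisfies $D \cdot H' = \epsilon D^2 > 0$. Riemann--Roch for surfaces combined with Serre duality gives
\[h^0(nD) + h^0(K_X - nD) \geq \chi(\mathcal{O}_X(nD)) = \chi(\mathcal{O}_X) + \tfrac{1}{2}\, nD \cdot (nD - K_X),\]
which grows like $\tfrac{n^2}{2} D^2$ as $n \to \infty$. Since $h^0(K_X - nD) > 0$ would force $K_X \cdot H' \geq n(D \cdot H')$, impossible for large $n$, one must have $h^0(nD) > 0$ for large $n$, so $nD$ is linearly equivalent to an effective divisor $E$. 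But $E \cdot H = nD \cdot H = 0$ and $H$ ample together force $E = 0$ (each irreducible component would otherwise contribute strictly positively); hence $nD \equiv 0$ and, by torsion-freeness of $\mathcal{Z}^1_{\num}(X)_\Q$, also $D \equiv 0$, a contradiction.

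The residual case $D \in H^\perp$, $D^2 = 0$, $D \not\equiv 0$ is reduced to the previous one by perturbation: non-degeneracy of the numerical pairing provides $E' \in H^\perp$ with $D \cdot E' \neq 0$, and $D + \epsilon E' \in H^\perp$ has square $2\epsilon (D \cdot E') + \epsilon^2 (E')^2 > 0$ for small $\epsilon$ of appropriate sign, whence the previous step forces $D + \epsilon E' \equiv 0$, which is inconsistent with varying $\epsilon$. The main obstacle is the Riemann--Roch step itself: one needs the quadratic growth in $n$ to dominate, which only happens when $D^2 > 0$, which in turn forces the small perturbation trick in the boundary case $D^2 = 0$. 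If $k$ is not algebraically closed, passing to $\bar k$ preserves both the intersection form and numerical equivalence, so the arithmetic setting causes no real difficulty.
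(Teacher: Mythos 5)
Your argument is correct in all essentials and is the classical, characteristic-free proof via Riemann--Roch for surfaces (Hodge's original argument used Hodge theory over $\C$; the algebraic version you reproduce is due to Segre and Bronowski, which are exactly the sources the paper cites for Theorem~\ref{introsegre}, via \cite{Segre,Brono}). The paper itself does not reprove this classical statement but refers to those references and to \cite[5.3.2.3]{Andmot}, so there is no internal proof to compare against; your write-up supplies the expected argument. Two small points worth making explicit if you were to polish the proof: (i) Riemann--Roch applies to integral divisors, so one should first clear denominators by replacing $D$ with a suitable integer multiple; and (ii) after passing to $\bar k$, the subspace $\mathcal{Z}^1_{\num}(X)_\Q \subset \mathcal{Z}^1_{\num}(X_{\bar k})_\Q$ need not be all of the target, so one should add the one-line remark that a nondegenerate form of signature $(1, m)$ restricted to a nondegenerate subspace containing a vector of positive square still has exactly one positive direction.
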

When the characteristic $p$ is zero, the above theorem was proved by Hodge by relating  the intersection product to the cup product in singular cohomology through the cycle class map \cite{Hodge}. An algebraic proof, valid in any characteristic,  was found by Segre \cite{Segre} and Bronowski \cite{Brono}.

Elaborating on an argument of Mattuck--Tate \cite {Mattuck}, Grothendieck  realized that the Lang--Weil estimate for the number of rational points on a smooth and projective curve $C$ over a finite field follows from the Hodge index theorem applied to the surface $X=C\times C$ \cite{Grothseg}. He then proposed a program to show the Weil conjectures for varieties of higher dimension based on a conjectural generalisation of the Hodge index theorem,  known as the standard conjecture of Hodge type. Together with the three other standard conjectures (and the resolution of singularities), it was considered by Grothendieck as the most urgent task in algebraic geometry \cite{GrothSC}. 

This conjecture is connected with other arithmetic contexts such as the conjectural description of the rational points on a Shimura variety over a finite field \cite{LRSC} and the weight-monodromy conjecture \cite{mSaito}. 

The four standard conjectures imply that the category of numerical motives   is semisimple and polarizable just as the category of Hodge structures of smooth projective complex varieties \cite{Rivano}. Surprisingly enough, Jannsen proved semisimplicity   unconditionally and independently of the standard conjectures \cite{Jann}. Polarizability is still open and it is intimately related to the standard conjecture of Hodge type.

 As a conclusion of this historical panorama, let us mention that Gillet and Soul\'e have proposed an arithmetic version of the standard conjectures (i.e. over a ring of integers) \cite{soule}. Some results on the arithmetic standard conjecture of Hodge type can be found in \cite{Kunz1,Kunz2,Kunz3,Kresch}.
  
  \subsection*{Main results}
  Let us now recall the formulation of the standard conjecture of Hodge type.
  \begin{conj}
  Let   $d$ be the dimension of   a smooth, projective and geometrically connected variety $X$ and fix a hyperplane section $L$ of $X$.  For $n \leq d/2$ define the space of primitive cycles $\mathcal{Z}^{n,\prim}_{\num}(X)_{\Q}$ as
\[\mathcal{Z}^{n,\prim}_{\num}(X)_{\Q} = \{\alpha \in \mathcal{Z}^{n}_{\num}(X)_{\Q} , \hspace{0.2cm} \alpha \cdot L^ {{d-2n+1}}=0 \hspace{0.2cm}  \textrm{in}  \hspace{0.2cm}  \mathcal{Z}^{d-n+1}_{\num}(X)_{\Q}\}\]
and define the pairing  
\[\langle \cdot, \cdot \rangle_{n} : \mathcal{Z}^{n,\prim}_{\num}(X)_{\Q}  \times  \mathcal{Z}^{n,\prim}_{\num}(X)_{\Q}  \longrightarrow \Q\]
 via the intersection product
 \[\alpha, \beta \mapsto  (-1)^n \alpha \cdot \beta \cdot L^{d-2n}.\]
The   standard conjecture of Hodge type predicts that this pairing is positive definite.
\end{conj}

The evidences for Grothendieck were the case $n=1$ (which can be shown by reducing it to Theorem \ref{introsegre}) and the case     $p=0$. Indeed,  in characteristic zero, one can use the cycle class map to relate the quadratic form $\langle \cdot, \cdot \rangle_{n}$ to the quadratic form given by the cup product on singular cohomology. Then  this kind of positivity statements can be deduced from positivity statements in cohomology, such as the Hodge--Riemann relations in Hodge theory.

\

 The following is our main theorem. 

\begin{theorem}\label{thmintro}
The standard conjecture of Hodge type holds for abelian fourfolds in positive characteristic.\end{theorem}
It turns out that this statement is equivalent to the following, which is maybe a more direct formulation (see also  Proposition \ref{anypolarization}).
\begin{theorem} 
Let $X$ be an abelian fourfold. Then the intersection product 
\[\mathcal{Z}^2_{\num}(X)_{\Q}\times \mathcal{Z}_{\num}^2(X)_{\Q} \longrightarrow \Q\]
is of signature  $ (s_+;s_-) = ( \rho_2 - \rho_1+1;  \rho_1-1)$, with $\rho_n$ as in $($\ref{eq:intro}$)$.
\end{theorem}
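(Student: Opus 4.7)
The case of characteristic zero is classical via the Hodge--Riemann bilinear relations, so the new content is in positive characteristic. By standard spreading-out and specialization, I would reduce to the case where $X$ is an abelian fourfold over $\overline{\mathbb{F}}_q$. My overall strategy exploits the Hasse--Minkowski classification: a non-degenerate symmetric $\Q$-valued bilinear form is determined up to isometry by its rank, its local invariants at all finite primes, and its signature at infinity. The rank here is $\rho_2$ by definition. For the finite primes, the local invariants (discriminant, Hasse--Witt) can be computed via any Weil cohomology: appealing to Clozel's theorem that numerical and $\ell$-adic homological equivalences coincide for infinitely many $\ell$, the intersection form on cycles becomes, over each such $\Q_\ell$, the restriction of the cup product on $H^4_\ell(X,\Q_\ell)$ to the algebraic classes. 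This reduces the problem to controlling the single remaining invariant: the archimedean signature.

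\textbf{Lifting via $p$-adic Hodge theory.} To bring in Hodge-theoretic positivity, I would lift the polarized pair $(X,L)$ to a polarized abelian fourfold $(\tilde X,\tilde L)$ over $W(\overline{\mathbb{F}}_q)$ by Serre--Tate. Set $K=W(\overline{\mathbb{F}}_q)[1/p]$ and fix an embedding $\overline K\hookrightarrow\C$, so that $\tilde X$ has a complex incarnation $\tilde X_\C$ to which Hodge--Riemann applies. The Berthelot--Ogus comparison provides a canonical cup-product-compatible isomorphism
\[ H^{4}_{\crys}(X/K) \isocan H^{4}_{\dR}(\tilde X/K), \]
and under it the cycle class of a codimension-$2$ cycle on $X$ sits in $F^{2}H^{4}_{\dR}(\tilde X/K)$. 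Via the crystalline comparison of Fontaine--Messing / Faltings between $H^{4}_{\crys}(X/K)\otimes B_{\crys}$ and $H^{4}_{\et}(\tilde X_{\overline K},\Q_p)\otimes B_{\crys}$, one transports these classes into the $p$-adic realization of the Betti cohomology of $\tilde X_\C$, where the Hodge--Riemann relations pin the signature of the intersection form on Hodge $(2,2)$ classes completely in terms of the Hodge numbers and the primitive decomposition against $\tilde L$.

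\textbf{Main obstacle and closing step.} The crux is that a numerical cycle on $X$ need not lift algebraically to $\tilde X$; its de Rham class on $\tilde X$ lies in $F^{2}H^{4}_{\dR}$, which is strictly larger than the space of $(2,2)$ Hodge classes on $\tilde X_\C$, so Hodge--Riemann does not apply directly. Bridging this gap will require using the filtered $\varphi$-module structure on $H^{4}_{\crys}(X/K)$ together with the rigidity of the Mumford--Tate group of an abelian fourfold, which is small enough in this dimension to make the Hodge $(2,2)$ classes explicit. The parity of $s_-$ is already fixed by the discriminant, which is $\ell$-adically computable as above; combining this parity with the inequality on $s_+$ obtained by restricting Hodge--Riemann to the Hodge-theoretic image of the algebraic cycles of $X$, I expect the pair $(s_+,s_-)$ to be forced to be exactly $(\rho_2-\rho_1+1,\rho_1-1)$. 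I expect the hardest step to be identifying the $p$-adic Hodge-theoretic image of $\mathcal Z^2_{\num}(X)_\Q$ with a subspace of the genuine $(2,2)$ Hodge classes on $\tilde X_\C$ rather than merely with a subspace of $F^{2}$, and the specific dimension $4$ is what keeps this analysis tractable.
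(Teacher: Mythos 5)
Your overall philosophy — reduce to a finite field, control the signature by computing the local invariants via Weil cohomologies, and convert the question at $p$ into $p$-adic Hodge theory — is the right one and matches the paper's strategy in spirit. But there are two genuine gaps, and the second one is fatal as written.

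First, you never decompose the quadratic space. The product formula on Hilbert symbols (together with local isomorphisms away from $p$) controls only $s_+ - s_- \bmod 8$, and since $\rho_2$ is unbounded this is far too weak to pin down the signature of the whole form on $\mathcal{Z}^2_{\num}(X)_\Q$. The paper's essential move is to use the CM structure of $X$ over a finite field (Tate, Honda--Tate) to decompose $\mathfrak h^4(X)$ into factors $\mathcal M_I$, to discard the factors that only carry Lefschetz classes (where the positivity already follows from the codimension-one case), and then to prove — this is Proposition \ref{exoticdim2}, a non-obvious combinatorial fact specific to $g=4$ — that the remaining ``exotic'' factors have rank two. Only in rank two does the mod-8 information determine the signature. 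Your proposal also appeals to a lift by Serre--Tate, which only produces canonical lifts of \emph{ordinary} abelian varieties; the paper instead lifts $(X,B)$ with its CM structure by Honda--Tate (Theorem \ref{HondaTate} and Corollary \ref{lifting motives}), which is what makes the decomposition lift as well.

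Second, and more seriously, your intended ``closing step'' aims in the wrong direction. You propose to show that the $p$-adic Hodge-theoretic image of $\mathcal Z^2_{\num}(X)_\Q$ lands in the space of genuine $(2,2)$ Hodge classes on the lift $\tilde X_\C$ and then apply Hodge--Riemann there. But the whole point of the hard cases is that this is false: for the exotic factors the lift typically has Hodge type $(3,1),(1,3)$ (see Proposition \ref{propexemple}(\ref{cond2})), so the positive-characteristic algebraic classes cannot correspond to Hodge classes in characteristic zero, and Hodge--Riemann on the lift gives a \emph{negative} definite form. The paper handles this by instead comparing the two $\Q_p$-quadratic forms $q_Z\otimes\Q_p$ (spanned by algebraic classes, Frobenius trivial) and $q_B\otimes\Q_p$ (the Betti form of the lift) via Fontaine's $B_\crys$, and shows, by explicitly writing the $2\times 2$ period matrix using Lubin--Tate/Colmez elements, that these forms are isomorphic over $\Q_p$ if and only if the Hodge-type parameter $i$ is even. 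Through the product formula (Corollary \ref{signaturetop}) the failure of the $p$-adic isomorphism in the odd case exactly compensates the negativity of $q_B\otimes\R$ and forces $q_Z$ to be positive definite. This sign-matching at $p$ is the heart of the argument and is absent from your plan; the ``Mumford--Tate rigidity'' you gesture at will not make the lifted class a Hodge class, because it simply is not one.
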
 
This formulation should   be reminiscent of the Hodge index theorem.

\begin{remark}\label{remintro}
As we explained above, the standard conjecture of Hodge type is known in characteristic zero. Hence Theorem \ref{thmintro} is new only   for those algebraic classes that cannot be lifted to characteristic zero. We discuss the existence of such classes in  Appendix \ref{geometricexamples}.
\end{remark}

By combining Theorem \ref{thmintro} with  a theorem of Clozel  \cite{Clozel} we deduce the following.
\begin{theorem}\label{thmclozelintro}
Let $X$ be an   abelian fourfold. Then numerical equivalence on $X$ coincides with $\ell$-adic homological equivalence on $X$ for infinitely many prime numbers $\ell$. 
\end{theorem}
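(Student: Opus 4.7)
The plan is to combine Theorem \ref{thmintro} with the main theorem of \cite{Clozel}. Clozel proved, roughly, that for an abelian variety over a finite field for which the standard conjecture of Hodge type is known, numerical and $\ell$-adic homological equivalence on it agree for an infinite set of primes $\ell$. His argument exploits the integrality of Frobenius eigenvalues on $\ell$-adic cohomology (which holds for abelian varieties by the Weil conjectures) together with the positivity of the numerical intersection pairing supplied by the standard conjecture of Hodge type, via a Chebotarev-type density argument that bounds the ``defect'' between the two equivalence relations.

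First I would reduce the statement to the case of a finite base field. Given an abelian fourfold $X$ over an arbitrary field $k$, I would spread $X$ out to an abelian scheme over a finitely generated integral subring $R \subset k$ and specialize to a closed point of $\Spec R$ whose residue field $\kappa$ is finite, obtaining an abelian fourfold $X_\kappa$. The specialization map on numerical cycles is injective (for abelian varieties this follows from the existence of a polarization and standard intersection-theoretic arguments), and for every prime $\ell$ coprime to $\tn{char}\,\kappa$, the specialization is compatible with the $\ell$-adic cycle class map. Hence it suffices to establish the statement for $X_\kappa$.

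Second, Theorem \ref{thmintro} supplies the standard conjecture of Hodge type for $X_\kappa$, so Clozel's theorem applies and produces an infinite set $S$ of primes $\ell$ for which numerical and $\ell$-adic homological equivalence coincide on $X_\kappa$. By the compatibility above, removing at most the single prime $\tn{char}\,\kappa$ from $S$, the same conclusion transfers back to $X$.

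The main obstacle is of course Theorem \ref{thmintro} itself; once that is in hand, the deduction of Theorem \ref{thmclozelintro} is essentially a formal combination of Clozel's theorem with a standard specialization argument. The one point requiring care is that the infinite set of primes produced over the finite residue field survive the transfer to $k$, but only one prime is ever lost in the specialization step, so this is automatic.
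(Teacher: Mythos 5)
Your overall plan — reduce to a finite field by specialization, then invoke Clozel — is the same as the paper's, but the transfer step is exactly where the nontrivial work lives, and your treatment of it has a gap. The assertion that ``the specialization map on numerical cycles is injective'' is not a standard intersection-theoretic fact, and it is also not quite the property you need. To conclude $\textnormal{num} = \textnormal{hom}$ on $X$ from $\textnormal{num} = \textnormal{hom}$ on $X_\kappa$, you take a homologically nontrivial class $\alpha$ on $X$, observe that its specialization $\alpha_0$ is homologically nontrivial (smooth proper base change) and hence numerically nontrivial on $X_\kappa$, and then you need to promote ``$\alpha_0$ numerically nontrivial on $X_\kappa$'' to ``$\alpha$ numerically nontrivial on $X$.'' The witness $\gamma$ on $X_\kappa$ with $\deg(\alpha_0 \cdot \gamma) \neq 0$ has no reason to lift to a cycle on $X$, and if it does not lift, your argument stalls. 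Equivalently: what is needed is well-definedness of a specialization map $\mathcal{Z}^n_{\num}(X)_\Q \to \mathcal{Z}^n_{\num}(X_\kappa)_\Q$ (numerically trivial specializes to numerically trivial), and this is precisely as hard as the statement one is trying to prove, because the special fiber may carry algebraic classes invisible from the generic fiber.

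The paper circumvents this in Corollary \ref{thmhomnum} by using Theorem \ref{thmscht} itself in the transfer step. One works with homological cycle groups, where smooth proper base change gives an honest inclusion $\mathcal{Z}_{\hom}^{2,\prim}(A)_{\Q}\subset \mathcal{Z}_{\hom}^{2,\prim}(A_0)_{\Q}$; under $\textnormal{num}=\textnormal{hom}$ on $A_0$ together with the standard conjecture of Hodge type, the pairing $\langle \cdot,\cdot\rangle_2$ is positive definite on the larger space, hence on the subspace, and then Proposition \ref{kerpairing} identifies the kernel of this pairing with the numerically trivial classes. Positivity supplies the witness for numerical nontriviality of $\alpha$, namely $\alpha$ itself, so no lifting of auxiliary cycles from $X_\kappa$ to $X$ is ever required. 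A useful sanity check: Clozel's finite-field result is unconditional (the paper says so explicitly), so if the transfer back to a general field were available by ``standard specialization,'' Theorem \ref{thmclozelintro} would follow from Clozel alone and the combination with Theorem \ref{thmintro} would be unnecessary — that should flag that the transfer is not free. (You also describe Clozel's theorem as conditional on the standard conjecture of Hodge type; it is not, which is why the SCHT input must appear in the specialization step rather than in Clozel.)
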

The fact that homological and numerical equivalence should always coincide is also one   of the four standard conjectures. The two others (namely K\"unneth and Lefschetz) being known for abelian varieties, Theorems \ref{thmintro} and \ref{thmclozelintro} imply that in order to fully understand the standard conjectures for abelian fourfolds what is missing is $\ell$-independency of $\ell$-adic homological equivalence.

\subsection*{Idea of the proof}
The starting point of the proof\footnote{An idea of the strategy is also presented in the report  \cite{Ober}.} of Theorem \ref{thmintro} is a classical product formula from the theory of quadratic forms: let $q$ be a $\Q$-quadratic form, if we know  $q \otimes \Q_\ell$ for all prime numbers $\ell$ then we have information on the signature of $q$, more precisely we know the difference $s_+-s_-$ modulo $8$. When $q$ is the quadratic form  $\langle \cdot, \cdot \rangle_{n}$ as above, then one can hope to compute $q \otimes \Q_\ell$ through the cycle class map.

In characteristic $p=0$ one computes $q \otimes \R$ by directly embedding it into singular cohomology. Our strategy should be thought as a way of circumventing the impossibility (when $p>0$) of embedding $q \otimes \R$ in some Weil cohomology by   instead  embedding all the other completions of   $q$ in  Weil cohomologies.

\

In order to deduce the full signature from the information modulo $8$ one needs to be sure that the rank of $q$ is small. Hence, one cannot apply this strategy to the whole space of algebraic cycles but rather one first decomposes it into smaller quadratic subspaces and then computes $q \otimes \Q_\ell$  for each of those.

To do so, one first   reduces the question to varieties defined over a finite field (this is a classical specialization argument) where abelian varieties are known to always  admit complex multiplication \cite{Tate}.  Then one uses complex multiplication to decompose the space of algebraic cycles into smaller quadratic subspaces. Finally,  it turns out that  when the abelian variety has dimension four then   the subspaces constructed are of rank two (at least those where the problem is not trivial).

\

Technically speaking, we do not compute $q \otimes \Q_\ell$ directly, but rather construct another $\Q$-quadratic form $\tilde{q}$ and try to compare these two. This quadratic form $\tilde{q}$ is constructed as follows. First, the abelian fourfold $X$ admits a lifting $\tilde{X}$ to characteristic zero on which  complex multiplication still acts (Honda--Tate). The action of  complex multiplication  decomposes the singular cohomology  $H_{\sing}^4(\tilde{X},\Q)$ into subspaces. Those are quadratic subspaces endowed with the cup product. To a given factor $q$ of $\mathcal{Z}^2_{\num}(X)_{\Q}$ one associates the factor $\tilde{q}$  of $H_{\sing}^4(\tilde{X},\Q)$ which is (roughly speaking) the same irreducible representation for the action of  complex multiplication.

\

The comparison  $q \otimes \Q_\ell \cong \tilde{q}\otimes \Q_\ell$ holds for all $\ell \neq p$   by smooth proper base change in $\ell$-adic cohomology.  On the other hand one computes $\tilde{q} \otimes \R$ using the Hodge--Riemann relations; it is negative definite if and only if the Hodge types appearing in the Hodge structure $\tilde{q}$ are odd. 

Now the theory of quadratic forms (in particular the product formula on Hilbert symbols) tells us that the positivity of $q \otimes \R$ is equivalent to the following statement. The quadratic forms $q \otimes \Q_p $ and $\tilde{q}\otimes \Q_p$ are   not isomorphic  precisely   if the Hodge types of $\tilde{q}$ are odd. (Note that this equivalence holds because our quadratic spaces have rank two.)

\

This formulation translates the  problem into a question in $p$-adic Hodge theory. To solve it we use the $p$-adic comparison theorem which gives a canonical isomorphism $(q\otimes \Q_p) \otimes B_\crys \cong (\tilde{q}\otimes \Q_p) \otimes B_\crys$ over a   $\Q_p$-algebra $B_{\crys}$. The strategy then consists in  writing explicitly the matrix of the isomorphism with respect to  well chosen bases of the two $\Q_p$-structures (i.e. computing the $p$-adic periods) and then exploit this explicit isomorphism to determine whether the two quadratic forms are  isomorphic also over $\Q_p$. 

The reason for which  a $p$-adic period is in general more computable than a complex one is that it comes equipped with some extra structures (in particular the action of an absolute Frobenius and the de Rham filtration) which sometimes characterize it\footnote{For example, an element of $B_\crys$ which is invariant under the Frobenius and sits in the zeroth step  of the filtration must be an element of $\Q_p$  \cite[Theorem 5.3.7]{Fontp}.}. Moreover, in our particular situation, these extra structures are particularly simple: the absolute Frobenius must act trivially (because $q\otimes \Q_p$ is spanned by algebraic cycles) and the only non-trivial subspace appearing in the de Rham filtration is a line (as these quadratic spaces have rank two).

Once   the matrix $f \in M_{2 \times 2} (B_\crys)$ of $p$-adic periods is explicitly computed we exploit the relation $(q\otimes B_\crys)=  (\tilde{q}\otimes B_\crys) \circ f$. Even though in this equality we only know $f$, this information is enough to determine whether the two quadratic forms  $q\otimes \Q_p$ and $\tilde{q}\otimes \Q_p$ have the same discriminant and represent the same elements of $\Q_p$. Again because we are in rank two these properties control whether  $q\otimes \Q_p$ and $\tilde{q}\otimes \Q_p$ are isomorphic.

\subsection*{Organisation of the paper}
In Section $3$ we recall the standard conjecture of Hodge type and present some first reduction steps, among which the fact that it is enough to work over a finite field. We state Theorem \ref{thmintro} (Theorem \ref{thmscht} in the text) and deduce from it Theorem \ref{thmclozelintro} (Theorem \ref{thmclozel} in the text). In Section $4$ we recall classical results on the motive of an abelian variety. In Section $5$ we show that Theorem \ref{thmscht} holds true for Lefschetz classes on abelian varieties, i.e. those algebraic classes that are linear combinations of intersections of divisors. In Section $6$ we recall that an abelian variety $X$ over a finite field admits complex multiplication and use this to decompose its   motive. We also recall that $X$ together with its complex multiplication lifts to characteristic zero, hence its motivic decomposition lifts too. In Section $7$ we study the interesting subfactors of this decomposition and we call them exotic. By definition, they are those containing algebraic classes which are not Lefschetz classes. The main result of the section says that if $X$ has dimension four then exotic motives have rank two (in the sense that their realizations are cohomology groups of rank two).

From there a somehow independent text starts, in which abelian varieties are not involved anymore: we study motives of rank two, living in mixed characteristic and having algebraic classes in positive characteristic. The main result (Theorem \ref{mainthm}) predicts the signature of the intersection product on those algebraic classes. In Section $8$ we put all pieces together and explain how Theorem \ref{mainthm} implies Theorem \ref{thmscht}. In Section $9$ we recall classical facts   from the theory of quadratic forms and use them to reduce Theorem \ref{mainthm}  to a $p$-adic question. The tool to attack this $p$-adic question is $p$-adic Hodge theory which we recall in Section $10$. In Section $11$ we describe the extra-structures (Frobenius and filtrations) on the   $p$-adic periods appearing and we uniquely characterize them with respect to these extra-structures. This characterization is essential to be able to determine them. We will compute them in  Section $12$. The proof of Theorem \ref{mainthm} is concluded in Section $13$.

Appendix \ref{geometricexamples} contains exemples of exotic motives and in particular of those having algebraic classes that cannot be lifted to characteristic zero.
\subsection*{Acknowledgements}
I would like to thank  Olivier Benoist,  Rutger Noot and Jean-Pierre Wintenberger for useful discussions; Eva Bayer, Giancarlo Lucchini Arteche and Olivier Wittenberg for explanations on  Galois cohomology; Xavier Caruso and Matthew Morrow for explanations on comparison theorems;   Olivier Brinon and Adriano Marmora for their help with $p$-adic periods; Robert Laterveer and Gianluca Pacienza for drawing my attention to the example in Proposition  \ref{propfermat}; Fr\'ed\'eric D\'eglise for pointing out the reference \cite{NiziFred}; Javier Fres\'an, Thomas Kr\"amer, Marco Maculan and Anastasia Prokudina for useful comments on the text.

Finally, I would like to thank the anonymous referee for many useful suggestions and especially for pointing out an argument which allowed to compute the $p$-adic periods in a direct way instead of using \cite[\S 9]{Colmez}. This made the $p$-adic part of the paper more self-contained and as elementary as possible. 

This research was partly supported by the grant ANR--18--CE40--0017 of Agence National de la Recherche. 
\section{Conventions}
Throughout the paper we will use the following conventions.
\begin{enumerate}
\item  A \textbf{variety} will mean a smooth, projective and geometrically connected scheme over a base  field  $k$.

\item Let  $k$ be a base field and $F$ be a field of coefficients of characteristic zero, we will denote by \[\CHM(k)_{F}\] the category of  \textbf{Chow motives} over $k$ with coefficients in $F$. (The subscript $F$ may sometimes be omitted.) For generalities, we refer to \cite{Andmot} in particular to \cite[Definition 3.3.1.1]{Andmot}  for the notion of Weil cohomology and to \cite[Proposition 4.2.5.1]{Andmot}  for the associated realization functor. 

We will denote by
\[\mathfrak{h} : \Var_k\longrightarrow \CHM(k)_{F}^{\op}\]
the functor associating to each variety its motive.

We will work also with \textbf{homological and numerical motives}. The motive of a variety $X$ in these categories will still be denoted by $\mathfrak{h} (X)$. We will denote by 
\[\NUM(k)_F\]
the category of numerical motives over $k$ with coefficients in $F$. 

Finally, we will need also to work with  \textbf{relative motives} as defined for example in \cite[\S 5.1]{OS2}.  The relative situation that will appear in this paper will always be over the ring of integers $W$ of a $p$-adic field. We will use analogous notation as in the absolute setting, for example Chow motives over $W$ will be denoted by \[\CHM(W).\]

\item By \textbf{classical realizations}\footnote{We tried to distinguish the properties which hold true for any realization and those which are known only for classical realizations.  In any case, the main results of the paper only make use of classical realizations and the reader can safely think only about them.} we will mean Betti realization
\[R_B : \CHM(\C)_{\Q}\longrightarrow \GrVec_\Q,\]
  $\ell$-adic realization (when $\ell$ is invertible in $k$)
\[R_\ell : \CHM(k)_{\Q}\longrightarrow \GrVec_{\Q_\ell}\]
 de Rham realization (when $k$ is of characteristic zero)
\[R_\dR : \CHM(k)_{\Q}\longrightarrow \GrVec_{k}\]
and crystalline realization (when $k$ is perfect and of positive characteristic)
\[R_\crys : \CHM(k)_{\Q}\longrightarrow \GrVec_{\Frac (W(k))}.\]
 For generalities see  \cite[\S 3.4.2 and Proposition 4.2.5.1]{Andmot}.

The realization functor are sometimes used in their enriched form, i.e. $R_B$ provides Hodge structures, $R_\ell$ provides Galois representations, $R_\dR$ provides filtered vector spaces and $R_\crys$ provides absolute Frobenius actions, see \cite[\S 7.1]{Andmot}.

\item\label{Hyodo-Kato} Let $W$ be the ring of integers  of a $p$-adic field $K$ with residue field $k$ (then $\Frac (W(k))$ is the maximal unramified subfield of $K$). We will denote by  \[\MOD\] the category  of admissible filtered $\varphi$-modules \cite{BertOgus,HyodoKato,NiziFred}.

Recall that an element in $\MOD$ is, in particular, a finite dimensional  $\Frac (W(k))$-vector space $V$, together with a decreasing filtration $\Fil^*$ on $V\otimes_{\Frac (W(k))} K$ and a $(\Frac (W(k))/\Q_p)$-semilinear endomorphism $\varphi$ of $V$, usually called absolute Frobenius, verifying some compatibilities between $\Fil^*$ and $\varphi$. 

There is a \textbf{Hyodo-Kato} realization functor \[R_{\HK}: \CHM(W)\rightarrow \MOD\]
such that, after forgetting the absolute Frobenius, it coincides with de Rham realization of the generic fiber $R_{\HK}(\cdot)\otimes K=R_\dR(\cdot_{\vert K})$  and, after forgetting the filtration, it coincides with  crystalline realization of the special fiber $R_{\HK}(\cdot) = R_{\crys}(\cdot_{\vert k})$. 
\item The \textbf{unit object} in one of the above categories of motives (Chow, homological or numerical motives) over a base $S$ is the motive $\mathfrak{h}(S)$ and it will be denoted by
\[\one \coloneqq \mathfrak{h}(S).\]
When $S=\Spec(k)$ we have the identification
\[\End(\one)=\Q.\]
\item An \textbf{algebraic class} of a motive $T$ is a map \[\alpha\in\Hom(\one, T).\]
The realization of an algebraic class gives a map
\[R(\alpha) : F \longrightarrow R(T),\]
where $F$ is the field of coefficients of the realization. 
The map $R(\alpha)$ is characterized by its value at $1\in F$. An element of the cohomology group $R(T)$ of this form
\[R(\alpha)(1) \in R(T)\]
will be called an algebraic class of  $R(T)$. By abuse of language, an algebraic class of the cohomology group $R(T)$ may also be called an algebraic class of the motive $T$.

Finally, we  may sometimes ignore Tate twists and say that $T$ has algebraic classes when $T(n)$ does. For example, for a variety $X$, the cohomology group $H^{2n}(X)$ may have algebraic classes, although, strictly speaking, these classes belong to $H^{2n}(X)(n).$ Hopefully this will not bring confusion as we will work with motives whose realization is concentrated in one single cohomological degree.

\item An algebraic class $\alpha$ of a motive $T$ is \textbf{numerically trivial} if, for all $\beta\in\Hom(T,\one)$, the composition
$\beta \circ \alpha $  is zero. Two algebraic classes are numerically equivalent if their difference is numerically trivial. (This recovers the classical definition \cite[7.1.2]{AK}.)

\item Let $X$ be a variety. The space of algebraic cycles on $X$ with coefficients in $F$ modulo numerical equivalence will be denoted by \[\mathcal{Z}^{n}_{\num}(X)_{F}= \Hom_{\NUM(k)_F}(\one, \mathfrak{h}(X)(n)).\]
Similarly, for a fixed Weil cohomology, $\mathcal{Z}^{n}_{\hom}(X)_{F}$ will denote the space of algebraic cycles on $X$ with coefficients in $F$ modulo homological equivalence.

\end{enumerate}

\section{Standard conjecture of Hodge type}

In this section we recall some classical conjectures (due to Grothendieck) and give some reformulations of those. We state our main result (Theorem \ref{thmscht}) and prove a consequence (Theorem \ref{thmclozel}).

Throughout the section, $k$ is a base field, $H^*$ is a fixed classical Weil cohomology and $R$ is the associated realization functor (see Conventions). We fix a variety $X$  over $k$ of dimension $d$ and  an ample divisor $L$ on $X$. We will write $\mathfrak{h}(X)$ for its homological motive (with respect to $H^*$). The section is written under the Assumption \ref{kun+lef} (which is satisfied in particular by abelian varieties).

\begin{conj}\label{conjkun} (Standard conjecture of K\"unneth type)

\noindent There exists a decomposition 
\[\mathfrak{h}(X)=\bigoplus_{n=0}^{2d} \mathfrak{h}^n(X)\] 
such that $R(\mathfrak{h}^n(X))=H^n(X)$ and $h^n(X)=h^{2d-n}(X)^{\vee}(-d).$
\end{conj}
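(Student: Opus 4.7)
Since the statement is a standard and widely-open conjecture, I sketch a strategy rather than a full argument, and indicate for which varieties it succeeds.

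My starting point would be the K\"unneth decomposition of the diagonal in cohomology. Writing $[\Delta_X] \in H^{2d}(X \times X)$, the K\"unneth formula gives $[\Delta_X] = \sum_{n=0}^{2d} \pi_n$ with $\pi_n \in H^{2d-n}(X) \otimes H^n(X)$, and the $\pi_n$ are mutually orthogonal idempotents projecting onto $H^n(X)$. The conjecture reduces to lifting each $\pi_n$ to an algebraic cycle modulo homological equivalence. Once such lifts are found, they are automatically orthogonal idempotents in the category of homological motives: the relations to be checked (idempotence, orthogonality, summing to the identity) are equalities of morphisms in this category, and each such equality follows from the corresponding equality in cohomology by the very definition of homological equivalence. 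This produces the decomposition $\mathfrak{h}(X) = \bigoplus_n \mathfrak{h}^n(X)$ with the required realizations, and uniqueness is immediate since any alternative family would realize to the same $\pi_n$.

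Next I would derive the duality statement from Poincar\'e duality, which in motivic language reads $\mathfrak{h}(X) \cong \mathfrak{h}(X)^\vee(-d)$. Under the induced pairing, the transpose of $\pi_n$ equals $\pi_{2d-n}$ (this is visible in cohomology, from the standard compatibility of K\"unneth with Poincar\'e duality), and restricting to summands yields $\mathfrak{h}^n(X) \cong \mathfrak{h}^{2d-n}(X)^\vee(-d)$.

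The hard part is of course the algebraicity of the $\pi_n$, which is open in general. For the applications of this paper it is enough that the conjecture be known for abelian varieties, and indeed it is: Lieberman, Deninger--Murre, and K\"unnemann construct the projectors even at the Chow level, using the multiplication maps $[m] \colon X \to X$. The key input is that $[m]^*$ acts by $m^n$ on $H^n(X)$, so that a suitable polynomial combination of the $[m]^*$ for varying $m$ picks out each K\"unneth summand. A second relevant case, by Katz--Messing, is that of a finite base field $k$, where algebraicity of the $\pi_n$ follows from the Weil conjectures.
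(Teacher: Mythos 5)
The item you were asked to prove is stated in the paper as \emph{Conjecture} 1.1, not as a theorem: the standard conjecture of K\"unneth type is open in general and the paper supplies no proof, only the remark (following Conjecture 1.2) that both K\"unneth and Lefschetz are known for abelian varieties by Lieberman and Kleiman, together with Theorem 2.1 which records the Chow--K\"unneth decomposition for abelian varieties due to Deninger--Murre and K\"unnemann. Your sketch correctly recognises this, and the reduction you give is sound: the $\pi_n$ in $H^{2d-n}(X)\otimes H^n(X)$ are the cohomological K\"unneth projectors, algebraicity of each $\pi_n$ is the whole content, and once algebraic lifts exist they are automatically mutually orthogonal idempotents summing to the identity in the category of homological motives (since equality of morphisms there is tested precisely in cohomology), which also gives the uniqueness; the duality $\mathfrak{h}^n(X)\cong\mathfrak{h}^{2d-n}(X)^\vee(-d)$ then follows from Poincar\'e duality $\mathfrak{h}(X)\cong\mathfrak{h}(X)^\vee(-d)$ together with the transpose relation $\pi_n^{\,t}=\pi_{2d-n}$. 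Your discussion of the abelian case via the multiplication maps $[m]$ acting as $m^n$ on $H^n$ is exactly the Deninger--Murre construction that underlies the paper's Theorem 2.1, and the Katz--Messing input over finite fields is also a correct (though not used here) piece of evidence. There is no gap in what you wrote; just note that the paper never claims the conjecture in general but only adopts it as Assumption 1.4 for the varieties it studies, where it is known.
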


\begin{conj}\label{conjlef} (Standard conjecture of Lefschetz type)

\noindent For all $n \leq d$, there is an isomorphism
\[\mathfrak{h}^n(X) \isocan \mathfrak{h}^{2d-n}(X)(d-n)\]
induced by the cup product with $L^{d-n}$. Moreover, the K\"unneth decomposition can be refined to a  decomposition 
\[\mathfrak{h}^n(X)= \mathfrak{h}^{n,\prim}(X)\oplus \mathfrak{h}^{n-2}(X)(-1)\] 
 which realizes to the primitive decomposition of $H^n(X)$.
\end{conj}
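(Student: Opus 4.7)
The plan is to reduce both assertions of the statement to a single claim: that the inverse of the hard Lefschetz isomorphism in cohomology is realised by an algebraic correspondence. Hard Lefschetz for the realisation $H^{*}(X)$ holds for any classical Weil cohomology (Hodge theory in characteristic zero, Deligne and Katz--Messing in positive characteristic), so cupping with $L^{d-n}$ is already a cohomological isomorphism $H^{n}(X)\to H^{2d-n}(X)$; upgrading this to a motivic isomorphism requires constructing an element $\Lambda\in\Hom(\mathfrak{h}^{2d-n}(X)(d-n),\mathfrak{h}^{n}(X))$ whose realisation is the cohomological inverse of $L^{d-n}$.

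Once such an algebraic $\Lambda$ is available, assertion (a) is immediate. For assertion (b), I would define $\mathfrak{h}^{n,\prim}(X)$ as the kernel of the algebraic morphism $L^{d-n+1}\colon \mathfrak{h}^{n}(X)\to\mathfrak{h}^{2d-n+2}(X)(d-n+1)$, while $\mathfrak{h}^{n-2}(X)(-1)\to\mathfrak{h}^{n}(X)$ is cupping with $L$. The direct sum decomposition then amounts to exhibiting a pair of compatible algebraic projectors, which $\Lambda$ produces via the standard polynomial expressions (the Lefschetz--K\"ahler identities, read motivically). Uniqueness follows because on realisations the decomposition is forced to be the classical primitive decomposition.

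In the setting that actually matters for this paper, namely abelian varieties, I would invoke the classical theorem of Lieberman. The key input is that the cohomology of an abelian variety is the exterior algebra generated by $H^{1}(X)$, so divisor classes generate the whole cohomology ring; combined with the Fourier--Mukai transform on Chow motives defined by the Poincar\'e bundle on $X\times\widehat{X}$ (following Beauville and Deninger--Murre), this allows one to write $\Lambda$ explicitly as a polynomial expression in the Chern classes of the Poincar\'e bundle and in $L$. The motivic K\"unneth decomposition is then obtained from the eigenspace decomposition of multiplication-by-$n$ on $\mathfrak{h}(X)$, and compatibility with the primitive decomposition is a formal check at the level of each eigenspace.

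The main obstacle, and the reason the full conjecture remains open for general varieties, is precisely the algebraicity of $\Lambda$: a priori the cohomological inverse of the Lefschetz operator need not be represented by an algebraic cycle on $X\times X$. Abelian varieties circumvent this obstruction thanks to their group structure, which yields extra algebraic operations---convolution and Fourier transform---unavailable for arbitrary smooth projective varieties. For this paper, only the abelian case is needed, and it is legitimate to take the conjecture as known in that generality.
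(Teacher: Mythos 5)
This is stated as a conjecture, not a theorem, and the paper does not (and cannot, in general) prove it; it simply records it and then cites Kleiman and Lieberman for the abelian case in the remark that follows, and Künnemann for the lift to Chow motives in Theorem \ref{classic}(\ref{polarization}). Your analysis is accurate and matches the paper's stance: you correctly identify that the single obstruction is the algebraicity of the inverse Lefschetz operator $\Lambda$, that once $\Lambda$ is algebraic the primitive projectors come for free from the Lefschetz--K\"ahler polynomial identities, and that for abelian varieties the Fourier--Mukai transform and the multiplication-by-$n$ eigenspace decomposition of Deninger--Murre supply $\Lambda$ explicitly. Since the paper only uses the abelian case and invokes it as known, your proposal is in essence the same as what the paper does, with the added bonus of sketching why the cited theorem is true.
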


\begin{remark}
By the work of Lieberman and Kleiman the two conjectures above are known for abelian varieties \cite{GKL,Lieb}, see also Theorem \ref{classic}.
\end{remark}
\begin{assumption}\label{kun+lef}
From now on we will assume that $X$ verifies the   Conjectures \ref{conjkun} and \ref{conjlef}.
\end{assumption}
\begin{remark}
Conjectures \ref{conjkun} and \ref{conjlef} induce an isomorphism
\[\mathfrak{h}^n(X) \isocan \mathfrak{h}^{n}(X)^{\vee}(-n).\]
By adjunction this gives a pairing
\[q_n : \mathfrak{h}^n(X) \otimes \mathfrak{h}^{n}(X)  \longrightarrow \one(-n).\]
By construction  the Lefschetz decomposition 
\[\mathfrak{h}^{n }(X)=\mathfrak{h}^{n,\prim}(X) \oplus \mathfrak{h}^{n-2}(X)(-1)\] 
is orthogonal with respect to this pairing.
\end{remark}
\begin{definition}\label{defpairingmot} For $n\leq d$, we define the pairing 
\[\langle \cdot, \cdot \rangle_{n,\mot} : \mathfrak{h}^n(X) \otimes \mathfrak{h}^{n}(X)  \longrightarrow \one(-n)\]
recursively on $n$, by slightly modifying the pairing $q_n$ of the above remark. We impose that  the Lefschetz decomposition 
\[\mathfrak{h}^{n }(X)=\mathfrak{h}^{n,\prim}(X) \oplus \mathfrak{h}^{n-2}(X)(-1)\] 
is still orthogonal, we impose the equality $\langle \cdot, \cdot \rangle_{n,\mot} =\langle \cdot, \cdot \rangle_{n-2,\mot}(-2) $ on $\mathfrak{h}^{n-2}(X)(-1)$  and finally on $\mathfrak{h}^{n,\prim}(X)$ we define \[\langle \cdot, \cdot \rangle_{n,\mot}  = (-1)^{n(n+1)/2}q_n. \]

 \end{definition}
 \begin{remark}\label{rempola}
If $k$ is embedded in $\C$, the Betti realization of $\langle \cdot, \cdot \rangle_{n,\mot}$ is a polarization of the Hodge structure $H^n(X)$.
\end{remark}
\begin{definition}\label{defpairing} We define  the space $\mathcal{Z}^{n,\prim}_{\num}(X)_{\Q} $  and the  pairing  $\langle \cdot, \cdot \rangle_{n}$.
\begin{enumerate}
\item For $n \leq d/2$, we  define the pairing
 \[\langle \cdot, \cdot \rangle_{n} : \mathcal{Z}^{n}_{\hom}(X)_{\Q}  \times  \mathcal{Z}^{n}_{\hom}(X)_{\Q}  \longrightarrow \Q=\End(\one)\]
as follows. For $\alpha, \beta  \in \mathcal{Z}^{n}_{\hom}(X)_{\Q}  = \Hom (\one, \mathfrak{h}^{2n}(X)(n))$ consider 
\[(\alpha \otimes \beta)  \in   \Hom (\one, \mathfrak{h}^{2n}(X) \otimes \mathfrak{h}^{2n}(X)(2n))\] and define
\[\langle \alpha, \beta \rangle_{n} =      \langle \cdot, \cdot \rangle_{2n,\mot} (2n) \circ  (\alpha \otimes \beta).\]

\item For $n \leq d/2$, define  $\mathcal{Z}^{n,\prim}_{\num}(X)_{\Q} \subset \mathcal{Z}^{n}_{\num}(X)_{\Q}$ as
 \[\mathcal{Z}^{n,\prim}_{\num}(X)_{\Q}  = \Hom_{\NUM(k)_\Q}(\one, \mathfrak{h}^{2n,\prim}(X)(n)).\]  We will keep the same notation as in (1) for the induced pairing
 \[\langle \cdot, \cdot \rangle_{n} : \mathcal{Z}^{n,\prim}_{\num}(X)_{\Q}  \times  \mathcal{Z}^{n,\prim}_{\num}(X)_{\Q}  \longrightarrow \Q.\]
\end{enumerate}
 \end{definition}
 \begin{remark}
The  above definition is equivalent to the one given in the introduction, namely 
\[\mathcal{Z}^{n,\prim}_{\num}(X)_{\Q} = \{\alpha \in \mathcal{Z}^{n}_{\num}(X)_{\Q} , \hspace{0.2cm} \alpha \cdot L^ {{d-2n+1}}=0 \hspace{0.2cm}  \textrm{in}  \hspace{0.2cm}  \mathcal{Z}^{d-n+1}_{\num}(X)_{\Q}\}.\]
Moreover,  for $\alpha$ and $\beta$ in $\mathcal{Z}^{n,\prim}_{\num}(X)_{\Q}$,  we have the equality \[\langle \alpha, \beta \rangle_{n} =  (-1)^n \alpha \cdot \beta \cdot L^{d-2n}.\]
 Notice that in the introduction we
 did not make the Assumption \ref{kun+lef}. Instead Definition \ref{defpairingmot} cannot be formulated without the  Assumption \ref{kun+lef}.
\end{remark}
\begin{conj}\label{conjscht} (Standard conjecture of Hodge type)

\noindent For all $n \leq d/2$, the pairing  \[\langle \cdot, \cdot \rangle_{n} : \mathcal{Z}^{n,\prim}_{\num}(X)_{\Q}  \times  \mathcal{Z}^{n,\prim}_{\num}(X)_{\Q}  \longrightarrow \Q\]
of Definition \ref{defpairing} is positive definite.
\end{conj}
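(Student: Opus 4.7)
The conjecture in its full generality is of course the main open problem of Grothendieck's program; I will sketch a strategy for the only genuinely new case within reach, namely abelian fourfolds $X$ over a field of characteristic $p>0$. For $n\leq 1$ the pairing reduces to the Hodge index theorem (Theorem \ref{introsegre}) applied to an auxiliary surface, and in characteristic zero the Hodge--Riemann relations on singular cohomology give everything via the cycle class map; so only $n=2$ on a fourfold in positive characteristic is truly open. First, by a standard specialization argument (algebraic cycles, the intersection pairing, and numerical equivalence all spread out over a model of finite type) one reduces to $k$ finite. Over a finite field Honda--Tate gives $X$ complex multiplication by a CM-algebra $E$, and the induced $E$-action splits
\[\mathcal{Z}^{2,\prim}_{\num}(X)_{\Q}=\bigoplus_{\chi}V_\chi\]
into isotypic quadratic subspaces indexed by characters. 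The key combinatorial observation, which I would verify in the motivic CM-decomposition, is that in dimension four the \emph{exotic} factors $V_\chi$ (those carrying algebraic classes beyond Lefschetz ones) have $\Q$-rank two.

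For a rank-two $\Q$-quadratic form $q$, the product formula for Hasse--Witt invariants $\prod_{v}\epsilon_{v}(q)=1$ determines the signature of $q\otimes\R$ from the knowledge of $q\otimes\Q_{v}$ at all finite $v$. I would construct a Hodge-theoretic companion $\tilde q$ as follows: by Honda--Tate, $(X,E)$ lifts to $(\tilde X,E)$ in characteristic zero, and the same character $\chi$ cuts out a rank-two subspace $\tilde q$ of $H^{4}_{\sing}(\tilde X,\Q)$. Smooth and proper base change yields
\[q\otimes\Q_{\ell}\;\cong\;\tilde q\otimes\Q_{\ell}\qquad\text{for every }\ell\neq p,\]
and Hodge--Riemann controls the real signature of $\tilde q$ from its Hodge types. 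Consequently the only local piece of $q$ that remains mysterious is $q\otimes\Q_{p}$; by the product formula, positive-definiteness of $q\otimes\R$ is equivalent to a precise matching (or deliberate non-matching, dictated by the Hodge types of $\tilde q$) of $q\otimes\Q_{p}$ with $\tilde q\otimes\Q_{p}$.

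The main obstacle, and in my view the genuinely hard step, is this $p$-adic comparison. I would invoke the crystalline--de Rham comparison theorem, which supplies a canonical isomorphism $(q\otimes\Q_{p})\otimes B_{\crys}\cong(\tilde q\otimes\Q_{p})\otimes B_{\crys}$. Writing it in well-chosen bases yields a $p$-adic period matrix $f\in M_{2\times 2}(B_{\crys})$ satisfying $q\otimes\Q_{p}=f^{\intercal}\cdot(\tilde q\otimes\Q_{p})\cdot f$, so that discriminants and Hilbert symbols of the two $\Q_{p}$-forms are encoded in $\det f$ and the entries of $f$. Those entries are heavily constrained: the algebraicity of classes spanning $q$ forces Frobenius-invariance, and the rank-two hypothesis leaves only a single nontrivial step of the Hodge filtration. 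I would then compute the periods explicitly, read off $\det f$ modulo squares together with the relevant Hasse--Witt data, and verify that the resulting local invariant is exactly the one prescribed by the Hodge types of $\tilde q$. I expect this explicit period computation, likely involving a case analysis over the possible Hodge types $(4,0),(3,1),(2,2)$ and careful use of Lubin--Tate-type descriptions of $B_{\crys}$, to be where the whole argument stands or falls.
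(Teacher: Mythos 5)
Your sketch reproduces the paper's strategy for Theorem \ref{thmscht} essentially step for step: specialization to a finite field, CM-decomposition of $\mathcal{Z}^{2,\prim}_{\num}$ into isotypic factors, the rank-two observation for exotic factors, the Honda--Tate lift, the Hasse--Witt product formula reducing everything to a $\Q_p$-comparison, and the explicit $p$-adic period computation via the crystalline comparison and Lubin--Tate periods. The one small deviation from the paper's organization is at the very end: the case analysis in Section 9 is driven not by the Hodge types $(4,0),(3,1),(2,2)$ but by the ramification of the quadratic field $F=\End_{\Gal_K}(V_{B,p})$ over $\Q_p$ (unramified, tamely ramified, and the two wildly ramified fields over $\Q_2$), the Hodge type entering only through the parity of the filtration jump $i$.
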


\begin{prop}\label{kerpairing}
The kernel of the pairing $\langle \cdot, \cdot \rangle_{n}$  on the vector space $\mathcal{Z}_{\hom}^{n}(X)_{\Q}$ is the space of algebraic classes  which are numerically trivial. The analogous statement holds true for $\mathcal{Z}_{\hom}^{n,\prim}(X)_{\Q}$.
\end{prop}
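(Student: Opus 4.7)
The plan is to exploit the non-degeneracy of the motivic pairing $\langle \cdot, \cdot \rangle_{2n,\mot}$ and then reduce the statement to the tautological definition of numerical triviality for morphisms of motives. Under Assumption \ref{kun+lef} the K\"unneth and Lefschetz conjectures together yield an isomorphism of homological motives $\mathfrak{h}^{2n}(X)(n) \isocan \mathfrak{h}^{2n}(X)(n)^{\vee}$, and by construction the pairing $q_{2n}$ of the remark preceding Definition \ref{defpairingmot} is its adjoint.

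My first step is to observe that $\langle \cdot, \cdot \rangle_{2n,\mot}$ is itself non-degenerate. Indeed, Definition \ref{defpairingmot} constructs it recursively by keeping the Lefschetz decomposition orthogonal and rescaling the restriction to each primitive summand by a sign $\pm 1$; non-degeneracy is preserved block by block. Applying $\Hom(\one, -)$ and the identity $\Hom(\one, T^{\vee}) = \Hom(T, \one)$ yields a $\Q$-linear isomorphism
\[
\mathcal{Z}^{n}_{\hom}(X)_{\Q} \isocan \Hom(\mathfrak{h}^{2n}(X)(n), \one), \qquad \beta \longmapsto \tilde\beta.
\]

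Next I would unwind Definition \ref{defpairing} to write $\langle \alpha, \beta \rangle_n = \tilde\beta \circ \alpha$. Thus $\alpha$ lies in the kernel of $\langle \cdot, \cdot \rangle_n$ if and only if $\tilde\beta \circ \alpha = 0$ for every $\beta$, equivalently (by surjectivity of $\beta \mapsto \tilde\beta$) if and only if $\gamma \circ \alpha = 0$ for every $\gamma \in \Hom(\mathfrak{h}^{2n}(X)(n), \one)$. By the convention on numerical triviality recalled in the Conventions, this is precisely numerical triviality of $\alpha$.

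For the primitive variant the argument is formally the same, using that the Lefschetz decomposition is orthogonal for $\langle \cdot, \cdot \rangle_{2n,\mot}$ so that the pairing remains non-degenerate on $\mathfrak{h}^{2n,\prim}(X)(n)$. To match the resulting condition with numerical triviality of $\alpha$ viewed as a cycle on $X$, I would note that if $\alpha$ factors through the primitive summand, then $\gamma \circ \alpha$ depends only on the restriction $\gamma|_{\mathfrak{h}^{2n,\prim}(X)(n)}$, so testing against $\Hom(\mathfrak{h}^{2n,\prim}(X)(n), \one)$ coincides with testing against the full $\Hom(\mathfrak{h}^{2n}(X)(n), \one)$. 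The proof amounts to a formal unpacking of the pairing's construction, so no substantial obstacle arises; the only care needed is in checking that the recursive sign modification of Definition \ref{defpairingmot} preserves non-degeneracy.
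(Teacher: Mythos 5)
Your proof is correct and follows essentially the same route as the paper: both exploit that the motivic pairing yields an isomorphism $\mathfrak{h}^{2n}(X)(n) \isocan \mathfrak{h}^{2n}(X)(n)^{\vee}$, rewrite the cycle-level pairing as a composition $\tilde\beta \circ \alpha$ with $\tilde\beta \in \Hom(\mathfrak{h}^{2n}(X)(n),\one)$, and then read off numerical triviality directly from its definition. The paper packages this as a general fact about a homological motive $T$ equipped with an isomorphism $T \isocan T^{\vee}$ and then applies it to $T = \mathfrak{h}^{2n}(X)(n)$ and $T = \mathfrak{h}^{2n,\prim}(X)(n)$; your more explicit unwinding, including the remark that the sign modifications of Definition \ref{defpairingmot} preserve non-degeneracy block by block and that the orthogonality of the Lefschetz decomposition reconciles the two notions of numerical triviality on the primitive summand, is the same argument with the details spelled out.
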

\begin{proof}

This is a direct consequence of a general fact. Let $T$ be a homological motive and suppose that it is endowed with an isomorphism
$f:T \isocan T^\vee$. Call $q: T \otimes T \rightarrow \one $ the pairing induced by adjunction. In analogy to Definition \ref{defpairing}, $q$ induces a pairing $\langle \cdot , \cdot \rangle$ on the space $\Hom(\one, T).$ First, notice that this pairing can also be described in the following way
\[\langle \alpha,\beta\rangle=\alpha^\vee \circ f \circ \beta.\]
This equality holds for formal reasons in any rigid category (alternatively, for homological motives, one can check it after realization).
Using this equality one has that $\beta$ is in the kernel of the pairing if and only if  $(\alpha^\vee \circ f) \circ \beta=0$ for all $\alpha$. As $f$ is an isomorphism, this is equivalent to the fact that $\gamma \circ \beta=0$ for all $\gamma\in\Hom(T,\one).$ This means precisely that $\beta$ is numerically trivial.

To conclude, notice that, by construction, one can apply this general fact to $T= \mathfrak{h}^{2n}(X)(n)$ or $T= \mathfrak{h}^{2n,\prim}(X)(n)$.
\end{proof}
\begin{corollary}\label{semipos} 
The pairing $\langle \cdot, \cdot \rangle_{n}$  is perfect on $\mathcal{Z}_{\num}^{n}(X)_{\Q}$. Moreover the pairing  is positive definite on  $\mathcal{Z}_{\num}^{n}(X)_{\Q}$ if and only if it is positive semidefinite on $\mathcal{Z}_{\hom}^{n}(X)_{\Q}$. The analogous statements hold true for $\mathcal{Z}_{\num}^{\prim,n}(X)_{\Q}$.
\end{corollary}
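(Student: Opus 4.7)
The plan is to deduce everything formally from Proposition \ref{kerpairing} and elementary linear algebra of symmetric bilinear forms over $\Q$. The key point is that $\mathcal{Z}_{\num}^{n}(X)_{\Q}$ is by definition the quotient of $\mathcal{Z}_{\hom}^{n}(X)_{\Q}$ by the subspace of numerically trivial algebraic classes, and Proposition \ref{kerpairing} identifies this subspace with the radical (kernel) of $\langle \cdot, \cdot \rangle_{n}$.

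First I would verify that the pairing descends to the quotient: since its kernel equals the subspace being quotiented out, the descent is well-defined, and a symmetric bilinear form on a quotient by its own radical is automatically non-degenerate. This gives perfectness of $\langle \cdot, \cdot \rangle_{n}$ on $\mathcal{Z}_{\num}^{n}(X)_{\Q}$ (a non-degenerate symmetric form on a finite-dimensional $\Q$-vector space is perfect).

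Next, for the positivity equivalence, I would use the general fact that a symmetric bilinear form $q$ on a finite-dimensional $\Q$-vector space $V$ is positive semidefinite if and only if the induced form on $V/\ker(q)$ is positive definite. Applying this with $V=\mathcal{Z}_{\hom}^{n}(X)_{\Q}$ and $\ker(q)$ the numerically trivial classes (via Proposition \ref{kerpairing}) yields that positive semidefiniteness on $\mathcal{Z}_{\hom}^{n}(X)_{\Q}$ is equivalent to positive definiteness on $\mathcal{Z}_{\num}^{n}(X)_{\Q}$.

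Finally, exactly the same argument, applied instead to the motive $\mathfrak{h}^{2n,\prim}(X)(n)$ (to which Proposition \ref{kerpairing} also applies, as noted there), gives the corresponding statements for $\mathcal{Z}_{\num}^{n,\prim}(X)_{\Q}$ and $\mathcal{Z}_{\hom}^{n,\prim}(X)_{\Q}$. There is no real obstacle here; the corollary is a purely formal consequence of the kernel computation in Proposition \ref{kerpairing} combined with the standard fact that quotienting a symmetric bilinear form by its radical produces a non-degenerate form whose signature equals that of the original form (with the zero eigenspace removed).
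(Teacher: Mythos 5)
Your argument is correct and is exactly the deduction the paper intends: the corollary is stated without an explicit proof because it is meant to follow immediately from Proposition~\ref{kerpairing}, which identifies the radical of the pairing on $\mathcal{Z}_{\hom}^{n}(X)_{\Q}$ with the numerically trivial classes, combined with the standard linear-algebra fact that a symmetric form is positive semidefinite on $V$ iff the induced form on $V/\mathrm{rad}$ is positive definite. Nothing is missing.
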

\begin{remark}\label{remHR}
If $k$ is embedded in $\C$ and if we work with Betti cohomology (cf. Remark \ref{rempola}), the Hodge--Riemann relations imply that $\langle \cdot, \cdot \rangle_{n}$  is positive definite on $(n,n)$-classes, hence on $\mathcal{Z}_{\hom}^{n}(X)_{\Q}$. In particular the standard conjecture of Hodge type holds true. 

Moreover, homological and numerical equivalence coincide (recall that we work under the Assumption  \ref{kun+lef}). The argument is due to Liebermann  \cite{Lieb} (see also \cite[Corollary 5.4.2.2]{Andmot}) and we recall it here.

If  $\alpha \in \mathcal{Z}_{\hom}^{n}(X)_{\Q}$ is a nonzero class  in codimension $n\leq d/2$ the inequality  $\langle \alpha, \alpha \rangle_{n}>0$ implies that $\alpha$ is not numerically trivial. On the other hand, the iterated intersection product with the hyperplane section $L$ induces an isomorphism $\mathcal{Z}_{\hom}^{n}(X)_{\Q}\cong \mathcal{Z}_{\hom}^{d-n}(X)_{\Q}$. Hence the intersection product $\mathcal{Z}_{\hom}^{n}(X)_{\Q}\times \mathcal{Z}_{\hom}^{d-n}(X)_{\Q}\rightarrow \Q$ is non-degenerate in one variable if and only if it is non-degenerate in the other, which implies that homological and numerical equivalence must coincide also in codimension $n\geq d/2$.

\end{remark}
Together with the case of characteristic zero (see the above remark), the following is the only known result on the standard conjecture of Hodge type \cite[5.3.2.3]{Andmot}.
\begin{theorem}
The pairing\label{segre} $\langle \cdot, \cdot \rangle_{1}$   on $\mathcal{Z}_{\num}^{1}(X)_{\Q}$ is positive definite. Equivalently, the pairing 
\[\mathcal{Z}^1_{\num}(X)_{\Q}\times \mathcal{Z}_{\num}^1(X)_{\Q} \longrightarrow \Q \hspace{1cm} D,D' \mapsto D\cdot D' \cdot L^{d-2}\]
is of signature  $ (s_+;s_-) = (1;  \rho_1-1)$ with
 $\rho_1=\dim \mathcal{Z}_{\num}^1(X)_{\Q}$.
\end{theorem}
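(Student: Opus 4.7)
The plan is to reduce to the two-dimensional Hodge index theorem (Theorem \ref{introsegre}) by cutting $X$ with $d-2$ general hyperplane sections.

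First I verify the equivalence of the two formulations. The Lefschetz decomposition induces an orthogonal splitting
\[\mathcal{Z}^1_{\num}(X)_{\Q} = \mathcal{Z}^{1,\prim}_{\num}(X)_{\Q} \oplus \Q\cdot L\]
with respect to $D\cdot D'\cdot L^{d-2}$; on $\Q\cdot L$ this form evaluates to $L^d>0$, contributing signature $(1,0)$. Taking into account the sign $(-1)^{n(n+1)/2}=-1$ from Definition \ref{defpairingmot} in cohomological degree $n=2$, positive-definiteness of $\langle\cdot,\cdot\rangle_1$ on the primitive summand is equivalent to $D\cdot D\cdot L^{d-2}<0$ for every nonzero primitive $D$, which together with the contribution of $L$ yields the signature $(1;\rho_1-1)$.

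By Corollary \ref{semipos} it therefore suffices to prove that
\[D\cdot D\cdot L^{d-2}\le 0\]
for every $D\in \mathcal{Z}^{1,\prim}_{\hom}(X)_{\Q}$. Pulling back along $X_{\bar k}\to X$ injects $\mathcal{Z}^1_{\num}(X)_{\Q}$ into $\mathcal{Z}^1_{\num}(X_{\bar k})_{\Q}$ while preserving the intersection form and the class of $L$, so I may assume $k=\bar k$ (the signature is inherited since positivity on the ambient space implies positivity on the subspace, and perfectness from Corollary \ref{semipos} rules out new degeneracies). Replacing $L$ by a very ample multiple rescales the pairing positively and preserves the primitive subspace, so I may further assume $L$ itself is very ample. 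By Bertini over the infinite field $\bar k$, I choose smooth $H_1,\ldots,H_{d-2}\in |L|$ whose complete intersection $Y:=H_1\cap\cdots\cap H_{d-2}$ is a smooth projective surface in $X$. The restricted class $D_Y:=D|_Y\in \mathcal{Z}^1_{\hom}(Y)_{\Q}$ then satisfies
\[(D_Y)^2_{Y} = D\cdot D\cdot L^{d-2}, \qquad D_Y\cdot L|_Y = D\cdot L^{d-1}=0,\]
so $D_Y$ is orthogonal to the ample class $L|_Y$ on $Y$.

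The classical two-dimensional Hodge index theorem (Theorem \ref{introsegre}) applied to $Y$ asserts that $\mathcal{Z}^1_{\num}(Y)_{\Q}$ carries an intersection form of signature $(1,\rho_1(Y)-1)$, with the single positive direction spanned by $L|_Y$; every class orthogonal to $L|_Y$ therefore has non-positive self-intersection. Hence $(D_Y)^2\le 0$, which is the desired inequality, and Corollary \ref{semipos} upgrades it to strict positive-definiteness of $\langle\cdot,\cdot\rangle_1$ on $\mathcal{Z}^{1,\prim}_{\num}(X)_{\Q}$. I expect the only point requiring care, rather than a real obstacle, to be the reductions to $\bar k$ and to a very ample $L$: both operations preserve primitivity and the intersection form up to positive scaling, so the signature is unaffected.
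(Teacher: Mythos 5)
The paper does not prove this theorem: it is stated as a known result and cited to \cite[5.3.2.3]{Andmot}. Your argument --- reducing to the two-dimensional Hodge index theorem (Theorem \ref{introsegre}) by passing to $\bar k$, replacing $L$ by a very ample multiple, and slicing with $d-2$ general members of $|L|$ via Bertini --- is the standard proof underlying that reference, and it is correct. One small imprecision in your phrasing: for a form of signature $(1,\rho_1(Y)-1)$ there is no ``single positive direction'' (there is a cone of positive vectors); what you actually use, and justify correctly, is that $(L|_Y)^2>0$ forces every class orthogonal to $L|_Y$ to have non-positive self-intersection, since otherwise its span with $L|_Y$ would be a two-dimensional positive-definite subspace, contradicting $s_+=1$.
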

\begin{prop}\label{anypolarization}
The standard conjecture of Hodge type for a fourfold $X$  is equivalent to the statement that the intersection product
\[\mathcal{Z}^2_{\num}(X)_{\Q}\times \mathcal{Z}_{\num}^2(X)_{\Q} \longrightarrow \Q\]
is of signature  $ (s_+;s_-) = ( \rho_2 - \rho_1+1;  \rho_1-1)$ with
 $\rho_n=\dim \mathcal{Z}_{\num}^n(X)_{\Q}$.

In particular the conjecture does not depend on the ample divisor $L$ chosen.
\end{prop}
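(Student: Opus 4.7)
The plan is to decompose $\mathcal{Z}^2_{\num}(X)_\Q$ as an orthogonal direct sum according to the iterated Lefschetz decomposition of $\mathfrak{h}^4(X)$, and then to verify the signature of each summand separately using the known codimension-one case (Theorem \ref{segre}) and the ampleness of $L$.

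Concretely, by iterating Conjecture \ref{conjlef} and applying a Tate twist,
\[\mathfrak{h}^4(X)(2)\isocan\mathfrak{h}^{4,\prim}(X)(2)\oplus\mathfrak{h}^{2,\prim}(X)(1)\oplus\one,\]
which after applying $\Hom_{\NUM(k)_\Q}(\one,-)$ gives
\[\mathcal{Z}^2_{\num}(X)_\Q\;=\;\mathcal{Z}^{2,\prim}_{\num}(X)_\Q\;\oplus\;L\cdot\mathcal{Z}^{1,\prim}_{\num}(X)_\Q\;\oplus\;\Q\cdot L^2.\]
Counting dimensions, together with $\dim\mathcal{Z}^{1,\prim}_{\num}(X)_\Q=\rho_1-1$, yields $\dim\mathcal{Z}^{2,\prim}_{\num}(X)_\Q=\rho_2-\rho_1$. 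Pairwise orthogonality of the three summands for the intersection product is immediate from the primitivity conditions $\alpha\cdot L=0$ for $\alpha\in\mathcal{Z}^{2,\prim}_{\num}(X)_\Q$ and $\gamma\cdot L^3=0$ for $\gamma\in\mathcal{Z}^{1,\prim}_{\num}(X)_\Q$.

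I would then compute the signature of each factor. On $\Q\cdot L^2$ the pairing is $L^4>0$, giving signature $(1;0)$. On $L\cdot\mathcal{Z}^{1,\prim}_{\num}(X)_\Q$ the pairing reads $(L\gamma)\cdot(L\delta)=\gamma\cdot\delta\cdot L^2=-\langle\gamma,\delta\rangle_1$, which by Theorem \ref{segre} is negative definite, contributing $(0;\rho_1-1)$. Finally, on $\mathcal{Z}^{2,\prim}_{\num}(X)_\Q$ the intersection pairing agrees with $\langle\cdot,\cdot\rangle_2$, which is positive definite precisely when the standard conjecture of Hodge type holds in codimension two (the codimensions $n=0,1$ being trivial or covered by Theorem \ref{segre}). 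Summing the three contributions gives $(\rho_2-\rho_1+1;\rho_1-1)$ under SCHT. Conversely, since the first two summands already account for signature $(1;\rho_1-1)$, the equality of total signatures forces the restriction to $\mathcal{Z}^{2,\prim}_{\num}(X)_\Q$ to be positive definite, whence SCHT.

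For the final assertion, the signature of the intersection product on $\mathcal{Z}^2_{\num}(X)_\Q$ and the integers $\rho_1,\rho_2$ are all intrinsic to $X$, referring neither to $L$ nor to any classical Weil cohomology; the equivalence just established then transfers this independence to the conjecture itself. I do not anticipate a serious obstacle here—this is essentially careful bookkeeping built on the Lefschetz decomposition, ampleness of $L$, and the classical Hodge index theorem.
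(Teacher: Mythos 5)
Your proof is correct and takes essentially the same approach as the paper: orthogonally decompose $\mathcal{Z}^2_{\num}(X)_\Q$ via the Lefschetz decomposition and reduce the signature computation on the non-primitive part to Theorem \ref{segre}. The only difference is cosmetic—you iterate the Lefschetz decomposition one more step, splitting $L\cdot\mathcal{Z}^1_{\num}(X)_\Q$ into $L\cdot\mathcal{Z}^{1,\prim}_{\num}(X)_\Q\oplus\Q L^2$ and handling the signatures separately, whereas the paper applies the Hodge index theorem to $L\cdot\mathcal{Z}^1_{\num}(X)_\Q$ all at once via $\alpha\cdot\alpha=D\cdot D\cdot L^2$.
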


\begin{proof}
By Theorem \ref{segre} the standard conjecture of Hodge type holds true for divisors.  Hence we have to study codimension $2$ cycles.
Consider the decomposition \[\mathcal{Z}^{2}_{\num}(X)_{\Q} = \mathcal{Z}^{2,\prim}_{\num}(X)_{\Q} \oplus L \cdot \mathcal{Z}^{1}_{\num}(X)_{\Q}\] induced by the Lefschetz decomposition \[\mathfrak{h}^{4}(X)=\mathfrak{h}^{4,\prim}(X) \oplus \mathfrak{h}^{2}(X)(-1).\]  It is orthogonal with respect to the intersection product as the Lefschetz decomposition is orthogonal with respect to the cup product (already at homological level).

We claim that the intersection product on  $L \cdot \mathcal{Z}^{1}_{\num}(X)_{\Q}$ is of signature $(1;  \rho_1-1)$. Assuming the claim notice that  the intersection product is positive definite on $\mathcal{Z}^{2,\prim}_{\num}(X)_{\Q}$ if and only if the intersection product of the total space $\mathcal{Z}^2_{\num}(X)_{\Q}$ has the predicted signature.

For the claim, consider $\alpha=L \cdot D$ an element of $L \cdot \mathcal{Z}^{1}_{\num}(X)_{\Q}$. Then we have 
$\alpha\cdot \alpha = D \cdot D \cdot L^2$
hence the claim follows from Theorem \ref{segre}. 
\end{proof}
\begin{prop} \label{reducefinite}
Let $X_0$ be a specialization of $X$. Suppose that $X_0$ as well satisfies the Assumption \ref{kun+lef} (with respect to the specialization of $L$). Then the standard conjecture of Hodge type for $X_0$ implies the standard conjecture of Hodge type for $X$. 
\end{prop}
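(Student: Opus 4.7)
The plan is to realize specialization as an isometric injection of quadratic subspaces. By Proposition \ref{anypolarization} combined with Theorem \ref{segre}, the standard conjecture of Hodge type for the fourfold $X$ reduces to positive definiteness of $\langle\cdot,\cdot\rangle_2$ on $\mathcal{Z}^{2,\prim}_{\num}(X)_{\Q}$, and by Corollary \ref{semipos} this is equivalent to positive semidefiniteness of the same pairing on $\mathcal{Z}^{2,\prim}_{\hom}(X)_{\Q}$. The same reformulation applies to $X_0$. So the task reduces to transferring semidefiniteness from $X_0$ to $X$.

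To do this, I would fix a smooth proper model $\mathcal{X}\to S$ over the spectrum of a discrete valuation ring realizing $X_0$ as the special fiber of $X$, pick a prime $\ell$ invertible on $S$, and work with the $\ell$-adic realization $R_\ell$; by Proposition \ref{anypolarization} we are free to choose the classical Weil cohomology. Smooth and proper base change provides a canonical isomorphism $R_\ell(\mathfrak{h}^4(X))\isocan R_\ell(\mathfrak{h}^4(X_0))$ compatible with cup products and carrying the class of $L$ to that of $L_0$. Combined with compatibility of the $\ell$-adic cycle class map with specialization of cycles (obtained by taking the closure in $\mathcal{X}$ and restricting to the special fiber), this produces an injection
\[\sigma\colon \mathcal{Z}^{2}_{\hom}(X)_{\Q}\hookrightarrow \mathcal{Z}^{2}_{\hom}(X_0)_{\Q}.\]
Since primitivity is cohomologically defined by vanishing of the cup product with a power of $L$, this map restricts to an inclusion $\mathcal{Z}^{2,\prim}_{\hom}(X)_{\Q}\hookrightarrow\mathcal{Z}^{2,\prim}_{\hom}(X_0)_{\Q}$; and since the pairing $\langle\cdot,\cdot\rangle_2$ is built purely out of cup product and the Lefschetz isomorphism in cohomology (Definitions \ref{defpairingmot}--\ref{defpairing}), this inclusion is isometric.

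Granting these compatibilities, the conclusion is formal: the standard conjecture of Hodge type on $X_0$ together with Corollary \ref{semipos} provides positive semidefiniteness of $\langle\cdot,\cdot\rangle_2$ on $\mathcal{Z}^{2,\prim}_{\hom}(X_0)_{\Q}$; restricting a semidefinite form to a subspace preserves semidefiniteness, so the pairing is semidefinite on $\mathcal{Z}^{2,\prim}_{\hom}(X)_{\Q}$; a final application of Corollary \ref{semipos} upgrades this to positive definiteness on $\mathcal{Z}^{2,\prim}_{\num}(X)_{\Q}$, which is the standard conjecture of Hodge type for $X$. The only point requiring some care is the construction of the specialization map together with its compatibility with the cycle class map and the motivic Lefschetz decomposition, but this is classical once $\ell\ne\mathrm{char}(X_0)$ is invertible on $S$; I do not expect a genuine obstacle here.
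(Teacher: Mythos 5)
Your proposal is correct and follows essentially the same route as the paper: reduce via Proposition \ref{anypolarization} and Corollary \ref{semipos} to positive semidefiniteness on $\mathcal{Z}^{2,\prim}_{\hom}$, use smooth proper base change in $\ell$-adic cohomology to obtain an isometric inclusion $\mathcal{Z}^{2,\prim}_{\hom}(X)_{\Q}\subset \mathcal{Z}^{2,\prim}_{\hom}(X_0)_{\Q}$, and observe that semidefiniteness passes to subspaces. The paper's proof is just a more compressed version of the same argument, omitting the explicit discussion of the specialization map, its compatibilities, and the isometry.
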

\begin{proof} Consider the canonical inclusion  $\mathcal{Z}_{\hom}^{n,\prim}(X)_{\Q}\subset \mathcal{Z}_{\hom}^{n,\prim}(X_0)_{\Q}$ induced by smooth proper base change in $\ell$-adic cohomology. Then, if the pairing $\langle \cdot, \cdot \rangle_{n}$  is positive semidefinite  on $\mathcal{Z}_{\hom}^{n,\prim}(X_0)_{\Q}$ it must be so for  $\mathcal{Z}_{\hom}^{n,\prim}(X)_{\Q} $. On the other hand, by Corollary \ref{semipos}, this semipositivity property is a reformulation of the standard conjecture of Hodge type.
\end{proof}
\begin{remark}
The above proposition reduces the standard conjecture of Hodge type to varieties defined over a finite field. Such reduction step is classical, see for example \cite[Remark 5.3.2.2(2)]{Andmot}.
\end{remark}

The following is our main result. It is shown at the end of Section 8.
\begin{theorem}\label{thmscht}
The standard conjecture of Hodge type holds for abelian fourfolds in positive characteristic.
\end{theorem}

\begin{corollary}\label{thmhomnum}
Let $A$ and $A_0$ be two   abelian fourfolds and suppose that  $A_0$ is a specialization of $A$. Let us fix a prime number $\ell$. If  $\ell$-adic homological equivalence on $A_0$ coincide with numerical equivalence on $A_0$ then the same holds true for $A$.
\end{corollary}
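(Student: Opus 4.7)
The plan is to deduce this corollary directly from Theorem \ref{thmscht} by essentially the same specialization argument as in Proposition \ref{reducefinite}. First I would reduce, via Lefschetz theory, to the primitive part of codimension two cycles, and then exploit positive definiteness of the intersection pairing there.

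Since the standard conjectures of Künneth and Lefschetz type are known for abelian varieties, the Lefschetz isomorphism $\mathcal{Z}^n_{\hom}(A)_{\Q} \xrightarrow{\sim} \mathcal{Z}^{d-n}_{\hom}(A)_{\Q}$ given by iterated intersection with $L$ is compatible with numerical equivalence, so it suffices to verify the conclusion for $n=1$ and $n=2$. The case $n=1$ is classical, since $\ell$-adic homological and numerical equivalence coincide for divisors on any smooth projective variety. For $n=2$, the orthogonal Lefschetz decomposition $\mathcal{Z}^2_{\hom}(A)_{\Q} = \mathcal{Z}^{2,\prim}_{\hom}(A)_{\Q} \oplus L\cdot \mathcal{Z}^1_{\hom}(A)_{\Q}$ combined with the $n=1$ case further reduces the problem to the primitive subspace.

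On this primitive subspace, smooth proper base change in $\ell$-adic cohomology (compatibly with the specialization of $L$) yields a pairing-preserving inclusion $\mathcal{Z}^{2,\prim}_{\hom}(A)_{\Q} \hookrightarrow \mathcal{Z}^{2,\prim}_{\hom}(A_0)_{\Q}$. By Theorem \ref{thmscht} applied to $A_0$, together with Corollary \ref{semipos} and the hypothesis that $\ell$-adic homological equivalence coincides with numerical equivalence on $A_0$, the pairing $\langle\cdot,\cdot\rangle_2$ is positive definite on $\mathcal{Z}^{2,\prim}_{\hom}(A_0)_{\Q}$. The restriction of a positive definite form to a subspace remains positive definite, so the pairing on $\mathcal{Z}^{2,\prim}_{\hom}(A)_{\Q}$ has trivial kernel; by Proposition \ref{kerpairing} this kernel coincides with the numerically trivial classes, hence $\mathcal{Z}^{2,\prim}_{\hom}(A)_{\Q} = \mathcal{Z}^{2,\prim}_{\num}(A)_{\Q}$, which closes the reduction. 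The only substantive input is Theorem \ref{thmscht}; once it is granted I do not anticipate any serious obstacle, as the descent from $A_0$ to $A$ amounts to the elementary fact that positive definiteness is inherited by subspaces, modulo routine compatibilities between specialization and the primitive Lefschetz decomposition for abelian varieties.
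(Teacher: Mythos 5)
Your proposal is correct and follows the paper's own argument essentially step for step: reduce to $\mathcal{Z}^{2,\prim}_{\hom}(A)_{\Q}$ via the divisor case and the Lefschetz isomorphism, use smooth proper base change to obtain the inclusion into $\mathcal{Z}^{2,\prim}_{\hom}(A_0)_{\Q}$, apply Theorem \ref{thmscht} and the hypothesis on $A_0$ to get positive definiteness there, restrict to the subspace, and conclude via Proposition \ref{kerpairing}. The only cosmetic difference is that you invoke Corollary \ref{semipos} explicitly (which is harmless but not strictly needed once the hypothesis identifies $\mathcal{Z}^{2,\prim}_{\hom}(A_0)_{\Q}$ with $\mathcal{Z}^{2,\prim}_{\num}(A_0)_{\Q}$).
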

\begin{proof} First, notice that the question whether homological and numerical equivalence coincide only matters for $\mathcal{Z}_{\hom}^{2,\prim}(A)_{\Q}$. Indeed, homological and numerical equivalence coincide for divisors \cite{matsu} (see also \cite[Proposition 3.4.6.1]{Andmot}). This implies   that they coincide also on dimension one cycles,   as the standard conjecture of Lefschetz type holds true for abelian varieties. Finally, consider the decomposition \[\mathcal{Z}_{\hom}^{2 }(A)_{\Q} =\mathcal{Z}_{\hom}^{2,\prim}(A)_{\Q} \oplus L \cdot \mathcal{Z}_{\hom}^{1}(A)_{\Q},\]
and notice that on the complement of $\mathcal{Z}_{\hom}^{2,\prim}(A)_{\Q} $ the equivalences again coincide, as a consequence of the case of divisors.

Now, by smooth proper base change we have $\mathcal{Z}_{\hom}^{2,\prim}(A)_{\Q}\subset \mathcal{Z}_{\hom}^{2,\prim}(A_0)_{\Q}$. If    homological and numerical equivalence  coincide on $A_0$ then the pairing $\langle \cdot, \cdot \rangle_{2}$ on  $\mathcal{Z}_{\hom}^{2,\prim}(A_0)_{\Q}$ is positive definite by Theorem \ref{thmscht}. Hence it is also positive definite on $\mathcal{Z}_{\hom}^{2,\prim}(A)_{\Q}$.   By  Proposition \ref{kerpairing}, this means that there are no nonzero algebraic classes in $\mathcal{Z}_{\hom}^{2,\prim}(A)_{\Q}$ which are numerically trivial.
\end{proof}
\begin{theorem}\label{thmclozel}
Let $A$ be an   abelian fourfold. Then numerical equivalence on $A$ coincides with $\ell$-adic homological equivalence on $A$ for infinitely many prime numbers $\ell$. 
\end{theorem}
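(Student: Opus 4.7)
The plan is to reduce the statement to an abelian fourfold over a finite field and then apply a theorem of Clozel \cite{Clozel}, using Theorem \ref{thmscht} as the missing input.

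First I would specialise: by the classical specialisation argument recalled after Proposition \ref{reducefinite}, one can find an abelian fourfold $A_0$ over a finite field which is a specialisation of $A$ (choose a smooth projective model of $A$ over a finitely generated subring of $k$ and reduce modulo a closed point). The fourfold $A_0$ automatically satisfies Assumption \ref{kun+lef}, since the standard conjectures of K\"unneth and Lefschetz type are known for abelian varieties. Since Corollary \ref{thmhomnum} transfers the coincidence of numerical and $\ell$-adic homological equivalence from $A_0$ to $A$ prime by prime, it suffices to exhibit infinitely many $\ell$ for which the two equivalences agree on $A_0$.

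Next I would feed the standard conjecture of Hodge type for $A_0$, furnished by Theorem \ref{thmscht}, into Clozel's theorem \cite{Clozel}. Clozel produces a set of primes $\ell$ of density one (and hence infinite) for which $\ell$-adic homological equivalence coincides with numerical equivalence on $A_0$. A final application of Corollary \ref{thmhomnum} transfers the conclusion back to $A$ and completes the argument.

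No serious obstacle remains once Theorem \ref{thmscht} is available: the specialisation step is formal (it uses only smooth proper base change and the fact that smooth specialisations of abelian fourfolds are again abelian fourfolds) and Clozel's theorem is used as a black box. The entire nontrivial content of the corollary is concentrated in the main theorem of the paper.
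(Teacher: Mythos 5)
Your proposal takes essentially the same route as the paper's proof: specialise $A$ to an abelian fourfold $A_0$ over a finite field, invoke Clozel's theorem for $A_0$, and transfer the conclusion back to $A$ via Corollary \ref{thmhomnum}. One point in your narration has the dependency reversed, however. You write that Theorem \ref{thmscht} (the Hodge standard conjecture for $A_0$) is ``fed into'' Clozel's theorem as the missing input, and you then describe the specialisation step as ``formal.'' In fact Clozel's theorem over a finite field is unconditional and needs no input from the Hodge standard conjecture; the non-formal part is precisely the specialisation step, where the proof of Corollary \ref{thmhomnum} uses Theorem \ref{thmscht} to guarantee that a nonzero class in $\mathcal{Z}^{2,\prim}_{\hom}(A)_{\Q}$, viewed inside $\mathcal{Z}^{2,\prim}_{\hom}(A_0)_{\Q}$ by smooth proper base change, cannot become numerically trivial. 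Since you do route the transfer through Corollary \ref{thmhomnum}, the argument you write down is correct in substance; only the account of where the Hodge standard conjecture intervenes is misplaced, and it is inconsistent with your later remark that the specialisation step uses only smooth proper base change. A minor additional caveat: Clozel produces a set of primes of \emph{positive} density, which suffices for ``infinitely many''; I would not assert density one without checking his statement.
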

\begin{proof}
When $A$ is defined over a finite field, this result is due to Clozel \cite{Clozel}. (Clozel's result actually holds true without the dimensional restriction.) We can reduce to the finite field case by Corollary \ref{thmhomnum}.
\end{proof}
\begin{remark}
The fact that homological and numerical equivalence should always coincide is also one of the four standard conjectures. The two others, namely K\"unneth and Lefschetz, being known for abelian varieties, Theorems \ref{thmscht} and \ref{thmclozel} imply that in order to fully understand the standard conjectures for abelian fourfolds what is missing is $\ell$-independency of $\ell$-adic homological equivalence.
\end{remark}

\section{The motive of an abelian variety}\label{reminder}



In this section we recall classical results on motives of abelian type. We will work with the category of Chow motives $\CHM(k)_{F}$, we fix a Weil cohomology $H^*$ together with its realization functor $R$. Generalities on this category   can be found in the  Conventions. 
\begin{theorem}\label{classic} Let $A$ be an abelian variety of dimension  $g$. Let $\End(A)$ be its ring of endomorphisms (as group scheme) and $\mathfrak{h}(A)\in \CHM(k)_{F}$ be its motive. Then the following holds:
\begin{enumerate}
\item\label{kunneth} \cite{DeMu} The motive $\mathfrak{h}(A)$ admits a Chow--K\"unneth decomposition 
\[\mathfrak{h}(A)=\bigoplus_{n=0}^{2g} \mathfrak{h}^n(A)\] natural in $\End(A)$ and such that 
\[R(\mathfrak{h}^n(A))= H^n(A).\] 
\item\label{kunn} \cite{Ku1} The intersection product induces a canonical isomorphism of graded algebras \[\mathfrak{h}^*(A)=\Sym^* \mathfrak{h}^1(A).\]
\item\label{kings}   \cite[Proposition 2.2.1]{Ki}  The action of $\End(A)$ on $\mathfrak{h}^1(A)$ (coming from naturality in (\ref{kunneth}))  induces an injective morphism of algebras  \[\End(A)\otimes_{\Z}F \hookrightarrow \End_{\CHM(k)_{F}}(\mathfrak{h}^1(A))\]
and if $A$ is isogenous to $B\times C$ then $\mathfrak{h}^1(A) =  \mathfrak{h}^1(B) \oplus \mathfrak{h}^1(C) .$
\item\label{polarization}   \cite{Ku2} The Chow--Lefschetz conjecture holds true for $A$. In particular, the classical isomorphism in $\ell$-adic cohomology  induced by a polarization $H_{\ell}^1(A)\cong H_{\ell}^1(A)^{\vee}(-1)$ lifts to an isomorphism
 \[\mathfrak{h}^1(A)\cong\mathfrak{h}^1(A)^{\vee}(-1).\] 
\end{enumerate}
\end{theorem}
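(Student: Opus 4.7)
The plan for Theorem \ref{classic} is to exploit the group structure on $A$, and in particular the multiplication-by-$n$ endomorphisms $[n]$ and the sum map $\mu : A\times A \to A$; all four items are already in the literature and I would follow the standard references cited in the statement.

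For (\ref{kunneth}), the approach is the Beauville--Deninger--Murre construction. Cohomologically, $[n]^*$ acts as $n^i$ on $H^i(A)$; by varying $n$, a Vandermonde argument produces, already at the level of correspondences in $\CH^g(A\times A)_F$, mutually orthogonal idempotents $\pi^i$ cutting out each $H^i(A)$. These idempotents are explicit polynomials in the classes of the graphs of $[n]$, and since $[n]$ commutes with every endomorphism, the resulting Chow--K\"unneth projectors are natural in $\End(A)$ by construction. Part (\ref{kings}) is then essentially formal: the assignment $A\mapsto \mathfrak{h}^1(A)$ is a functor additive on products, since $\mathfrak{h}^1(A\times B) = \mathfrak{h}^1(A)\oplus \mathfrak{h}^1(B)$ via the K\"unneth decomposition of $A\times B$; an isogeny $A\sim B\times C$ therefore induces the displayed splitting, and the $F$-algebra morphism from $\End(A)\otimes F$ is just functoriality applied to isogeny self-maps.

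For (\ref{kunn}), the sum map $\mu$ endows $\mathfrak{h}^*(A)$ with a commutative, associative Pontryagin product that realizes (up to Poincar\'e duality) the cup product in cohomology; this exhibits $\mathfrak{h}^*(A)$ as a graded bi-algebra generated in degree one by $\mathfrak{h}^1(A)$. K\"unnemann's step is to verify motivically that the natural comparison map $\Sym^* \mathfrak{h}^1(A) \to \mathfrak{h}^*(A)$ is an isomorphism of Chow motives, which can be checked after restricting to the idempotents $\pi^i$ and using the cohomological statement together with the fact that the relevant Chow groups of $A^n$ are already generated by the right classes coming from the diagonal and $\mu$.

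The main obstacle is (\ref{polarization}), the lift of hard Lefschetz to Chow motives. Cohomologically, hard Lefschetz for the abelian variety provides $L^{g-1} : H^1(A) \isocan H^{2g-1}(A)(g-1)$, and a polarization $A\to \hat{A}$ then yields $H^1(A) \isocan H^1(A)^\vee(-1)$ via the Weil pairing. The difficulty is that the inverse of $L^{g-1}$ is in cohomology produced by $\mathfrak{sl}_2$-representation theory, which is not available at the level of Chow motives. K\"unnemann's solution, which I would follow, is to use the Fourier--Mukai correspondence $\mathcal{F}\in \CH^g(A\times \hat{A})_F$ attached to the Poincar\'e bundle: $\mathcal{F}$ intertwines the Lefschetz operator on $A$ with an explicit algebraic operator on $\hat{A}$, from which one can manufacture an explicit Chow correspondence inverse to $L^{g-1}$ on $\mathfrak{h}^1(A)$. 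Composing with the motivic map induced by the polarization then gives the desired isomorphism $\mathfrak{h}^1(A) \cong \mathfrak{h}^1(A)^\vee(-1)$ refining the cohomological one.
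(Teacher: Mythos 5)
The paper does not prove Theorem \ref{classic}: each item is stated with its own citation (Deninger--Murre for (\ref{kunneth}), K\"unnemann for (\ref{kunn}) and (\ref{polarization}), Kings for (\ref{kings})), so there is no in-text argument to compare against, and the only meaningful check is whether your sketch accurately recounts those references. It does. You correctly identify the $[n]^*$-eigenspace construction of the Chow--K\"unneth projectors and the fact that $[n]$ being central in $\End(A)$ is what makes the decomposition natural; the role of the sum map $\mu$ and the symmetric-power comparison in $\CHM(k)_F$ (where the Koszul sign rule converts the exterior algebra on $H^1$ into $\Sym^*$ of the odd-weight motive $\mathfrak{h}^1$); and the Fourier--Mukai correspondence as the device K\"unnemann uses to produce an explicit algebraic inverse to $L^{g-1}$, which is the genuinely non-formal step. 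One small point worth making explicit in (\ref{kings}): the passage from \emph{isogenous} to the displayed direct sum uses that with $\mathbb{Q}$- (or $F$-) coefficients an isogeny $f:A\to B\times C$ of degree $d$ becomes invertible on $\mathfrak{h}^1$ because $f_\ast f^\ast = d\cdot\mathrm{id}$; and in (\ref{kunn}) the verification that $\Sym^\ast\mathfrak{h}^1(A)\to\mathfrak{h}^\ast(A)$ is an isomorphism cannot be reduced to a cohomological statement alone at the Chow level without an extra input (K\"unnemann's actual argument produces an explicit inverse using the Fourier transform), so the phrase ``checked after restricting to the $\pi^i$ and using the cohomological statement'' glosses over the real content of that reference. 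These are presentational rather than substantive gaps; your outline is faithful to the cited proofs.
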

\begin{remark}\label{AHP}
We will need the above results also  in a slightly more general context, namely over the ring of integers of a $p$-adic field. Nowadays these results are known over very general bases, see \cite[Theorem 5.1.6]{OS2} or \cite[Theorem 3.3]{AHP}.
\end{remark}
\begin{definition}\label{defrank}
 A motive is called of abelian type if it  is a  direct factor of the motive of an abelian variety (up to Tate twist).

We say that a motive of abelian type $T$ is  of rank $d$  if the  cohomology groups of $R(T)$ are all zero except in one degree and in that degree the cohomology group is of dimension $d$.  In this case we will write $\dim T = d.$
\end{definition}

\begin{remark}\label{remarkrank}
For motives of abelian type this definition is known to be independent of $R$, see for example  \cite[Corollary 3.5]{jannsen} and \cite[Corollary 1.6]{Ascona}.
\end{remark}
\begin{prop}\label{motivisupersingolari} 
Let $T$ be a motive of abelian type of dimension $d$. 
Consider   its space of algebraic classes modulo numerical equivalence \[V_Z = V_Z(T) =  \Hom_{\NUM(k)_\Q}(\one, T).\] Then the  inequality 
$\dim_\Q V_Z \leq d$ holds.

Moreover, if the equality $\dim_\Q V_Z = d$ holds then we have the following facts:

\begin{enumerate}
\item All  realizations of $T$ are spanned by algebraic classes.
\item Numerical equivalence on $\Hom_{\CHM(k)_\Q}(\one,T)$ coincides with  homological equivalence (for all  cohomologies).
\item Call $L$ the field of coefficients of the realization $R$, then the equality $V_Z\otimes _\Q L = R(T)$ holds.
\end{enumerate}
\end{prop}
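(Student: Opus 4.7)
The plan is to combine Jannsen's semisimplicity of the numerical category with a direct lifting from $\NUM(k)_\Q$ to $\CHM(k)_\Q$ of the $\one$-isotypic summand of $T$.

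I would begin by applying Jannsen's theorem: $\NUM(k)_\Q$ is a semisimple $\Q$-linear abelian category, so $T$ decomposes as $T\cong\one^a\oplus T''$ in $\NUM(k)_\Q$, with $T''$ a direct sum of simple objects none of which is isomorphic to $\one$. For any simple $S\not\cong\one$ one has $\Hom_{\NUM}(\one,S)=0$, since a nonzero morphism would identify $S$ with a nonzero quotient of $\one$, forcing $S\cong\one$. Hence $V_Z=\Hom_{\NUM}(\one,T)=\Q^a$ and $a=\dim_\Q V_Z$.

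The next step is to lift this splitting from $\NUM$ to $\CHM$. Pick Chow-level representatives $\tilde\phi\in\Hom_{\CHM}(T,\one^a)$ and $\tilde\psi\in\Hom_{\CHM}(\one^a,T)$ of the numerical projection and section onto the $\one^a$-factor. Their composite $\tilde\phi\circ\tilde\psi$ lies in $\End_{\CHM}(\one^a)=M_a(\End_{\CHM}(\one))=M_a(\Q)$, and the reduction map to $\End_{\NUM}(\one^a)=M_a(\Q)$ is the identity on $M_a(\Q)$. Since the composite is $\id_{\one^a}$ in $\NUM$, it is $\id_{\one^a}$ already in $\CHM$. Hence $\tilde\psi$ is a Chow-level section of $\tilde\phi$, giving a splitting $T\cong\one^a\oplus T'$ in $\CHM(k)_\Q$ with $T'\cong T''$ in $\NUM$. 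Applying any classical realization $R$ with coefficient field $L$ then yields $R(T)=L^a\oplus R(T')$, and since $\dim_L R(T)=d$ by Remark \ref{remarkrank}, we get $a\leq d$, proving the inequality $\dim_\Q V_Z\leq d$.

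For the equality case, if $a=d$ then $R(T')=0$ for every classical realization. Conservativity of the realization functor on homological motives (by definition of homological equivalence) then forces $T'=0$ as a homological motive, and hence as a numerical motive, since numerical equivalence is coarser than homological. Therefore $T\cong\one^d$ in both the homological and the numerical categories, from which the three conclusions follow at once: the $d$ tautological algebraic classes span $R(T)=L^d$ (giving (1) and (3)), while $V_{\hom}=\Hom_{\hom}(\one,\one^d)=\Q^d=V_Z$ (giving (2)). The only delicate point in this strategy is the lifting step, but it is painless here precisely because $\End(\one^a)$ is canonically $M_a(\Q)$ in both $\CHM$ and $\NUM$, so no general idempotent-lifting machinery is needed.
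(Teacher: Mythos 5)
Your argument is correct, but it takes a genuinely different and heavier route from the paper's. The paper argues directly: pick classes $\bar v_1,\dots,\bar v_n$ linearly independent in $V_Z$, lift them to $\CHM$, and use the non-degeneracy of the numerical pairing $\Hom(\one,T)\times\Hom(T,\one)\to\Q$ to produce $f_1,\dots,f_n\in\Hom_{\CHM}(T,\one)$ with $f_i(v_j)=\delta_{ij}$. Applying any realization $R$ immediately shows the $R(v_j)(1)$ are linearly independent in $R(T)$, giving $n\le d$; the three consequences then follow by expanding an arbitrary algebraic class in the resulting basis and evaluating against the $f_i$. This is pure linear algebra plus the definition of numerical equivalence, and requires nothing else. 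You instead invoke Jannsen's semisimplicity to decompose $T\cong\one^a\oplus T''$ in $\NUM$, then lift the $\one^a$-summand to $\CHM$ using the observation that $\End_{\CHM}(\one^a)\to\End_{\NUM}(\one^a)$ is an isomorphism $M_a(\Q)\to M_a(\Q)$, and finally use conservativity of the realization on homological motives to conclude $T'=0$ when $a=d$. This works, and it yields a slightly stronger conclusion (a splitting $T\cong\one^d\oplus T'$ already at the Chow level), but the cost is relying on Jannsen's theorem, idempotent completeness, Remark \ref{remarkrank} to transfer the vanishing of $R(T')$ across realizations, and conservativity — none of which the paper's elementary argument needs. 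In fact, even the semisimplicity input is unnecessary for your step 1: the idempotent $p=\sum_i \bar v_i\circ\bar f_i\in\End_{\NUM}(T)$ built from a dual pair already splits off the $\one$-isotypic part in any pseudo-abelian category, without Jannsen. So your proof is correct but not minimal; it is worth internalizing the paper's shorter argument, which is the one that generalizes most painlessly (e.g., in Proposition \ref{tortura} the same reasoning is reused almost verbatim).
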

\begin{proof}
Let us consider $n$ elements $\bar{v}_1,\ldots,\bar{v}_n$ which are linearly independent in $V_Z$ and fix $v_1,\ldots,v_n\in \Hom_{\CHM(k)_\Q}(\one,T)$ which are liftings of those. By definition of numerical equivalence, there exist  $n$ elements   $f_1,\ldots,f_n $  in $\Hom_{\CHM(k)_\Q}( T,\one)$ such that $f_i(v_j)=\delta_{ij}$. By applying a realization we get $R(f_i)(R(v_j))=\delta_{ij}$ which implies the inequality ${n \leq \dim R(T)=d.}$

When $n=d$, the argument just above shows that $R(v_1),\ldots,R(v_d)$ form a basis of $\Hom(R(\one),R(T))$, which means that $R(T)$ is spanned by the  algebraic classes $R(v_1)(1),\ldots,R(v_d)(1)$, hence we have (1).

Fix  an element $v\in \Hom_{\CHM(k)_\Q}(\one,T)$ and write its realization in the previous basis $R(v)= \lambda_1 R(v_1)+ \ldots + \lambda_d R(v_d)$. Consider now the composition $f_i(v)$. On the one hand, it is a rational number, on the other hand it equals $\lambda_i$. This means that any algebraic class is a rational combination of the basis $R(v_1)(1),\ldots,R(v_d)(1)$. This implies the points (2) and (3).
\end{proof}

\section{Lefschetz classes}\label{sectionlefschetz}
The standard conjecture of Hodge type is known for divisors (Theorem \ref{segre}). In this section we explain how this implies the standard conjecture of Hodge type   for algebraic classes on abelian varieties that are linear combinations of intersections of divisors. This has been already pointed out by Milne \cite[Remark 3.7]{pola} using a different argument.

Throughout the section $A$ is a polarized abelian variety of dimension $g$. We will work with the motive $\mathfrak{h}^{n}(A)$ from Theorem \ref{classic} and the pairing  $\langle \cdot, \cdot \rangle_{n,\mot} $   from Definition \ref{defpairingmot}. 
\begin{definition}\label{defexotic} Let  $\Q[\mathcal{Z}^{1}_{\num}(A)_{\Q}]$ be the subalgebra of $\mathcal{Z}^{*}_{\num}(A)_{\Q}$ generated by $\mathcal{Z}^{1}_{\num}(A)_{\Q}$.  An element of $\Q[\mathcal{Z}^{1}_{\num}(A)_{\Q}]$ is called a  Lefschetz class. The subspace  of Lefschetz classes in $\mathcal{Z}^{n}_{\num}(A)_{\Q}$ is denoted by  $\mathcal{L}^{n}(A)$.
\end{definition}
\begin{remark}\label{nonsisa}
Given two positive integers $a,b$ such that $a\cdot b \leq g$ we have the canonical inclusions
\begin{equation} \label{eq:nonsisa}
\mathfrak{h}^{a\cdot b}(A)=\Sym^{a \cdot b} \mathfrak{h}^1(A) \subset (\Sym^{a} \mathfrak{h}^1(A))^{\otimes b}  = \mathfrak{h}^a(A)^{\otimes b}.
\end{equation}
  induced by Theorem \ref{classic}(\ref{kunn}). In particular any pairing on the right hand side induces a pairing on the left hand side. \end{remark}
\begin{lemma}\label{accoppiamenti1}
Fix an integer $n$ such that   $ 2 \leq 2n\leq g.$ Then the three motivic pairings $\langle \cdot, \cdot \rangle_{1,\mot}^{\otimes 2n}$,  $\langle \cdot, \cdot \rangle_{2,\mot}^{\otimes n}$ and $\langle \cdot, \cdot \rangle_{2n,\mot}$ on $\mathfrak{h}^{2n,\prim}(A)\in \NUM(k)_\Q$ coincide up to a positive rational scalar. Moreover, for each of these pairings, the Lefschetz decomposition $\mathfrak{h}^{2n}(A) = \mathfrak{h}^{2n,\prim}(A) \oplus \mathfrak{h}^{2n-2,\prim}(A) (-1)$ is orthogonal.\end{lemma}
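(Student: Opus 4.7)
My plan is to reduce both assertions to classical statements in the Betti (or $\ell$-adic) realization, where the Lefschetz $\mathfrak{sl}_2$-action on cohomology and the symplectic structure on $H^1(A)$ make the verification routine; the equalities then descend from homological to numerical motives formally.

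Denote the three pairings by $P_1$, $P_2$, $P_3$ in the order listed. By Theorem \ref{classic}(\ref{kunn}) and Remark \ref{nonsisa}, the motive $\mathfrak{h}^{2n,\prim}(A)$ is a direct summand of both $\mathfrak{h}^1(A)^{\otimes 2n}$ and $\mathfrak{h}^2(A)^{\otimes n}$, so all three pairings are well-defined morphisms on $\mathfrak{h}^{2n,\prim}(A)^{\otimes 2}$ already in the category of homological motives. Each is constructed functorially from the polarization $\mathfrak{h}^1(A)\cong \mathfrak{h}^1(A)^\vee(-1)$ of Theorem \ref{classic}(\ref{polarization}), so their classical realizations are invariant under the symplectic group $\mathrm{Sp}(H^1(A),\langle\cdot,\cdot\rangle_1)$. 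The representation-theoretic core of the argument is the standard fact that $H^{2n,\prim}(A)$ is an irreducible $\mathrm{Sp}$-representation, hence the space of $\mathrm{Sp}$-invariant bilinear forms on it is one-dimensional; thus $R(P_1)$, $R(P_2)$, $R(P_3)$ are pairwise proportional with rational scalars. Since realization is faithful on homological motives by construction, the proportionality lifts to the $\Hom$-space in the homological category, and descends to $\Hom_{\NUM(k)_\Q}(\mathfrak{h}^{2n,\prim}(A)^{\otimes 2},\one(-2n))$.

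For positivity of the scalars I would pass, by a harmless reduction, to a setting where $k\subset \C$; this is allowed because the scalars depend only on $n$ and the combinatorics of the Lefschetz decomposition, not on $A$ or $k$. By the Hodge--Riemann bilinear relations, $R(P_3)$ is a polarization of the pure Hodge structure $H^{2n,\prim}(A,\Q)$. By functoriality of polarizations, $R(P_1)$ (resp.\ $R(P_2)$) is the restriction to $H^{2n,\prim}(A)$ of the $2n$-th (resp.\ $n$-th) tensor power of a polarization of $H^1(A)$ (resp.\ $H^2(A)$), which is again a polarization up to an explicit sign depending on the conventions of Definition \ref{defpairingmot}. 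Two polarizations of an irreducible rational Hodge structure differ by a positive rational scalar, so the three ratios are positive rational numbers provided the signs match.

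For the orthogonality claim, the case of $\langle\cdot,\cdot\rangle_{2n,\mot}$ is built into Definition \ref{defpairingmot}. For $P_1$ and $P_2$, faithfulness of realization reduces the question to checking in cohomology that $H^{2n,\prim}(A)$ and $L\cdot H^{2n-2}(A)$ are orthogonal inside $H^{2n}(A)$, which is immediate since they are distinct weight spaces for the Lefschetz $\mathfrak{sl}_2$-action and the realized pairings are $\mathrm{Sp}$-invariant. The main obstacle is the sign match in the third paragraph: one must verify that the twist $(-1)^{n(n+1)/2}$ in Definition \ref{defpairingmot} is exactly what turns the tensor-power pairings $P_1, P_2$ into polarizations (rather than their negatives) on $H^{2n,\prim}(A)$. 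This is a combinatorial check with the standard Hodge--Riemann signs on tensor products, best done by evaluation on a single primitive class of type $(n,n)$.
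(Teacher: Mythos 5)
Your proposal follows the paper's strategy closely: reduce to a classical realization, use invariance of the pairings under the symplectic group together with irreducibility of $H^{2n,\prim}$ and Schur's lemma to obtain proportionality and orthogonality, and appeal to the Hodge--Riemann relations for positivity of the scalar. Your direct observation that the realized pairings are $\mathrm{Sp}(H^1)$-invariant by construction is a clean alternative to the paper's appeal to Zariski-density of monodromy, and the faithfulness-of-realization argument for rationality of the scalar is sound.

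The one real gap is the step you wave away as ``a harmless reduction'' to $k\subset\C$. Over a field of positive characteristic there is no Hodge theory, so in order to import the Hodge--Riemann positivity you must first prove that the proportionality constant is the same over $k$ and over $\C$. Your phrase ``the scalars depend only on $n$ and the combinatorics of the Lefschetz decomposition, not on $A$ or $k$'' is precisely the statement that needs proof, not something to be assumed. This is exactly what the paper's passage to the moduli space of polarized abelian varieties in mixed characteristic accomplishes: the pairings are defined on the relative cohomology of the universal abelian scheme, so the scalar is a locally constant function on a connected base and may therefore be computed at a complex point. Without that spreading-out argument (or an equivalent one), the positivity step is unjustified. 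By contrast, the sign check you flag as ``the main obstacle'' is a non-issue: by Remark \ref{rempola} the forms $\langle\cdot,\cdot\rangle_{1,\mot}$ and $\langle\cdot,\cdot\rangle_{2,\mot}$ realize to polarizations, a tensor power of a polarization of a $\Q$-Hodge structure is again a polarization, and the restriction of a polarization to a rational sub-Hodge-structure is a polarization; so $R_B(P_1)$ and $R_B(P_2)$ restricted to $H^{2n,\prim}(A)$ are automatically polarizations with no further sign bookkeeping.
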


\begin{proof}
 Our statement holds true even at homological level and not just numerically. For this, it is enough to check the statement after realization, for instance on the cohomology group $H_\ell^{2n,\prim}(A)$. 

To do this, take the moduli space of polarized abelian varieties (in mixed characteristic, with some fixed level structure). These pairings are defined on  the relative cohomology of the abelian scheme. Recall that the Zariski closure of the monodromy group associated to this relative cohomology is $\GSp_{2g}= \GSp(H_\ell^{1}(A))$ and that $H_\ell^{2n,\prim}(A)\subset  H_\ell^{2n }(A) = \Lambda^{2n}H_\ell^{1}(A)$  is a geometrically irreducible representation. This implies that these pairings coincide up to a scalar, by Schur's lemma. As these pairings are also defined on Betti cohomology this scalar must be rational. Moreover, they are polarizations of the underlying Hodge structure, so this scalar must be positive.

For the orthogonality part, the argument is the same. If, for a fixed pairing, the decomposition was not orthogonal, we would have a nonzero map between $H_\ell^{2n,\prim}(A)$ and $H_\ell^{2n-2}(A)(-1)^\vee$ which would be $\GSp_{2g}$-equivariant. This is impossible again by Schur's lemma.
 \end{proof}
\begin{proposition}\label{oklefschetz}
Fix an integer $n$ such that   $ 2 \leq 2n\leq g.$ The pairings $\langle \cdot, \cdot \rangle_{1,\mot }^{\otimes 2n}$,  $\langle \cdot, \cdot \rangle_{2,\mot}^{\otimes n}$ and $\langle \cdot, \cdot \rangle_{2n,\mot} $  on  $\mathcal{Z}^{n}_{\num}(A)_{\Q}$ are positive definite if and only if anyone of them is so. Moreover, they are positive definite on $\mathcal{L}^{n}(A)$.\end{proposition}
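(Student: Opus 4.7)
My plan is to handle the two assertions in sequence. The first is a direct iteration of Lemma \ref{accoppiamenti1} along the Lefschetz decomposition; the second reduces the Lefschetz-class case to the Hodge index theorem for divisors (Theorem \ref{segre}) via a Schur-type orthogonality argument.

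For the equivalence of positive definiteness on $\mathcal{Z}^n_{\num}(A)_{\Q}$, I iterate the Lefschetz decomposition to write
\[
\mathfrak{h}^{2n}(A) = \bigoplus_{k=0}^{n} \mathfrak{h}^{2n-2k,\prim}(A)(-k),
\]
and apply Lemma \ref{accoppiamenti1} at each level. The lemma gives simultaneously that the decomposition is orthogonal for each of the three pairings, and that on each primitive summand the three pairings coincide up to a positive rational scalar (the Tate twist by $(-k)$ rescales all three by the same positive factor and is therefore harmless). Since positive definiteness on an orthogonal direct sum reduces to positive definiteness on each summand, and is preserved by rescaling by a positive scalar, the three pairings are simultaneously positive definite on $\mathcal{Z}^n_{\num}(A)_{\Q}$.

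For positivity on $\mathcal{L}^n(A)$, by the first part I may work with $\langle \cdot, \cdot \rangle_{2,\mot}^{\otimes n}$. Setting $V = \mathcal{Z}^1_{\num}(A)_{\Q}$, Theorem \ref{segre} makes $\langle \cdot, \cdot \rangle_1$ positive definite on $V$, so the tensor power $\langle \cdot, \cdot \rangle_1^{\otimes n}$ is positive definite on $V^{\otimes n}$. External products give a pairing-preserving injection $V^{\otimes n} \hookrightarrow \Hom(\one, \mathfrak{h}^2(A)^{\otimes n}(n))$ whose image $W$ therefore carries a positive definite tensor pairing. The inclusion $\mathfrak{h}^{2n}(A) \hookrightarrow \mathfrak{h}^2(A)^{\otimes n}$ from Remark \ref{nonsisa} has multiplication $\mu$ as its canonical retraction, and I would show---by the Schur argument of Lemma \ref{accoppiamenti1} applied to the $\GSp_{2g}$-isotypic decomposition of $\mathfrak{h}^2(A)^{\otimes n}$---that this splitting is orthogonal for $\langle \cdot, \cdot \rangle_{2,\mot}^{\otimes n}$. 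A Lefschetz class $\alpha = D_1 \cdots D_n$ then identifies, via the motivic section $s$ of $\mu$, with an element $s(\alpha) \in W$, and the intersection $W \cap \mathcal{Z}^n_{\num}(A)_{\Q}$ coincides with $s(\mathcal{L}^n(A))$. Since the restriction of a positive definite pairing to a subspace is again positive definite, the pairing on $s(\mathcal{L}^n(A)) \subset W$ is positive definite, and this transports via $s$ to positive definiteness of $\langle \cdot, \cdot \rangle_{2,\mot}^{\otimes n}$ on $\mathcal{L}^n(A)$; Corollary \ref{semipos} then secures strict positive definiteness after quotienting by numerical equivalence.

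The main technical step I anticipate is the orthogonality of the splitting $\mathfrak{h}^2(A)^{\otimes n} = \mathfrak{h}^{2n}(A) \oplus \ker(\mu)$ for the tensor pairing: it is a representation-theoretic statement strictly parallel to the orthogonality clause of Lemma \ref{accoppiamenti1}, but now for a sub-representation which is not merely a primitive piece. Once this is in place the rest of the argument is the formal observation that a positive definite form restricts to a positive definite form on any subspace, so the whole scheme reduces, as it should, to the Hodge index theorem for divisors.
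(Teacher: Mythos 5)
The first half of your argument (iterating Lemma \ref{accoppiamenti1} along the Lefschetz decomposition) matches the paper and is correct. For the Lefschetz-class half your approach is right in outline, but you have built in a detour that is both unproved and unnecessary.

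The step you flag as "the main technical step"---orthogonality of the splitting $\mathfrak{h}^2(A)^{\otimes n} = \mathfrak{h}^{2n}(A) \oplus \ker(\mu)$ for $\langle \cdot, \cdot \rangle_{2,\mot}^{\otimes n}$---is not needed. Recall from Remark \ref{nonsisa} that the pairing $\langle \cdot, \cdot \rangle_{2,\mot}^{\otimes n}$ on $\mathfrak{h}^{2n}(A)$ is \emph{defined} as the restriction along the canonical inclusion $\iota : \mathfrak{h}^{2n}(A) = \Sym^{2n}\mathfrak{h}^1(A) \hookrightarrow (\Sym^2 \mathfrak{h}^1(A))^{\otimes n} = \mathfrak{h}^2(A)^{\otimes n}$ of the tensor-power pairing on the ambient motive. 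There is therefore no "transport via $s$" to justify: the pairing on $\mathcal{Z}^n_{\num}(A)_{\Q} = \Hom(\one,\mathfrak{h}^{2n}(A)(n))$ is already the restriction of the pairing on $\Hom(\one,(\mathfrak{h}^2(A)(1))^{\otimes n})$, and $\mathcal{L}^n(A) = \mathcal{Z}^n_{\num}(A)_{\Q} \cap \mathcal{Z}^1_{\num}(A)_{\Q}^{\otimes n}$ as subspaces of that ambient $\Hom$-space, where the form is the positive definite $\langle\cdot,\cdot\rangle_1^{\otimes n}$. A positive definite form restricts to a positive definite form on any subspace; nothing about orthogonal complements enters. (It also helps here that you should again use Lemma \ref{accoppiamenti1} together with the fact, from Milne, that the Lefschetz decomposition of a Lefschetz class has Lefschetz components---which lets you localize to the primitive part and then pick $\langle\cdot,\cdot\rangle_{2,\mot}^{\otimes n}$; the paper does exactly this and your first half covers it.)

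Beyond being superfluous, the orthogonality claim is not an innocent appeal to Schur: unlike the primitive pieces treated in Lemma \ref{accoppiamenti1}, $\mathfrak{h}^{2n}(A) \cong \Sym^{2n}\mathfrak{h}^1(A)$ is \emph{not} $\GSp_{2g}$-irreducible (it has its own Lefschetz decomposition), and $\ker(\mu)$ can a priori share irreducible $\GSp$-constituents with it, so Schur's lemma alone does not give orthogonality. Since you left this step as an anticipation rather than a proof, the argument as written has a genuine gap---one that the cleaner route above removes entirely.
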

\begin{proof}
By Lemma \ref{accoppiamenti1} the Lefschetz decomposition \[\mathfrak{h}^{2n}(A) = \mathfrak{h}^{2n,\prim}(A) \oplus \mathfrak{h}^{2n-2,\prim}(A) (-1)\] is orthogonal with respect to any of these pairings, so, arguing by induction on $n$, it is enough to check positivity on algebraic classes of $ \mathfrak{h}^{2n,\prim}(A)$. Again by Lemma \ref{accoppiamenti1} the positivity on the primitive part does not depend on the pairing.

The argument just above works also for Lefschetz classes, indeed each component in the Lefschetz decomposition of a Lefschetz class is again a Lefschetz class  \cite[p. 640]{Milnelef}. Hence we can check positivity for one of the pairings, we will do it for $\langle \cdot, \cdot \rangle_{2,\mot}^{\otimes n}$. Notice that, by construction, the restriction of $\langle \cdot, \cdot \rangle_{2,\mot}^{\otimes n}$ to algebraic classes is $\langle \cdot, \cdot \rangle_{1}^{\otimes n}$, see Definition \ref{defpairing}.

Now, by (\ref{eq:nonsisa}), we have the inclusion
 \[\mathcal{Z}^n_{\num}(A)_{\Q}=\Hom_{\NUM(k)_\Q}(\one, \mathfrak{h}^{2n}(A)(n)) \subset  \Hom_{\NUM(k)_\Q}(\one,(\mathfrak{h}^2(A)(1))^{\otimes n}).\] 
 Moreover Theorem \ref{classic}(\ref{kunn})   implies the equality
 \[\mathcal{L}^{n}(A)_{\Q} = \mathcal{Z}^n_{\num}(A)_{\Q} \cap \mathcal{Z}^1_{\num}(A)_{\Q}^{\otimes n}\]
 where the intersection is taken inside  $\Hom_{\NUM(k)_\Q}(\one,(\mathfrak{h}^2(A)(1))^{\otimes n})$.

On the other hand, the pairing $\langle \cdot, \cdot \rangle_1$ on \[\mathcal{Z}^1_{\num}(A)_{\Q}= \Hom_{\NUM(k)_\Q}(\one, \mathfrak{h}^2(A)(1))\] 
is positive definite by Theorem \ref{segre}. Hence the pairing $\langle \cdot, \cdot \rangle_1^{\otimes n}$ on \[\mathcal{Z}^1_{\num}(A)^{\otimes n}_{\Q} = \Hom_{\NUM(k)_\Q}(\one, \mathfrak{h}^2(A)(1))^{\otimes n}\] is positive definite as well. In particular its restriction to $\mathcal{L}^{n}(A)_{\Q}$ will also be positive definite. \end{proof}

\section{Abelian varieties over finite fields}\label{finitefields}

We start by recalling classical results on abelian varieties over finite fields and afterwards we draw some consequences on motives and algebraic cycles. The abundance of endomorphisms (due to Tate) will allow to decompose the motive of such an abelian variety in small factors. The main results  are Proposition \ref{talvez} and Corollary \ref{lifting motives} which will be essential to apply Theorem \ref{mainthm} to abelian varieties. The latter  says that such a decomposition lifts to characteristic zero and the former that each factor of the decomposition either does not contain algebraic classes (hence it is not interesting for our problem) either it is spanned by algebraic classes.

Throughout the section, we consider an abelian variety $A$ of dimension $g$ over a finite field $k$. We denote by $\End(A)$ the ring of endomorphisms of $A$ and by $\Frob_A$ the Frobenius of $A$. We write $\End^0(A)$ for $\End(A)\otimes_\Z \Q$,  and $*$ for the Rosati involution on it (induced by a polarization).

\begin{theorem}(Tate)\label{tate} A maximal  commutative $\Q$-subalgebra of $\End^0(A)$ has dimension $2g$ \cite{Tate}. There exist maximal commutative $\Q$-subalgebras of $\End^0(A)$ which are the product of CM fields \cite[Lemma 2]{TateBour}.\end{theorem}
\begin{remark}\label{remfrob}
Any maximal commutative $\Q$-subalgebra of $\End^0(A)$ must contain  $\Frob_A$ as the latter is contained in the center of $\End^0(A)$. 
\end{remark}
\begin{definition}\label{definitionCM} 
When $A$ is a simple abelian variety over $k$ a CM-structure for $A$ is the choice of a maximal CM field in $\End^0(A)$. 

For a general $A$ the choice of a decomposition of $A$  in the isogeny category as a product of simple abelian varieties $A_1\times \cdots\times A_t$ and a  CM-structure $L_i$ for each $A_i$ induce a maximal commutative $\Q$-subalgebra ${B=L_1\times \cdots\times L_t}$ of $\End^0(A)$. An algebra $B$  constructed as above  is called a CM-structure for $A$. If such an algebra is $*$-stable we will say that the CM-structure is $*$-stable.
\end{definition}
\begin{remark}
We will see (Corollary \ref{CMstab}) that given a CM-structure there exists (possibly after a finite extension of the base field $k$ and after isogeny) a polarization for which  the CM-structure is $*$-stable.
\end{remark}

\begin{notation}\label{NotationCM}
Let $B$ be a CM-structure for $A$. We write \[{B=L_1\times \cdots\times L_t}\] as a product of CM fields. Let 
 $L$ be the number field which is the Galois closure (over $\Q$) of the compositum of the fields $L_i$, it is a CM number field as well \cite[Proposition 5.12]{goro}. Let $\Sigma_i$ be the set of morphisms from $L_i$ to $L$ and   $\Sigma$ be the disjoint union 
 \[\Sigma = \Sigma_1\cup\cdots \cup \Sigma_t.\]
Write $\bar{\cdot}$ for the action on $\Sigma$ induced by composition with  complex conjugation. We will use the same notation for the induced action on subsets of  $\Sigma$.
\end{notation}

\begin{prop}\label{decomposition}
Let $L$ and   $\Sigma$ be as in Notation  \ref{NotationCM}. Then, in $\CHM (k)_{L}$, the motive $\mathfrak{h}^1(A)$ decomposes into a sum of $2g$ motives  
\[\mathfrak{h}^1(A) =\bigoplus_{\sigma \in \Sigma} M_\sigma,\]
where the action of $b\in L_i$ on $M_\sigma$ induced by Theorem \ref{classic}(\ref{kings}) is given by multiplication by $\sigma(b)$ if $\sigma \in \Sigma_i$ and by multiplication by zero otherwise.  Moreover, each motive $ M_\sigma$ is of rank one (in the sense of Definition \ref{defrank}). Finally, if the CM-structure is $*$-stable, the isomorphism $p:  \mathfrak{h}^1(A)\cong\mathfrak{h}^1(A)^{\vee}(-1)$ of Theorem \ref{classic}(\ref{polarization}) restricts to an isomorphism \[M_\sigma\cong M_{\bar{\sigma}}^\vee(-1)\] for all $\sigma\in \Sigma$, and to the zero map 
\[ M_\sigma \stackrel{0}{\longrightarrow} M_{\sigma'}^\vee(-1) \]
for all $\sigma'\neq {\bar{\sigma}}$.
\end{prop}
\begin{proof}
This is \cite[Corollary 3.2]{Ascona}, we recall here the argument. Consider the inclusion $L_1\times \cdots\times L_t  \hookrightarrow  \End^0(A).$ By Theorem \ref{classic}(\ref{kings}), we deduce an inclusion $(\prod_i L_i)\otimes L  \hookrightarrow  \End_{\CHM^{\ab}(k)_{L}}(\mathfrak{h}^1(A)).$ Each projector of the product of fields $(\prod_i L_i)\otimes L \cong \prod_i L^{\Sigma_i}$ defines a factor $M_{\sigma}.$

Let us now consider the last part of the statement. First, it can be checked after realization   \cite[Corollary 1.12]{Ascona}. Secondly, Rosati involution acts as  complex conjugation on CM fields  \cite[pp. 211-212]{MumfordTata}. Then the statement follows by the very definition of Rosati involution.\end{proof}


\begin{proposition}\label{primadecomposizione}
In the setting of Notation \ref{NotationCM} and Proposition \ref{decomposition}  the following holds:

\begin{enumerate}

\item\label{decomposizioneL} In $\CHM (k)_{L}$ the motive $\mathfrak{h}^n(A)$ decomposes into a sum 
\[\mathfrak{h}^n(A) =\bigoplus_{I\subset \Sigma, \vert I \vert = n} M_I,\]
with $M_I= \otimes_{\sigma \in I}M_\sigma$. Each motive $M_I$ is of rank one (in the sense of Definition \ref{defrank}).

Moreover, if the CM-structure is $*$-stable, then the motives $M_I$ and $M_J$ are mutually orthogonal  in $\NUM(k)_L$ with respect to $\langle \cdot, \cdot \rangle_{1,\mot}^{\otimes n}$ (Definition \ref{defpairingmot}) except if $I = \overline{J}$.

\item\label{decomposizioneQ} In $\CHM (k)_{\Q}$ the motive $\mathfrak{h}^n(A)$ decomposes into a sum of motives
\[\mathfrak{h}^n(A) =\bigoplus  \mathcal{M}_I,\]
where $\mathcal{M}_I$ is spanned by the factors of the form $M_{g(I)}$, with $g$ varying in $\Gal(L/\Q).$

Moreover, if the CM-structure is \nobreak{$*$-stable}, this decomposition in $\NUM(k)_\Q$  is orthogonal with respect to $\langle \cdot, \cdot \rangle_{1,\mot}^{\otimes n}$  (Definition \ref{defpairingmot}).

\end{enumerate}
\end{proposition}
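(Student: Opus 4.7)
My plan is to deduce both parts directly from Theorem \ref{classic}(\ref{kunn}) and Corollary \ref{decomposition}. For part (1), the isomorphism $\mathfrak{h}^n(A) = \Sym^n \mathfrak{h}^1(A)$ combined with the rank-one decomposition $\mathfrak{h}^1(A) = \bigoplus_{\sigma \in \Sigma} M_\sigma$ reduces the problem to computing an $n$-th symmetric power of a finite direct sum of rank-one direct summands of $\mathfrak{h}^1(A)$. By the general distributivity formula $\Sym^n(V \oplus W) = \bigoplus_{a+b=n} \Sym^a V \otimes \Sym^b W$ valid in any symmetric monoidal $\Q$-linear category, together with the vanishing $\Sym^k M_\sigma = 0$ for $k \geq 2$ (a consequence of the Koszul sign rule applied to a rank-one direct summand of the odd motive $\mathfrak{h}^1(A)$), the only surviving terms are those indexed by subsets $I \subset \Sigma$ with $|I| = n$, producing $M_I = \bigotimes_{\sigma \in I} M_\sigma$, which has rank one as a tensor product of rank-one motives.

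For the orthogonality statement in (1), I use that the tensor-power pairing $\langle \cdot, \cdot \rangle_{1,\mot}^{\otimes n}$ on $\mathfrak{h}^1(A)^{\otimes n}$ restricts through the embedding of Remark \ref{nonsisa} to the pairing on $\mathfrak{h}^n(A)$. Expanding $\mathfrak{h}^1(A)$ according to Corollary \ref{decomposition}, the restriction to $M_I \otimes M_J$ becomes a signed sum, over matchings between $I$ and $J$, of tensor products of the scalar factors $\langle M_\sigma, M_\tau \rangle_{1,\mot} : M_\sigma \otimes M_\tau \to \one(-1)$. By the last assertion of Corollary \ref{decomposition}, each such factor vanishes unless $\tau = \bar{\sigma}$; consequently the total pairing on $M_I \otimes M_J$ vanishes whenever $J \neq \bar{I}$.

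For part (2), we descend from $L$ to $\Q$. The coefficient-extension functor $\CHM(k)_\Q \to \CHM(k)_L$ carries a natural $\Gal(L/\Q)$-action which, on the summands of $\mathfrak{h}^1(A)_L$, acts by $M_\sigma \mapsto M_{g\sigma}$, and hence permutes the $M_I$ by $I \mapsto g(I)$. A direct sum over a single Galois orbit is Galois-stable, and by Galois descent for motives with coefficients it descends to a well-defined $\Q$-motive $\mathcal{M}_I \in \CHM(k)_\Q$. The orthogonality of $\mathcal{M}_I$ and $\mathcal{M}_J$ when $\mathcal{M}_I \neq \mathcal{M}_J$ then follows from (1) combined with the fact that complex conjugation lies in $\Gal(L/\Q)$, since $L$ is a Galois CM field (Proposition \ref{classical}(\ref{compositum})); hence the Galois orbit of $I$ is already stable under $I \mapsto \bar{I}$, so whenever $\mathcal{M}_I \neq \mathcal{M}_J$ no translate $g(J)$ can equal $\overline{h(I)}$, forcing the pairing to vanish on $\mathcal{M}_I \otimes \mathcal{M}_J$.

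The step I expect to require the most care is the symmetric-monoidal bookkeeping in (1): first, verifying that the symmetric powers $\Sym^k M_\sigma$ really do vanish for $k \geq 2$ (where the Koszul signs are essential, since without them the rank-one motive $M_\sigma$ would have nontrivial $\Sym^k$), and second, identifying the restriction of $\langle \cdot, \cdot \rangle_{1,\mot}^{\otimes n}$ to each summand $M_I \otimes M_J$ with the explicit signed sum over matchings. Once this categorical combinatorics is settled, both parts of the proposition are a direct unwinding of Corollary \ref{decomposition}.
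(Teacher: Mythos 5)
Your proof is correct and takes essentially the same route as the paper's (two-line) argument: Corollary \ref{decomposition} supplies the case $n=1$, the symmetric-algebra isomorphism of Theorem \ref{classic}(\ref{kunn}) handles higher $n$, and part (2) is obtained from part (1) by Galois descent on the corresponding idempotents. Your write-up simply makes explicit the $\Sym^n$ distributivity, the vanishing of $\Sym^{\geq 2}M_\sigma$, and the orthogonality bookkeeping that the paper leaves to the reader.
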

\begin{proof}
Part (\ref{decomposizioneQ}) follows from part (\ref{decomposizioneL}). For the latter, Proposition \ref{decomposition} gives the case $n=1$. Using Theorem \ref{classic}(\ref{kunn}) we deduce the result for higher $n$.
\end{proof}

\begin{proposition}\label{talvez}
Let  $\mathcal{M}_I \in \CHM (k)_{\Q}$ be a  direct factor of $\mathfrak{h}^{2n}(A)$ as constructed in Proposition \ref{primadecomposizione}(\ref{decomposizioneQ}). Then the following holds:
\begin{enumerate}
\item\label{generatoalgebrico} If the vector space $\Hom_{\NUM(k)_\Q}(\one, \mathcal{M}_I (n))$ is not zero then 
\[\dim_\Q \Hom_{\NUM(k)_\Q}(\one, \mathcal{M}_I (n)) = \dim \mathcal{M}_I.\]
In this case homological and numerical equivalence coincide on $ \mathcal{M}_I$ and the realizations of $ \mathcal{M}_I$ are spanned by algebraic classes.
\item\label{generatolefschetz}   If the vector space $\Hom_{\NUM(k)_\Q}(\one, \mathcal{M}_I (n))$ contains a nonzero Lefschetz class (Definition \ref{defexotic}), then all algebraic  classes in 
it are Lefschetz.
\end{enumerate}
\end{proposition}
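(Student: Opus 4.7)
The plan is to base-change to $L$ in order to access the rank-one decomposition of Corollary \ref{decomposition}, and then to exploit Galois-equivariance together with the action of the CM algebra $B$.

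By Proposition \ref{primadecomposizione}(\ref{decomposizioneQ}), $\mathcal{M}_I \otimes_\Q L$ decomposes as a sum of rank-one motives $M_{g(I)}$ indexed by the $\Gal(L/\Q)$-orbit of $I$. Writing $V := \Hom_{\NUM(k)_\Q}(\one, \mathcal{M}_I(n))$ and $V_g := \Hom_{\NUM(k)_L}(\one, M_{g(I)}(n))$, and using that $\Hom$ of motives commutes with base change and with direct sums, one obtains
\[
V \otimes_\Q L \;=\; \bigoplus_g V_g,
\]
with each $V_g$ of $L$-dimension at most one by Proposition \ref{motivisupersingolari}. The Galois group $\Gal(L/\Q)$ permutes these summands transitively, and any class in $V$ is Galois-invariant under this action.

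Part (\ref{generatoalgebrico}) is then essentially immediate: if $V \ne 0$, some $V_g$ is nonzero, and Galois transitivity forces every $V_g$ to be nonzero, hence of dimension exactly one. Therefore $\dim_\Q V = \dim_L (V \otimes L)$ equals the orbit size $\dim \mathcal{M}_I$, which is the desired equality. The remaining assertions---coincidence of homological and numerical equivalence, and realizations spanned by algebraic classes---then follow directly from Proposition \ref{motivisupersingolari} applied to $\mathcal{M}_I$.

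For part (\ref{generatolefschetz}), the additional input is the action of $B$. Combining Theorem \ref{classic}(\ref{kings}) with the algebra isomorphism $\mathfrak{h}^*(A) = \Sym^* \mathfrak{h}^1(A)$ of Theorem \ref{classic}(\ref{kunn}), $B$ acts on $\mathfrak{h}^*(A)$ by graded $\Q$-algebra endomorphisms; hence $B$ sends $\mathcal{Z}^1_{\num}(A)_\Q$ to itself and preserves the Lefschetz subalgebra $\Q[\mathcal{Z}^1_{\num}(A)_\Q]$. It also preserves each $\mathcal{M}_I$. After base change, $B \otimes_\Q L$ acts on $V_g$ through a character $\chi_g$, and these characters are pairwise distinct for different orbit representatives, since distinct subsets of $\Sigma$ yield distinct characters. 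For a nonzero Lefschetz $\lambda \in V$, the Galois argument above shows that its component $\lambda_g \in V_g$ is nonzero for every $g$. By linear independence of distinct characters, the $L$-span of $\{b \cdot (\lambda \otimes 1) : b \in B \otimes_\Q L\}$ equals $V \otimes L$, so the $\Q$-span of $\{b \cdot \lambda : b \in B\}$ is all of $V$. Since each $b \cdot \lambda$ remains Lefschetz, every element of $V$ is Lefschetz. The most delicate point to verify carefully is precisely the preservation of Lefschetz classes by $B$; it rests on the algebra structure $\mathfrak{h}^*(A) = \Sym^* \mathfrak{h}^1(A)$, which makes the $B$-action into one by graded algebra morphisms sending divisors to divisors.
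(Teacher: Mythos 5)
Your part (1) reproduces the paper's argument: extension of scalars to $L$, decomposition of $\mathcal{M}_I\otimes L$ into rank-one eigenmotives $M_{g(I)}$, the bound $\dim_L V_g\le 1$ from Proposition \ref{motivisupersingolari}, and Galois transitivity to force the $V_g$ to be simultaneously nonzero. Your part (2), however, is a genuinely different route. The paper's proof of (2) is a one-liner: it repeats the argument of (1) in the world of Lefschetz motives and Lefschetz classes, citing Milne for the fact that the CM decomposition, the Galois equivariance and the rank-one count all have Lefschetz analogues, and then concludes by comparing the two (equal) dimension counts. You bypass Milne's Lefschetz-motive formalism and work directly with the $B$-action: from the algebra isomorphism $\mathfrak{h}^*(A)=\Sym^*\mathfrak{h}^1(A)$ you get that each $b\in B$ acts as a graded algebra endomorphism, hence preserves $\mathcal{Z}^1_{\num}(A)_\Q$ and the whole Lefschetz subalgebra; over $L$ the $B$-action is diagonal with pairwise distinct multiplicative characters on the eigenlines, and a nonzero Galois-invariant $\lambda$ has nonzero component in every $V_g$, so independence of characters makes the $B$-orbit of $\lambda$ span $V$. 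This is a valid, more hands-on replacement for the paper's black-box appeal to Milne, and you correctly flag the crucial step (preservation of Lefschetz classes by $B$). A few points should be spelled out more carefully: (i) $b\mapsto\Sym^{2n}(b)$ is multiplicative but not $\Q$-linear, so the $\chi_{g(I)}$ are characters of the group $B^\times$ only, and the spanning step is cleanest via Dedekind's independence applied to finitely many $b_1,\dots,b_m\in B^\times$ chosen so that $\bigl(\chi_{g_i(I)}(b_j)\bigr)$ is invertible, rather than by passing to $B\otimes L$ where $\Sym^{2n}$ does not extend linearly; (ii) the nonvanishing of each component $\lambda_g$ is not literally what ``the Galois argument above'' shows (that argument concerns the $V_g$, not the $\lambda_g$), but it does follow from the same semilinear permutation of the summands applied to the Galois-invariant $\lambda$; (iii) the assertion that distinct subsets of $\Sigma$ give distinct multiplicative characters of $B^\times$ deserves a short justification (a primitive-element argument). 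None of these is a real gap, just places where the write-up should be tightened.
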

\begin{proof}
As numerical equivalence commutes with  extension of scalars over $\mathbb{Q}$ \cite[Proposition 3.2.7.1]{Andmot}, we have \[\dim_\Q \Hom_{\NUM(k)_\Q}(\one, \mathcal{M}_I (n)) = \dim_L \Hom_{\NUM(k)_L}(\one, \mathcal{M}_I (n)).\]
 On the other hand, by construction of $\mathcal{M}_I$ (see Proposition \ref{primadecomposizione}(\ref{decomposizioneQ})), we have that the space $\Hom_{\NUM(k)_L}(\one,\mathcal{M}_I(n))$ is generated by the spaces $\Hom_{\NUM(k)_L}(\one,M_{g(I)}(n))$, with $g$ varying in $\Gal(L/\Q).$

 Moreover, as $\Gal(L/\Q)$  acts on the space of algebraic cycles modulo numerical equivalence, we have 
 \[\dim_L \Hom_{\NUM(k)_L}(\one,M_{I}(n)) = \dim_L \Hom_{\NUM(k)_L}(\one,M_{g(I)}(n)),\] for all $g\in \Gal(L/\Q).$
 
 Now, by Proposition \ref{primadecomposizione}(\ref{decomposizioneL}), the motives $M_{I} \in \CHM (k)_{L}$ are of rank one hence, by Proposition \ref{motivisupersingolari}, the above  dimension is either zero or one. This implies the equality in part (\ref{generatoalgebrico}). The rest of part (\ref{generatoalgebrico}) follows from Proposition \ref{motivisupersingolari} as well.

 For part (\ref{generatolefschetz}), the proof just goes as part (\ref{generatoalgebrico}) as all the properties of motives and algebraic classes that we used hold true for Lefschetz motives and Lefschetz classes by   \cite[p. 640]{Milnelef}.
\end{proof}
\begin{remark} The previous proposition is not accessible if one replaces numerical with homological equivalence as it is not known that homological equivalence commutes with extension of scalars. One finds the same issues in \cite{Clozel}. This is the crucial reason for which the main results in this paper are in the setting of numerical equivalence.
\end{remark}
\begin{theorem}[{\cite[Theorem 2]{TateBour}}]\label{HondaTate}
 For any CM-structure $B$ for $A$ (Definition \ref{definitionCM}), the pair  $(A,B)$ lifts to characteristic zero. More precisely there exists a $p$-adic field $K$ with ring of integers $W$ whose residue field $k'$ is a finite extension of $k$, and there is an abelian scheme $\mathcal{A}$ over $W$ together with an embedding $B\hookrightarrow \End^0(\mathcal{A})$, such that $\mathcal{A}\times_W k'$ is isogenous to $A \times_k k'$ (and the isogeny is $B$-equivariant).
 \end{theorem}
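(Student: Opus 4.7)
The plan is to reduce, by the decomposition of $B$, to the construction of CM lifts of simple CM abelian varieties, and then invoke the Shimura--Taniyama--Weil construction of CM abelian varieties over number fields. The Honda--Tate theory will guarantee that the lift has the desired special fiber up to isogeny.

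First I would use that $B=L_1\times\cdots\times L_t$ is a commutative semisimple $\Q$-algebra of dimension $2g$ acting on $A$. The idempotents of $B$ are in $\End^0(A)$ and therefore induce an isogeny decomposition $A \sim A_1\times\cdots\times A_t$ over $k$ such that $L_i$ embeds in $\End^0(A_i)$ and $2\dim A_i = [L_i:\Q]$. In particular each $A_i$ has sufficiently many complex multiplications by the CM field $L_i$, and the Frobenius $\pi_i := \Frob_{A_i}\in L_i$ is a $q$-Weil number (where $q=\# k$). Since the lifting problem is compatible with products and isogenies, it is enough to treat each pair $(A_i, L_i)$ separately: once each $(A_i, L_i)$ lifts to an abelian scheme $\mathcal{A}_i$ over the ring of integers $W_i$ of some finite extension $K_i$ of $\Q_p$, we replace all the $W_i$ by the ring of integers $W$ of a common finite extension $K/\Q_p$ large enough to accommodate all of them and form the product $\mathcal{A}=\prod_i \mathcal{A}_i\times_{W_i} W$.

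For each simple factor, I would apply the core result of Honda--Tate theory (the surjectivity part): given the $q$-Weil number $\pi_i\in L_i$, Shimura--Taniyama's construction produces a CM abelian variety $\tilde A_i$ of type $(L_i,\Phi_i)$ over a number field $F_i$ (which one can take to contain the reflex field of a suitable CM type $\Phi_i$), such that the Frobenius of its reduction at some prime $\mathfrak{p}\mid p$ of $F_i$ is, up to Galois conjugation, equal to $\pi_i$. By the Honda--Tate classification this reduction is isogenous to $A_i$ after base change of the residue field to a common finite extension $k'$ of $k$. Since CM abelian varieties have potentially good reduction everywhere (N\'eron--Ogg--Shafarevich combined with the fact that the $\ell$-adic Galois representation is abelian, or by direct CM arguments), after possibly replacing $F_i$ by a finite extension we obtain an abelian scheme model $\mathcal{A}_i$ of $\tilde A_i$ over the localization of the ring of integers at $\mathfrak p$, and the $L_i$-action extends to $\mathcal{A}_i$ by the N\'eron mapping property.

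The main obstacle in this programme is precisely the Honda--Tate existence statement for each simple factor: showing that every $q$-Weil number in $L_i$ actually arises as the Frobenius of a CM abelian variety defined over a number field. This is the nontrivial direction of Honda--Tate, proved by Honda using the Shimura--Taniyama construction of abelian varieties attached to a CM type, together with a careful choice of CM type matching the local invariants at $p$ prescribed by $\pi_i$. Once this step is granted — as cited from \cite{TateBour} — the assembly into a global abelian scheme over $W$ equipped with an action of $B$ restricting to the given one on the special fiber is formal, yielding the desired lift $(\mathcal{A}, B)$ of $(A, B)$.
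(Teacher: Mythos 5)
The paper imports this theorem from Tate's Bourbaki expos\'e of Honda's work (the reference {\cite[Theorem 2]{TateBour}} appears in the theorem header) without offering a proof, so there is no internal argument to compare your sketch against; you are reconstructing the argument of the cited source, and your reconstruction follows the standard route (idempotent decomposition, Shimura--Taniyama construction for each CM factor, potential good reduction, assembly). Two imprecisions are worth tightening. First, the factors $A_i$ produced by the idempotents of $B$ need not be simple: each $A_i$ is a CM abelian variety with CM by the field $L_i$ of degree $2\dim A_i$, but it may well be isogenous to a power of a simple abelian variety whose endomorphism algebra is larger than $\Q(\pi_i)$. This is harmless, because the Shimura--Taniyama machine is run on the CM type $(L_i,\Phi_i)$ rather than on simple isogeny factors, but the opening sentence of your plan should not promise a reduction to \emph{simple} CM abelian varieties. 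Second, for the lifting of the pair $(A,B)$ (and not merely of $A$) one needs the isogeny between $A_i$ and the reduction of the constructed $\tilde A_i$ to be $L_i$-linear. This follows from Tate's full faithfulness theorem, which gives uniqueness, up to $L_i$-linear isogeny, of an abelian variety over a finite field with prescribed CM field $L_i$ and prescribed Frobenius in $L_i$; it is a step that goes slightly beyond the bare surjectivity statement of Honda--Tate (``every Weil number is a Frobenius'') and is worth flagging explicitly, since it is exactly where the structure of the Shimura--Taniyama construction, and not merely its output, is used.
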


\begin{corollary}\label{CMstab}
There exists a polarization on the abelian scheme $\mathcal{A}$ for which the CM-structure is $*$-stable
\end{corollary}
\begin{proof}
Consider the decomposition $\mathcal{A}= \mathcal{A}_1\times \cdots \times \mathcal{A}_t  $ in product  CM simple abelian schemes given from the choice of the CM-structure (Definition \ref{definitionCM}). Choose the polarization to be the product of a polarization on each factor. The statement then reduces to the case when $\mathcal{A}$ is simple and it is enough to check it on the generic fiber, hence in characteristic zero. On the other hand the endomorphism algebra of a simple CM abelian variety in characteristic zero cannot be bigger than the CM-structure itself \cite[\S 22]{MumfordTata}, hence the $*$-stability is automatic. 
\end{proof}

\begin{corollary}\label{lifting motives}
The decompositions of $\mathfrak{h}(A \times_k k')$ in Proposition \ref{primadecomposizione}  lift to decompositions of $\mathfrak{h}(\mathcal{A})\in  \CHM (W)$. Moreover, if the CM-structure of $A$ is $*$-stable (and if the polarisation lifts as well) then the orthogonality statement in Proposition \ref{primadecomposizione} holds true in $ \CHM (W)$ as well.
\end{corollary}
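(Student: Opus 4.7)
The plan is to transfer the constructions of Section 4 from the special fibre $A \times_k k'$ to the full scheme $\mathcal{A}/W$. Two inputs make this possible: the CM-structure $B$ lifts by Theorem \ref{HondaTate}, and the structural results of Theorem \ref{classic} (Chow--K\"unneth, the isomorphism $\mathfrak{h}^n = \Sym^n \mathfrak{h}^1$, the naturality in $\End$, and the Chow--Lefschetz isomorphism induced by a polarization) all hold in the relative setting over $W$ by Remark \ref{AHP}.

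First I would lift the decomposition. By Theorem \ref{HondaTate} we have $B \hookrightarrow \End^0(\mathcal{A})$, and by (the relative version of) Theorem \ref{classic}(\ref{kings}) this action induces a $B$-action on $\mathfrak{h}^1(\mathcal{A}) \in \CHM(W)$. After extending scalars to $L$ and decomposing $L \otimes_\Q B = \prod_{\sigma \in \Sigma} L$, the resulting orthogonal system of idempotents $\{e_\sigma\}_{\sigma \in \Sigma}$ cuts out direct summands $M_\sigma^W$ of $\mathfrak{h}^1(\mathcal{A})$ whose restriction to the special fibre recovers the $M_\sigma$ of Corollary \ref{decomposition}. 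Using the relative isomorphism $\mathfrak{h}^n(\mathcal{A}) = \Sym^n \mathfrak{h}^1(\mathcal{A})$, we immediately obtain the decomposition $\mathfrak{h}^n(\mathcal{A}) = \bigoplus_{|I|=n} M_I^W$ of Proposition \ref{primadecomposizione}(\ref{decomposizioneL}) in $\CHM(W)_L$; averaging over $\Gal(L/\Q)$ then yields the $\Q$-coefficient decomposition of Proposition \ref{primadecomposizione}(\ref{decomposizioneQ}) in $\CHM(W)_\Q$.

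For the orthogonality, assume the CM-structure is $*$-stable and that the polarization lifts to $\mathcal{A}$. Then Theorem \ref{classic}(\ref{polarization}) in the relative setting produces an isomorphism $p_W : \mathfrak{h}^1(\mathcal{A}) \isocan \mathfrak{h}^1(\mathcal{A})^\vee(-1)$ satisfying the Rosati compatibility $p_W \circ b = (b^*)^\vee(-1) \circ p_W$ for every $b \in B$. Applying this to the idempotent $e_\sigma$ and using Proposition \ref{classical}(\ref{mumford}), which says that $*$ acts on $B$ as complex conjugation (so that $e_\sigma^* = e_{\bar\sigma}$), we obtain $p_W \circ e_\sigma = e_{\bar\sigma}^\vee(-1) \circ p_W$. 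Hence $p_W$ sends $M_\sigma^W$ into $M_{\bar\sigma}^{W,\vee}(-1)$ and vanishes on the other components, exactly as in the proof of Corollary \ref{decomposition}. Tensoring gives the vanishing of the induced pairing $M_I^W \otimes M_J^W \to \one(-n)$ whenever $I \neq \overline{J}$.

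The main obstacle is conceptual rather than computational: one must be certain that every formal tool invoked from Section 2 really is available in the relative category $\CHM(W)$, so that the idempotents, the symmetric-power description of $\mathfrak{h}^n$, and the polarization-induced duality can all be manipulated over $W$ in exactly the same way as over a field. This is precisely what Remark \ref{AHP} (via \cite[Theorem 5.1.6]{OS2} and \cite[Theorem 3.3]{AHP}) provides; once this is granted, the argument is a direct transcription of the proof over $k$, with no specialization or lifting of cycles needed beyond Theorem \ref{HondaTate} itself.
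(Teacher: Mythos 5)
Your proposal is correct and follows essentially the same route as the paper: the paper's own proof simply observes that the proof of Proposition \ref{primadecomposizione} is a formal combination of Theorem \ref{classic} with the CM-structure, and that these ingredients remain available over $W$ by Remark \ref{AHP} and Theorem \ref{HondaTate}. You have merely unpacked that compressed statement into its explicit steps (relative idempotents, relative $\Sym^n$, relative Chow--Lefschetz with Rosati compatibility), which is faithful to what the paper intends.
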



\begin{proof}
The proof of Proposition \ref{primadecomposizione}    is a formal combination of Theorem \ref{classic} together with   the CM-structure.
It works also over $W$ because of Remark \ref{AHP} and Theorem \ref{HondaTate}.
\end{proof}

\section{Exotic classes}\label{sectionexotic}
In this section we fix an abelian variety $A$  of dimension four over a finite field $k$ (and we fix an algebraic closure $\bar{k}$ of $k$). After choosing a CM-structure for $A$ (Definition \ref{definitionCM}), Proposition \ref{primadecomposizione}(\ref{decomposizioneQ}) constructs motives ${\mathcal{M}_I \in \CHM (k)_{\Q}}$ which are direct factors of the motive  $\mathfrak{h}^{4}(A)$. Some of them, that we will call \textit{exotic}, are essential in the proof of the standard conjecture of Hodge type for $A$. The main result of the section (Proposition \ref{exoticdim2}) tells us that they are of rank two (in the sense of Definition \ref{defrank}).
\begin{definition}\label{definizione}
Let  $\mathcal{M}_I \in \CHM (k)_{\Q}$ be a  direct factor of $\mathfrak{h}^{4}(A)$ as constructed in Proposition \ref{primadecomposizione}(\ref{decomposizioneQ}). The motive $\mathcal{M}_I $  is called exotic if the  space of algebraic classes $\Hom_{\NUM(k)_\Q}(\one, \mathcal{M}_I (2))$ is nonzero and it does not contain any  nonzero Lefschetz class (Definition \ref{defexotic}).

An element in the $\mathbb{Q}$-vector space $\Hom_{\NUM(k)_\Q}(\one, \mathcal{M}_I (2))$ will be called an exotic class.
\end{definition}
\begin{remark}\label{remarkuguale}
Notice that, by Proposition \ref{talvez}(\ref{generatoalgebrico}), any exotic motive verifies the equality 
\begin{equation} \label{eq:uguale}
\dim_\Q \Hom_{\NUM(k)_\Q}(\one, \mathcal{M}_I (2)) = \dim \mathcal{M}_I.
 \tag{$\#$}
\end{equation}
Examples of exotic classes on abelian fourfolds (and especially of those that cannot be lifted to characteristic zero) will be discussed in  Appendix \ref{geometricexamples}.
\end{remark}
\begin{proposition}\label{exoticdim2}
Suppose that $A$ has dimension four. Then there exist a finite extension $k'$ of $k$ and a  CM-structure for $A\times_k k'$ such that any exotic motive $\mathcal{M}_I \in\CHM (k')_{\Q}$  of $\mathfrak{h}^{4}(A\times_k k')$ has dimension two. 
\end{proposition}

The proof is decomposed in a series of lemmas and will take the rest of the section. We first fix notation and assumptions that will be used for the lemmas below.
\begin{notation}\label{notnotnot}
Let $\Sigma$ be as in Notation \ref{NotationCM}. Consider the decomposition $\mathfrak{h}^1(A) =\bigoplus_{\sigma \in \Sigma} M_\sigma$ from Proposition \ref{decomposition}. (Recall that  Frobenius acts on each of the eight $ M_\sigma$, see Remark \ref{remfrob}.) Let us denote  $\alpha_\sigma \in \overline{\Q}$ the  eigenvalues for the action of  Frobenius on  $ M_\sigma$. We will denote by $\bar{\cdot}$ the action of  complex conjugation on $ \Sigma$ (or on the set of parts of $ \Sigma$, or on $\overline{\Q}$). 
 
\end{notation}
\begin{assumption}\label{assumassum}
We will suppose that all the algebraic classes $\mathcal{Z}^*_\num (A_{\bar{k}})_\Q$ are defined over $k$. \end{assumption}
\begin{remark}\label{assumptionlef}
Note that the assumption above always holds after a finite extension of $k$. Notice also that, under this assumption, a class which becomes Lefschetz after a finite extension of the base field must be already Leftschetz. Hence, a motive which is exotic over the base field will still be exotic after a finite extension.
\end{remark}

\begin{lemma}\label{musica}
Let $q$ be the cardinality of $k$ and let $\mathcal{M}_I$ be an exotic motive. Then we have  the relation 
\begin{equation} \label{eq:1}
\prod_{\sigma \in I}\alpha_\sigma =q^2.
\end{equation}
Moreover, we have the property 
\begin{equation} \label{eq:2}
 \alpha_\sigma \cdot \alpha_\tau \neq q, \, \, \forall \sigma \neq \tau, \,\, \sigma,\tau \in I.
\end{equation} 
\end{lemma}

\begin{proof}
By Proposition \ref{talvez}(\ref{generatoalgebrico}) the $\ell$-adic realization of $\mathcal{M}_I$ is spanned by algebraic cycles hence  Frobenius acts on it by multiplication by $q^2$.  On the other hand, by construction (see  Proposition \ref{primadecomposizione}(\ref{decomposizioneL})),  Frobenius acts on the line $ M_I\subset \mathcal{M}_I$ via the multiplication by $\prod_{\sigma \in I}\alpha_\sigma $. This gives (\ref{eq:1}).   

Suppose now that  (\ref{eq:2}) is not satisfied.    Then $(\ref{eq:1})$ would force the set    $\{\alpha_\sigma\}_{\sigma \in I}$ to be of the form  $\alpha,q/\alpha, \beta, q/\beta$.   Each of the pairs $(\alpha,q/\alpha)$ and $(\beta, q/\beta)$ would correspond to a Frobenius invariant class in $H^2_\ell (A) (1)$. As the Tate conjecture for divisors on abelian varieties is known \cite[Theorem 4]{Tate}, each of these pairs corresponds to the class of a divisor, hence   $M_I$ would contain a Lefschetz class.
\end{proof}

\begin{definition}
A subset $I$ of $\Sigma$ of cardinality $4$ verifying (\ref{eq:1}) and (\ref{eq:2}) is called an exotic subset.
\end{definition}

\begin{remark}\label{completare}
Lemma \ref{musica} implies that the dimension of the space of exotic classes is at most the number of exotic subsets. The  Tate conjecture for $A$ predicts that this inequality should actually be an equality.
\end{remark}

\begin{lemma}\label{lemmaeven} The following holds.
\begin{enumerate}
\item\label{partizione} Let $I$ be an exotic subset, then the pair $(I,\bar{I})$ forms a partition of $\Sigma$.
\item Any  $\Gal(\overline{\Q}/\Q)$-conjugate of an exotic subset is again exotic.
\item Any exotic motive is even dimensional.
  \end{enumerate}
\end{lemma}
\begin{proof}
 Recall that the Weil conjectures imply $\bar{\alpha}=q/\alpha$. 
Then property (\ref{eq:2}) gives the first part. The second part follows from the very definition of exotic subset. 
Altogether complex conjugation acts without fixed points on  the $\Gal(\overline{\Q}/\Q)$-conjugates of $I$, hence the cardinality of the orbit of $I$ under the action of   $\Gal(\overline{\Q}/\Q)$ is even. On the other hand, this cardinality is the dimension of $\mathcal{M}_I$  (see  Proposition \ref{primadecomposizione}), this concludes the last part.
\end{proof}

\begin{lemma}
Under the Assumption \ref{assumassum}, an exotic subset $I$ verifies
\begin{equation} \label{eq:2n}
(\alpha_\sigma \cdot \alpha_\tau)^n \neq q^n, \, \, \forall \sigma \neq \tau, \,\, \sigma,\tau \in I
\end{equation} 
(for all positive integer $n$).
\end{lemma}
\begin{proof}
 If we extend the scalars to  $\mathbb{F}_{q^n}$  the motive $M_I$  will still be exotic (Remark \ref{assumptionlef}).  The eigenvalues of  Frobenius become $\{\alpha_\sigma^n\}$. Hence, by applying  (\ref{eq:2}) over this new base field we deduce (\ref{eq:2n}).
\end{proof}
\begin{lemma}\label{lemmanotwo}
Under the Assumption \ref{assumassum}, two exotic subsets cannot intersect in exactly two elements.
\end{lemma}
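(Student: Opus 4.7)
My plan is to argue by contradiction. Suppose $I, J \subset \Sigma$ are exotic subsets with $|I \cap J| = 2$, and set $K := I \cap J$, $I' := I \setminus K$, $J' := J \setminus K$, and $L := \Sigma \setminus (I \cup J)$. Since $|\Sigma| = 8$ and $|I| = |J| = 4$, each of $K, I', J', L$ has cardinality $2$ and $\Sigma = K \sqcup I' \sqcup J' \sqcup L$. The first step is to pin down how the complex conjugation $\bar{\cdot}$ acts on this partition. Because $\alpha_{\bar\sigma} = \overline{\alpha_\sigma}$ by construction, property $(\ref{eq:2})$ for $I$ forces $\bar I \cap I = \emptyset$, hence $\bar I = \Sigma \setminus I = J' \cup L$, and symmetrically $\bar J = I' \cup L$. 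Intersecting, $\bar K \subset (J' \cup L) \cap (I' \cup L) = L$, and since $|\bar K| = |L| = 2$, we conclude $\bar K = L$ and therefore $\bar{I'} = J'$.

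The second step extracts information from the product condition $(\ref{eq:1})$. From $\prod_{\sigma \in I}\alpha_\sigma = q^2 = \prod_{\sigma \in J}\alpha_\sigma$ one gets $\prod_{\sigma \in I'}\alpha_\sigma = \prod_{\sigma \in J'}\alpha_\sigma$. Combining this with $J' = \bar{I'}$ and the Weil identity $\alpha_{\bar\sigma} = q/\alpha_\sigma$, and using $|I'|=2$, one finds $\prod_{\sigma \in J'}\alpha_\sigma = q^2/\prod_{\sigma \in I'}\alpha_\sigma$, so $(\prod_{\sigma \in I'}\alpha_\sigma)^2 = q^2$. Plugging back into $\prod_{\sigma \in K}\alpha_\sigma \cdot \prod_{\sigma \in I'}\alpha_\sigma = q^2$ then yields $\prod_{\sigma \in K}\alpha_\sigma = \pm q$.

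The last step derives the contradiction. Write $K = \{\sigma_1, \sigma_2\}$ and set $\alpha = \alpha_{\sigma_1}$, $\beta = \alpha_{\sigma_2}$, so that $\alpha\beta = \pm q$. If $\alpha\beta = q$, then $\beta = q/\alpha = \bar\alpha$, so both $\alpha$ and $\bar\alpha$ belong to $\{\alpha_\sigma\}_{\sigma \in I}$, contradicting property $(\ref{eq:2})$. If $\alpha\beta = -q$, then $\beta^2 = q^2/\alpha^2 = \bar\alpha^2$, so $\alpha^2$ and $\bar\alpha^2$ both lie in $\{\alpha_\sigma^2\}_{\sigma \in I}$, contradicting property $(\ref{eq:2n})$ with $n = 2$. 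The main subtlety is the sign ambiguity in $\prod_{\sigma \in K}\alpha_\sigma = \pm q$: the minus-sign case cannot be ruled out by $(\ref{eq:2})$ alone, and this is precisely where the strengthened property $(\ref{eq:2n})$ is indispensable; Assumption $\ref{assumassum}$ enters here because it is what guarantees, via Remark $\ref{assumptionlef}$, that $\mathcal{M}_I$ remains exotic after base change to $\mathbb{F}_{q^2}$, so that $(\ref{eq:2n})$ applies.
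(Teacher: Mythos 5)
Your proof is correct and follows essentially the same route as the paper: use property $(\ref{eq:1})$ to extract a relation forcing $(\prod\alpha_\sigma)^2=q^2$ on a two-element piece of the partition, use property $(\ref{eq:2})$ to pin down the conjugation action, and invoke $(\ref{eq:2n})$ with $n=2$ to rule out the $-q$ case. The paper pivots on the eigenvalues of $I\setminus J$ rather than $I\cap J$, but this is an immaterial choice; your explicit derivation of $\bar K = L$ and $\bar{I'} = J'$, and your separation of the $\alpha\beta = q$ and $\alpha\beta = -q$ cases, make the role of $(\ref{eq:2})$ versus $(\ref{eq:2n})$ clearer than the paper's compressed chain of equalities.
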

\begin{proof}
 Suppose that there are two exotic subsets  $I$ and $J$ whose intersection has cardinality two. Let us call $\alpha,\beta$ the two Frobenius eigenvalues corresponding to  $I-I\cap J$ and $\gamma,\delta$  those corresponding to  $J-I\cap J$.  Property (\ref{eq:1})  gives
 \[\alpha\cdot \beta=\gamma \cdot  \delta\]
and  property (\ref{eq:2}) gives (after reordering if necessary)
\[\alpha= q/ \gamma \,\, , \,\, \beta=  q/\delta.\]
Putting these relations together we have 
$\alpha^2 =q^2/\beta^2.$ This equality gives a contradiction by applying   (\ref{eq:2n})   to $I$ for $n=2$.
\end{proof}

\begin{lemma}\label{lemmaabove}
Under the Assumption \ref{assumassum}, the dimension of the space of exotic classes is zero, two or four. More precisely, there are either  zero, two or four exotic subsets. In case they are two, they are  complex conjugate to each other. In case they are four, they are of the form $I,\bar{I}, J, \bar{J}$, with $I\cap J$ of cardinality three.
 \end{lemma}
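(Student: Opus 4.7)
The plan is to reduce everything to a combinatorial enumeration of $4$-element subsets of the $8$-element set $\Sigma$, using only the necessary conditions on exotic subsets provided by Lemmas \ref{musica} and \ref{lemmanotwo}. Since $A$ has dimension four, $|\Sigma| = 8$ and complex conjugation acts on $\Sigma$ as a fixed-point-free involution, splitting $\Sigma$ into four conjugate pairs. If $I$ contained both $\sigma$ and $\bar{\sigma}$ for some $\sigma$, then $\alpha_\sigma$ and $\alpha_{\bar{\sigma}}=\bar{\alpha}_\sigma$ would both lie in $\{\alpha_\tau\}_{\tau \in I}$, contradicting property \eqref{eq:2}; hence $I \cap \bar{I} = \emptyset$, and since $|I|=4$ and $|\Sigma|=8$ this forces $\Sigma = I \sqcup \bar{I}$, i.e.\ $I$ is a transversal of the four pairs. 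Fixing any one transversal $\Sigma_+$ as coordinates, each transversal $I$ is encoded by the subset $S(I) \subseteq \Sigma_+$ of pairs on which $I$ disagrees with $\Sigma_+$, and $|I \cap J| = 4 - |S(I) \triangle S(J)|$.

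Next I would record two symmetries: first, if $I$ is exotic then so is $\bar{I}$, because $\mathcal{M}_I$ depends only on the $\Gal(L/\Q)$-orbit of $I$ and complex conjugation belongs to this Galois group; second, Lemma \ref{lemmanotwo} excludes $|I \cap J| = 2$, i.e.\ $|S(I) \triangle S(J)| \ne 2$. Together these assert that the exotic subsets form a subset of $\{0,1\}^{\Sigma_+}$ which is closed under complementation and in which no two elements are at Hamming distance $2$. The main step is to show that two distinct exotic subsets $I, J$ that are not each other's complex conjugate force a very rigid structure. Since $|I \cap J| \in \{1,3\}$, after possibly replacing $J$ by $\bar{J}$ one may assume $|I \cap J| = 3$; relabeling so that $S(I) = \emptyset$ and $S(J) = \{\sigma_0\}$, any further exotic $K$ with $T = S(K)$ satisfies $|T|, |T \triangle \{\sigma_0\}| \in \{0,1,3,4\}$ while differing by $1$ (flipping one coordinate changes the Hamming weight by $\pm 1$). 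The only options are $(|T|, |T \triangle \{\sigma_0\}|) \in \{(0,1),(1,0),(3,4),(4,3)\}$, giving $T \in \{\emptyset,\{\sigma_0\},\Sigma_+ \setminus \{\sigma_0\},\Sigma_+\}$, i.e.\ $K \in \{I, J, \bar{J}, \bar{I}\}$.

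This yields exactly the trichotomy stated in the lemma: the set of exotic subsets is either empty, a pair $\{I, \bar{I}\}$, or a quadruple $\{I, \bar{I}, J, \bar{J}\}$ with $|I \cap J| = 3$. For the dimension statement, Proposition \ref{talvez}(\ref{generatoalgebrico}) shows that each exotic motive $\mathcal{M}_I$ carries a space of exotic classes of $\Q$-dimension $\dim_\Q \mathcal{M}_I$, which equals the cardinality of the $\Gal(L/\Q)$-orbit of $I$; summing over the distinct exotic orbits gives the total number of exotic subsets, so this dimension is $0$, $2$, or $4$. The only step requiring a little care is the Hamming-distance bookkeeping in the second paragraph, but once the transversal encoding is in place it reduces to a routine enumeration.
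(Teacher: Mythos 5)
Your argument is correct and essentially matches the paper's: both establish $I\cap\bar{I}=\emptyset$ from property~(\ref{eq:2}), both exploit the complement symmetry $I\mapsto\bar{I}$, and both rely on Lemma~\ref{lemmanotwo} to exclude intersections of size two. Your Hamming-distance bookkeeping over a fixed transversal $\Sigma_+$ is a somewhat more systematic presentation of the same combinatorics; the paper instead assumes a hypothetical fifth exotic subset $K$ with $|I\cap K|=3$ and checks directly that $|J\cap K|=2$, contradicting Lemma~\ref{lemmanotwo}. The substance is identical.

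One imprecision in the final paragraph: you assert that summing $\dim_\Q\mathcal{M}_I$ over the exotic orbits \emph{equals} the total number of exotic subsets. Unconditionally this is only an upper bound (this is exactly what Remark~\ref{completare} says; equality would require the Tate conjecture, since a subset can satisfy (\ref{eq:1}) and (\ref{eq:2}) without $\mathcal{M}_I$ actually carrying algebraic classes). The paper therefore argues with the inequality together with Lemma~\ref{lemmaeven}: each exotic $\mathcal{M}_I$ has even dimension, so the dimension of the space of exotic classes is an even number at most the number of exotic subsets, hence $0$, $2$, or $4$. Your conclusion is unaffected, but you should replace the equality by an inequality and invoke Lemma~\ref{lemmaeven} for parity.
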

\begin{proof}
The statement on exotic classes is implied by the statement on exotic subsets. Indeed, the dimension of the space of exotic classes is even (Lemma \ref{lemmaeven} together with the relation (\ref{eq:uguale}) in Remark \ref{remarkuguale}) and it is  at most the number of exotic subsets (Remark \ref{completare}).

Let us now show the statement on exotic subsets. We have already pointed out in the proof of Lemma \ref{lemmaeven} that  complex conjugation acts without fixed points on the exotic subsets. Suppose now that there are at least four such subsets, call them $I,\bar{I}, J, \bar{J}$. Then one subset among $J$ and $\bar{J}$ intersects $I$ in at least two elements. Without loss of generality we suppose it is $J$. Then by Lemma \ref{lemmanotwo}, $I\cap J$ must be of cardinality three.

If there were more than four exotic subsets, then, by the same arguments there would be an exotic subset $K$ intersecting $I$ in exactly three elements. Then the intersection of $J$ and $K$ would have exactly two elements. This is impossible  by Lemma \ref{lemmanotwo}.
\end{proof}

\begin{lemma}\label{lemmafour}
Suppose that there are four exotic subsets and that the Assumption   \ref{assumassum} is satisfied. Then, after extending $k= \mathbb{F}_{q}$ to its quadratic extension $\mathbb{F}_{q^2}$, the abelian fourfold $A$ becomes isogenous to  $E\times X$ where  $X$ is an abelian threefold and $E$ is a supersingular elliptic curve on which   Frobenius acts as $q\cdot \id $.

\end{lemma}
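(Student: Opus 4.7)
The plan is to combine the combinatorial description from the previous lemma with Honda--Tate theory. My first step is to pin down the shape of the four exotic subsets. By property (\ref{eq:2}) we have $I\cap\bar I = \emptyset$, and since $|I| = |\bar I| = 4$ while $|\Sigma| = 8$, this forces $\Sigma = I\sqcup\bar I$. The previous lemma furnishes an exotic subset $J$ with $|I\cap J| = 3$, hence $|I\setminus J| = |J\setminus I| = 1$. Write $I\setminus J = \{d\}$ and $J\setminus I = \{\bar x\}$ (the element of $J\setminus I$ must lie in $\bar I$). Because property (\ref{eq:2}) applied to $J$ gives $J\cap\bar J = \emptyset$, the set $J$ cannot contain both $x$ and $\bar x$, which forces $x = d$ and hence $J = (I\setminus\{d\})\cup\{\bar d\}$.

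Next, I would exploit relation (\ref{eq:1}) for both $I$ and $J$ and take the ratio
\[
\frac{\alpha_d}{\alpha_{\bar d}} \;=\; \frac{\prod_{\sigma\in I}\alpha_\sigma}{\prod_{\sigma\in J}\alpha_\sigma} \;=\; 1.
\]
Combined with the Weil relation $\alpha_d\,\alpha_{\bar d} = q$ (recalled in the proof of Lemma \ref{musica}), this yields $\alpha_d^2 = q$, so $\alpha_d = \pm\sqrt{q}$.

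Finally, I would translate this equality into the desired isogeny statement via Honda--Tate. By construction (Corollary \ref{decomposition}), $\alpha_d$ is the Frobenius eigenvalue on the one-dimensional summand $M_d$ of $\mathfrak{h}^1(A)_L$; after base change to $\mathbb{F}_{q^2}$, the Frobenius eigenvalue on this line becomes $\alpha_d^2 = q$, so $q$ appears as a root of the characteristic polynomial of Frobenius on $H^1(A_{\mathbb{F}_{q^2}})$. The rational Weil $q^2$-number $q$ corresponds under the Honda--Tate classification to a simple supersingular elliptic curve $E/\mathbb{F}_{q^2}$ on which Frobenius acts as $q\cdot\id$ on $H^1(E)$; Poincar\'e complete reducibility then produces an isogeny $A_{\mathbb{F}_{q^2}}\sim E\times X$ with $X$ an abelian threefold. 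The only delicate point is the identification of the Honda--Tate simple factor attached to the rational Weil number $q$ as such a supersingular elliptic curve, but this is a standard consequence of the classification, and it is the main step on which I expect to lean heavily rather than a genuine obstacle.
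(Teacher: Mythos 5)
Your proof is correct and follows the same route as the paper's: identify the two elements in the symmetric difference of $I$ and $J$ as a pair $\sigma,\bar\sigma$, deduce $\alpha_\sigma^2=q$ from (\ref{eq:1}) together with the Weil relation, pass to $\mathbb{F}_{q^2}$ to produce the Frobenius eigenvalue $q$, and invoke Tate's theorem to split off the supersingular elliptic curve. The only difference is that you spell out the combinatorial step ($\Sigma = I\sqcup\bar I$ and $J=(I\setminus\{d\})\cup\{\bar d\}$) in more detail than the paper, which is a welcome clarification rather than a deviation.
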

 
\begin{proof}
We keep notation from Lemma \ref{lemmaabove}. If the element of $I-I\cap J$ is $\sigma$ then the element of  $(J-I\cap J)= J \cap \bar{I}$ must be $\bar{\sigma}$ because of  Lemma \ref{lemmaeven}(\ref{partizione}).  Let $\alpha$ be the Frobenius eigenvalue for the action on $M_\sigma$, then  (\ref{eq:1}) applied to $I$ and $J$ implies $\alpha=\bar{\alpha}$, hence $\alpha^2=q$.

Let us now extend the field of definition to $\mathbb{F}_{q^2}$. Among the eight eigenvalues of  Frobenius we will find $q$, which means that there is a nonzero Frobenius-equivariant map between the Tate module of the supersingular elliptic curve $E$ and the Tate module of $A$. This implies the statement by  \cite[Theorem 4]{Tate}.
\end{proof}

\begin{lemma}
We keep Notation \ref{notnotnot} and Assumption \ref{assumassum}. Consider on the base field $k=\mathbb{F}_{q^2}$   a supersingular elliptic curve $E$  on which   Frobenius acts as $q\cdot \id$.
Let $A$ be an abelian fourfold of the form $ A=E\times X$, let  $\Sigma_X$ and $ \Sigma_E$ be CM-structures for $X$ and $E$ and define  $\Sigma=\Sigma_X \cup \Sigma_E$. Then, any exotic subset $I$ verifies that $I\cap \Sigma_X$ has cardinality three. Moreover,  the motive $\mathcal{M}_{I\cap \Sigma_X} \in\CHM (k)_{\Q}$, direct factor of $\mathfrak{h}^3(X)$ as constructed in Proposition \ref{primadecomposizione}, is of dimension two. 

Finally, if the space of exotic classes on $A$ is four dimensional, it is contained in the four dimensional motive $\mathcal{M}_{I\cap \Sigma_X} \otimes \mathfrak{h}^1(E)$.
\end{lemma}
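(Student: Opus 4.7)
The plan is to establish the three claims in sequence. To prove $|I\cap\Sigma_X|=3$, I rule out the alternatives for $|I\cap\Sigma_E|$. If $|I\cap\Sigma_E|=2$, the two elements of $\Sigma_E$ both contribute Frobenius eigenvalue $q$; over $k=\mathbb{F}_{q^2}$ the Weil relation reads $\alpha\bar\alpha=q^2$, so the ``pair'' $(q,q)=(q,\bar q)$ appearing in $\{\alpha_\sigma\}_{\sigma\in I}$ violates the analogue of (\ref{eq:2}) (as phrased in the proof of Lemma \ref{musica}) and forces $\mathcal{M}_I$ to be Lefschetz rather than exotic. If $|I\cap\Sigma_E|=0$, then $M_I$ is a direct summand of $\mathfrak{h}^4(X)(2)$; since $\dim X=3$, the Chow--Lefschetz isomorphism for abelian varieties (Theorem \ref{classic}(\ref{polarization})) gives $\mathfrak{h}^4(X)(2)\isocan\mathfrak{h}^2(X)(1)$ via cup product with $L$, so every algebraic class in $M_I$ is of the form $D\cdot L$ for a divisor class $D$, and therefore Lefschetz --- again contradicting exoticity. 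Hence $|I\cap\Sigma_E|=1$ and $|I\cap\Sigma_X|=3$.

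For the dimension-two assertion, I bound the $\Gal(L/\Q)$-orbit of $I_X:=I\cap\Sigma_X$ from both sides. Since $|I_X|=3$ is odd and complex conjugation acts without fixed points on $\Sigma_X$ (the fields $L_i$ are CM), we have $\bar{I_X}\neq I_X$, whence $|\Gal(L/\Q)\cdot I_X|\geq 2$. For the upper bound, write $\Sigma_E=\{\sigma_1,\sigma_2\}$ and observe that for any $g\in\Gal(L/\Q)$ and any $\tau\in\Sigma_E$ the subset $g(I_X)\cup\{\tau\}$ has the same multiset of Frobenius eigenvalues as $g(I)=g(I_X)\cup\{g(\sigma_1)\}$, because $\sigma_1$ and $\sigma_2$ both contribute eigenvalue $q$. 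The conditions (\ref{eq:1}) and (\ref{eq:2}) are thus equally satisfied, so $g(I_X)\cup\{\tau\}$ is itself an exotic subset. Distinct pairs $(g(I_X),\tau)$ give distinct exotic subsets, producing $2\cdot|\Gal(L/\Q)\cdot I_X|$ exotic subsets in total; the preceding lemma bounds this number by $4$, forcing $|\Gal(L/\Q)\cdot I_X|\leq 2$, hence equality. Therefore $\dim\mathcal{M}_{I_X}=2$.

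For the final assertion, when the space of exotic classes on $A$ is four-dimensional, the preceding analysis identifies the four exotic subsets precisely with the four $L$-coefficient summands of
\[
\mathcal{M}_{I_X}\otimes\mathfrak{h}^1(E) \;=\; (M_{I_X}\oplus M_{\bar{I_X}})\otimes(M_{\sigma_1}\oplus M_{\sigma_2}).
\]
Descending to $\Q$, the motive $\mathcal{M}_{I_X}\otimes\mathfrak{h}^1(E)\in\CHM(k)_\Q$ decomposes as the direct sum of the exotic $\Q$-motives $\mathcal{M}_K$, and the space of exotic classes, which lies in $\bigoplus_{K\text{ exotic}}\Hom(\one,\mathcal{M}_K(2))$, is therefore contained in $\Hom(\one,(\mathcal{M}_{I_X}\otimes\mathfrak{h}^1(E))(2))$; both spaces have dimension four. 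The main subtle point throughout is the combinatorial interplay between the arithmetic conditions (\ref{eq:1})--(\ref{eq:2}) and the $\Gal(L/\Q)$-action on $\Sigma$, where the parity of $|I_X|=3$ and the equality of Frobenius eigenvalues on $\sigma_1,\sigma_2$ conspire to pin down the orbit size of $I_X$ to exactly two.
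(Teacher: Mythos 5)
Your proofs of the second and third claims are essentially the paper's argument: you bound the size of the $\Gal(L/\Q)$-orbit of $I_X = I\cap\Sigma_X$ from below by $2$ (using $\bar I_X \neq I_X$), and from above by $2$ by pairing each orbit element with the two elements of $\Sigma_E$ to manufacture $2\,|\Gal(L/\Q)\cdot I_X|$ distinct exotic subsets and invoking the preceding lemma's bound of four; the containment in the final claim then follows from identifying the four exotic subsets with the four $L$-summands of $\mathcal{M}_{I_X}\otimes\mathfrak{h}^1(E)$. The paper phrases the upper bound via subsets $K\subset\Sigma_X$ satisfying (\ref{eq:4})--(\ref{eq:5}) rather than directly via the orbit, but the counting is the same.

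However, your treatment of the first claim has a gap in the case $|I\cap\Sigma_E|=0$. The lemma is a statement about \emph{exotic subsets}, which by definition are subsets of $\Sigma$ satisfying the purely numerical conditions (\ref{eq:1}) and (\ref{eq:2}); they need not carry any algebraic classes at all. Your argument for this case invokes the Chow--Lefschetz isomorphism $\mathfrak{h}^4(X)(2)\isocan\mathfrak{h}^2(X)(1)$ to conclude that every algebraic class in $M_I$ is Lefschetz, and then appeals to ``exoticity'' to obtain a contradiction; but that notion of exoticity concerns the motive $\mathcal{M}_I$, not the subset $I$, and the argument is vacuous for an exotic subset $I$ whose $M_I$ has no algebraic class. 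The paper instead handles all of claim 1 with a single counting argument: by (\ref{eq:2}) one has $I\cap\bar I=\emptyset$, and $\Sigma_X$ is conjugation-stable of cardinality six, hence a union of three conjugate pairs, so $|I\cap\Sigma_X|\leq 3$; similarly $|I\cap\Sigma_E|\leq 1$; since $|I|=4$ both inequalities are equalities. Note that this is precisely the same mechanism you already use for $|I\cap\Sigma_E|=2$, only applied to $\Sigma_X$ as well, so it simplifies your proof rather than complicating it.

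A cosmetic point on the final claim: when the exotic classes are four-dimensional, $\mathcal{M}_{I_X}\otimes\mathfrak{h}^1(E)$ does decompose as the sum of the exotic $\Q$-motives, but the cleaner statement of what you need is simply that each exotic $\Q$-motive $\mathcal{M}_K$ (with $K$ ranging over the four exotic subsets, grouped into Galois orbits) is a direct summand of $\mathcal{M}_{I_X}\otimes\mathfrak{h}^1(E)$, which is immediate since each such $M_K$ is one of the four $L$-summands you display. That gives the containment without having to assert exactness of the decomposition.
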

\begin{proof}
First recall that  the pair $(I,\bar{I})$ forms a partition of $\Sigma$ by Lemma \ref{lemmaeven}(\ref{partizione}). As $ \Sigma_X$ has cardinality six and is stable by the action of complex conjugation one must have that $I\cap \Sigma_X$ has cardinality three.

Consider the subsets $K \subset \Sigma_X$ (of cardinality three) verifying
\begin{equation} \label{eq:4}
\prod_{\sigma \in K} \alpha_\sigma =q^3
\end{equation}
\begin{equation} \label{eq:5}
 \alpha_\sigma \cdot \alpha_\tau \neq q^2, \, \, \forall \sigma \neq \tau, \,\, \sigma,\tau \in K
\end{equation}
and
\begin{equation} \label{eq:5epoi}
 \alpha_\sigma  \neq q,  \,\, \forall \sigma\in K
\end{equation}

Clearly the relations (\ref{eq:1}) and (\ref{eq:2}) for $I$ imply that $K=I\cap \Sigma_B$ verifies (\ref{eq:4}),(\ref{eq:5}) and (\ref{eq:5epoi}). Conversely, the relations (\ref{eq:4}),(\ref{eq:5}) and (\ref{eq:5epoi})  for $K$ imply that $I=K\cup \{\sigma\}$ verifies (\ref{eq:1}) and (\ref{eq:2}) for any $\sigma \in \Sigma_E$. As there are at most four exotic subsets for $A$, there must be at most two subsets of $\Sigma_X$ verifying  (\ref{eq:4}),(\ref{eq:5}) and (\ref{eq:5epoi}).  On the other hand $I\cap \Sigma_X$  and $\bar{I}\cap \Sigma_X$ are two of those, which implies that there are exactly  two subsets of $\Sigma_X$  verifying (\ref{eq:4}),(\ref{eq:5}) and (\ref{eq:5epoi}).
Finally, as the Galois group $\Gal(\bar{\Q}/\Q)$ acts on those subsets then the motive $\mathcal{M}_{I\cap \Sigma_X}$ has dimension two. 
The rest follows from the construction of $\mathcal{M}_{I\cap \Sigma_X}$.
\end{proof}
\begin{lemma}\label{bellaidea}
Consider an abelian fourfold of the form $ A=E\times X$, where  $E$ is a supersingular elliptic curve such that $\dim_\Q\End(E)\otimes_\Z \Q=4$. Fix a CM-structure for $X$ and consider a direct factor of $\mathfrak{h}^3(X)$  of the form $\mathcal{M}_{I} \in\CHM (k)_{\Q}$, as constructed in Proposition \ref{primadecomposizione}. Suppose that $\mathcal{M}_{I} $ has rank two and that  the  motive ${\mathcal{M}_I \otimes \mathfrak{h}^1(E)}$ is spanned by algebraic classes. Then  there exists a CM structure for  $E$ (write $\Sigma_E=\{\sigma,\bar{\sigma}\}$) such that  the motive $\mathcal{M}_I \otimes \mathfrak{h}^1(E)$ decomposes in the category $\CHM (k)_{\Q}$ as the sum of two motives of rank two 
\[\mathcal{M}_I \otimes \mathfrak{h}^1(E)= \mathcal{M}_{I\cup \sigma} \oplus \mathcal{M}_{I\cup \bar{\sigma}}\]
via Proposition  \ref{primadecomposizione}(\ref{decomposizioneQ}).
 \end{lemma}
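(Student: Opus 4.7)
The plan is to extract the CM structure of $E$ directly from the endomorphism algebra of $\mathcal{M}_I$. Fix a CM structure on $X$ with associated Galois-closed CM field $L_X$ (Notation \ref{NotationCM}), let $G \coloneqq \Gal(L_X/\Q)$, and let $H$ be the stabilizer of $I$ in $G$. Since $\mathcal{M}_I$ has rank $2$, $H$ has index $2$ in $G$, say $G = H \sqcup gH$, and Proposition \ref{primadecomposizione} gives $\mathcal{M}_I \otimes_\Q L_X = M_I \oplus M_{g(I)}$ in $\CHM(k)_{L_X}$. Let $F \coloneqq L_X^H$, a quadratic extension of $\Q$. For $a \in F$, the diagonal endomorphism $(a, g(a))$ of $M_I \oplus M_{g(I)}$ is $G$-equivariant, and so by Galois descent descends to a $\Q$-endomorphism of $\mathcal{M}_I$. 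This produces an embedding $F \hookrightarrow \End_{\CHM(k)_\Q}(\mathcal{M}_I)$.

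Next, set $T \coloneqq \mathcal{M}_I \otimes \mathfrak{h}^1(E)$ and let $V_Z = \Hom_{\NUM(k)_\Q}(\one, T(2))$ be the space of algebraic classes. By hypothesis $T$ is spanned by algebraic classes, so Proposition \ref{motivisupersingolari} gives $\dim_\Q V_Z = \dim T = 4$. The space $V_Z$ carries two commuting actions: the action of $D \coloneqq \End^0(E)$ on the $\mathfrak{h}^1(E)$-factor, and the action of $F$ on the $\mathcal{M}_I$-factor constructed above. Because $E$ is supersingular with $\dim_\Q D = 4$, the algebra $D$ is the quaternion division algebra over $\Q$ ramified at $p$ and $\infty$. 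In particular every $D$-module is free; as $\dim_\Q V_Z = 4 = \dim_\Q D$, the $D$-module $V_Z$ has rank one, and $\End_D(V_Z) \cong D^{\mathrm{op}} \cong D$ (the second isomorphism via the main involution). Consequently $F$ embeds into $D$. Since $D \otimes_\Q \R \cong \mathbb{H}$ contains only imaginary quadratic subfields, $F$ is imaginary quadratic. The Frobenius of $E$ lies in the centre $\Q$ of $D$ and is therefore automatically contained in $F$, so $K \coloneqq F$ is a valid CM structure for $E$ in the sense of Definition \ref{definitionCM}.

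It remains to read off the decomposition. Since $K = F \subset L_X$, the Galois-closed CM field attached to the product CM structure on $A = E \times X$ is still $L_X$. Write $\sigma, \bar\sigma : K \hookrightarrow L_X$ for the two embeddings; by Corollary \ref{decomposition} we have $\mathfrak{h}^1(E) \otimes_\Q L_X = M_\sigma \oplus M_{\bar\sigma}$. By the very definition of $F$, the subgroup $H$ acts trivially on $F = K$ and so fixes $\sigma$, while $g$ acts on $F$ as the non-trivial element of $\Gal(F/\Q)$ and sends $\sigma$ to $\bar\sigma$. Hence the $G$-orbits are $\{I \cup \sigma,\, g(I) \cup \bar\sigma\}$ and $\{I \cup \bar\sigma,\, g(I) \cup \sigma\}$, each of size $2$. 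Applying Proposition \ref{primadecomposizione}(\ref{decomposizioneQ}) yields the desired splitting
\[
\mathcal{M}_I \otimes \mathfrak{h}^1(E) = \mathcal{M}_{I \cup \sigma} \oplus \mathcal{M}_{I \cup \bar\sigma}
\]
into two rank-$2$ factors.

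The main obstacle is verifying the embedding $F \hookrightarrow D$: it combines the fact that $D$ is a division algebra (which, together with the dimension count, forces $V_Z$ to be a free $D$-module of rank one, pinning down $\End_D(V_Z)$) with the hypothesis that $T$ is spanned by algebraic classes. Once $F$ is placed inside $D$, the choice $K = F$ automatically aligns the two Galois actions of Proposition \ref{primadecomposizione}(\ref{decomposizioneQ}), and the decomposition follows formally.
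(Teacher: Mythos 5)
Your proof is correct, and it takes a genuinely different — and in fact cleaner — route than the paper's. The paper reduces the statement to showing that the quadratic field $F$ does not split at $p$ and $\infty$, so that $F$ embeds into the quaternion algebra $D = \End^0(E)$ (ramified exactly at $p$ and $\infty$) by the local-global criterion for splitting fields; it then gives two proofs of this local non-splitting. The first uses Hodge symmetry at $\infty$ and admissibility of filtered isocrystals at $p$, and requires the lift to characteristic zero. The second is a local avatar of your argument: if $F$ embedded in $\Q_p$ (resp.\ $\R$), the $4$-dimensional division algebra $D\otimes\Q_p$ (resp.\ $D\otimes\R$) would have to act on the $2$-dimensional $\Q_p$- (resp.\ $\R$-) space of algebraic classes of $M_I\otimes\mathfrak{h}^1(E)$, which is impossible. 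You globalize that dimension count: since $D$ is a division algebra of $\Q$-dimension $4$ and $V_Z$ has $\Q$-dimension $4$, the $D$-module $V_Z$ is free of rank one, so the commuting $F$-action lands in $\End_D(V_Z)\cong D^{\op}\cong D$ in a single step. This bypasses both the local-global principle for embeddings into quaternion algebras and the characteristic-zero lift, at the mild cost of making explicit the Galois-descent construction of the $F$-action on $\mathcal{M}_I$ (which the paper takes for granted in its opening sentence). The read-off of the final decomposition from the $G$-orbits of $I\cup\{\sigma\}$ is then the same in both approaches.
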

\begin{proof}

By construction of   $\mathcal{M}_I$ (Proposition  \ref{primadecomposizione}(\ref{decomposizioneQ})) we can find a quadratic number field $F$  such that the motive $\mathcal{M}_{I} $  decomposes in the category $\CHM (k)_{F}$ into the sum 
\[\mathcal{M}_{I} = M_{I} \oplus  M_{\bar{I}}  \] of two motives of rank one. In order to conclude, it is enough to show that also $\mathfrak{h}^1(E)$  decomposes in the category $\CHM (k)_{F}$ into the sum of two motives of dimension one. 

This fact is equivalent to saying that $F$ can be embedded in the division algebra $\End(E)\otimes_\Z \Q.$ As this algebra splits at all primes except infinity and  the characteristic of the base field (call it $p$), we have to show that the embeddings $F \subset \R$ and $F\subset \Q_p$ cannot exist. Indeed, as the degree of $F$ over $\Q$ is the same as the index of the central $\Q$-algebra $\End(E)\otimes_\Z \Q$ (namely two), the inclusion  $F\subset\End(E)\otimes_\Z \Q$ is equivalent to the fact that $\End(E)\otimes_\Z F$ is a matrix algebra. By the Albert--Brauer--Hasse--Noether theorem this can be checked locally. As $\End(E)\otimes_\Z \Q_\ell$ is already a matrix algebra it is enough to check at $p$ and at infinity. Now if $F $ does not embed in $\R$ and $\Q_p$ then $F\otimes_{\Q} \R$ and $F\otimes_{\Q}\Q_p$ are quadratic field extensions of $\R$ and $\Q_p$. As $\End(E)\otimes_\Z \Q$ has invariant $1/2$ at $p$ and infinity it becomes a matrix algebra after tensoring with these quadratic extensions.

We will give two proofs of the non existence of the embeddings $F \subset \R$ and $F\subset \Q_p$.

A first proof uses that these motives lift to characteristic zero (Proposition \ref{lifting motives}). 
Now if $F$ was contained in $\R$ then the Betti realization of the lifting of $M_{I}$ would respect the Hodge symmetry. This is impossible as it is one dimensional and of weight three. Similarly, if $F$ was contained in $\Q_p$ then the Hyodo-Kato realization of $M_{I}$ and $M_{\bar{I}} $ would be two filtered $\varphi$-modules of dimension one with different filtrations (again because the weight of $\mathcal{M}_{I}$ is three). This implies that  
$\det (M_{I}  \otimes \mathfrak{h}^1(E))$ and $\det(M_{\bar{I}}  \otimes \mathfrak{h}^1(E))$ would realize in two filtered $\varphi$-modules of dimension one and different filtrations.  On the other hand   absolute Frobenius acts on both   in the same way (namely by multiplication by $p^4$) because they are spanned by algebraic classes. This implies that at least one of the two filtered $\varphi$-modules is not admissible and concludes the proof as all filtered $\varphi$-modules coming from geometry (in particular filtered $\varphi$-modules that are realizations of motives) must be admissible.

We give an alternative proof, which does not use the lifting to characteristic zero. We write it for $\Q_p$, it works in the same way for $\R$. If $F$ was contained in $\Q_p$ then the motive $M_{I}  \otimes \mathfrak{h}^1(E)$  would live in $\CHM (k)_{\Q_p}$. On such a motive the division algebra $\End(E)\otimes_\Z \Q_p$ acts hence it acts also on the $\Q_p$ vector space spanned by the algebraic classes of the motive. On the other hand, as this motive is spanned by algebraic classes, the division algebra  $\End(E)\otimes_\Z \Q_p$  would act on a $\Q_p$-vector space  of dimension two. This gives a contradiction as such an  action cannot exist.
\end{proof}
\begin{proof}[Proof of Proposition \ref{exoticdim2}] This is a combination of the  last four lemmas.
\end{proof}

\section{Orthogonal motives of rank $2$}
We start by stating our main technical result,  Theorem \ref{mainthm}. The remark and proposition right after will hopefully give some intuitions on the hypothesis of the statement. We conclude the section by showing that Theorem \ref{mainthm} implies  Theorem \ref{thmscht}. The proof of Theorem \ref{mainthm} will take the rest of the paper and will be ended in Section 13.
\begin{theorem}\label{mainthm}
Let $K$ be a $p$-adic field, $W$  its ring of integers and $k$  its residue field. Let us fix an embedding $\sigma : K \hookrightarrow \C$.
Let \[M \in \CHM(W)_\Q\] be a motive in mixed characteristic and consider the motives induced by pullbacks $M_{\vert\C} \in \CHM(\C)_\Q$ and $M_{\vert{k}} \in \CHM(k)_\Q$. Consider $V_B$ and $V_Z$ the two $\Q$-vector spaces defined as
\[V_B = R_B(M_{\vert\C} ) \hspace{0.5cm}
\textrm{and} \hspace{0.5cm}
V_Z = \Hom_{\NUM(k)_\Q}(\one, M_{\vert{k}}).\]

Let $q$ be a quadratic form on $M$, by which we mean a morphism in $ \CHM(W)_\Q$  of the form
$q: \Sym^2 M  \longrightarrow \one.$
 Consider the two $\Q$-quadratic forms induced by $q$ on $V_B$ and $V_Z$ respectively, 
\[q_B=R_B(q)  \hspace{0.5cm} \textrm{and}  \hspace{0.5cm} q_Z(\cdot)= (q_{\vert{k}} \circ \Sym^2(\cdot)).\]
Suppose that the following holds:
\begin{enumerate}

\item\label{assumptiondim2} The two $\Q$-vector spaces $V_B$  and $V_Z$ are of dimension $2$.

\item The pairing $q_B$ on $V_B$ is a polarization of Hodge structures.

\end{enumerate}

Then the quadratic form $q_Z$ is positive definite.

\end{theorem}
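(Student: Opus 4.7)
\textbf{Proof plan for Theorem \ref{mainthm}.}

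The overall strategy is to study $q_Z$ place-by-place over $\Q$ and exploit the product formula for Hilbert symbols together with the rank-two constraint. First I would observe that since $\dim V_B=\dim V_Z=2$ and $q_B$ is non-degenerate (being a polarization), the quadratic form $q_Z$ is non-degenerate as well. Indeed, by Proposition \ref{motivisupersingolari}, the assumption $\dim V_Z = \dim M_{/k}=2$ implies that for every classical realization $R$ one has $V_Z\otimes_\Q L = R(M_{/k})$, so $q_Z$ agrees with $R(q)$ after extension of scalars and hence is non-degenerate. Classifying rank-two $\Q$-quadratic forms up to isomorphism reduces, by Hasse--Minkowski, to knowing the discriminant in $\Q^\times/(\Q^\times)^2$ together with the Hasse invariants (or equivalently the local isomorphism classes) at every place; to prove $q_Z$ positive definite it therefore suffices to determine $q_Z\otimes \Q_v$ at every place $v$ of $\Q$, and the natural candidate to compare it with is $q_B$.

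Next I would handle the easy places. For each prime $\ell\neq p$, smooth proper base change produces a canonical isomorphism of $\ell$-adic realizations $R_\ell(M_{/\bar K})\cong R_\ell(M_{/\bar k})$ compatible with the realization of $q$. Since $V_Z$ has dimension $2=\dim R_\ell(M_{/k})$, Proposition \ref{motivisupersingolari} gives $V_Z\otimes \Q_\ell \isocan R_\ell(M_{/k})$, and similarly $V_B\otimes \Q_\ell \isocan R_\ell(M_{/\C})$. Composing, we obtain $q_Z\otimes \Q_\ell \cong q_B\otimes \Q_\ell$ for every $\ell\neq p$. At the archimedean place, the Hodge--Riemann relations, together with the fact that $q_B$ is a polarization of the rank-two Hodge structure $V_B$, pin down the signature of $q_B\otimes \R$ in terms of the Hodge types of $V_B$: it is positive definite exactly when $V_B$ is of pure type $(n,n)$ (with $n$ the weight of $q$), negative definite when the Hodge structure is concentrated in a single pair $(p,q)$ with $p\neq q$, and of signature $(1,1)$ otherwise.

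The main step, and the hard one, is to relate $q_Z\otimes \Q_p$ to $q_B\otimes \Q_p$. Here the $p$-adic comparison isomorphism identifies the crystalline realization $R_\crys(M_{/k})$ and the de Rham realization $R_\dR(M_{/K})$ after base change to $B_\crys$, and under this identification $V_Z\otimes \Q_p$ corresponds to a $\Q_p$-lattice inside $V_B\otimes B_\crys$ (and vice-versa, each of the two $\Q_p$-structures is recovered from the other using Frobenius invariants and the de Rham filtration). The plan is to pick adapted bases on both sides: a basis of $V_Z\otimes \Q_p$ consisting of algebraic classes (so that the absolute Frobenius $\vp$ acts as the identity after the appropriate Tate twist) and a basis of $R_\dR(M_{/K})$ adapted to the Hodge filtration (which, in rank two, is either trivial or reduces to the datum of a single line). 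These constraints force the $2\times 2$ period matrix $f\in M_{2\times 2}(B_\crys)$ expressing one basis in terms of the other to have entries lying in very explicit Fontaine subrings; Sections 8 and 9 of the paper are devoted to this computation. The hard part is exactly this explicit identification of $f$: one must argue that the pair (Frobenius $=\id$, Hodge line prescribed) has essentially a unique solution up to the ambiguity that preserves the Hilbert-symbol invariants of the resulting quadratic form.

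Finally I would assemble the information. Writing the identity $q_Z\otimes \Q_p = f^\intercal\cdot (q_B\otimes \Q_p)\cdot f$ inside quadratic forms over $B_\crys$, the explicit form of $f$ lets one read off the discriminant of $q_Z\otimes \Q_p$ and an element that it represents, which is all that is needed in rank two to determine its class in the Witt group of $\Q_p$. One then checks that $q_Z\otimes \Q_p$ and $q_B\otimes \Q_p$ are isomorphic over $\Q_p$ in exactly those cases where $q_B\otimes \R$ is positive definite, and fail to be isomorphic precisely when $q_B\otimes \R$ is not positive definite (the failure at $p$ matching the one at $\infty$). Combining this with the isomorphisms $q_Z\otimes \Q_\ell\cong q_B\otimes \Q_\ell$ for $\ell\neq p$ and the product formula $\prod_v (q_Z,-1)_v = 1$ (together with the matching of discriminants), we conclude that $q_Z\otimes \R$ must be positive definite, which in rank two means $q_Z$ itself is positive definite.
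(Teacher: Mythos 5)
Your local-to-global strategy is the same as the paper's (compare Proposition \ref{padicreduction} and Corollary \ref{signaturetop}): isomorphism at all $\ell\neq p$ by smooth proper base change, a sign at $\infty$ from the Hodge--Riemann relations, an explicit $p$-adic period computation inside $B_\crys$, and a conclusion via the product formula. You have correctly located the crux of the proof at the prime $p$ and identified the right leverages there (Frobenius fixes $V_{Z,p}$ because it is spanned by algebraic classes; the Hodge filtration singles out a line).

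The genuine gap is in your archimedean step. You claim that $q_B\otimes\R$ is positive definite exactly when $V_B$ has pure type $(n,n)$ and negative definite whenever the Hodge types split as $(p,q),(q,p)$ with $p\neq q$. What the Hodge--Riemann relations actually give is a parity, not a dichotomy: after Tate-twisting to weight zero and writing the Hodge types of $V_B$ as $(i,-i)$ and $(-i,i)$ with $i\geq 0$, the form $q_B\otimes\R$ is positive definite when $i$ is even and negative definite when $i$ is odd (this is what Proposition \ref{padicreduction} records). Your characterization agrees with the truth only for $i\in\{0,1\}$, and the error propagates: your later claim that $q_{Z,p}\cong q_{B,p}$ if and only if $q_B\otimes\R$ is positive definite would, given your criterion, force non-isomorphism at $p$ for every $i\neq 0$, whereas the paper's period computation (Corollary \ref{brinon}, Lemma \ref{notnorm} and the ramified analogues) shows $q_{Z,p}\cong q_{B,p}$ precisely when $i$ is even, because $\lambda_i\cdot\varphi(\lambda_i)=1/p^i$ is a norm from the quadratic field $F$ of Proposition \ref{propendo} exactly when $i$ is even. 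For abelian fourfolds only $i\in\{0,1\}$ can occur, so your slip is invisible in that application, but Theorem \ref{mainthm} permits any $i$ and a correct proof must carry the parity of $i$ through both the archimedean and the $p$-adic steps, as the paper does.
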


\begin{remark}\label{hypothesismainthm}
\begin{enumerate}
\item The main example of such a motive $M$ we have in mind is an exotic motive of an abelian fourfold (see Section \ref{sectionexotic}). Another example coming from geometry is given in Proposition \ref{propfermat}.
\item\label{homvsrat} One can actually make the  hypothesis a little bit more flexible and work with homological motives instead of Chow motives. This will not matter for our application to abelian varieties.
\end{enumerate}
\end{remark}
\begin{prop}\label{tortura}
Under the hypothesis of Theorem \ref{mainthm} the following holds:
\begin{enumerate}
\item The quadratic form $q_Z$   on $V_Z$ is non-degenerate. 
\item  Given a classical realization $R$, the vector space $R(M_{\vert{k}})$  is of dimension two.
\item Given a classical realization $R$, the vector space $R(M_{\vert{k}})$ is spanned by algebraic classes.
\item Numerical equivalence on $\Hom_{\CHM(k)_\Q}(\one,M_{\vert{k}})$ coincides with the homological equivalence for any classical cohomology.
\item\label{giorno} Fix a classical realization $R$ and call $L$ the field of coefficients of  $R$. Then the equality $V_Z\otimes _\Q L = R(M)$ holds.

\end{enumerate}
\end{prop}
\begin{proof}
Let us start with (2). By the comparison theorem between singular and $\ell$-adic cohomology, we have ${\dim_\Q V_B = \dim_{\Q_\ell}R_\ell(M_{\vert{\C}})}$, for all primes $\ell$, including $\ell=p$.  Then, by smooth proper base change we have   $\dim_{\Q_\ell}R_\ell(M_{\vert{\C}})=\dim_{\Q_\ell}R_\ell(M_{\vert{k}})$ for all $\ell\neq p$ and finally by the $p$-adic comparison theorem we have
 $\dim_{\Q_p}R_p(M_{\vert{\C}})=\dim_{\Frac(W(k))}R_\crys(M_{\vert{k}})$. This concludes (2) as, by hypothesis, $\dim_\Q V_B=2$. 
 
 The proof of (3)-(5) goes as in Proposition \ref{motivisupersingolari}. There, the hypothesis that the motive was of abelian type\footnote{For our main application the motive $M$ will be anyway of abelian type.} was used to ensure that all  realizations have the same dimension (see Remark \ref{remarkrank}). Here, this is replaced by part (2).
 
 Let us now show part (1). First notice that it is enough to show that $R(q_{\vert{k}})$ is non-degenerate (for some classical realization) because of parts (2)-(5).
 Then, again by using the comparison theorems, this is equivalent to the fact that $R_B(q)$ is non-degenerate. On the other hand this is the case as $R_B(q)$ is a polarization of Hodge structures.
\end{proof}
\begin{proof}[Proof of Theorem \ref{thmscht}]
Let $A$ be an abelian fourfold and $L$ be a hyperplane section. By Proposition \ref{reducefinite} we can suppose that $A$ and $L$ are defined over a finite field $k=\mathbb{F}_q$. Notice that, in order to prove Theorem \ref{thmscht}, we can (and will) replace $k$ by a finite extension.

Consider now the decomposition from Proposition \ref{primadecomposizione}(\ref{decomposizioneQ}) and, among the factors of this decomposition, consider 
those that are exotic (Definition \ref{defexotic}). Then there exist a finite extension $k'$ of $k$ and a  CM-structure for $A\times_k k'$ such that any exotic motive  of $\mathfrak{h}^{4}(A\times_k k')$ has dimension two (Proposition \ref{exoticdim2}).  By Theorem \ref{HondaTate}, the CM-structure can be lifted    to characteristic zero.  Moreover, by Proposition \ref{anypolarization}, it is enough to work with a single $L$ for a given abelian fourfold. By Corollary \ref{CMstab} we can choose such an $L$ so that it lifts to characteristic zero and the CM-structure is $*$-stable (Definition \ref{definitionCM}).

Now, by Lemma \ref{accoppiamenti1} we can work with the pairing $\langle \cdot, \cdot \rangle_{1,\mot}^{\otimes 4}$ instead of $\langle \cdot, \cdot \rangle_{4,\mot}$. For this pairing, the decomposition from Proposition \ref{primadecomposizione}(\ref{decomposizioneQ}) is orthogonal, hence we can work with a single motive of the decomposition. By Propositions \ref{oklefschetz} and \ref{talvez}(\ref{generatolefschetz}) we are reduced to motives that are exotic.

Finally, those exotic motives $\mathcal{M}_I$ are settled by Theorem \ref{mainthm} by setting $M=\mathcal{M}_I(2)$. Notice that all the hypothesis of Theorem \ref{mainthm} are satisfied. Indeed, the motive lives in mixed characteristic (Corollary \ref{lifting motives}), together with its quadratic form (because $L$ lifts to characteristic zero). The space $V_B$ is clearly of dimension two; so it is $V_Z$ by   Proposition  \ref{talvez}(\ref{generatoalgebrico}).  Last, the quadratic form $q_B=R_B(\langle \cdot, \cdot \rangle_{1,\mot}^{\otimes 4})$ is a polarization as $R_B(\langle \cdot, \cdot \rangle_{1,\mot})$ is so.
\end{proof}

\section{Quadratic forms}
We recall here some classical facts on quadratic forms. They will allow us to reduce Theorem \ref{mainthm} to a $p$-adic question (Proposition \ref{padicreduction}). For simplicity, we will  work only in the context we will need later, namely with non-degenerate \nobreak{$\Q$-quadratic} forms of rank $2$.
In what follows $\Q_\nu$ denotes the completion of $\Q$ at the place $\nu$.
\begin{definition}\label{Hilbert}
Let $q$ be a $\Q$-quadratic form of rank $2$. Define $\varepsilon_\nu(q)$, the Hilbert symbol of $q$ at $\nu$, as  $+1$ if the equation
\[x^2-q(y,z)=0\]
has a nonzero solution in $x,y,z\in \Q_\nu$, and as $-1$ otherwise. Depending on the context we may write $\varepsilon_p(q)$ or $\varepsilon_\R(q)$.
\end{definition}

\begin{remark}\label{remarksignature}
Let $q$ be a $\Q$-quadratic form of rank $2$. It is positive definite if and only if its discriminant is positive and $\varepsilon_\R(q)=+1$
and it is negative definite if and only if its discriminant is positive and $\varepsilon_\R(q)=-1.$
\end{remark}

\begin{prop}[{\cite[\S 2.3]{cours}}]\label{localinvariant} 
Let $p$ be a prime number and $q_1$ and $q_2$   two non-degenerate $\Q$-quadratic forms of rank $2$. Then 
\[q_1 \otimes \Q_p \cong q_2 \otimes \Q_p\]
if and only if the discriminants of $q_1$ and $q_2$ coincide in $\Q_p^*/(\Q_p^*)^2$ and 
\[\varepsilon_p(q_1)=\varepsilon_p(q_2).\]
\end{prop}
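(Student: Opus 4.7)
This is a standard result in the theory of quadratic forms over local fields (see Serre, \emph{Cours d'arithm\'etique}, Ch.~IV), so my plan only sketches the main ideas. The forward direction is automatic since both the discriminant class in $\Q_p^*/(\Q_p^*)^2$ and the Hilbert symbol $\varepsilon_p$ are invariants of the isomorphism class. For the converse, I would diagonalize each form over $\Q_p$, writing $q_i \cong \langle a_i, b_i \rangle$ with common discriminant $a_1 b_1 \equiv a_2 b_2 \equiv d$ modulo squares.

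The key observation, read directly off Definition~\ref{Hilbert}, is that $\varepsilon_p(\langle a, b \rangle) = +1$ if and only if $\langle a, b \rangle$ represents $1$ over $\Q_p$: a nontrivial zero $(x,y,z)$ of $x^2 - ay^2 - bz^2$ with $x \neq 0$ becomes, after dividing by $x^2$, a representation of $1$, and conversely any representation of $1$ produces such a zero with $x = 1$. Hence, if $\varepsilon_p(q_1) = \varepsilon_p(q_2) = +1$, each $q_i$ represents $1$ and therefore decomposes, via the orthogonal complement of a vector of norm $1$, as $\langle 1, d \rangle$. The two forms then agree over $\Q_p$.

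The remaining case $\varepsilon_p(q_1) = \varepsilon_p(q_2) = -1$ is where the real work lies: both forms are anisotropic, and I must prove uniqueness (up to isomorphism) of anisotropic binary forms of fixed discriminant $d$. The plan is to pick any $c \in \Q_p^*$ represented by $q_1$; then $q_1 \cong \langle c, cd \rangle$, which up to the scalar $c$ is the norm form of the quadratic \'etale algebra $\Q_p[\sqrt{-d}]$. Its set of represented values is the coset $c \cdot N_{\Q_p[\sqrt{-d}]/\Q_p}\Q_p[\sqrt{-d}]^\times$. A short Hilbert-symbol bilinearity computation then shows that $q_2$, having the same discriminant and Hilbert symbol, also represents $c$, and hence diagonalizes as $\langle c, cd \rangle \cong q_1$.

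The main obstacle is precisely this uniqueness step for anisotropic forms, which depends on the structure of the local square-class group $\Q_p^*/(\Q_p^*)^2$ (of order $4$ for odd $p$, of order $8$ for $p=2$, requiring a somewhat more delicate case analysis) and, implicitly, on local class field theory identifying the norm subgroup. Everything else is formal manipulation of diagonalized binary forms together with the defining property of the Hilbert symbol.
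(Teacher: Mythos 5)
Your proposal is correct, and since the paper supplies no proof here—it simply cites Serre, \emph{Cours d'arithm\'etique}, Ch.~IV—your sketch is precisely a reconstruction of that reference's argument: reduce to showing the forms represent a common element $c$, identify $c\langle 1,d\rangle$ with $c$ times the norm form of $\Q_p(\sqrt{-d})$, and use Hilbert-symbol bilinearity, via $(c,cd)=(c,c)(c,d)=(c,-d)$, to conclude that $(c,-d)=(c',-d)$ forces $cc'\in N_{\Q_p(\sqrt{-d})/\Q_p}$. The only point you leave tacit (and should make explicit if you write this out) is that a nontrivial zero of $x^2-q(y,z)$ with $x=0$ makes $q$ isotropic, hence a hyperbolic plane that represents $1$ regardless, so the ``represents $1$'' reformulation of $\varepsilon_p(q)=+1$ really is an equivalence.
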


\begin{theorem}[{\cite[\S 3.1]{cours}}]\label{productformula} 
Let $q$ be non-degenerate $\Q$-quadratic form of rank $2$ and $\varepsilon_\nu(q)$ be as in Definition \ref{Hilbert}. Then for all but finite places $\nu$ the equality $\varepsilon_\nu(q)=+1$ holds. Moreover, the following product formula running on all places holds
\[\prod_\nu  \varepsilon_\nu(q)= +1.\]
\end{theorem}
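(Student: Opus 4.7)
The plan is to reduce this (classical) statement to the product formula for the Hilbert symbol, hence ultimately to quadratic reciprocity and its two supplementary laws. First, I would diagonalize $q$ over $\Q$: by Gram--Schmidt one can choose a basis in which $q(y,z) = ay^2 + bz^2$ for some $a,b \in \Q^\times$. Under this normalization the defining equation $x^2 - q(y,z) = 0$ becomes $x^2 - ay^2 - bz^2 = 0$, whose non-trivial solvability in $\Q_\nu$ is by definition the condition $(a,b)_\nu = +1$ for the classical Hilbert symbol. So $\varepsilon_\nu(q) = (a,b)_\nu$, and both claims reduce to well-known facts about $(a,b)_\nu$.

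For the almost-everywhere triviality, I would use bimultiplicativity of $(\cdot,\cdot)_\nu$ modulo squares to pass to integer representatives of $a$ and $b$. Then for every prime $p \neq 2$ with $p \nmid ab$ both $a$ and $b$ are $p$-adic units, and a short application of Hensel's lemma produces a non-trivial $\Q_p$-point on the conic $x^2 - ay^2 - bz^2 = 0$, giving $(a,b)_p = +1$. Only the prime $2$, the infinite place, and the odd primes dividing $ab$ can then contribute, which is a finite set.

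For the product formula, again by bimultiplicativity I would reduce to the case where $a,b \in \{-1\} \cup \{\textrm{primes}\}$ and then proceed case by case. The cases $(-1,-1)$, $(-1,\ell)$ and $(2,\ell)$ for $\ell$ an odd prime reduce to the supplementary laws of quadratic reciprocity; the remaining case $(\ell,\ell')$ with $\ell \neq \ell'$ distinct odd primes is exactly quadratic reciprocity. The main obstacle is of course this last case, which is genuinely deep, but since the entire argument is completely classical and written out in \cite[\S 3.1]{cours}, I would carry out only the reduction $\varepsilon_\nu(q) = (a,b)_\nu$ and defer to Serre for the remainder.
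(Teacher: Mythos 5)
The paper does not prove Theorem \ref{productformula}; it simply cites \cite[\S 3.1]{cours} (Serre) as a classical fact. Your sketch is a correct outline of exactly the argument given there: diagonalize $q$ so that $\varepsilon_\nu(q)$ becomes the classical Hilbert symbol $(a,b)_\nu$, observe that $(a,b)_p=+1$ for odd $p \nmid ab$ via Hensel, and reduce the product formula by bimultiplicativity to the case of $\pm 1$ and primes, where it is equivalent to quadratic reciprocity and its supplements. This matches the intent of the citation, and deferring the reciprocity step to Serre is entirely reasonable.
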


\begin{corollary}\label{signaturetop}
Let $q_1$ and $q_2$  be two non-degenerate $\Q$-quadratic forms of rank $2$ and let $p$ be a prime number. Suppose that, for all primes $\ell$ different from $p$, we have
\[q_1 \otimes \Q_\ell \cong q_2 \otimes \Q_\ell.\]
Then $q_1$ is positive definite if and only if one of the following two cases happens:
\begin{enumerate}
\item  The quadratic forms $q_1\otimes \Q_p$ and $q_2\otimes \Q_p$ are isomorphic and $q_2$ is positive definite.
\item  The quadratic forms $q_1\otimes \Q_p$ and $q_2\otimes \Q_p$ are not isomorphic and $q_2$ is negative definite.
\end{enumerate}
\end{corollary}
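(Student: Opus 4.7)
The plan is to combine Proposition~\ref{localinvariant}, the product formula (Theorem~\ref{productformula}), and the signature criterion (Remark~\ref{remarksignature}). The strategy is to first upgrade the local equivalence of $q_1$ and $q_2$ at every $\ell\neq p$ to the agreement of their discriminants in every completion, and then to use the product formula to trade information at $p$ for information at $\R$. Write $d_i\in\Q^*/(\Q^*)^2$ for the discriminant of $q_i$.

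By Proposition~\ref{localinvariant} the hypothesis translates, for each finite $\ell\neq p$, into the two equalities $d_1\equiv d_2$ in $\Q_\ell^*/(\Q_\ell^*)^2$ and $\varepsilon_\ell(q_1)=\varepsilon_\ell(q_2)$. Applying Theorem~\ref{productformula} separately to $q_1$ and to $q_2$ and dividing the two product formulas, every factor at a finite $\ell\neq p$ cancels and one is left with the identity
\[
\varepsilon_p(q_1)\,\varepsilon_\R(q_1)\;=\;\varepsilon_p(q_2)\,\varepsilon_\R(q_2).
\]
Next the class $\delta:=d_1/d_2\in\Q^*/(\Q^*)^2$ is a local square at every finite $\ell\neq p$, and the plan is to deduce that $\delta=1$. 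This relies on the standard fact that any non-trivial class in $\Q^*/(\Q^*)^2$ fails to be a local square at a positive-density set of primes (Chebotarev applied to $\Q(\sqrt\delta)/\Q$); alternatively, observing that odd $\ell$-adic valuations of $\delta$ are forbidden reduces to the four candidates $\delta\in\{\pm 1,\pm p\}$, each non-trivial one being eliminated by quadratic reciprocity. In any case $d_1\equiv d_2$ in every completion of $\Q$, at $p$ and at $\R$ as well.

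With discriminants agreeing everywhere, Proposition~\ref{localinvariant} applied at $p$ yields
\[
q_1\otimes\Q_p\cong q_2\otimes\Q_p \;\Longleftrightarrow\; \varepsilon_p(q_1)=\varepsilon_p(q_2) \;\Longleftrightarrow\; \varepsilon_\R(q_1)=\varepsilon_\R(q_2),
\]
the last equivalence coming from the displayed identity above. Moreover $d_1$ and $d_2$ have the same real sign, so by Remark~\ref{remarksignature} the two forms are simultaneously definite or indefinite. In the indefinite case $q_2$ is neither positive nor negative definite, both clauses (1) and (2) fail, and $q_1$ is also not positive definite, so the equivalence is trivial. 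In the definite case, Remark~\ref{remarksignature} identifies positive (resp.\ negative) definiteness with $\varepsilon_\R=+1$ (resp.\ $-1$), and a case distinction on whether $q_2$ is positive or negative definite recovers exactly cases (1) and (2) of the statement. The main obstacle is the global step $\delta=1$: the product formula gives the Hilbert symbol identity for free, but passing from the local squareness of $\delta$ away from $p$ to genuine global triviality requires an essentially global input of Chebotarev type.
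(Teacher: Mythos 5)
Your argument is correct and essentially identical to the paper's: both deduce the identity $\varepsilon_\R(q_1)\varepsilon_p(q_1)=\varepsilon_\R(q_2)\varepsilon_p(q_2)$ from the product formula, upgrade the agreement of discriminants at all $\ell\neq p$ to a global equality in $\Q^*/(\Q^*)^2$, and then split into cases according to the sign of the common discriminant and $\varepsilon_\R(q_2)$. The global step you single out as the main obstacle is a genuine local-to-global input but is standard; the paper simply cites Ireland--Rosen for it, and your Chebotarev / quadratic-reciprocity justification is a valid way to supply it.
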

\begin{proof}

The $\ell$-adic hypothesis implies in particular that the discriminants of $q_1$ and $q_2$ coincide in $ \Q_\ell^*/(\Q_\ell^*)^2$ 
 for all   $\ell\neq p$. This implies that they coincide in $ \Q^*/(\Q^*)^2$
by \cite[Theorem 3 in 5.2]{Ireland}.

If the discriminants are negative, none of the conditions in the statement holds and the equivalence is clear. From now on we suppose that the discriminants are   positive. By Remark \ref{remarksignature}, $q_1$ is positive definite if and only $\varepsilon_\R(q_1)=+1.$

There are then two cases, namely $q_2$ is positive definite, respectively $q_2$ is negative definite. Again by Remark \ref{remarksignature} they are equivalent to $\varepsilon_\R(q_2)=+1$, respectively $\varepsilon_\R(q_2)=-1$. 

\

Now, Theorem \ref{productformula} implies that
$\prod_\nu  \varepsilon_\nu(q_1)=\prod_\nu  \varepsilon_\nu(q_2).$
Combining this with the $\ell$-adic isomorphisms we deduce
\[\varepsilon_\R(q_1)\varepsilon_p(q_1)=\varepsilon_\R(q_2)\varepsilon_p(q_2).\] This means that $q_1$ is positive definite if and only if
 \[\varepsilon_p(q_1)=\varepsilon_\R(q_2)\varepsilon_p(q_2).\]
 This relation is equivalent to the fact that one of the following two situations hold:
 \begin{enumerate}
\item   $q_2$ is positive definite and $\varepsilon_p(q_1)=\varepsilon_p(q_2).$
\item  $q_2$ is negative definite and $\varepsilon_p(q_1)\neq\varepsilon_p(q_2).$
\end{enumerate}
As the discriminant of $q_1$ and $q_2$ coincide, the equality $\varepsilon_p(q_1)=\varepsilon_p(q_2)$ is equivalent to the fact that $q_1\otimes \Q_p$ and $q_2\otimes \Q_p$ are isomorphic (Proposition \ref{localinvariant}).
\end{proof}

\begin{prop}\label{padicreduction}
Let us keep notation from Theorem \ref{mainthm}. Let $p$ be the characteristic of $k$ and $i$ be the unique non-negative integer such that the Hodge structure $V_B$ is of type $(i,-i)$ and  $(-i,i)$. Then, the quadratic form $q_Z$ is positive definite if and only if the  following   holds:
\begin{enumerate}
\item When $i$ is even: the quadratic forms $q_B\otimes \Q_p$ and $q_Z\otimes \Q_p$ are isomorphic.
\item  When $i$ is odd: the quadratic forms $q_B\otimes \Q_p$ and $q_Z\otimes \Q_p$ are not isomorphic.
\end{enumerate}
\end{prop}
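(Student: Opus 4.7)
The plan is to apply Corollary \ref{signaturetop} to the pair $q_1 := q_Z$ and $q_2 := q_B$, both viewed as $\Q$-quadratic forms of rank $2$. To do so I need to check three ingredients: non-degeneracy of both forms, an $\ell$-adic comparison for every prime $\ell \neq p$, and the real signature of $q_B$.

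\textbf{Non-degeneracy and rank.} Both forms have rank $2$ by hypothesis. Non-degeneracy of $q_Z$ is Proposition \ref{tortura}(1), and $q_B$ is non-degenerate since it is a polarization of the Hodge structure $V_B$.

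\textbf{$\ell$-adic comparison.} For each $\ell \neq p$ I would chain three canonical isomorphisms coming from the motivic nature of everything in sight: the identification $V_Z \otimes \Q_\ell \cong R_\ell(M_{/k})$ from Proposition \ref{tortura}(5); smooth proper base change for the $W$-motive $M$, giving $R_\ell(M_{/k}) \cong R_\ell(M_{/K})$; and the Betti--$\ell$-adic comparison theorem, giving $R_\ell(M_{/K}) \cong V_B \otimes \Q_\ell$ via the embedding $\sigma$. All three are natural in $M$ and hence transport the motivic quadratic form $q$ coherently, yielding $q_Z \otimes \Q_\ell \cong q_B \otimes \Q_\ell$.

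\textbf{Signature of $q_B$.} Here I invoke the Hodge--Riemann bilinear relations. Since $V_B$ is a polarised $\Q$-Hodge structure of weight $0$ concentrated in Hodge types $(i,-i)$ and $(-i,i)$, its real locus is spanned by classes of the form $u + \bar u$ with $u \in V_B^{i,-i}$, and the Weil operator $C$ acts on $u$ by $(\sqrt{-1})^{\,i-(-i)} = (-1)^i$. Since $\langle u,u\rangle = \langle \bar u,\bar u\rangle = 0$ for type reasons,
\[
q_B(u+\bar u,\,u+\bar u) \;=\; 2\,\langle u,\bar u\rangle,
\]
and Hodge--Riemann positivity of the Hermitian form $\langle C\cdot,\bar\cdot\rangle$ pins down the right-hand side to have sign $(-1)^i$. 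Hence $q_B$ is positive definite when $i$ is even and negative definite when $i$ is odd.

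Corollary \ref{signaturetop} applied to $q_1 = q_Z$ and $q_2 = q_B$ then reads off exactly the dichotomy in the statement: in the case $i$ even (where $q_B$ is positive definite), $q_Z$ is positive definite iff $q_Z \otimes \Q_p \cong q_B \otimes \Q_p$; in the case $i$ odd (where $q_B$ is negative definite), $q_Z$ is positive definite iff $q_Z \otimes \Q_p \not\cong q_B \otimes \Q_p$. None of the three ingredients is technically difficult in isolation; the only mildly subtle point is to verify that the composition of isomorphisms in the second ingredient is compatible with the motivic form $q$, but this is automatic since each step is a realization of a morphism in $\CHM(W)_\Q$. The real work, of course, is postponed: it lies in the subsequent $p$-adic Hodge-theoretic computation that will decide which side of the dichotomy actually holds.
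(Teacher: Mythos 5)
Your proof is correct and follows the same route as the paper's: non-degeneracy of both forms, the $\ell$-adic comparison for $\ell\neq p$ via Proposition \ref{tortura}, smooth proper base change and the Betti--\'etale comparison, the sign of $q_B$ via Hodge--Riemann, and finally Corollary \ref{signaturetop} with $q_1=q_Z$, $q_2=q_B$. You spell out the Hodge--Riemann sign computation (which the paper treats as standard), but the structure and every key step are the same.
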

\begin{proof}
By  Proposition \ref{tortura}(\ref{giorno}), we have that $q_Z \otimes \Q_\ell = R_\ell(q_{\vert{k}})$. By the comparison theorem, we have $q_B \otimes \Q_\ell = R_\ell(q_{\vert{\C}})$. Combining these equalities with smooth proper base change in $\ell$-adic cohomology   we deduce that \[q_B \otimes \Q_\ell = q_Z \otimes \Q_\ell.\]
On the other hand, by hypothesis, $q_B$ is a polarization for the Hodge structure $V_B$ hence it is positive definite if $i$ is even and negative definite if $i$ is odd. We can now conclude by applying Corollary \ref{signaturetop} to $q_1=q_Z$ and $q_2=q_B$.
\end{proof}
\section{The $p$-adic comparison theorem}
We keep notation from Theorem \ref{mainthm} and Proposition \ref{padicreduction}.  
The aim of this and next sections is to compare the two $\Q_p$-quadratic spaces of rank two
\[ (V_{B,p},q_{B,p}) \coloneqq (V_B,q_B)\otimes_\Q \Q_p \hspace{0.5cm}  \textrm{and}\hspace{0.5cm}  (V_{Z,p},q_{Z,p}) \coloneqq (V_Z,q_Z)\otimes_\Q \Q_p\]
and deduce from this study Theorem \ref{mainthm}  (via Proposition \ref{padicreduction}). 

The core of the proof is in the next sections, we give here some preliminary results. We start by recalling the $p$-adic comparison theorem (Theorem \ref{tsuji}) and then we apply it to our geometric situation.  

\

Recall that $K$ denotes the $p$-adic field over which the motive $M$ from Theorem \ref{mainthm} is defined.

\begin{theorem}\label{javier} \cite{Fonfon}
There are two integral $\Q_p$-algebras \[B_\crys \subset B_\dR\] the first endowed with  actions of the Galois group $\Gal_K$ and of the absolute Frobenius $\varphi$ and the second endowed with a discrete valuation,  
\[\nu: B_\dR \longrightarrow \Z \cup \{+\infty\}\]
hence in particular endowed with a decreasing filtration \[\Fil^i B_\dR=\{x \in B_\dR, \nu(x)\geq i\}\] verifying 
\[\Fil^i \cdot \Fil^j \subset \Fil^{i+j}.\] 
The $\Q_p$-algebra $ B_\dR$ contains $\overline{\Q}_p$, an algebraic closure of $\Q_p$, and the intersection $ B_\crys \cap \overline{\Q}_p$ is the biggest non-ramified extension of $\Q_p$ inside $\overline{\Q}_p$.

Finally, the following equality holds \cite[Theorem 5.3.7]{Fontp}
 \begin{equation}\label{eq:fontaine}(B_\crys^{\varphi=\id} \cap \Fil^0B_\dR)=\Q_p.\end{equation}
\end{theorem}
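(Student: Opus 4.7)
The statement is a compilation of foundational results of Fontaine, so my plan is less to discover a new argument than to recall and organise the relevant constructions, reserving the real work for the last equality, which is the only non-formal input.

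First I would define the rings. Starting from the tilt $\mathcal{O}_{\C_p}^{\flat} = \varprojlim_{x\mapsto x^p} \mathcal{O}_{\C_p}/p$ and the ring of Witt vectors $A_{\inf} = W(\mathcal{O}_{\C_p}^\flat)$, one uses Fontaine's canonical surjection $\theta: A_{\inf} \to \mathcal{O}_{\C_p}$, whose kernel is principal. The ring $B_{\dR}^{+}$ is the $\ker\theta$-adic completion of $A_{\inf}[1/p]$; choosing a compatible system $\varepsilon$ of $p$-power roots of unity and setting $t = \log[\varepsilon]$, one defines $B_{\dR} = B_{\dR}^{+}[1/t]$ and $\Fil^{i} B_{\dR} = t^{i} B_{\dR}^{+}$. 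For the crystalline variant, one takes the divided-power envelope of $\ker\theta$ inside $A_{\inf}$, completes $p$-adically, inverts $p$ to obtain $B_{\crys}^{+}$, and then sets $B_{\crys} = B_{\crys}^{+}[1/t]$. The Witt-vector Frobenius on $A_{\inf}$ extends to $\varphi$ on $B_{\crys}$, and the Galois action of $\Gal_{K}$ on $\C_{p}$ lifts by functoriality; the natural ring map $B_{\crys} \hookrightarrow B_{\dR}$ is compatible with all these structures.

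Next I would verify the listed properties one by one. Multiplicativity $\Fil^{i} \cdot \Fil^{j} \subset \Fil^{i+j}$ is immediate from $\Fil^{i} B_{\dR} = t^{i} B_{\dR}^{+}$ and the fact that $B_{\dR}^{+}$ is a subring. The embedding $\overline{\Q}_{p} \hookrightarrow B_{\dR}$ is obtained by sending an algebraic element to a compatible Teichm\"uller-type lift inside $B_{\dR}^{+}$, using Hensel's lemma for $B_{\dR}^{+}$, which is a complete discrete valuation ring with residue field $\C_{p}$. The identification $B_{\dR} \cap \overline{\Q}_{p} = \Q_{p}^{\mathrm{nr}}$ (inside $\overline{\Q}_{p}$) would then follow from the compatibility of $\varphi$ with the crystalline subring: elements that lift to $B_{\crys}$ must lie in the maximal unramified extension, because ramified elements would fail the compatibility with Frobenius on $A_{\inf}/p = \mathcal{O}_{\C_{p}}^{\flat}$.

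The main obstacle is the last equality $B_{\crys}^{\varphi = \id} \cap \Fil^{0} B_{\dR} = \Q_{p}$. This is a shadow of Fontaine's \emph{fundamental exact sequence}
\[
0 \longrightarrow \Q_{p} \longrightarrow B_{\crys}^{\varphi = \id} \longrightarrow B_{\dR}/B_{\dR}^{+} \longrightarrow 0,
\]
whose exactness sits at the heart of the whole theory: injectivity of the boundary map uses that $\varphi(t) = pt$, while surjectivity requires delicate analysis of specific elements of $B_{\crys}^{+}$. I do not see any shortcut, and every known approach ultimately rests on this input; I would therefore simply cite \cite[Theorem 5.3.7]{Fontp} for this clause and move on, since all the applications in the sequel only use the equality in the clean form stated here.
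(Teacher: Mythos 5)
The paper gives no proof of this theorem at all: it is stated purely as a citation of Fontaine (\cite{Fonfon}, and \cite[Theorem 5.3.7]{Fontp} for the final equality), and your proposal rightly reduces to the same two citations while adding a correct sketch of the standard constructions of $B_{\dR}$ and $B_{\crys}$. One thing worth noting: the statement as printed contains a slip --- since $\overline{\Q}_p \subset B_{\dR}$, the intersection $B_{\dR} \cap \overline{\Q}_p$ is trivially all of $\overline{\Q}_p$, and the intended assertion is $B_{\crys} \cap \overline{\Q}_p = \Q_p^{\mathrm{nr}}$; you implicitly make this correction in your discussion (you argue about elements lifting to $B_{\crys}$), which is the right reading.
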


\begin{theorem}[\cite{FontMess,Falt,CF}]\label{tsuji} 
There is an  equivalence of $\Q_p$-linear rigid categories 
\[D : \REP \longrightarrow \MOD\]
between the category $\REP$  of crystalline  $\Gal_K$-representations and the category $\MOD$ of admissible filtered $\varphi$-modules (see Convention (\ref{Hyodo-Kato})). This equivalence verifies the following properties:
\begin{enumerate}
\item\label{tsujican} There is a canonical identification 
\[ V \otimes B_\crys=  D(V)\otimes B_\crys \]
which is $\Gal_K$-equivariant and $\varphi$-equivariant. Moreover, the induced isomorphism
\[ V \otimes B_\dR=  D(V)\otimes B_\dR \]
respects the filtrations.

\item The functor $D$ is given by
\[V \mapsto D(V)= (V \otimes B_\crys)^{\Gal_K}\]
and its inverse is given by
\[ [W\otimes B_\crys]^{\varphi=\id} \cap  \Fil^0[W \otimes B_\dR] \mapsfrom W.\]

\item The equivalence $D$  is compatible with the two realization functors\footnote{This last fact is already present in the original works in $p$-adic Hodge theory although they are not written down in the motivic language. A reference is 
\cite[Theorem 1.2]{Niziol}, which can be immediately translated in our setting by  \cite[Proposition 4.2.5.1]{Andmot}.   This fact appears also in \cite[3.4.5]{Andmot} and it is implicitly  used in   \cite[7.4.1]{Andmot}. Nowadays much more general results are known, for instance this comparison of realization functors is available for mixed motives (i.e. motives of varieties which are not necessarely smooth and projective), see \cite[4.15]{NiziFred}.}
$R_p ((\cdot)_{\vert K}): \CHM(W) \rightarrow \REP$   and $R_\HK (\cdot): \CHM(W)\rightarrow \MOD$, namely we have
\[R_\HK (\cdot) = D \circ R_p ((\cdot)_{\vert K}).\]
\end{enumerate}
\end{theorem} 
\begin{corollary}\label{cortsuji}
There are canonical identifications (commuting with the extra structures):
\begin{equation} \label{eq:tsuji}
 (V_{B,p},q_{B,p}) =[(V_{Z,p},q_{Z,p})\otimes B_\crys]^{\varphi=\id} \cap  \Fil^0[(V_{Z,p},q_{Z,p})\otimes B_\dR].
\end{equation}
\begin{equation}\label{chissa}
 (V_{B,p},q_{B,p}) \otimes B_\crys=  (V_{Z,p},q_{Z,p}) \otimes B_\crys.
\end{equation}
\begin{equation}\label{chissa2}
 (V_{B,p},q_{B,p}) \otimes B_\dR=  (V_{Z,p},q_{Z,p}) \otimes B_\dR.
\end{equation}
Moreover, under these identifications,  the  equality of $\Q_p$-algebras  
\begin{equation}\label{stessaalgebra}
 \End_{\Gal_K}(V_{B,p}) = \End_{\varphi,\Fil^*}(V_{Z,p}\otimes \Frac(W(k)))
\end{equation}
holds.
\end{corollary}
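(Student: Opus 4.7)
The strategy is to combine Proposition \ref{tortura} with the $p$-adic comparison theorem (Theorem \ref{tsuji}), using Proposition \ref{tortura} to reinterpret the arithmetic datum $V_Z$ as a classical realization and then letting the machinery of admissible filtered $\varphi$-modules do the work. All compatibilities with the quadratic form $q$ will follow formally from the fact that $q$ is a morphism in $\CHM(W)_\Q$ and that every realization functor is tensor, so respects the bilinear form obtained by composition.

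First I would rewrite both sides of the desired identities in terms of classical realizations of the motive $M$. On the Betti side, the comparison between singular and $p$-adic \'etale cohomology over $\C$, combined with smooth proper base change from $\C$ to $\bar{K}$ (using the embedding $\sigma : K \hookrightarrow \C$), gives a canonical identification
\[
(V_{B,p}, q_{B,p}) \;=\; \bigl(R_p(M_{/K}),\, R_p(q_{/K})\bigr)
\]
as $\Q_p$-quadratic $\Gal_K$-representations. On the crystalline side, Proposition \ref{tortura}(5) applied to the classical realization $R_\crys$ (whose coefficient field is $\Frac(W(k))$) gives the identification
\[
V_Z \otimes_{\Q} \Frac(W(k)) \;=\; R_\crys(M_{/k}),
\]
and the quadratic form $q_Z \otimes \Frac(W(k))$ corresponds to $R_\crys(q_{/k})$, again because $q$ is motivic.

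Next I would feed these identifications into Theorem \ref{tsuji}. By part (3) of that theorem, the functor $D$ sends $R_p(M_{/K})$ to $R_\crys(M_{/k})$, and by part (1) there is a canonical $\Gal_K$- and $\varphi$-equivariant identification
\[
R_p(M_{/K}) \otimes B_\crys \;=\; R_\crys(M_{/k}) \otimes B_\crys
\]
(respecting filtrations after tensoring up to $B_\dR$). Substituting the two identifications of the previous paragraph yields (\ref{chissa}) and, by the same argument applied to $B_\dR$, also (\ref{chissa2}). The equality (\ref{eq:tsuji}) is then exactly the inverse functor formula in part (2) of Theorem \ref{tsuji} applied to $W = R_\crys(M_{/k})$. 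For each of these equalities, the compatibility with the quadratic form is automatic: $q$ is a morphism $\Sym^2 M \to \one$ in $\CHM(W)_\Q$, both sides of the comparison are values of tensor functors at this morphism, and the canonical isomorphism of Theorem \ref{tsuji} is itself a natural transformation of tensor functors, so it intertwines the two quadratic forms.

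Finally, for (\ref{stessaalgebra}), the equivalence of rigid categories $D : \REP \to \MOD$ induces, by functoriality, an isomorphism of $\Q_p$-algebras between the endomorphism rings of any object and its image. Applied to $R_p(M_{/K})$, this yields
\[
\End_{\Gal_K}\bigl(R_p(M_{/K})\bigr) \;=\; \End_{\varphi, \Fil^*}\bigl(R_\crys(M_{/k})\bigr),
\]
which, after substituting the two identifications from the first step, is precisely (\ref{stessaalgebra}). The main subtlety to keep track of in this whole argument is that $V_Z$ is a $\Q$-vector space which becomes the crystalline realization only after enlarging scalars to $\Frac(W(k))$; however, since both $B_\crys$ and $B_\dR$ contain $\Frac(W(k))$, the two possible meanings of $V_Z \otimes B_\crys$ (namely $V_{Z,p} \otimes_{\Q_p} B_\crys$ and $R_\crys(M_{/k}) \otimes_{\Frac(W(k))} B_\crys$) coincide, so no ambiguity arises.
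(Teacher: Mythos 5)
Your proof is correct and follows the same route as the paper: identify $V_{B,p}$ with $R_p(M_{/K})$ and $V_{Z,p}\otimes\Frac(W(k))$ with $R_\crys(M_{/k})$ via Proposition \ref{tortura}, apply Theorem \ref{tsuji}(3) to equate these under $D$, then invoke parts (1) and (2) of Theorem \ref{tsuji} plus tensor-functoriality for the quadratic forms and the $\Q_p$-linear equivalence property for (\ref{stessaalgebra}). The paper's proof is more compressed but identical in substance; your closing remark about the two possible meanings of $V_Z\otimes B_\crys$ is a good point to make explicit.
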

\begin{proof}
 Proposition \ref{tortura}(\ref{giorno}) gives the identification
\[  V_{Z,p}\otimes \Frac(W(k)) =  R_\crys (M_{\vert k}).\]
and we have the identification $R_\crys (M_{\vert k})= R_\HK (M)$, see Convention (\ref{Hyodo-Kato}).
On the other hand, we have the equality
\[V_{B,p}  =  R_p (M_{\vert K})\]
because of the comparison theorem between singular and $p$-adic cohomology.
Finally, Theorem \ref{tsuji}(3) implies
\[ D(R_p (M_{\vert K}))   =   R_\HK (M).\]
Altogether we have the equality
\[D(V_{B,p}) = V_{Z,p}\otimes \Frac(W(k)).\]
Notice that these identifications are compatible with the quadratic forms as realization functors and the equivalence $D$ are tensor functors.
Hence Theorem \ref{tsuji}(2) gives  (\ref{eq:tsuji}) and Theorem \ref{tsuji}(1) gives  (\ref{chissa}) and   (\ref{chissa2}).
The identification (\ref{stessaalgebra}) follows from the fact that $D$ is an equivalence of $\Q_p$-linear categories.
\end{proof}

The following proposition settles Theorem \ref{mainthm} in the case where $V_B$ is of type $(0,0)$. It turns out that this case is easier than the others as the quadratic spaces $q_{B,p}$ and $q_{Z,p}$ are not only isomorphic (as predicted by Proposition  \ref{padicreduction}) but also equal (through the identification (\ref{chissa})).

\begin{prop}\label{ordinarycase}
Suppose that the Hodge structure $V_B$ is of type $(0,0)$, then $q_{B,p}$ and $q_{Z,p}$ are isomorphic, hence Theorem \ref{mainthm}  holds true in this case.
\end{prop}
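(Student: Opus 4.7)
The plan is to combine two ingredients: the $p$-adic comparison isomorphism from Corollary \ref{cortsuji} and Fontaine's identity $B_\crys^{\varphi=\id}\cap \Fil^0 B_\dR = \Q_p$ from (\ref{eq:fontaine}). Since $V_B$ is of type $(0,0)$, the integer $i$ in Proposition \ref{padicreduction} is $0$, which is even; that proposition therefore reduces the statement of Theorem \ref{mainthm} in this case to the isomorphism $q_{B,p}\cong q_{Z,p}$.

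The observation making the $(0,0)$ situation easy is that the filtered $\varphi$-module $D(V_{B,p})=V_{Z,p}\otimes_{\Q_p}\Frac W(k)$ is maximally rigid. Being of Hodge type $(0,0)$ forces the Hodge filtration to be trivial ($\Fil^0$ is everything, $\Fil^1=0$), and the fact that $V_{Z,p}$ is spanned by algebraic classes in a motive of weight $0$ forces Frobenius to act as the identity on $V_{Z,p}$ (and in the usual way on the scalars $\Frac W(k)$). With these two observations in hand, the computation
\[\bigl[V_{Z,p}\otimes B_\crys\bigr]^{\varphi=\id}\cap \Fil^0\bigl[V_{Z,p}\otimes B_\dR\bigr]=V_{Z,p}\otimes \bigl(B_\crys^{\varphi=\id}\cap \Fil^0 B_\dR\bigr)=V_{Z,p}\]
is immediate from (\ref{eq:fontaine}).

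By Corollary \ref{cortsuji}, the same intersection recovers $V_{B,p}$, so $V_{B,p}$ and $V_{Z,p}$ coincide as $\Q_p$-subspaces of the common $B_\crys$-module $V_{B,p}\otimes B_\crys=V_{Z,p}\otimes B_\crys$. Since Corollary \ref{cortsuji} also asserts that $q_{B,p}$ and $q_{Z,p}$ extend to the same $B_\crys$-valued form on this module, the identification automatically respects the quadratic structures and we obtain $(V_{B,p},q_{B,p})\cong (V_{Z,p},q_{Z,p})$. The only non-formal point to check is the claim that Frobenius acts trivially on $V_{Z,p}$; this is where the weight hypothesis enters, since an algebraic class in a motive of weight $0$ corresponds, in the crystalline realization, to a Frobenius-fixed element of $\Fil^0$. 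This ``ordinary'' case is therefore settled almost tautologically by $p$-adic Hodge theory, and I anticipate that the harder non-ordinary cases treated in the next section will require an actual computation of $p$-adic periods rather than this direct appeal to (\ref{eq:fontaine}).
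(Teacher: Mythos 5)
Your proof is correct and follows the same route as the paper: both use the explicit description of $V_{B,p}$ from equation (\ref{eq:tsuji}), observe that the Frobenius is trivial (algebraic classes) and the filtration degenerate (Hodge type $(0,0)$), and invoke Fontaine's identity (\ref{eq:fontaine}) to conclude $q_{B,p}=q_{Z,p}$, finishing via Proposition \ref{padicreduction}. The reasoning matches the paper's argument essentially line by line.
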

\begin{proof}
First notice that   Frobenius acts trivially on $V_{Z,p}$ because the latter is spanned by  algebraic classes. Second, the hypothesis on the Hodge types gives  
$\Fil^0(V_{Z,p} \otimes K)=V_{Z,p} \otimes K$. Hence the relation (\ref{eq:tsuji}) implies 
\[(V_{B,p},q_{B,p}) =(V_{Z,p},q_{Z,p})\otimes (B_\crys^{\varphi=\id} \cap \Fil^0B_\dR).\]
Using (\ref{eq:fontaine}) we deduce the equality $q_{B,p} = q_{Z,p}$. Theorem \ref{mainthm}  then follows using  Proposition \ref{padicreduction}.
\end{proof}
\begin{remark}\label{boris}\begin{enumerate}
\item This proposition (together with the arguments of the previous sections) gives a full proof of Theorem \ref{thmscht} for ordinary abelian fourfolds. Indeed, in the ordinary case, all Galois invariant classes lift to $(0,0)$-classes as it can be shown with Shimura--Taniyama  formula  \cite[Lemma 5]{TateBour}. See Appendix \ref{geometricexamples} for related discussions.
\item\label{duccio} The hypothesis that $V_B$ is of type $(0,0)$ corresponds to the only case where one can hope that the algebraic classes in $V_Z$ might be lifted to characteristic zero, in which case Theorem \ref{mainthm} would follow from the Hodge--Riemann relations. We find an amusing coincidence that this conjecturally easier  case corresponds to an easier $p$-adic analysis.
\item The proof of the above proposition shows that, under the comparison isomorphisms of Corollary \ref{cortsuji}, one has the equality of $\Q_p$-vector spaces $V_{B,p} = V_{Z,p}$. The Hodge conjecture  predicts   that actually also the two $\Q$-structures $V_B$ and $V_Z$ should coincide as well. Is it possible to show the equality $V_{B} = V_{Z}$ without assuming the Hodge conjecture? We do not know. 

An analogous question can be formulated in the $\ell$-adic setting. Consider an ordinary abelian variety $A$  (of any dimension) together with its canonical lifting $\tilde{A}$. Fix an algebraic class on $A$. Does it corresponds to a Hodge class on $\tilde{A}$? This is a priori weaker than the Hodge conjecture: we do not ask that the algebraic cycle does lift to an algebraic cycle. Notice that if the answer to the question was affirmative then one would have a proof of the standard conjecture of Hodge type for $A$.
\end{enumerate}
\end{remark}

Thank to Proposition \ref{ordinarycase}  we are reduced to the case where $V_B$ is of type $(-i,i),(i,-i),$ for a positive integer $i$.
 It turns out that the only case which is interesting for the application to abelian fourfold is $i=1$, see Remark \ref{altreesotiche}(\ref{altreesotiche2}). We decided to work with a general $i>0$ to keep the possibility of applying  Theorem \ref{mainthm} to varieties other than abelian fourfolds, but the reader might find useful to think of the case $i=1$ in what follows.
 
 \begin{assumption}\label{ipos}
From now on we will suppose that the Hodge structure $V_B$ is not of type $(0,0)$. Equivalently there is a well-defined positive integer $i$ such that $\Fil^i(V_Z\otimes K)$ is a $K$-line and $\Fil^{i+1}(V_Z\otimes K)=0$.
\end{assumption}
 \begin{lemma}\label{isotrop}
 Under Assumption \ref{ipos} the line 
 $\Fil^i(V_Z\otimes K)$ is isotropic.
\end{lemma}
\begin{proof}
As the quadratic form $q$ is motivic its de Rham realization must respect the filtration, so $R_{\dR}(q)(\Fil^i(V_Z\otimes K)) \subset \Fil^{2i}R_{\dR}(\one).$ As $i$ is positive $\Fil^{2i}R_{\dR}(\one) =0.$
\end{proof}

\begin{prop}\label{propendo}
Under  Assumption \ref{ipos} the
$\Q_p$-algebra  $\End_{\Gal_K}(V_{B,p})$
is a field $F$ such that  $[F:\Q_p]=2$. Moreover, for all $v\in V_{B,p}$ and all $f\in F$,  we have the equality
\begin{equation} \label{eq:nonso}
 q_{B,p}(f \cdot v)=N_{F/\Q_p}(f) q_{B,p}(v).
\end{equation}
\end{prop}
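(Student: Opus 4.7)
The plan has three steps: (i) show $V_{B,p}$ is $\Gal_K$-irreducible via weak admissibility, (ii) identify $F$ as a quadratic extension of $\Q_p$ via the isotropic structure of $q$, and (iii) deduce the norm identity from the Hodge--Tate decomposition.

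For (i), since $V_{Z,p}$ is spanned by algebraic classes, the crystalline Frobenius $\varphi$ on $D := V_{Z,p}\otimes_{\Q_p}\Frac(W(k))$ acts trivially on $V_{Z,p}$ and is $\Frob$-semilinear over $\Frac(W(k))$, so every $\varphi$-stable $\Frac(W(k))$-line in $D$ has the form $V'\otimes_{\Q_p}\Frac(W(k))$ for some $\Q_p$-line $V'\subset V_{Z,p}$, and has Newton number $t_N=0$. Setting $\ell := \Fil^i(V_{Z,p}\otimes K)$, the induced filtration on $V'\otimes K$ gives $t_H=i$ if $V'\otimes K=\ell$ and $t_H=-i$ otherwise. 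Under Assumption \ref{ipos} we have $i\geq 1$, so the first case violates weak admissibility of $D$ (a subobject with $t_H>t_N$), while the second gives $t_H\neq t_N$ so the subobject is not itself admissible; via Theorem \ref{tsuji} and (\ref{stessaalgebra}), $V_{B,p}$ has no proper $\Gal_K$-subrepresentation.

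Schur's lemma then makes $F$ a $\Q_p$-division algebra acting faithfully on the $2$-dimensional space $V_{B,p}$, hence a field of degree $1$ or $2$ over $\Q_p$. The polarization condition $q_B(\Fil^p,\Fil^{1-p})=0$ with $p=i\geq 1$ (weight zero) forces $q_{Z,K}(\ell,\ell)=0$, making $\ell$ one of the two isotropic lines of the non-degenerate form $q_{Z,p}\otimes K$; both lines are defined over the quadratic subfield $F' := \Q_p(\sqrt{-\det q_{Z,p}})\subseteq K$, and step (i) prevents $\ell$ from being $\Q_p$-rational, so $F'/\Q_p$ is a proper quadratic extension. Any $g\in \End_{\Q_p}(V_{Z,p})$ preserving $\ell$ automatically preserves its Galois conjugate line $\ell^\#$ (because $g$ is $\Q_p$-linear and commutes with $\Gal(F'/\Q_p)$) and acts on $\ell$ by an eigenvalue in $F'$; this produces a $\Q_p$-algebra embedding $F\hookrightarrow F'$, necessarily an isomorphism, proving $[F:\Q_p]=2$.

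For the norm identity I would extend scalars to $\C_p$ and use the Hodge--Tate decomposition $V_{B,p}\otimes_{\Q_p}\C_p = V^{[i]}\oplus V^{[-i]}$ into the two one-dimensional weight spaces. Because $i\neq -i$, Galois invariance of $q_{B,p}$ together with the non-trivial cyclotomic twist on $V^{[\pm i]}\otimes V^{[\pm i]}$ forces each summand to be isotropic. Since $F$ commutes with $\Gal_K$, it preserves the decomposition and acts on $V^{[i]}$ and $V^{[-i]}$ through two $\Q_p$-algebra embeddings $\chi_1,\chi_2\colon F\hookrightarrow \C_p$; they must be distinct, else $F$ would act as scalars lying in $\Q_p$. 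Writing $v=v_1+v_2$ with $v_j\in V^{[\pm i]}$, isotropy of each summand gives
\[
q_{B,p}(fv,fv) = 2\chi_1(f)\chi_2(f)\, q_{B,p}(v_1,v_2) = N_{F/\Q_p}(f)\, q_{B,p}(v,v).
\]
The main obstacle I expect is step (i): the precise Hodge-and-Newton bookkeeping on filtered $\varphi$-subobjects, and the justification that triviality of $\varphi$ on algebraic classes forces every $\varphi$-stable $\Frac(W(k))$-line to be of the form $V'\otimes\Frac(W(k))$.
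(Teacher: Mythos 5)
Your proposal is correct in its essentials but takes a genuinely different route from the paper's proof. The paper defines $F$ directly as $\Q_p[SO(q_{B,p})]$, which has dimension two by inspection, shows $\Q_p[\Gal_K]$ has dimension at least two (by arguing that otherwise $\End_{\Gal_K}(V_{B,p})$ would be $M_{2\times 2}(\Q_p)$, forcing a one-step filtration on the crystalline side and contradicting Assumption~\ref{ipos}), concludes $F=\Q_p[\Gal_K]=\End_{\Gal_K}(V_{B,p})$, and finally rules out $F\cong\Q_p\times\Q_p$ by the admissibility argument. You instead first prove $\Gal_K$-irreducibility via a Hodge--Newton count on $\varphi$-stable lines of the filtered $\varphi$-module, then identify $F$ by studying the stabilizer of the isotropic line $\ell=\Fil^i$, and you derive the norm identity cleanly from the Hodge--Tate decomposition rather than inheriting it from the definition of $F$. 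Both arguments use the same inputs (weak admissibility and the isotropy of the filtration line); your route makes the norm formula more transparent, while the paper's is shorter because the desired relation is built into $\Q_p[SO(q_{B,p})]$.

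Two points deserve tightening. First, the phrase ``this produces a $\Q_p$-algebra embedding $F\hookrightarrow F'$, necessarily an isomorphism'' is not justified as written: from $F$ being a field of degree $\leq 2$ and $F\hookrightarrow F'$, one cannot rule out $F=\Q_p$. The fix is to observe that by~(\ref{stessaalgebra}) one has $F=\End_{\varphi,\Fil^*}(D)$, that $\End_{\varphi}(D)=\End_{\Q_p}(V_{Z,p})=M_{2\times 2}(\Q_p)$ (since $\varphi=\id\otimes\Frob$ on $D=V_{Z,p}\otimes\Frac(W(k))$ forces any $\varphi$-equivariant map to preserve the $\Q_p$-rational structure), and hence that $F$ is \emph{exactly} the stabilizer of $\ell$ in $M_{2\times 2}(\Q_p)$. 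Since $\ell$ is not $\Q_p$-rational by your step (i), this stabilizer is $F'$, giving $F=F'$ directly; the detour through Schur's lemma and a mere embedding is then unnecessary. Second, regarding the step you flag as your main worry: the claim that every $\varphi$-stable $\Frac(W(k))$-line of $D$ has the form $V'\otimes\Frac(W(k))$ is fine, precisely because $\varphi$ acts as $\id\otimes\Frob$ and the $\Frob$-fixed subfield of $\Frac(W(k))$ is $\Q_p$; a $\varphi$-eigenline forces the ratio of coordinates to be $\Frob$-fixed, hence in $\Q_p$. Your Hodge and Newton bookkeeping is correct, and case 1 ($t_H=i>0=t_N$) does double duty: it both violates weak admissibility of $D$ and shows $\ell$ is not $\Q_p$-rational, which is what step (ii) then needs.
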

\begin{proof}
Consider the ($\Q_p$-points of the) orthogonal  group $G=\SO(q_{B,p})$. We claim that the $\Q_p$-algebra
\[F\coloneqq \Q_p[G]\subset \End (V_{B,p})\]
satisfies all the properties of the statement. First, notice that this algebra is commutative and has dimension two by construction.  

\

As the quadratic form is induced by an algebraic cycle, the Galois group $\Gal_K$ must act on $V_{B,p}$ through $G$. Hence we have the inclusions  
\[\Q_p[\Gal_K]\subset  F \subset  \End (V_{B,p}).\] 
Let us show that $\Q_p[\Gal_K]$ is not of dimension one. If it were so, the algebra $\End_{\Gal_K}(V_{B,p})$ would be isomorphic to $M_{2\times 2}(\Q_p)$. By   (\ref{stessaalgebra}) the algebra $\End_{\varphi,\Fil^*}(V_{Z,p}\otimes \Frac(W(k)))$ would also be $M_{2\times 2}(\Q_p)$, which would imply that $V_{Z,p}\otimes \Frac(W(k))$ would be decomposed into the sum of two isomorphic filtered $\varphi$-modules. This is impossible as it would in particular imply that the filtration on $V_{Z,p}\otimes \Frac(W(k))$ would be a one step filtration, hence contradicting Assumption  \ref{ipos}.

As the $\Q_p$-algebra $\Q_p[\Gal_K]$ is not of dimension one, we deduce that the  inclusion $\Q_p[\Gal_K]\subset  F$ is an equality. The commutator of $F$ being $F$ itself, we also have
$F=\End_{\Gal_K}(V_{B,p})$. 

\

Let us show that the algebra $F$ is not isomorphic to $\Q_p \times \Q_p$. If it were so, arguing as before, we would have a decomposition of 
filtered $\varphi$-modules
$(V_{Z,p}\otimes \Frac(W(k))= W \oplus W'$ and each of the lines $W, W'$ would be isotropic. On the other hand  the line $\Fil^i(V_Z\otimes K)$ is also isotropic (Lemma \ref{isotrop}), hence, we would have  the equality 
\[W\otimes K =\Fil^i(V_Z\otimes K)\]
after possibly replacing $W$ with $W'$.

Now, as $W$ must be admissible, for any nonzero vector $w$ of $W$, the scalar $\alpha$ such that $\varphi(w)=\alpha w$ has $p$-adic valuation equal to $i$.
On the other hand, as  $V_{Z,p}$ is spanned by algebraic classes,  there is a nonzero vector of $W$ which is fixed by $\varphi$. As $i\neq 0$ by Assumption  \ref{ipos}, we deduce a contradiction.

\

As it is not isomorphic to $\Q_p \times \Q_p$ the algebra $F=\Q_p[\SO(q_{B,p})]$ must be a quadratic field extension of $\Q_p.$ The vector space $V_{B,p}$ can be identified with $F$ and the action of $F$ on it can be identified with the (left) multiplication. Under this identification we have $N_{F/\Q_p}(f)= \det(f \cdot)$ and the relation  (\ref{eq:nonso})   follows.
\end{proof}
\begin{remark}
When the motive $M$ from Theorem \ref{mainthm} is  an exotic motive (Section 7), the action of $F$ is induced by algebraic correspondences. The Hodge conjecture predicts that this should always be the case. Indeed, arguing as in the proof of Proposition \ref{propendo}, one can check that $E=\Q[\SO(\Q)]$ acts on $V_B$ respecting the Hodge decomposition and that $F=E \otimes \Q_p.$
\end{remark}
\begin{corollary}\label{nonso}
Keep Assumption  \ref{ipos} and let $F$ be the field of  Proposition \ref{propendo}. Then $F$ acts on $V_{Z,p}$ and the equality  (\ref{eq:tsuji}) is $F$-equivariant. Moreover,  for all $v\in V_{Z,p} $ and all $f\in F$  we have the equality
\begin{equation} \label{eq:nonso2}
q_{Z,p}(f \cdot v)=N_{F/\Q_p}(f) q_{Z,p}(v).
\end{equation}
\end{corollary}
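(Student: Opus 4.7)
The plan is to deduce everything from the fact that the identifications of Corollary \ref{cortsuji} are functorial with respect to all the structures involved, and in particular are $F$-equivariant once we recognize $V_{Z,p}$ intrinsically inside its isocrystal.

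First, I would produce the action of $F$ on $V_{Z,p}$. By (\ref{stessaalgebra}), the field $F$ already acts on $V_{Z,p}\otimes \Frac(W(k)) = R_\crys(M_{/k})$ by $\Frac(W(k))$-linear endomorphisms commuting with $\varphi$. Since every element of $V_Z$ is an algebraic class, it corresponds to a morphism $\one \to M_{/k}$ whose crystalline realization is automatically $\varphi$-equivariant; thus $V_{Z,p} \subseteq (V_{Z,p}\otimes \Frac(W(k)))^{\varphi=\id}$. Moreover, $V_{Z,p}$ already spans $V_{Z,p}\otimes \Frac(W(k))$ as a $\Frac(W(k))$-module, so this isocrystal is isoclinic of slope zero and Dieudonn\'e--Manin forces the inclusion to be an equality. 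Because the $F$-action on $R_\crys(M_{/k})$ commutes with $\varphi$, it preserves this common $\Q_p$-subspace, producing the desired action of $F$ on $V_{Z,p}$.

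For the $F$-equivariance of (\ref{eq:tsuji}), nothing remains to be done: the equivalence $D$ of Theorem \ref{tsuji} is a $\Q_p$-linear tensor equivalence and therefore identifies endomorphism rings, the isomorphism (\ref{stessaalgebra}) being precisely this identification applied to $V_{B,p}$. The operations of tensoring with $B_\crys$ or $B_\dR$, taking $\varphi$-invariants and taking $\Fil^0$ all commute with endomorphisms, so (\ref{eq:tsuji}) is automatically $F$-equivariant.

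Finally, for the norm identity (\ref{eq:nonso2}): tensor the relation $q_{B,p}(f\cdot v) = N_{F/\Q_p}(f)\, q_{B,p}(v)$ of Proposition \ref{propendo} over $\Q_p$ with $B_\crys$. By (\ref{chissa}) the resulting identity lives on the common space $(V_{Z,p}\otimes B_\crys,\, q_{Z,p}\otimes 1)$, and by the $F$-equivariance just established the $F$-action on this common space is the extension of the $F$-action on $V_{Z,p}$. Restricting the identity to the $\Q_p$-subspace $V_{Z,p}\subseteq V_{Z,p}\otimes B_\crys$ yields (\ref{eq:nonso2}). The only genuinely nontrivial step is the first; everything else is formal bookkeeping against Corollary \ref{cortsuji}.
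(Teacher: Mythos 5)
Your argument is correct and follows essentially the same route as the paper: both identify the $F$-action on $V_{Z,p}\otimes\Frac(W(k))$ via (\ref{stessaalgebra}), observe that $V_{Z,p}$ is exactly the subspace of $\varphi$-fixed vectors, and use the commutation of $F$ with $\varphi$ to descend the action and the norm identity. You spell out the equality $V_{Z,p}=(V_{Z,p}\otimes\Frac(W(k)))^{\varphi=\id}$ with a bit more care (slope-zero plus Dieudonn\'e--Manin) than the paper, which simply asserts it, but the substance is the same.
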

\begin{proof}
If one replaces $V_{Z,p}$ by $V_{Z,p}\otimes \Frac(W(k))$ the statement is a combination of  Corollary \ref{cortsuji}   and Proposition \ref{propendo}. As $F$ commutes with the action of $\varphi$ and $V_{Z,p} \subset V_{Z,p}\otimes \Frac(W(k))$ is precisely the space of vectors which are fixed by $\varphi$, we deduce that $V_{Z,p}$ is stable by $F$ and the statement follows.
\end{proof}
\begin{corollary}\label{norms}
Keep Assumption  \ref{ipos} and let $F$ be the field of  Proposition \ref{propendo}. The following statements are equivalent:
\begin{enumerate}
\item The quadratic forms $q_{B,p}$ and $q_{Z,p}$ are isomorphic.
\item There exists a pair of nonzero vectors  $v_B\in V_{B,p}$ and $v_Z\in V_{Z,p}$ such that $q_{B,p}(v_B)$ and $q_{Z,p}(v_Z)$ are equal in $\Q_p^*/N_{F/\Q_p}(F^*).$
\item For any  pair of nonzero vectors  $v_B\in V_{B,p}$ and $v_Z\in V_{Z,p}$ we have that $q_{B,p}(v_B)$ and $q_{Z,p}(v_Z)$ are equal in $\Q_p^*/N_{F/\Q_p}(F^*).$
\end{enumerate}
\end{corollary}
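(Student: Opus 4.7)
The plan is to exploit the $F$-action from Proposition \ref{propendo} and Corollary \ref{nonso} to reduce both quadratic spaces to scalar multiples of the norm form $N_{F/\Q_p} : F \to \Q_p$, after which the equivalence becomes a standard fact about binary quadratic forms. First I would observe that since $F$ is a field of degree $2$ over $\Q_p$ acting faithfully (it is a subfield of $\End(V_{B,p})$ containing the identity) on the $2$-dimensional space $V_{B,p}$, the action endows $V_{B,p}$ with the structure of a one-dimensional $F$-vector space; the same holds for $V_{Z,p}$ by Corollary \ref{nonso}.

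Then, fixing any nonzero $v_B \in V_{B,p}$, I would use the $\Q_p$-linear isomorphism $F \isocan V_{B,p}$, $f \mapsto f\cdot v_B$, to transport $q_{B,p}$ to a quadratic form on $F$. By (\ref{eq:nonso}), this transported form is
\[ f \mapsto q_{B,p}(v_B)\cdot N_{F/\Q_p}(f). \]
The analogous computation with (\ref{eq:nonso2}) shows that $q_{Z,p}$ is likewise isomorphic to $q_{Z,p}(v_Z) \cdot N_{F/\Q_p}$ on $F$. Hence both $q_{B,p}$ and $q_{Z,p}$ are (noncanonically) isomorphic to scalar multiples of the norm form $N_{F/\Q_p}$ on the underlying $\Q_p$-space $F$.

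The key classical input is then: two $\Q_p$-quadratic forms $\alpha \cdot N_{F/\Q_p}$ and $\beta \cdot N_{F/\Q_p}$ on $F$ are isomorphic if and only if $\alpha/\beta \in N_{F/\Q_p}(F^*)$. The ``if'' direction is realized by multiplication by any $\gamma \in F^*$ with $N_{F/\Q_p}(\gamma) = \alpha/\beta$; the ``only if'' direction follows because the set of nonzero values represented by $\alpha \cdot N_{F/\Q_p}$ is exactly the coset $\alpha\cdot N_{F/\Q_p}(F^*) \subset \Q_p^*$. This yields the equivalence (1) $\Leftrightarrow$ (2). Finally, (2) $\Leftrightarrow$ (3) is immediate from norm-multiplicativity: replacing $v_B$ by $f\cdot v_B$ multiplies $q_{B,p}(v_B)$ by $N_{F/\Q_p}(f)$, so the coset of $q_{B,p}(v_B)$ in $\Q_p^*/N_{F/\Q_p}(F^*)$ does not depend on the choice of nonzero $v_B$, and similarly for $v_Z$.

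There is no significant obstacle here: all the serious work has already been carried out in Proposition \ref{propendo} and Corollary \ref{nonso}, which provide the faithful $F$-action and the norm-multiplicativity of the two quadratic forms. What remains is the purely formal repackaging above, identifying the isomorphism class of each form with a class in $\Q_p^*/N_{F/\Q_p}(F^*)$.
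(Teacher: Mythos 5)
Your proof is correct and is exactly the ``formal consequence'' the paper alludes to: you unfold formulas (\ref{eq:nonso}) and (\ref{eq:nonso2}) by transporting each form to a scalar multiple of the norm form on $F$ and then invoke the standard fact that $\alpha N_{F/\Q_p}\cong\beta N_{F/\Q_p}$ over $\Q_p$ if and only if $\alpha/\beta\in N_{F/\Q_p}(F^*)$. One tiny point worth making explicit (though it is implicit in your argument): $q_{B,p}(v_B)$ and $q_{Z,p}(v_Z)$ are automatically nonzero for nonzero $v_B,v_Z$, since by the norm-multiplicativity a single isotropic vector would force the whole form to vanish, contradicting non-degeneracy (Proposition \ref{tortura}).
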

\begin{proof}
This is a formal consequence of  formulas (\ref{eq:nonso}) and (\ref{eq:nonso2}).
\end{proof}
\begin{remark}\label{conventionF}
By the very construction of $F$, there are two actions of $F$ on  $V_{Z,p}\otimes F$ and this allows us to write the decomposition
\[V_{Z,p}\otimes F= V_{Z,+}  \oplus  V_{Z,-}\]
where $V_{Z,+}$ is the line where the two actions coincide and $V_{Z,-}$ is the line where the two actions are permuted by the non-trivial element of $\Gal(F/\Q_p)$. Using (\ref{eq:nonso2}), we see that $V_{Z,+}$ and $V_{Z,-}$ are also the two isotropic lines of the hyperbolic plane $V_{Z,p}\otimes F$. Notice that there is also an analogous decomposition
\[V_{B,p}\otimes F= V_{B,+} \oplus V_{B,-}\]
with analogous properties and that these decompositions are respected by Corollary \ref{cortsuji}. 

Finally, as $\Fil^i V_{Z,p}\otimes K$ is an isotropic line (Lemma \ref{isotrop}) it must coincide with $V_{Z,+}$ or  $V_{Z,-}$ (after extension of scalars to a field containing $F$ and $K$). We decide that the identification of $F$ and $K$ with a subfield of $\overline{\mathbb{Q}}_p$ is made to have the equality
\[\Fil^i V_{Z,p}\otimes  \overline{\mathbb{Q}}_p = V_{Z,+} \otimes  \overline{\mathbb{Q}}_p.\]  
\end{remark}

\section{Characterization of  $p$-adic periods}\label{p-adic periods}
This section continues the comparison between the two quadratic spaces
$(V_{B,p},q_{B,p})$ and  $(V_{Z,p},q_{Z,p})$ which we initiated in the previous section. We keep notation from there, in particular we work under the Assumption \ref{ipos} which gives a well defined positive integer $i$ and we will make constant use of the field $F$  constructed in  Proposition \ref{propendo}.

The goal is to study the period matrix (relating  the two quadratic spaces $V_{B,p}$ and $V_{Z,p}$) given by Corollary \ref{cortsuji}. 
We compute  the filtration and the action of the Frobenius on these periods. Most importantly, we show that the elements of $B_\crys$ having the same behavior with respect to filtration and Frobenius are essentially unique (this fact characterizes the periods involved).

We   divide the analysis in two cases, depending  whether $F$ is unramified or not.

\subsection*{Unramified case}

In this subsection we work under   Assumption  \ref{ipos} and we assume moreover that the field $F$  constructed in  Proposition \ref{propendo} is unramified over $\Q_p$.
This means that we have the inclusion $F \subset B_\crys$ and that the absolute Frobenius $\varphi$ of $B_\crys$  restricted to $F$ is the  non-trivial element of $\Gal(F/\Q_p)$.
\begin{definition}\label{defpi}
We define the $F$-vector subspace $P_i=P_i(F)$ of $B_\crys$ as the set of $\lambda_i \in B_\crys$ verifying the following properties:
\begin{enumerate}
\item $\varphi^2 (\lambda_i)= \lambda_i.$
\item $\lambda_i \in \Fil ^i B_\dR.$
\item $\varphi(\lambda_i) \in \Fil ^{-i} B_\dR.$
\end{enumerate}
\end{definition}

\begin{proposition}\label{lemmapi}
The $F$-vector space  $P_i(F)$ is of dimension one. Moreover a nonzero element $\lambda_i \in P_i$ verifies the following properties.
\begin{enumerate}
 \item\label{preciso1} $\lambda_i \cdot\varphi(\lambda_i) \in \Q^*_p.$
  \item\label{preciso4} $\lambda_i $ and $\varphi(\lambda_i)$ are invertible in $B_\crys.$
\item\label{preciso2} $\lambda \in \Fil ^i B_\dR-\Fil ^{i+1} B_\dR.$
\item\label{preciso3} $\varphi(\lambda) \in \Fil ^{-i} B_\dR-\Fil ^{-i+1} B_\dR.$
\end{enumerate}
\end{proposition}

\begin{proof}

By construction of $P_i$ we have   $ \lambda_i \cdot\varphi(\lambda_i) \in (B_\crys^{\varphi=\id} \cap \Fil^0B_\dR)$  and on the other hand we have  $(B_\crys^{\varphi=\id} \cap \Fil^0B_\dR)=\Q_p$, see Theorem \ref{javier}. What is missing to show (\ref{preciso1}) is that $\lambda_i \cdot\varphi(\lambda_i)\neq 0.$ It will follow from part (\ref{preciso4}).

  Consider the $\Q_p$-vector space $N=\Q_p^2$ with basis $e_1,e_2$ endowed with 
\begin{equation}\label{biagi}\varphi=\begin{pmatrix}
0 & 1 \\
1 & 0
\end{pmatrix}
\end{equation}
as Frobenius and with the following filtration
\[\Fil^{-i}N=N, \Fil^{-i+1}N =\Fil^i N=\Q_p\cdot e_2, \Fil^{i+1} N=0.  \]
Let us  check that $N$ is an admissible filtered $\varphi$-module.  The action of $\varphi$ on $\det N$, the maximal exterior power of $N$, is by multiplication by $-1$, which has valuation equal to $0$, and the only non trivial step of the induced filtration on $\det N$ is at $0$ as well. There are two lines  stable by $\varphi$ and none of those is $\Q_p\cdot e_2$ hence the only non trivial step of the induced filtration on these lines is at $-i$. On the other hand the action of $\varphi$ on these lines is by multiplication by $\pm 1$, which has valuation equal to $0$. As $i$ is  positive, the required inequality  $-i \leq 0$ holds true.
 
 As $N$ is an admissible $\varphi$-module, by Theorem \ref{tsuji} there exists a unique $\Gal_{\Q_p}$-representation $V$ such that $N=D(V)$. 
 Consider the identification $V \otimes B_\crys= N \otimes B_\crys$ given by  Theorem \ref{tsuji}. It allows to write an element of $V$ with respect to the basis $e_1,e_2$. Let $\begin{pmatrix} \alpha \\
\beta
 \end{pmatrix}$ be a fixed nonzero vector in $V$. As the Frobenius must act trivially on it we get from (\ref{biagi})  the relations
 \[ \alpha=  \varphi(\beta), \,
   \beta= \varphi(\alpha) \]
  which give the equality
  \begin{equation}  \alpha=  \varphi^2(\alpha).\end{equation}
  
    On the other hand, again by Theorem \ref{tsuji}, the vector $\begin{pmatrix} \alpha \\
\beta
 \end{pmatrix}$ lies in $ \Fil^0[M \otimes B_\dR]$. This means that it can be written as a linear combination $\gamma n + \delta e_2$ with $\gamma \in \Fil^{i}B_\dR, \delta \in \Fil^{-i} B_\dR$ and $n \in N$, which implies that $\alpha \in \Fil^{i}B_\dR$ and  $\beta \in \Fil^{-i}B_\dR$. 
 
 Altogether we have    $\alpha \in P_i$ and in particular  $P_i$ is a non empty space. Moreover, arguing as above, one sees that for all $t \in P_i$ the vector $\begin{pmatrix} t   \\
\varphi(t)
 \end{pmatrix}$ is in the vector space $V=[N \otimes B_\crys]^{\varphi=\id} \cap  \Fil^0[N \otimes B_\dR] $. This gives an isomorphism of $\Q_p$-vector spaces between $V$ and $P_i$. As $V$ has dimension $2$ the space $P_i$ is an $F$-vector space of dimension one.
 
 Now notice that  the one dimensional $\Q_p$-vector space of $p$-adic periods relating $\det N$ to $\det V$ is generated by an element in $B_\crys$ of the form $\alpha \beta (f-\varphi( f) )$, for any fixed $f \in F- \Q_p$. As $\det N\otimes B_\crys$ and $\det V\otimes B_\crys$ are free $B_\crys$-modules of rank one the element $\alpha \beta (f-\varphi( f) )$ must be invertible in $B_\crys$, hence $\alpha$ and  $\beta$ are invertible as well.
 
Finally, let us consider the  valuation $\nu$ from Theorem \ref{javier} associated to the filtration. We have showed the inequalities $\nu ( \alpha) \geq +i$ and  $\nu (  \beta)\geq -i$.
On the other hand, as the only non trivial step of the filtration on $\det M$ is  $0$, we deduce that $\nu ( \alpha \beta (f-\sigma( f) ))=0$ and hence $\nu ( \alpha \beta)=0$. This implies  that the above inequalities are actually equalities.
 \end{proof}

\begin{definition}\label{defqi}
Consider $V_{Z,+}$ and $V_{B,+}$ as constructed in Remark \ref{conventionF} and consider them as $F$-vector subspaces of $V_{Z,p}\otimes B_\crys$ via Corollary \ref{cortsuji}. 
We define the $F$-vector subspace $Q=Q(M)$ of $B_\crys$ as the set of $\lambda \in B_\crys$ such that
\[\lambda \cdot V_{B,+} \subset V_{Z,+}.\]
\end{definition}
\begin{remark}
The notation $Q(M)$ would like to suggest that the definition of $Q$ depends a priori on the motive $M$ from Theorem \ref{mainthm}. It will turn out that it actually only depends on $i$ and $F$.
\end{remark}
\begin{proposition}\label{piqi}
The $F$-vector subspaces $P_i,Q\subset B_\crys$ introduced in Definitions \ref{defpi} and \ref{defqi} coincide and they have dimension one.
\end{proposition}

\begin{proof}
 Following Remark \ref{conventionF} we have 
 \begin{equation} \label{eq:mirella0}V_{B,+} \cdot B_\crys =  V_{Z,+} \cdot B_\crys.\end{equation}
  As the  $F$-vector spaces $V_{B,+} $ and $ V_{Z,+}$ have dimension one   then $Q\subset B_\crys$ must have dimension one as well.
 
Fix a nonzero element $\lambda \in Q$. By definition of $Q$ there are nonzero vectors $v_B \in V_{B,+} $ and $v_Z \in V_{Z,+} $ such that
\begin{equation} \label{eq:mirella} \lambda \cdot v_{B} = v_{Z}  \end{equation}
which implies 
\begin{equation} \label{eq:mirella2} \varphi(\lambda) \cdot \varphi(v_{B}) = \varphi(v_{Z}).
\end{equation}
Now  $\varphi$ acts trivially on $V_{B,p}$ and,  because of the presence of algebraic classes, also on $V_{Z,p}$. Hence $\varphi$ acts as the non-trivial element of $\Gal(F/\Q_p)$ on $V_{B,p}\otimes F$ and  $V_{Z,p}\otimes F$. This implies that 
\begin{equation} \label{eq:mirella3} \varphi(v_{B})  \in V_{B,-} \hspace{0.5cm}  \textrm{and} \hspace{0.5cm} \varphi(v_{Z})  \in V_{Z,-} 
\end{equation}
as well as 
\begin{equation} \label{eq:mirella4} \varphi^2(v_B)=v_B \hspace{0.5cm}  \textrm{and} \hspace{0.5cm}   \varphi^2(v_Z)=v_Z.
\end{equation}
We claim that  the above relations (through Corollary \ref{cortsuji}) imply   that $\lambda$ verifies conditions (1),(2) and (3) from Definition \ref{defpi}. Indeed (\ref{eq:mirella}) gives (2), (\ref{eq:mirella2}) and  (\ref{eq:mirella3}) give (3) and finally  (\ref{eq:mirella4}) gives (1).

Hence, we have showed the  inclusion $Q \subset P_i$. For dimensional reasons (Proposition \ref{lemmapi}) it is actually an equality.
\end{proof}

\begin{prop}\label{warmbrinon}
Consider a nonzero element $\lambda_i \in P_i,$ and recall that $\lambda_i \cdot\varphi(\lambda_i) \in \Q^*_p$ (Proposition \ref{lemmapi}).  Then the quadratic forms $q_{B,p}$ and $q_{Z,p}$ are isomorphic if and only if
 \[\lambda_i \cdot\varphi(\lambda_i) \in N_{F/\Q_p}(F^*).\] 
\end{prop}
\begin{proof}
Let $v_B \in V_{B,+}$ and $v_Z \in V_{Z,+}$ as in the proof of Proposition \ref{piqi}.  By (\ref{eq:mirella4}) we have
\begin{equation} \label{eq:mirella9}v_B + \varphi(v_{B})  \in  V_{B,p} \hspace{0.5cm}  \textrm{and} \hspace{0.5cm}    v_Z + \varphi(v_{Z})  \in  V_{Z,p}. 
\end{equation}

Write  $\langle \cdot  ,  \cdot \rangle_{?}$ for the bilinear form associated to the quadratic form $q_?$. Then we have
\[q_{B,p}(v_B + \varphi(v_{B}) ) = 2 \langle v_B ,  \varphi(v_{B}) \rangle_{B,p}  \hspace{0.3cm}  \textrm{and}   \hspace{0.3cm} q_{Z,p}(v_Z + \varphi(v_{Z}) ) = 2 \langle v_Z ,  \varphi(v_{Z}) \rangle_{Z,p}  \]
because    $v_B,v_Z,\varphi(v_B)$ and $\varphi(v_Z)$ are isotropic vectors (see Remark \ref{conventionF} and (\ref{eq:mirella3})). 
Hence, using  relations  (\ref{eq:mirella}), (\ref{eq:mirella2}), together with (\ref{chissa}), we have
\[\lambda_i \cdot\varphi(\lambda_i) \cdot q_{B,p}(v_B + \varphi(v_{B}) ) =  q_{Z,p}(v_Z + \varphi(v_{Z}) ) . \]
We can conclude thanks to Corollary \ref{norms}.
\end{proof}

\subsection*{Ramified case}\label{ramcase}

In this subsection we work under the Assumption  \ref{ipos} and we assume moreover that the field $F$  constructed in  Proposition \ref{propendo} is ramified over $\Q_p$.
Define $\sigma$ to be the  non-trivial element of $\Gal(F/\Q_p)$ and $  B_{\crys,F} \subset B_\dR$ as the smallest subring containing $F$ and  $B_{\crys}$.
The inclusions   $  B_{\crys} \subset B_{\crys,F}$ and  $  F \subset B_{\crys,F}$ induce an identification
\[F\otimes_{\Q_p} B_{\crys}=B_{\crys,F}. \]
This allows  to extend $\sigma$ and $\varphi$ to two endomorphisms of $\Q_p$-algebras
\[   \sigma, \varphi : B_{\crys,F}    \longrightarrow B_{\crys,F} \]
which commute.

\begin{definition}\label{defpiram}
We define the $F$-vector subspace $P_i=P_i(F)$ of $B_{\crys,F}$ as the set of $\lambda \in B_{\crys,F}$ verifying the following properties.
\begin{enumerate}

\item $\lambda \in \Fil ^i B_\dR.$
\item $\sigma(\lambda) \in \Fil ^{-i} B_\dR.$
\item $\varphi (\lambda)= \lambda.$
\end{enumerate}
\end{definition}

\begin{proposition}\label{lemmapiram} The $F$-vector space  $P_i(F)$ is of dimension one. 
Moreover a nonzero element $\lambda_i \in P_i$  verifies the following properties.
\begin{enumerate}
 \item  $\lambda_i \cdot\sigma(\lambda_i) \in \Q^*_p.$
  \item  $\lambda_i$ and $\sigma(\lambda_i)$ are invertible in $B_{\crys,F}.$
\item  $\lambda_i \in \Fil ^i B_\dR-\Fil ^{i+1} B_\dR.$
\item  $\sigma(\lambda_i) \in \Fil ^{-i} B_\dR-\Fil ^{-i+1} B_\dR.$
\end{enumerate}
\end{proposition}

\begin{proof}
By construction of $P_i$ we have that $\lambda_i \cdot\varphi(\lambda_i) $ is invariant under $\sigma$ hence it is in $B_\crys$. Moreover it verifies  $ \lambda_i \cdot\varphi(\lambda_i) \in (B_\crys^{\varphi=\id} \cap \Fil^0B_\dR)$  and on the other hand we have  $(B_\crys^{\varphi=\id} \cap \Fil^0B_\dR)=\Q_p$, see Theorem \ref{javier}. What is missing to show (\ref{preciso1}) is that $\lambda_i \cdot\varphi(\lambda_i)\neq 0.$ It will follow from part (\ref{preciso4}).

 Consider the $\Q_p$-vector space $N=\Q_p^2$ endowed with the identity as Frobenius. Let $L$ be a line in $N\otimes F$ which is not stable under $\sigma$.
Define the following filtration
\[\Fil^{-i}N\otimes F=N\otimes F, \Fil^{-i+1}N\otimes F =\Fil^i N \otimes F=\sigma(L), \Fil^{i+1} N=0.  \]
Let us  check that $N$ is an admissible filtered $\varphi$-module.  The action of $\varphi$ on $\det N$, the maximal exterior power of $N$, is by multiplication by $+1$, which has valuation equal to $0$, and the only non trivial step of the induced filtration on $\det N$ is at $0$ as well. Consider a line $L'$ in $N$. The action of $\varphi$ on it is again by multiplication by $+ 1$, which has valuation equal to $0$, and the only non trivial step of the induced filtration on it is at $-i$ because $L'\neq \sigma(L)$. As $i$ is  positive, the required inequality  $-i \leq 0$ holds true.
 
 As $N$ is an admissible $\varphi$-module, by Theorem \ref{tsuji} there exists a unique $\Gal_{\Q_p}$-representation $V$ such that $N=D(V)$. Consider the action of $f\in F$ on $N\otimes F$ given by multiplication by $f$ on $L$ and by multiplication by $\sigma(f)$ on $ \sigma(L)$. It descends over $\Q_p$ and gives an action of $F$ on $N$. Moreover this action respects the filtration. By  Theorem \ref{tsuji}, the field $F$ acts on $V$ as well and  the identification $V \otimes B_\crys= N \otimes B_\crys$ is $F$-equivariant. In particular there are eigenvectors 
 $m\in L$ and  $v \in V \otimes F$ and a scalar $\alpha \in B_{\crys,F}$ such that 
  \[
\alpha \cdot m=   v
 \]
 and hence $ \sigma(\alpha) \cdot \sigma(m)=   \sigma(v)$.
 
As the Frobenius acts trivially on $V$ and $N$ we deduce that it acts as the identity on  $\alpha$ and $\sigma(\alpha)$ as well. 
Moreover,  the inclusion $V \subset \Fil^0 N\otimes B_\dR$ implies $\alpha \in \Fil ^i B_\dR-\Fil ^{i+1} B_\dR$ and $\sigma(\alpha) \in \Fil ^{-i} B_\dR-\Fil ^{-i+1} B_\dR.$ 

 Altogether we have    $\alpha \in P_i$ and in particular  $P_i$ is a non empty space. Moreover, arguing as above, one sees that for all $t \in P_i$ the linear combination $t \cdot m + \sigma(t) \cdot \sigma (m)$  is in the vector space $V=[N \otimes B_\crys]^{\varphi=\id} \cap  \Fil^0[N \otimes B_\dR] $. This gives an isomorphism of $\Q_p$-vector spaces between $V$ and $P_i$. As $V$ has dimension $2$ the space $P_i$ is an $F$-vector space of dimension one.
 
 Finally, as $v$ and $m$ generates the same free  $B_{\crys,F}$-module of rank one, the scalar $\alpha$ must be invertible in $B_{\crys,F}$, and the same argument applies for $ \sigma(\alpha)$.
\end{proof}

\begin{definition}\label{defqiram}
Consider $V_{Z,+}$ and $V_{B,+}$ as constructed in Remark \ref{conventionF} and consider them as $F$-vector subspaces of $V_{Z,p}\otimes B_{\crys,F}$ via Corollary \ref{cortsuji}. 
We define the $F$-vector subspace $Q=Q(M)$ of $B_{\crys,F}$ as the set of $\lambda \in B_{\crys,F}$ such that
\[\lambda \cdot V_{B,+} \subset V_{Z,+}.\]
\end{definition}
\begin{proposition}\label{piqiram}
The $F$-vector subspaces $P_i,Q\subset B_{\crys,F}$ introduced in Definitions \ref{defpiram} and \ref{defqiram} coincide and they have dimension one.
\end{proposition}
\begin{proof}
Analogous to the proof of Proposition \ref{piqi}.
\end{proof}

\begin{prop}\label{warmbrinonram}
Consider a nonzero element $\lambda_i \in P_i,$ then \[\lambda_i \cdot\sigma(\lambda_i) \in \Q^*_p.\] Moreover the quadratic forms $q_{B,p}$ and $q_{Z,p}$ are isomorphic if and only if
 \[\lambda_i \cdot\sigma(\lambda_i) \in N_{F/\Q_p}(F^*).\] 
\end{prop}
\begin{proof}
Analogous to the proof of Proposition \ref{warmbrinon}.
\end{proof}
\begin{remark}  
For any $i$ and any $F$ the periods $P_i(F)$ appear in the realization of a motive $M_{i,F}$. Indeed, for $i=1$ this comes from
Proposition \ref{propexemple}, see also Remark \ref{bellaciao}(\ref{unamattina}). For a different $i$ one can consider $M_{i,F}=M_{1,F}^{\otimes i}$, where the tensor product is done in the $F$-linear category of motives endowed with an $F$-action. 

Define $P_0(F)=F$ and $P_*(F)=\bigoplus_{i\geq 0} P_i(F)$. By the very definition of $P_i(F)$, the set $P_*(F)$ is a graded $F$-algebra. Moreover Propositions \ref{lemmapi} and \ref{lemmapiram} imply that $P_*(F)$  is non canonically isomorphic to the polynomial algebra $F[x]$.

\end{remark}

\section{Computation of $p$-adic periods}\label{computation}
The aim of this section is to exhibit explicit elements of the space of periods $P_i(F)$ introduced in Definition \ref{defpi} when $F$ is unramified and in Definition \ref{defpiram} when $F$ is ramified. This is done with Propositions  \ref{warmbrinon} and \ref{warmbrinonram} in mind. The construction of these periods is inspired by \cite[\S 9]{Colmez}.

 We will still divide the analysis in two cases, depending  whether $F$ is unramified or not.  Moreover the ramified case will need some extra special cases for $p=2$.

 \subsection*{Unramified case}

In  this subsection we work under   Assumption  \ref{ipos} and we assume moreover that the field $F$  constructed in  Proposition \ref{propendo} is unramified over $\Q_p$.
This means that we have the inclusions \[F \subset  B_{\crys} \subset B_{\dR}\] and that the absolute Frobenius $\varphi$ of $B_\crys$  restricted to $F$ is the  non-trivial element of $\Gal(F/\Q_p)$.

\begin{prop}\label{lubintate} (Lubin--Tate)
There is an element $t_2 \in B_\crys$ which verifies the following properties:
\begin{enumerate}
\item $t_2 \in \Fil ^{1} B_\dR-\Fil ^{2} B_\dR$.
\item  $ \varphi(t_2) \in \Fil ^{0} B_\dR-\Fil ^{1} B_\dR$.
\item $\varphi^2(t_2) = p \cdot t_2$.
\item Both $t_2 $ and  $ \varphi(t_2) $ are invertible in $B_{\crys}$.
\end{enumerate}
\end{prop}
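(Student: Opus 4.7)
The strategy is to exhibit $t_2$ as a $p$-adic period of a height-one Lubin--Tate formal group over the unique unramified quadratic extension $F/\Q_p$. Let $\mathcal{O}_F = W(\mathbb{F}_{p^2})$ and let $G$ be the Lubin--Tate formal group over $\mathcal{O}_F$ attached to the uniformizer $p$, whose multiplication-by-$p$ isogeny is a power series $[p]_G(T) \in \mathcal{O}_F[[T]]$ congruent to $T^{p^2}$ modulo $p$. Equivalently, viewed as a formal group of height $2$ over $\Z_p$, the covariant Dieudonn\'e module $D(G)$ is a free $\Frac(W(\mathbb{F}_{p^2}))$-module of rank one on which the absolute Frobenius $\varphi$ acts with $\varphi^2 = p\cdot \id$, and whose de Rham filtration is given by the Hodge filtration of $G$ (the tangent space versus the cotangent space), producing one non-trivial step in degree $1$.

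First I would pick a compatible sequence $(x_n)_{n\geq 1}$ of non-zero $p^n$-torsion points of $G$ in $\mathcal{O}_{\overline{K}}$, i.e.\ with $[p]_G(x_{n+1}) = x_n$ and $[p]_G(x_1) = 0$. The reductions $\overline{x}_n \in \mathcal{O}_{\overline{K}}/p$ assemble into an element $\underline{x}$ of the tilt, and Fontaine's construction of $B_{\crys}$ allows one to form $t_2 := \log_G([\underline{x}])\in B_{\crys}^+$, where $[\,\cdot\,]$ is the Teichm\"uller-type lift and $\log_G$ is the Lubin--Tate logarithm. Property (3), i.e.\ $\varphi^2(t_2)=p\cdot t_2$, is then the direct translation of the functional equation $\log_G \circ [p]_G = p \cdot \log_G$ together with the fact that $\varphi$ acts as $T\mapsto T^p$ on the tilt, so that $\varphi^2$ incarnates the substitution $T\mapsto [p]_G(T)$ modulo lower order. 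Property (1), namely $t_2\in \Fil^1 B_{\dR}\setminus \Fil^2 B_{\dR}$, follows because the map $\theta \colon B_{\dR}^+ \twoheadrightarrow \C_p$ sends $t_2$ to $\log_G(0)=0$, while the image of $t_2$ in $\Fil^1/\Fil^2$ is, by construction, the cotangent vector $dT|_0$ of $G$, which is non-zero.

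The main remaining point is property (2): that $\varphi(t_2)\notin \Fil^1 B_{\dR}$. The cleanest formulation uses the filtered $\varphi$-module $D(G)_{\Q_p}$, which has dimension $2$ with $\varphi^2=p$, hence Hodge--Newton slopes $(0,1)$ and $(1/2,1/2)$ respectively. Admissibility (weak admissibility) of $D(G)$ forces $\Fil^1$ to be a single line, not stable under $\varphi$: if it were, its unique eigenvalue $\alpha$ would satisfy $\alpha^2=p$ so $v_p(\alpha)=1/2$, but an eigenline inside $\Fil^1 \cap B_{\crys}^{\varphi^2=p}$ would have Hodge weight $1$, breaking the equality $t_H=t_N$. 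Consequently, writing $t_2$ as the basis vector spanning $\Fil^1 D(G)$ under the period isomorphism $D(G)\otimes B_{\crys} \isocan \Frac(W(\mathbb{F}_{p^2}))\otimes_{W(\mathbb{F}_{p^2})} B_{\crys}$, the element $\varphi(t_2)$ spans the complementary line $\varphi(\Fil^1 D(G))$, which intersects $\Fil^1 B_{\dR}$ trivially.

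The hardest part in practice will be rendering this filtration-jumping computation rigorously inside $B_{\crys}$ rather than inside the abstract filtered $\varphi$-module; concretely, one has to identify the period isomorphism for $G$ with the logarithm construction above, and check that the logarithm genuinely lands outside $\Fil^2$. Once these identifications are made, (1)--(3) read off the structure of $D(G)$ as a Lubin--Tate filtered $\varphi$-module, and the proposition follows; alternatively one may invoke the explicit Colmez--Fontaine description of such periods in \cite{Colmez} and simply take $t_2$ to be the period denoted there.
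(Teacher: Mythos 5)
Your route is the same as the paper's. The paper proves this statement with a one-line citation to Colmez \cite[\S 9.3]{Colmez} applied to $E=\Q_{p^2}$, and your sketch is precisely an unpacking of what that reference carries out: the Lubin--Tate formal group for the uniformizer $p$ of $\Q_{p^2}$, a compatible system of torsion points producing an element of the tilt, the Lubin--Tate logarithm of its Teichm\"uller lift, and the functional equation $\log_G\circ[p]_G=p\cdot\log_G$ giving property (3). The gap you flag at the end --- turning the filtration-jump computation into a rigorous statement in $B_\crys$ rather than in the abstract filtered $\varphi$-module, and matching the period isomorphism with the explicit logarithm --- is exactly the content of Colmez's \S 9.3, so invoking him (as you also offer to do in your last sentence) is the efficient resolution; this is what the paper does. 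One simplification worth recording: to see that $\Fil^1 D(G)$ is not $\varphi$-stable you need not appeal to weak admissibility at all. Since the Lubin--Tate group law for $\pi=p$ has $\Z_p$-coefficients, its special fibre lives over $\mathbb{F}_p$, the Dieudonn\'e module is a $\Q_p$-vector space, and $\varphi$ is $\Q_p$-linear with $\varphi^2=p\cdot\id$; as $\sqrt{p}\notin\Q_p$, the operator $\varphi$ has no eigenvalue in $\Q_p$ and hence no stable line whatsoever, independently of the filtration. Your weak-admissibility detour proves the same thing but via a longer chain.
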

\begin{proof}
This period will appear as the period of an explicit filtered $\varphi$-module. Consider the vector space  $N=\Q_p^2$ with basis $e_1,e_2$ endowed with the Frobenius
\begin{equation}\label{darienzo}  
\varphi =  \begin{pmatrix} 0 & 1/p \\
1 & 0
 \end{pmatrix}.
 \end{equation}
 Define the filtration to be $\Fil^{-1} N=N, \, \Fil^0 N= \Q_p e_2, \, \Fil^{1} N=0.$ This filtered $\varphi$-module is admissible. To check so notice that there are no non trivial submodules of $N$ stable by $\varphi$ hence the admissibility condition has to be checked only on $N$. 
 Moreover the action of $\varphi$ on $\det N$, the maximal exterior power of $N$, is by multiplication by $-1/p$, which has valuation equal to $-1$, and the only non trivial step of the induced filtration on $\det N$ is at $-1$ as well.
 
 As $N$ is an admissible $\varphi$-module, by Theorem \ref{tsuji} there exists a unique $\Gal_{\Q_p}$-representation $V$ such that $N=D(V)$. 
 Consider the identification $V \otimes B_\crys= N \otimes B_\crys$ given by  Theorem \ref{tsuji}. It allows to write an element of $V$ with respect to the basis $e_1,e_2$. Let $\begin{pmatrix} \alpha \\
\beta
 \end{pmatrix}$ be a fixed nonzero vector in $V$. As the Frobenius must act trivially on it we get from (\ref{darienzo})  the relations
 \[ \alpha= 1/p \cdot \varphi(\beta),\,
    \beta= \varphi(\alpha),\]
  which give the equality
  \[ p \cdot \alpha=  \varphi^2(\alpha).\]
  We claim that $t_2= \alpha$ and $\varphi(t_2)= \beta$ verify all the properties of the proposition.
  
  By Theorem \ref{tsuji} the vector $\begin{pmatrix} \alpha \\
\beta
 \end{pmatrix}$ belongs to $\Fil^0[M \otimes B_\dR]$. This means that it can be written as $\gamma n + \delta e_2$ with $\gamma \in \Fil^{1}B_\dR, \delta \in \Fil^0 B_\dR$ and $n \in N$, which implies that $\alpha \in \Fil^{1}B_\dR$ and  $\beta \in \Fil^{0}B_\dR$. 
  
  Now notice that, for all $f\in F$, the vector $\begin{pmatrix} f\cdot \alpha \\
\varphi(f)\beta
 \end{pmatrix}$ is in the vector space $V=[N\otimes B_\crys]^{\varphi=\id} \cap  \Fil^0[N \otimes B_\dR] $. Hence,  the one dimensional $\Q_p$-vector space of $p$-adic periods relating $\det N$ to $\det V$ is generated by an element in $B_\crys$ of the form $\alpha \beta (f-\sigma( f) )$, for any fixed $f \in F- \Q_p$. As $\det N\otimes B_\crys$ and $\det V\otimes B_\crys$ are free $B_\crys$-modules of rank one the element $\alpha \beta (f-\sigma( f) )$ must be invertible in $B_\crys$, hence $\alpha$ and  $\beta$ are invertible as well.
 
Finally, let us consider the  valuation $\nu$ from Theorem \ref{javier} associated to the filtration. We have showed the inequalities $\nu ( \alpha) \geq 1$ and  $\nu (  \beta)\geq 0$.
On the other hand, as the only non trivial step of the filtration on $\det M$ is  $-1$, we deduce that $\nu ( \alpha \beta (f-\sigma( f) ))=1$ and hence $\nu ( \alpha \beta)=1$. This implies  that the above inequalities are actually equalities and concludes the proof.
 \end{proof}
\begin{corollary}\label{brinon}
Fix a positive integer $i$ and define $\lambda_i \in B_\crys$ as
\[\lambda_i = (t_2/\varphi(t_2))^i .\]
Then $\lambda_i$ is a nonzero element of  $P_i$ and moreover it verifies
\[\lambda_i \cdot\varphi(\lambda_i) = 1/p^i.\] 
\end{corollary}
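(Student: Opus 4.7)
The plan is to verify the three defining conditions of $P_i$ (Definition \ref{defpi}) directly by substituting $\lambda_i = (t_2/\varphi(t_2))^i$ and exploiting the three properties of $t_2$ provided by Proposition \ref{lubintate}. Non-vanishing is automatic: both $t_2$ and $\varphi(t_2)$ are nonzero in $B_\dR$ since they sit in $\Fil^1 \setminus \Fil^2$ and $\Fil^0 \setminus \Fil^1$ respectively, and $B_\dR$ is a field, so $\lambda_i$ is a well-defined unit.

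For the Frobenius invariance, I would compute
\[
\varphi^2(\lambda_i) = \left(\frac{\varphi^2(t_2)}{\varphi^3(t_2)}\right)^i = \left(\frac{p\, t_2}{\varphi(p\, t_2)}\right)^i = \left(\frac{p\, t_2}{p\, \varphi(t_2)}\right)^i = \lambda_i,
\]
where the second equality uses $\varphi^2(t_2)=p\,t_2$ and the third uses the same identity applied to $\varphi(t_2)$, noting that $\varphi(p)=p$.

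For the two filtration conditions, I would use that the filtration on $B_\dR$ satisfies $\Fil^a \cdot \Fil^b \subset \Fil^{a+b}$, together with the fact that for a unit $u$ with $u \in \Fil^a \setminus \Fil^{a+1}$ one has $u^{-1} \in \Fil^{-a} \setminus \Fil^{-a+1}$. Thus $t_2/\varphi(t_2)\in \Fil^1 \cdot \Fil^0 = \Fil^1$, giving $\lambda_i \in \Fil^i$. Similarly,
\[
\varphi(\lambda_i) = \left(\frac{\varphi(t_2)}{\varphi^2(t_2)}\right)^i = \left(\frac{\varphi(t_2)}{p\, t_2}\right)^i \in \Fil^0 \cdot \Fil^{-1} \cdot \Fil^0 \subset \Fil^{-i},
\]
since the scalar $p^{-i}$ lies in $\Fil^0$.

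Finally, for the normalization $\lambda_i \cdot \varphi(\lambda_i) = 1/p^i$, the factors telescope:
\[
\lambda_i \cdot \varphi(\lambda_i) = \left(\frac{t_2}{\varphi(t_2)}\right)^i \cdot \left(\frac{\varphi(t_2)}{\varphi^2(t_2)}\right)^i = \left(\frac{t_2}{\varphi^2(t_2)}\right)^i = \left(\frac{t_2}{p\, t_2}\right)^i = \frac{1}{p^i}.
\]
No step looks like a real obstacle; the content is entirely concentrated in the Lubin--Tate input (Proposition \ref{lubintate}), and the verification is a short symbolic manipulation.
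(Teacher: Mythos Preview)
Your verification is correct and is exactly the routine computation the paper leaves implicit (the corollary is stated without proof). One small point worth tightening: your non-vanishing argument shows $\lambda_i$ is a unit in $B_\dR$, but $P_i$ is defined inside $B_{\crys}$, and $\varphi$ only acts there; you should note that $t_2$ is in fact a unit in $B_{\crys}$ (this is part of Colmez's Lubin--Tate construction cited in Proposition~\ref{lubintate}), so that $\lambda_i \in B_{\crys}$ and the Frobenius computations make sense. With that addition the argument is complete.
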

\begin{proof}
All properties follow from Proposition \ref{lubintate} by direct calculation.
\end{proof}

\subsection*{Ramified case}\label{ramcase}

In this subsection we work under the Assumption  \ref{ipos} and we assume that the field $F$  constructed in  Proposition \ref{propendo} is ramified over $\Q_p$. This subsection is written in  analogy to the unramified case, although  some extra computations are needed to deal with the case $p=2$.

Define $\sigma$ to be the  non-trivial element of $\Gal(F/\Q_p)$ and $  B_{\crys,F} \subset B_\dR$ as the smallest subring containing $F$ and  $B_{\crys}$.
The inclusions   $  B_{\crys} \subset B_{\crys,F}$ and  $  F \subset B_{\crys,F}$ induce an identification
\[F\otimes_{\Q_p} B_{\crys}=B_{\crys,F}. \]
This allows to extend $\sigma$ and $\varphi$ to two endomorphisms of $\Q_p$-algebras
\[   \sigma, \varphi : B_{\crys,F}    \longrightarrow B_{\crys,F} \]
which commute.

Finally, we will denote by $\Q_{p^n}\subset B_{\crys}$ the unique unramified field extension of $\Q_p$ of degree $n$.

\begin{prop}\label{lubintateram} (Colmez)
For each uniformizer $\pi \in F$ there exists an element $t_\pi \in B_{\crys,F}$  which verifies the following properties.
\begin{enumerate}
\item $t_\pi \in \Fil ^{1} B_\dR-\Fil ^{2} B_\dR$.
\item $\sigma (t_\pi) \in \Fil ^{0} B_\dR-\Fil ^{1} B_\dR$.
\item  $ \varphi(t_\pi) = \pi  \cdot t_\pi$.
\item Both $t_\pi $ and  $ \sigma(t_\pi) $ are invertible in $B_{\crys,F}$.
\end{enumerate}
\end{prop}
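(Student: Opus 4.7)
The plan is to construct $t_\pi$ using Lubin--Tate theory attached to the pair $(F,\pi)$, mirroring the construction of Proposition \ref{lubintate} (which handled the unramified quadratic case). This is carried out in Colmez's paper \cite{Colmez}, Section 9.3, for a general local field $F$; I would quote the general construction and then verify the three listed properties in the ramified setting.

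First, I would choose a Lubin--Tate formal group law $\mathcal{F}$ over $O_F$ associated to the uniformizer $\pi$, characterized up to isomorphism by $[\pi]_{\mathcal{F}}(X)\equiv \pi X\pmod{X^2}$ and $[\pi]_{\mathcal{F}}(X)\equiv X^{q}\pmod{\pi}$, where $q$ is the cardinality of the residue field of $F$. Since $F/\Q_p$ is totally ramified of degree $2$, this residue field is $\F_p$, so $q=p$. Pick a compatible system $(x_n)_{n\geq 1}$ of $\pi^n$-torsion points in $\mathcal{F}(\overline{\Q}_p)$ with $[\pi]_{\mathcal{F}}(x_{n+1})=x_n$ and $x_1\neq 0$, and define
\[ t_\pi \;=\; \log_{\mathcal{F}}\!\bigl([\underline{x}]\bigr) \;\in\; B_{\crys,F}^+, \]
where $[\underline{x}]$ is the Teichmüller-type element attached to this compatible system, exactly analogous to the cyclotomic period $t=\log[\underline{\varepsilon}]$.

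For property (3), the congruence $[\pi]_{\mathcal{F}}(X)\equiv X^p\pmod{\pi}$ ensures that the absolute crystalline Frobenius $\varphi$ (which acts on $B_{\crys}$ and is extended $F$-linearly to $B_{\crys,F}$) lifts the $p$-th power map and so acts on $[\underline{x}]$ as the Lubin--Tate endomorphism $[\pi]_{\mathcal{F}}$. Combining this with the functional equation $\log_{\mathcal{F}}\circ[\pi]_{\mathcal{F}}=\pi\cdot\log_{\mathcal{F}}$ yields $\varphi(t_\pi)=\pi\cdot t_\pi$. For the filtration properties (1) and (2), I would use the Hodge--Tate structure of the Tate module $V_\pi(\mathcal{F})$: as a $G_F$-representation it is one-dimensional over $F$, and as a $G_{\Q_p}$-representation it has Hodge--Tate weight $1$ at the identity embedding $F\hookrightarrow\overline{\Q}_p$ and weight $0$ at the conjugate embedding $\sigma$. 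Translating this into the $B_\dR$-filtration on $B_{\crys,F}$ gives precisely $t_\pi\in\Fil^1 B_\dR\setminus\Fil^2 B_\dR$ and $\sigma(t_\pi)\in\Fil^0 B_\dR\setminus\Fil^1 B_\dR$.

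The main obstacle is the ramification. In the unramified setting of Proposition \ref{lubintate} the relation $\varphi^2(t_2)=p\cdot t_2$ took a symmetric form because the residue degree of $F=\Q_{p^2}$ is $2$, so one needs two applications of $\varphi$ to match $[\pi]_{\mathcal{F}}$. Here the residue field collapses to $\F_p$, which is exactly what allows a single application of $\varphi$ to implement multiplication by $\pi$, but this requires checking that the wild ramification of $F/\Q_p$ does not corrupt either the congruence $[\pi]_{\mathcal{F}}\equiv X^p \pmod{\pi}$ or the $F$-linear extension of $\varphi$ used to interpret property (3). Both verifications are standard in Colmez's formalism, which is why the argument reduces cleanly to citing \cite[\S 9.3]{Colmez} with $E=F$ and the chosen uniformizer $\pi$.
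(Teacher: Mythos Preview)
Your proposal is correct and takes essentially the same approach as the paper: the paper's proof is the single line ``This is \cite[\S 9.3]{Colmez} applied to $E=F$,'' and your sketch unwinds precisely that citation via the Lubin--Tate period $t_\pi=\log_{\mathcal{F}}([\underline{x}])$ for the formal $O_F$-module attached to $\pi$. The only minor inaccuracy is the phrase ``wild ramification of $F/\Q_p$'': for $p$ odd the ramified quadratic extension is tame, so you should say ``(possibly wild) ramification'' there.
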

\begin{proof}
This period will appear as the period of an explicit filtered $\varphi$-module. Let $x^2+ax+b$ be the minimal polynomial of $1/\pi$ over $\Q_p$. Consider the vector space  $N=\Q_p^2$ with basis $e_1,e_2$ endowed with the Frobenius
\[ 
\varphi =  \begin{pmatrix} 0 & -b \\
1 & -a
 \end{pmatrix}.
 \]
 Fix an eigenvector $m \in M \otimes F$ satisfying
 \begin{equation}\label{disarli2}  
\varphi (m)=  \frac{1}{\pi} m
 \end{equation}
 and hence 
  $
\varphi (\sigma(m))=  \frac{1}{\sigma(\pi)} \sigma(m). 
 $
 Define the filtration to be \[\Fil^{-1} (N\otimes F)=N\otimes F, \, \Fil^0 (N\otimes F)= F\cdot ( \sigma(v)), \, \Fil^{1} (N\otimes F)=0.\] We claim that this filtered $\varphi$-module is admissible. To check so notice that there are no non trivial submodules of $N$ stable by $\varphi$ hence the admissibility condition has to be checked only on $N$. Moreover the action of $\varphi$ on $\det N$, the maximal exterior power of $N$, is by multiplication by $b$, which has valuation equal to $-1$, as $\pi$ is a uniformizer of $F$, and the only non trivial step of the induced filtration on $\det N$ is at $-1$ as well.
 
  As $N$ is an admissible filtered $\varphi$-module we can apply Theorem \ref{tsuji} and consider the $\Gal_{\Q_p}$-representation $V$ which verifies  $N=D(V)$. Now notice that the action of $\varphi$ on $N$ respects the filtration, this means that the field $F$ acts on $N$ as filtered $\varphi$-module. Then Theorem \ref{tsuji}  implies that
 $F$ acts on $V$ as well and that the canonical isomorphism  $V \otimes B_\crys= N \otimes B_\crys$ is $F$-equivariant. In particular we can find an eigenvector $v \in V \otimes F$ and a scalar $\alpha \in B_{\crys,F}$ such that 
  \begin{equation}\label{disarli4}  
\alpha \cdot m=   v
 \end{equation}
 and hence $ \sigma(\alpha) \cdot \sigma(m)=   \sigma(v)$.
 
As the Frobenius acts trivially on $V$, relations (\ref{disarli2}) and (\ref{disarli4}) imply $ \varphi(\alpha) = \pi  \cdot \alpha$. We claim now that the period $t_\pi=\alpha$ satisfies all the properties of the statement. Indeed, as $v$ and $m$ generates the same free  $B_{\crys,F}$-module of rank one, the scalar $\alpha$ must be invertible in $B_{\crys,F}$, and the same argument applies for $ \sigma(\alpha)$. As the filtration is trivial on $V \otimes F$, relation   (\ref{disarli4}) implies that $\alpha \in \Fil ^{1} B_\dR-\Fil ^{2} B_\dR$ and again the same argument applies for $ \sigma(\alpha)$.
 \end{proof}
\begin{corollary}\label{brinonram}
Suppose that there is a uniformizer $\pi\in F$ such that 
\[\sigma(\pi)=-\pi.\]
Let $\sqrt{a}\in \Q_{p^2}$ be an element of the quadratic unramified extension of $\Q_p$ such that
\[\varphi(\sqrt{a}) = -\sqrt{a}.\]
Define $\lambda_i \in B_{\crys,F}$ as
\[\lambda_i = (\sqrt{a}  \cdot t_\pi/ \sigma(t_\pi))^i .\]
Then $\lambda_i$ is a nonzero element of  $P_i(F)$ and moreover it verifies
\[\lambda_i \cdot\sigma(\lambda_i) = a^i.\] 
\end{corollary}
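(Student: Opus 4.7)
The plan is to verify each of the three defining properties of $P_i$ from Definition \ref{defpiram} for the element $\lambda_i = (a \cdot t_\pi/\sigma(t_\pi))^i$ and then to compute $\lambda_i \cdot \sigma(\lambda_i)$ by direct substitution. The argument is entirely parallel to the unramified case (Corollary \ref{brinon}), the extra wrinkle being that one must carefully track how $\sigma$ and $\varphi$ act on the various subrings of $B_{\crys,F}$: recall that $\sigma$ is the non-trivial element of $\Gal(F/\Q_p)$ extended by the identity on $B_{\crys}$, whereas $\varphi$ is the absolute Frobenius of $B_{\crys}$ extended by the identity on $F$, and the two automorphisms commute.

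First I would observe that inside the discrete valuation ring $B_\dR$ the element $t_\pi$ is a uniformizer (it lies in $\Fil^1 \setminus \Fil^2$), while $\sigma(t_\pi)$ lies in $\Fil^0 \setminus \Fil^1$ and is therefore a unit of $B_\dR$. Since $a \in \Q_{p^2}^{\times}$ sits in the unit group of $B_\crys$ at filtration level zero, the ratio $a \cdot t_\pi/\sigma(t_\pi)$ lies in $\Fil^1 \setminus \Fil^2$, and its $i$-th power $\lambda_i$ lies in $\Fil^i \setminus \Fil^{i+1}$. This simultaneously yields condition (1) of Definition \ref{defpiram} and the non-vanishing of $\lambda_i$.

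For condition (2), applying $\sigma$ (which fixes $B_\crys$ pointwise, so $\sigma(a) = a$, and which squares to the identity) gives
\[\sigma(\lambda_i) = \bigl(a \cdot \sigma(t_\pi)/t_\pi\bigr)^i,\]
which, by the same filtration count with the roles of $t_\pi$ and $\sigma(t_\pi)$ interchanged, sits in $\Fil^{-i}$ as required. For condition (3), I would use that $\varphi$ and $\sigma$ commute and that $\varphi$ fixes $F$, so that
\[\varphi\bigl(\sigma(t_\pi)\bigr) = \sigma\bigl(\varphi(t_\pi)\bigr) = \sigma(\pi \cdot t_\pi) = -\pi \cdot \sigma(t_\pi).\]
Combined with $\varphi(t_\pi) = \pi \cdot t_\pi$ and $\varphi(a) = -a$, the two sign flips cancel one another and the two factors of $\pi$ cancel as well, yielding $\varphi(\lambda_i) = \lambda_i$.

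Finally, the identity $\lambda_i \cdot \sigma(\lambda_i) = (a^2)^i$ follows from direct multiplication of the explicit formulas for $\lambda_i$ and $\sigma(\lambda_i)$ above, since the factors $t_\pi$ and $\sigma(t_\pi)$ cancel entirely. There is no genuine obstacle here: the claim is essentially a compact verification, and the only care required is the bookkeeping of the two commuting actions of $\sigma$ and $\varphi$ together with the sign conventions $\sigma(\pi) = -\pi$ and $\varphi(a) = -a$, which are precisely what is needed to make $\varphi(\lambda_i) = \lambda_i$ hold. (The fact that $\sigma(t_\pi)^{-1}$ already lives in $B_{\crys,F}$, rather than only in $B_\dR$, is built into the Lubin--Tate construction recalled in Proposition \ref{lubintateram}.)
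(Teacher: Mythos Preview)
Your verification is correct and is precisely the routine check the paper leaves to the reader (the corollary carries no proof in the text, just as Corollary \ref{brinon} in the unramified case). Your bookkeeping of the two commuting actions---$\sigma$ fixing $B_\crys$ (hence $a$) while acting non-trivially on $F$ (hence $\sigma(\pi)=-\pi$), and $\varphi$ fixing $F$ (hence $\pi$) while acting as Frobenius on $B_\crys$ (hence $\varphi(a)=-a$)---is exactly right, and the cancellation of the two signs in condition~(3) is the point of introducing the factor $a$.
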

\begin{proof}
All properties follow from  Proposition \ref{lubintateram} by direct calculation.
\end{proof}

\begin{remark}\label{remtame}
Let us recall the list of quadratic extensions of $\Q_p$. If $p\neq 2$, and if $a$ is an integer which is not a square modulo $p$, then $\Q_p(\sqrt{a})$ is the unramified quadratic extension, whereas $\Q_p(\sqrt{p})$ and $\Q_p(\sqrt{ap})$ are the two ramified quadratic extensions. 
If $p=2$,  $\Q_2(\sqrt{5})$ is the unramified quadratic extension, whereas $\Q_2(\sqrt{2}),\Q_2(\sqrt{6}),\Q_2(\sqrt{10}),\Q_2(\sqrt{14}),\Q_2(\sqrt{3})$ and $\mathbb{Q}_2(\sqrt{-1})$ are the six ramified quadratic extensions. 

In particular, it is almost always possible to find a uniformizer $\pi\in F$ such that 
$\sigma(\pi)=-\pi$ as in  Corollary \ref{brinonram}. The only exceptions are for  $p=2$ and  $F=\mathbb{Q}_2(\sqrt{3})$ or  $F=\mathbb{Q}_2(\sqrt{-1})$.  What follows treats these two exceptions.
\end{remark}

\begin{corollary}\label{brinonwild}
We keep notation from Proposition  \ref{lubintateram} and we work  with $p=2$ and the field  $F=\mathbb{Q}_2(\sqrt{-1})$.  Let $\alpha \in \Q_{2^4}(\sqrt{-1})$ be an element  such that
\[\varphi(\alpha) =  \sqrt{-1} \cdot \alpha.\]
Define $\lambda_i \in B_{\crys,F}$ as
\[\lambda_i = (\alpha \cdot t_{1-\sqrt{-1}} /\sigma(t_{1-\sqrt{-1}}))^i .\]
Then $\lambda_i$ is a nonzero element of  $P_{i}(F) $ and  moreover it verifies
\[\lambda_i \cdot\sigma(\lambda_i) = (\alpha  \cdot \sigma(\alpha) )^i.\] 
\end{corollary}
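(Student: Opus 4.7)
The plan is to follow the blueprint of Corollary \ref{brinonram}, modified to handle the wildly ramified extension $F = \Q_2(\sqrt{-1})$. The key initial observation is that for the uniformizer $\pi = 1 - \sqrt{-1}$ of $F$ one has $\sigma(\pi) = 1 + \sqrt{-1}$ and
\[\frac{\pi}{\sigma(\pi)} = \frac{(1-\sqrt{-1})^2}{(1-\sqrt{-1})(1+\sqrt{-1})} = \frac{-2\sqrt{-1}}{2} = -\sqrt{-1}.\]
Since $\sigma(\pi) \neq -\pi$, the argument of Corollary \ref{brinonram} no longer suffices: the auxiliary factor $\alpha$ is introduced precisely so that, under $\varphi$, the phase $\sqrt{-1}$ it contributes cancels the phase $-\sqrt{-1}$ coming from $\pi/\sigma(\pi)$.

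First I would check that $\alpha \cdot \sigma(\alpha)$ lies in $\Q_2^*$, so that the asserted equality makes sense. Using that $\varphi$ and $\sigma$ commute on $B_{\crys,F}$ together with the defining relation $\varphi(\alpha) = \sqrt{-1}\,\alpha$, one has $\varphi(\alpha \sigma(\alpha)) = \sqrt{-1}\,\alpha \cdot \sigma(\sqrt{-1}\,\alpha) = (\sqrt{-1})(-\sqrt{-1})\,\alpha \sigma(\alpha) = \alpha\sigma(\alpha)$, so $\alpha\sigma(\alpha)$ is $\varphi$-fixed. It is manifestly $\sigma$-fixed as well, hence lies in the fixed field of $\Gal(\Q_{2^4}(\sqrt{-1})/\Q_2)$, namely $\Q_2$, and it is non-zero because $\alpha$ is.

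Next I would verify the three conditions of Definition \ref{defpiram} for $\lambda_i$. Conditions (1) and (2) follow from Proposition \ref{lubintateram}: since $\alpha \in \overline{\Q}_p$ lies in $\Fil^0 B_\dR - \Fil^1 B_\dR$ and $t_{1-\sqrt{-1}}/\sigma(t_{1-\sqrt{-1}})$ lies in $\Fil^1 B_\dR - \Fil^2 B_\dR$, one gets $\lambda_i \in \Fil^i B_\dR$, and symmetrically $\sigma(\lambda_i) \in \Fil^{-i} B_\dR$. Condition (3) is the heart of the argument: combining $\varphi(t_{1-\sqrt{-1}}) = (1-\sqrt{-1})\, t_{1-\sqrt{-1}}$ with $\varphi\sigma = \sigma\varphi$ yields
\[\varphi\left(\frac{t_{1-\sqrt{-1}}}{\sigma(t_{1-\sqrt{-1}})}\right) = \frac{\pi}{\sigma(\pi)} \cdot \frac{t_{1-\sqrt{-1}}}{\sigma(t_{1-\sqrt{-1}})} = -\sqrt{-1} \cdot \frac{t_{1-\sqrt{-1}}}{\sigma(t_{1-\sqrt{-1}})},\]
and so $\varphi(\lambda_i) = \bigl((\sqrt{-1})(-\sqrt{-1})\bigr)^i \lambda_i = \lambda_i$.

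The product computation is immediate from the definition of $\lambda_i$: the factors involving $t_{1-\sqrt{-1}}$ cancel exactly against their $\sigma$-conjugates appearing in $\sigma(\lambda_i)$, leaving $(\alpha\sigma(\alpha))^i$. Non-vanishing of $\lambda_i$ is automatic since $\alpha$, $t_{1-\sqrt{-1}}$, and $\sigma(t_{1-\sqrt{-1}})$ are all non-zero in the integral domain $B_{\crys,F}$. The main obstacle I anticipate is purely bookkeeping: tracking how the Frobenius twist on $\alpha$ interacts with the asymmetry $\sigma(\pi) \neq -\pi$ of the uniformizer. Once the identity $(1-\sqrt{-1})/(1+\sqrt{-1}) = -\sqrt{-1}$ is noted, the cancellation is precisely what the chosen $\alpha$ was designed to produce.
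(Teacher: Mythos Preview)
Your proposal is correct and follows exactly the approach the paper intends: the paper states this corollary without proof, treating it as a routine verification from Proposition~\ref{lubintateram} in the pattern of Corollaries~\ref{brinon} and~\ref{brinonram}. Your computation of $\pi/\sigma(\pi)=-\sqrt{-1}$ for $\pi=1-\sqrt{-1}$, and the resulting cancellation against $\varphi(\alpha)=\sqrt{-1}\,\alpha$ to obtain $\varphi(\lambda_i)=\lambda_i$, is precisely the missing detail the paper leaves to the reader.
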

\begin{proof}
All properties follow from Proposition \ref{lubintateram}  by direct calculation.
\end{proof}

\begin{prop}\label{idearef} (Colmez)
For each uniformizer $\pi \in F$ there exists an element $t_{2,\pi} \in B_{\crys,F}$  which verifies the following properties.
\begin{enumerate}
\item $t_{2,\pi}  \in \Fil ^{1} B_\dR-\Fil ^{2} B_\dR$.
\item $\sigma (t_{2,\pi} ) , \varphi (t_{2,\pi} ) , \varphi \circ \sigma (t_{2,\pi} ) \in \Fil ^{0} B_\dR-\Fil ^{1} B_\dR$.
\item  $ \varphi^2(t_{2,\pi} ) = \pi \cdot t_{2,\pi} $.
\item The elements $t_{2,\pi}  ,  \sigma(t_{2,\pi} ), \varphi (t_{2,\pi} )$ and $ \varphi \circ \sigma (t_{2,\pi} )  $ are invertible in $B_{\crys,F}$.
\end{enumerate}
\end{prop}
\begin{proof}
This period will appear as the period of an explicit filtered $\varphi$-module. Let $x^2+ax+b$ be the minimal polynomial of $1/\pi$ over $\Q_p$.  Define the $2 \times 2$ matrix
\[ 
C =  \begin{pmatrix} 0 & -b \\
1 & -a
 \end{pmatrix}.
 \]
 Consider the vector space  $N=\Q_p^4$  endowed with the Frobenius given by the block matrix
 \[ 
\varphi =  \begin{pmatrix} 0_2 & C \\
I_2 & 0_2
 \end{pmatrix},
 \]
where $I_2$ and $0_2$ are the identity and the zero $2 \times 2$  matrices.  

 Notice  that we have \[ 
\varphi^2 =  \begin{pmatrix} C & 0_2 \\
0_2 & C
 \end{pmatrix}.
 \]
 Hence the action of $\varphi^2$ induces a decomposition of eigenspaces
  \[N \otimes F=N_{1/\pi}\otimes N_{1/\sigma(\pi)}.\]
 Fix a nonzero eigenvector $m \in N_{1/\pi}$ hence satisfying
 \begin{equation}\label{troilo}  
\varphi^2 (m)=  \frac{1}{\pi} m.
 \end{equation}
Define the filtration to be \[\Fil^{-1} (N\otimes F)=N\otimes F, \, \Fil^0 (N\otimes F)= N_{1/\sigma(\pi)}\oplus F\cdot \varphi (m ) , \, \Fil^{1} (N\otimes F)=0.\] 
We claim that this filtered $\varphi$-module is admissible. First notice that there are no non trivial submodules of $N$ stable by $\varphi$. Indeed the action of $\varphi$ on $N$ satisfies the equation $x^4+ax^2+b=0$. This polynomial is irreducible over $\Q_p$ as it is also the minimal polynomial of $1/\sqrt{\pi}$, which has valuation $1/4$.
 We deduce that the admissibility condition has to be checked only on $N$. Moreover the action of $\varphi$ on $\det N$, the maximal exterior power of $N$, is by multiplication by $b$, which has valuation equal to $-1$ and the only non trivial step of the induced filtration on $\det N$ is at $-1$ as well.
 
As $N$ is an admissible filtered $\varphi$-module we can apply Theorem \ref{tsuji} and consider the $\Gal_{\Q_p}$-representation $V$ which verifies  $N=D(V)$. 
Now notice that the action of $\varphi^2$ on $N$ respects the filtration, this means that the field $F$ acts on $N$ as filtered $\varphi$-module. 
Then Theorem \ref{tsuji}  implies that
 $F$ acts on $V$ as well and that the canonical isomorphism  $V \otimes B_\crys= N \otimes B_\crys$ is $F$-equivariant. In particular there is a decomposition in eigenspaces  
 \[V \otimes F=V_{1/\pi}\otimes V_{1/\sigma(\pi)}\]
 and, if we fix  a nonzero eigenvector $v \in V_{1/\pi}$, there exist two periods $\alpha, \beta$ in $B_{\crys,F}$ such that 
  \begin{equation}\label{troilo2}  
\alpha \cdot m+ \beta \cdot \varphi(m)=   v.
 \end{equation}
 
As the Frobenius acts trivially on $V$, relations (\ref{troilo}) and (\ref{troilo2}) imply $ \varphi(\alpha) = \beta$ and $ \varphi^2(\alpha) = \pi \alpha$ . We claim now that the period $t_{2,\pi}=\alpha$ satisfies all the properties of the statement. 

 By Theorem \ref{tsuji} we have $v\in \Fil^0[N_{1/\pi} \otimes B_\dR]$. This means that the vector $v$ can be written as $\gamma n + \delta \varphi(m)$ with $\gamma \in \Fil^{1}B_\dR, \delta \in \Fil^0B_\dR$ and $n \in N_{1/\pi}$, which implies that $\alpha \in \Fil^{1}B_\dR$ and  $\beta \in \Fil^{0}B_\dR$. 

Now notice that, for all $x\in \Q_{p^2}$, the vector $x\alpha \cdot m+ \varphi(x)\varphi(\alpha) \cdot \varphi(m)$ is in the vector space $V=[N\otimes B_\crys]^{\varphi=\id} \cap  \Fil^0[N \otimes B_\dR] $. 
Moreover, it is in the eigenspace $V_{1/\pi}$. 
Hence,  the one dimensional $F$-vector space of $p$-adic periods relating $\det N_{1/\pi}$ to $\det V_{1/\pi}$ is generated by an element in $B_{\crys,F}$ of the form $\alpha \varphi(\alpha) (x-\varphi( x) )$, for any fixed $x \in \Q_{p^2} - \Q_p$. 
As $\det N_{1/\pi}\otimes B_{\crys,F}$ and $\det V_{1/\pi}\otimes B_{\crys,F}$ are free $B_{\crys,F}$-modules of rank one the element $\alpha \varphi(\alpha) (x-\varphi( x) )$ must be invertible in $B_{\crys,F}$, hence $\alpha$ and  $\varphi(\alpha)$ are invertible as well.
 
 Finally, let us  consider the valuation $\nu$ from Theorem \ref{javier} associated  to the filtration. 
 First of all, as the only non trivial step of the filtration on $\det N_{1/\pi}$ is  $-1$, we deduce that $\nu ( \alpha \varphi(\alpha) (f-\sigma( f) ))=1$ and hence $\nu ( \alpha \varphi(\alpha))=1$. 
 On the other hand we have showed the inequalities $\nu ( \alpha) \geq 1$ and  $\nu (  \varphi(\alpha))\geq 0$. 
 Combining them with $\nu ( \alpha \varphi(\alpha)  )=1$ we deduce that these inequalities are actually equalities.
 
 This shows all the desired properties on $\alpha$ and $ \varphi(\alpha)$. 
 To show the analogous properties on $\sigma (\alpha)$ and $ \varphi \circ \sigma (\alpha)$ one applies $\sigma$ on the relation (\ref{troilo2}) and argues as above replacing the eigenspaces $N_{1/\pi}$ and $V_{1/\pi}$ with the eigenspaces $N_{1/\sigma(\pi)}$  and $V_{1/\sigma(\pi)}$.
\end{proof}

\begin{prop}\label{lubintatewild2} (Colmez)
Consider the ring $B_{\crys,F}$ for the prime $p=2$ and $F=\Q_2(\sqrt{3}).$
There  exists an element $t_{f} \in B_{\crys,F}$ which verifies the following properties.
\begin{enumerate}
\item $t_{f} \in \Fil ^{1} B_\dR-\Fil ^{2} B_\dR$.
\item $\sigma (t_{f}) , \varphi (t_{f}) , \varphi \circ \sigma (t_{f}) \in \Fil ^{0} B_\dR-\Fil ^{1} B_\dR$.
\item  $ \varphi^2(t_{f}) = (1-\sqrt{-1})  \cdot t_{f}$.
\item The elements $t_{f}  ,  \sigma(t_{f} ), \varphi (t_{f} )$ and $ \varphi \circ \sigma (t_{f} )  $ are invertible in $B_{\crys,F}$.
\end{enumerate}
\end{prop}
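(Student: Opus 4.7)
The plan is to apply Colmez's general Lubin--Tate construction from \cite[\S 9.3]{Colmez} to the larger field $E = F(\sqrt{-1}) = \Q_2(\sqrt{3}, \sqrt{-1})$, exactly in the spirit of Propositions~\ref{lubintate} and \ref{lubintateram}, choosing $E$ so that the uniformizer $1 - \sqrt{-1}$ becomes available and the wild ramification of $F/\Q_2$ is diluted into something manageable.

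The first step is to set up the arithmetic of $E$. Since $\sqrt{-3} = \sqrt{3} \cdot \sqrt{-1}$ and $\Q_2(\sqrt{-3}) = \Q_2(\zeta_3) = \Q_{2^2}$ is the unramified quadratic extension of $\Q_2$, the extension $E/\Q_2$ has degree $4$ with residue degree $f = 2$ and ramification index $e = 2$. Because $\sqrt{-3}$ lies in the maximal unramified extension and hence in $B_{\crys}$, the element $\sqrt{-1} = \sqrt{-3}/\sqrt{3}$ sits in $B_{\crys,F}$, so $E \subset B_{\crys,F}$ and in particular $B_{\crys,E} = B_{\crys,F}$. Next, $\pi := 1 - \sqrt{-1}$ is a uniformizer of $E$: from $(1-\sqrt{-1})(1+\sqrt{-1}) = 2$, the $\Gal(E/F)$-symmetry of the two factors forces $v_E(1 \pm \sqrt{-1}) = \tfrac{1}{2} v_E(2) = 1$.

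Applying Colmez's construction to the pair $(E, \pi)$ produces an element $t_\pi \in B_{\crys,E} = B_{\crys,F}$ with $t_\pi \in \Fil^1 B_\dR - \Fil^2 B_\dR$ and $\varphi^f(t_\pi) = \pi \cdot t_\pi$, i.e.\ $\varphi^2(t_\pi) = (1 - \sqrt{-1}) t_\pi$. Setting $t_{F(\sqrt{-1})} := t_\pi$, this yields properties (1) and (3) at once. For property (2) one exploits that the Lubin--Tate period $t_\pi$ is attached to a specific embedding $\iota : E \hookrightarrow B_\dR$, and that the Hodge--Tate weights of the associated character are $1$ at $\iota$ and $0$ at the three other embeddings. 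A direct computation, using that $\varphi$ is trivial on $F$ and acts as Frobenius on $\Q_{2^2} = \Q_2(\sqrt{-3}) \subset B_{\crys}$ (hence sends $\sqrt{-1}$ to $-\sqrt{-1}$), shows that $\{\id, \sigma, \varphi, \varphi\circ\sigma\}$ realize the four distinct elements of $\Gal(E/\Q_2) \cong (\Z/2\Z)^2$ and therefore permute the four embeddings simply-transitively, with the identity fixing $\iota$. Hence $\sigma(t_\pi)$, $\varphi(t_\pi)$ and $\varphi\circ\sigma(t_\pi)$ are the Lubin--Tate periods at the three embeddings of Hodge--Tate weight $0$, so they lie in $\Fil^0 B_\dR - \Fil^1 B_\dR$, delivering (2).

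The main obstacle is the verification underlying property (2): one must unpack Colmez's construction enough to see that the Hodge--Tate weights of the character associated to $t_\pi$ are supported only at the chosen embedding, and then identify the three extended operators $\sigma$, $\varphi$, $\varphi\circ\sigma$ on $B_{\crys,F}$ with the three non-identity embeddings of $E$ into $B_\dR$. For a biquadratic field with $e = f = 2$ this matching is a small bookkeeping exercise, but it is the point where the hybrid ramified/unramified nature of $E$ must be handled with care.
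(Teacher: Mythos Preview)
Your proposal is correct and follows exactly the same approach as the paper: the paper's proof is the single line ``This is \cite[\S 9.3]{Colmez} applied to $E=F(\sqrt{-1})= \Q_{2^2}(\sqrt{-1})$,'' and you have unpacked precisely this, verifying that $E$ has residue degree $2$ (so that $\varphi^2$ is the relevant Frobenius), that $1-\sqrt{-1}$ is a uniformizer, and that the four operators $\id,\sigma,\varphi,\varphi\circ\sigma$ realize the four embeddings of $E$ so as to pin down the filtration behaviour in (2). Your added detail is welcome and consistent with how the paper handles the companion Propositions~\ref{lubintate} and~\ref{lubintateram}.
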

\begin{proof}
This is a special case of Proposition \ref{idearef} up to a little subtlety. Consider first the ramified extension $F=\mathbb{Q}_2(\sqrt{-1})$ and the uniformizer $\pi= 1-\sqrt{-1}. $ Then Proposition  \ref{idearef}  will give a period $t_{2,\pi}$ which verifies all the properties of the statement except that it is constructed in $B_{\crys,\mathbb{Q}_2(\sqrt{-1})}$. On the other hand we have the equality \[B_{\crys,\mathbb{Q}_2(\sqrt{-1})} = B_{\crys,\mathbb{Q}_2(\sqrt{3})}.\]
Notice though that the two descriptions of this ring exchange the role of $ \sigma $ with $ \varphi \circ \sigma$. As  Proposition \ref{idearef}  is stable under this exchange, the period $t_f=t_{2,\pi}$ does satisfy all the desired properties.
\end{proof}
\begin{corollary}\label{brinonwild2}
  Consider $F=\Q_2(\sqrt{3})$ and let $\alpha \in \Q_{2^4}(\sqrt{3})=\Q_{2^4}(\sqrt{-1})$ be an element  such that
\[\varphi(\alpha) =  \sqrt{-1} \cdot \alpha.\]
Define $\lambda_i \in B_{\crys,F}$ as
\[\lambda_i = \left(\frac{\alpha \cdot t_{f} \cdot \varphi(t_{f}) }{\sigma(t_{f})   \cdot (\varphi \circ \sigma) (t_{f})}\right)^i .\]
Then $\lambda_i$ is a nonzero element of  $P_{i}(F)$ and  moreover it verifies
\[\lambda_i \cdot\sigma(\lambda_i) = (\alpha  \cdot \sigma(\alpha))^i.\] 
\end{corollary}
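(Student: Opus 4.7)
The plan is to verify the three defining conditions of $P_i$ in Definition \ref{defpiram} and the norm identity by direct computation, following the same strategy as Corollary \ref{brinonwild}. Writing $t = t_{F(\sqrt{-1})}$ and noting that $\lambda_i = \lambda_1^i$, it suffices to treat the case $i = 1$: filtration degrees are additive under products, the equivariance under $\varphi$ is a multiplicative relation, and the norm identity is multiplicative as well. So the task reduces to showing $\lambda_1 \in P_1$ and $\lambda_1 \cdot \sigma(\lambda_1) = \alpha \cdot \sigma(\alpha)$.

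Before computing one needs to clarify how $\sigma$ and $\varphi$ act on $\sqrt{-1}$, which enters through the relation $\varphi^2(t) = (1-\sqrt{-1}) t$ of Proposition \ref{lubintatewild2}. Since $F = \Q_2(\sqrt{3})$ is totally ramified while $\Q_{2^2} = \Q_2(\zeta_3) = \Q_2(\sqrt{-3})$ is unramified and hence contained in $B_\crys$, one can realise $\sqrt{-1} = -\sqrt{3}\sqrt{-3}/3 \in F \otimes_{\Q_p} B_\crys = B_{\crys,F}$. With the conventions $\sigma = \sigma_F \otimes \id$ and $\varphi = \id \otimes \varphi_{B_\crys}$, one has $\sigma(\sqrt{3}) = -\sqrt{3}$, $\sigma(\sqrt{-3}) = \sqrt{-3}$, $\varphi(\sqrt{3}) = \sqrt{3}$, and $\varphi(\sqrt{-3}) = -\sqrt{-3}$ (the latter since Frobenius on $\Q_{2^2}$ sends $\zeta_3 \mapsto \zeta_3^2$). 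Both $\sigma$ and $\varphi$ therefore send $\sqrt{-1}$ to $-\sqrt{-1}$, and in particular $\sigma(1-\sqrt{-1}) = 1+\sqrt{-1}$.

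With this in hand, the filtration properties follow at once from Proposition \ref{lubintatewild2}: since $\sigma(t), \varphi(t), \varphi\sigma(t)$ all lie in $\Fil^0 \setminus \Fil^1$ (hence are units in $B_\dR^+$) and $t \in \Fil^1 \setminus \Fil^2$, one reads off $\lambda_1 \in \Fil^1 B_\dR$ and $\sigma(\lambda_1) \in \Fil^{-1} B_\dR$. For the Frobenius identity, using $\sigma\varphi = \varphi\sigma$, the relation $\varphi^2(t) = (1-\sqrt{-1})t$, its conjugate $\varphi^2\sigma(t) = (1+\sqrt{-1})\sigma(t)$, and the hypothesis $\varphi(\alpha) = \sqrt{-1} \cdot \alpha$, one computes
\[\varphi(\lambda_1) = \frac{\sqrt{-1}(1-\sqrt{-1})}{1+\sqrt{-1}} \cdot \lambda_1 = \lambda_1,\]
the simplification using $\sqrt{-1}(1-\sqrt{-1}) = 1+\sqrt{-1}$. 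The norm identity is a bare cancellation: in the product $\lambda_1 \cdot \sigma(\lambda_1)$, each of $t, \varphi(t), \sigma(t), \varphi\sigma(t)$ appears exactly once in the numerator and once in the denominator, so they all cancel and leave only $\alpha \cdot \sigma(\alpha)$.

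The main obstacle I anticipate is purely bookkeeping: correctly tracking how $\sqrt{-1}$ is moved by $\sigma$ and $\varphi$ inside $B_{\crys,F}$, and verifying that the prescribed choice of $\alpha$ with $\varphi(\alpha) = \sqrt{-1} \cdot \alpha$ is precisely what cancels the factor $1-\sqrt{-1}$ arising from $\varphi^2(t)$. Once these pieces are set in place, every remaining verification is forced.
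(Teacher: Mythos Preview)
Your proof is correct and follows exactly the route the paper intends: the paper states this corollary without proof, leaving it as a direct verification from Proposition~\ref{lubintatewild2}, and your reduction to $i=1$ followed by checking the three conditions of Definition~\ref{defpiram} plus the norm identity is precisely that verification. Your careful bookkeeping of how $\sigma$ and $\varphi$ act on $\sqrt{-1}$ via the decomposition $\sqrt{-1}\in F\cdot\Q_{2^2}$ is the one subtle point, and it agrees with what the paper records in the proof of Lemma~\ref{notnormwild2} (where $\sigma(\sqrt{-1})=-\sqrt{-1}$ is justified by noting that $\sigma$ fixes exactly $B_\crys$).
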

\begin{proof}
All properties follow from Proposition \ref{lubintatewild2}  by direct calculation.
\end{proof}

\section{End of the proof}\label{endpf}
We are now ready to show Theorem \ref{mainthm}  (via Proposition \ref{padicreduction}).  

\begin{proof}[Proof of Theorem \ref{mainthm}]
Thank to Proposition \ref{ordinarycase}, we can work under Assumption \ref{ipos}, and fix the positive integer $i$ as in the assumption as well as the field $F$ from Proposition \ref{propendo}.

Let us first suppose that  $F$ is unramified. We can then combine Proposition \ref{warmbrinon} and Corollary \ref{brinon} to conclude that the quadratic forms  $q_{B,p}$ and $q_{Z,p}$ are isomorphic if and only if $1/p^i$ is a norm. Because of Lemma \ref{notnorm}, this is the case   if and only if $i$ is even.

Suppose now that $F$ is ramified but it is not $\mathbb{Q}_2(\sqrt{3})$ nor $\mathbb{Q}_2(\sqrt{-1})$. Then, following  Remark \ref{remtame}, we can combine Proposition \ref{warmbrinonram} and Corollary \ref{brinonram} to conclude that the quadratic forms $q_{B,p}$ and $q_{Z,p}$ are isomorphic if and only if    $ a^i$ is a norm. Because of Lemma \ref{notnormram}, this is the case   if and only if $i$ is even.

If $F$ is  $\mathbb{Q}_2(\sqrt{-1})$ we can  combine Proposition \ref{warmbrinonram} and Corollary \ref{brinonwild} to conclude that the quadratic forms $q_{B,p}$ and $q_{Z,p}$ are isomorphic if and only if    $  (\alpha  \cdot \sigma(\alpha) )^i$ is a norm. Because of Lemma \ref{notnormwild}, this is the case   if and only if $i$ is even.

Finally, if $F$ is  $\mathbb{Q}_2(\sqrt{3})$ we can  combine Proposition \ref{warmbrinonram} and Corollary \ref{brinonwild2} to conclude that the quadratic forms  $q_{B,p}$ and $q_{Z,p}$ are isomorphic if and only if    $  (\alpha  \cdot \sigma(\alpha) )^i$ is a norm. Because of Lemma \ref{notnormwild2}, this is the case   if and only if $i$ is even.

In conclusion, we showed for that, for any possible $F$, the quadratic forms $q_{B,p}$ and $q_{Z,p}$ are isomorphic if and only if $i$ is even. This shows Theorem \ref{mainthm}  via Proposition \ref{padicreduction}.
\end{proof}

\begin{lemma}\label{notnorm}
The element $1/p\in \Q_p^*$ does not belong to the group of norms $ N_{\Q_{p^2}/\Q_p}(\Q^*_{p^2}).$
\end{lemma}
\begin{proof}
For $f\in \Q_{p^2}$,   the $p$-adic valuation of $f$ and $\varphi (f)$ coincide, hence a norm must have even $p$-adic valuation.
\end{proof}

\begin{lemma}\label{notnormram}
Suppose that $F$ is ramified but it is not $\mathbb{Q}_2(\sqrt{3})$ nor  $\mathbb{Q}_2(\sqrt{-1})$. Let $\sqrt{a}\in \Q_{p^2}$ be as in Corollary \ref{brinonram}. Then the element $a \in \Q_p^*$ does not belong to the group of norms $ N_{F/\Q_p}(F^*).$
\end{lemma}
\begin{proof}
This is about finding nonzero solutions $ x,y \in \Q_p$ of the equation
\begin{equation}\label{calcare} x^2 -\pi^2 y ^2 = a. \end{equation}
We can suppose that    $a$ has $p$-adic valuation zero. Then $x$ has valuation zero and $y$ has non-negative valuation. 

Consider first the case $p\neq 2$. By reducing modulo $p$ we see that the existence of a solution $(x,y)$ would imply that $a$ would be a square in $\mathbb{F}_p$  which gives a contradiction. 

Consider now the case $p= 2$. We can then take $a=5$ and $\pi^2=2,6,10$ or $14$, see Remark \ref{remtame}. If we reduce modulo $8$ the  equation (\ref{calcare}) we see that the existence of a solution $(x,y)$ would imply that  $3,5$ or $7$ would be a square in $\mathbb{Z}/ 8 \mathbb{Z}$ which gives a contradiction. 
\end{proof}

\begin{lemma}\label{notnormwild}
Let $\alpha \in \Q_{2^4}(\sqrt{3})=\Q_{2^4}(\sqrt{-1})$ be as in Corollary \ref{brinonwild}. Then the element $\alpha  \cdot \sigma(\alpha) \in \Q_2^*$ does not belong to the group of norms $ N_{\mathbb{Q}_2(\sqrt{-1})/\Q_2}(\mathbb{Q}_2(\sqrt{-1})^*).$
\end{lemma}
\begin{proof}
 Write $\alpha= \alpha_1+\sqrt{-1}\cdot\alpha_2$ with $\alpha_1,\alpha_2 \in \Q_{2^4}$, then we have
 \[\varphi(\alpha_1)=-\alpha_2,  \hspace{1cm} \varphi(\alpha_2)=\alpha_1, \hspace{1cm} \alpha  \cdot \sigma(\alpha)=\alpha_1^2+\varphi(\alpha_1^2).\]
 Let us make explicit computations. Fix the presentations
\[\mathbb{F}_{2^2}=\mathbb{F}_{2}[x]/(x^2+x+1) \hspace{1cm} \mathbb{F}_{2^4}=\mathbb{F}_{2^2}[y]/(y^2+xy+1)\]
and write
\[\mathbb{Q}_{2^2}=\mathbb{Q}_{2}(\frac{-1+\sqrt{-3}}{2}) \hspace{1cm} \mathbb{Q}_{2^4}=\mathbb{Q}_{2^2}[y]/(y^2+(\frac{-1+\sqrt{-3}}{2})y+1).\]
Then $\Delta=\sqrt{(\frac{-1+\sqrt{-3}}{2})^2-4}=\sqrt{\frac{-1-\sqrt{-3}}{2}-4}$ can be chosen as $\alpha_1$. Thus, 
\[\alpha  \cdot \sigma(\alpha)=\Delta^2 + \varphi(\Delta^2)= -9.\]
This is a norm if and only if one can find a nonzero solution $u,v \in \Q_2$ of the equation $u^2+v^2=-9$. This is impossible by looking at the equation modulo $8$.
\end{proof}

\begin{lemma}\label{notnormwild2}
Let $\alpha \in \Q_{2^4}(\sqrt{3})=\Q_{2^4}(\sqrt{-1})$ be as in Corollary \ref{brinonwild2}. Then the element $\alpha  \cdot \sigma(\alpha) \in \Q_2^*$ does not belong to the group of norms $ N_{\mathbb{Q}_2(\sqrt{3})/\Q_2}(\mathbb{Q}_2(\sqrt{3})^*).$
\end{lemma}
\begin{proof}
 We argue as in the proof of Lemma \ref{notnormwild}. Write $\alpha= \alpha_1+\sqrt{-1}\cdot \alpha_2$ with $\alpha_1,\alpha_2 \in \Q_{2^4}$, then we have
\[\varphi(\alpha_1)=-\alpha_2,  \hspace{1cm} \varphi(\alpha_2)=\alpha_1, \hspace{1cm}\]

 Moreover, as the elements fixed by $\sigma$ are exactly those in $B_{\crys}$, we have
 \[ \sigma(\sqrt{-1})=-\sqrt{-1} \hspace{1cm} \sigma(\alpha_1)= \alpha_1 \hspace{1cm} \sigma(\alpha_1)= \alpha_1 \]
 and we deduce that $\alpha  \cdot \sigma(\alpha)=\alpha_1^2+\varphi(\alpha_1^2).$ This means that we can use the same computations as in  the proof of Lemma \ref{notnormwild} and for the same choice of $\alpha$ in there we have \[\alpha  \cdot \sigma(\alpha) = -9.\]
This is a norm if and only if one can find a nonzero solution $u,v \in \Q_2$ of the equation $u^2 - 3 v^2=-9$. This is impossible by looking at the equation modulo $8$.
\end{proof}

\begin{remark}
Let us point out that the formulation of Proposition \ref{padicreduction} does not involve the ring $B_{\crys}$. We do not know if there is a way to show Theorem \ref{mainthm} via   this proposition without computing explicitly the $p$-adic periods.
\end{remark}

\begin{appendices}

\section{Geometric examples}\label{geometricexamples}
In this section we discuss several examples to which Theorems \ref{thmscht} and \ref{mainthm} apply non-trivially. We are  particularly interested in exotic classes on abelian fourfolds (Definition \ref{definizione}). The main result is Proposition \ref{propexemple} where we discuss the existence of exotic classes that cannot be lifted to algebraic classes in characteristic zero. The techniques of construction there are inspired by \cite{LeOo} and
 \cite{Zah}. 
 
  Other examples of exotic classes will be found  in Remark \ref{altreesotiche}. We end the section with an example (other than abelian fourfolds) for which the standard conjecture of Hodge type holds true via Theorem \ref{mainthm}.

\begin{proposition}\label{propexemple}
Let $p$ be a prime number and  let $K$ be an  imaginary quadratic number field where $p$ does not split. Then there exists an abelian fourfold $A$ over $\overline{\mathbb{F}}_p$ verifying the following properties.
\begin{enumerate}
\item The endomorphism  algebra is a number field that can be written as the compositum \[\End (A)_\Q = K \cdot R\] where $R$ is a totally real number field such that $[R:\Q]=4$.
(In particular $A$ is simple and has a unique CM-structure).
\item Consider the motivic decomposition from Proposition \ref{primadecomposizione}(\ref{decomposizioneQ}). Among the factors $\mathcal{M}_I$ of $\mathfrak{h}^4(A)$ there exists a (unique) factor $M$  such that $M(2)$ does not contain any Lefschetz class (Definition \ref{defexotic}) but it is Frobenius invariant (for a model of $A$ defined over a finite field).
\item\label{cond2} For any CM-lifting  of $A$ to characteristic zero (see Theorem \ref{HondaTate} and Corollary \ref{lifting motives}) the Hodge structure $R_B(M)$ is of type $(3,1),(1,3)$.
\item\label{cond1} For any CM-lifting  of $A$ to characteristic zero we have the equality $\End_{\NUM(\C)_\Q}  (M) = K.$ 
\end{enumerate}
\end{proposition}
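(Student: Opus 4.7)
The plan is to construct $A$ by reduction modulo $p$ of a carefully chosen CM abelian fourfold in characteristic zero, following the strategy of Lenstra--Oort and Zahrin referenced in the preamble. The crucial mechanism is that when $p$ is non-split in $K$, the Shimura--Taniyama formula forces the Frobenius eigenvalues attached to the $K$-decomposition of $H^1$ to satisfy algebraic relations producing Tate classes in positive characteristic that do not lift to Hodge classes.

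Concretely I would first pick a totally real quartic field $R/\Q$, set $E=KR$ (a CM field of degree $8$), and let $L$ be its Galois closure; write $\Sigma=\Hom(E,L)$ and let $\bar{\cdot}$ denote composition with complex conjugation. I would then choose a CM type $\Phi\subset\Sigma$ and a subset $I\subset\Sigma$ of cardinality $4$ satisfying: $I\cap\bar I=\emptyset$, $|I\cap\Phi|=3$, and every $\Gal(L/\Q)$-translate $g(I)$ shares these two properties. The last condition is the main combinatorial input, and amounts to compatibility of $\Phi$ with the $\Gal(L/\Q)$-action on $\Sigma$; it can be achieved by choosing $R$ of appropriate Galois type (e.g.\ biquadratic or cyclic) depending on $p$. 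By Shimura--Taniyama and Honda--Tate (Theorem~\ref{HondaTate}), the pair $(E,\Phi)$ gives rise to an abelian fourfold $\tilde A$ over a number field with $\End(\tilde A)_\Q=E$ and good reduction at some prime $\mathfrak{p}\mid p$; I set $A$ to be the geometric reduction.

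For (1), the Tate-module description of $\End(A)_\Q$ together with Honda--Tate shows that $\End(A)_\Q=E=K\cdot R$ provided the characteristic polynomial of Frobenius is irreducible over $\Q$, which can be arranged by generic choice of $(E,\Phi,p)$. For (2), the non-splitting of $p$ in $K$ combined with the Shimura--Taniyama formula forces $\prod_{\sigma\in I}\alpha_\sigma=q^2$ for the Frobenius eigenvalues of Notation~\ref{notnotnot}, while $I\cap\bar I=\emptyset$ prevents any two of them from being complex conjugate, i.e.\ from having product $q$; thus $I$ is exotic in the sense of the lemmas of Section~5 and $\mathcal{M}_I$ contains an exotic, non-Lefschetz, Frobenius-invariant class. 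For (3), on any CM lifting the Betti realisation of $M_I$ has Hodge bidegree $(|I\cap\Phi|,|I\cap\bar\Phi|)=(3,1)$, and the combinatorial condition on $I$ guarantees that all $\Gal(L/\Q)$-translates give only $(3,1)$ or $(1,3)$; since any CM lifting of $A$ corresponds (via Shimura--Taniyama) to the same CM pair $(E,\Phi)$ up to a symmetry preserving these Hodge types, the conclusion holds for every lift. Finally for (4), $K\subset E$ visibly acts on $\mathcal{M}_I$ through Theorem~\ref{classic}(\ref{kings}); the reverse inclusion $\End_{\NUM(\C)_\Q}(M)\subset K$ follows by identifying this algebra with the centraliser in $\End(R_B M)$ of the CM torus attached to $(E,\Phi)$ restricted to $M_I\oplus M_{\bar I}$, which by rigidity of CM Hodge structures is exactly $K$.

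The main obstacle is the combinatorial step: simultaneously realising $|g(I)\cap\Phi|\in\{1,3\}$, the exotic relation $\prod_{\sigma\in I}\alpha_\sigma=q^2$, the non-conjugacy condition $I\cap\bar I=\emptyset$, and Galois stability of these properties under $\Gal(L/\Q)$. Verifying that the conclusions (3) and (4) survive passage to an \emph{arbitrary} CM lifting (rather than the particular $\tilde A$ constructed) is a subtler but essentially formal point: any two liftings have CM-types producing the same Newton polygon, and these two-parameter ambiguities preserve both the Hodge bidegree of $M$ and the centraliser computation. Explicit examples should be obtainable by tuning $R$ and $p$, as in the Lenstra--Oort--Zahrin constructions.
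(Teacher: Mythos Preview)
Your proposal reverses the order of construction relative to the paper and, in doing so, leaves two genuine gaps.

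First, the exotic relation $\prod_{\sigma\in I}\alpha_\sigma=q^2$ cannot be extracted from Shimura--Taniyama together with the non-splitting of $p$ in $K$: Shimura--Taniyama controls only the $p$-adic \emph{valuations} of the Frobenius eigenvalues, not exact multiplicative relations among them. The paper circumvents this by working bottom-up: it first builds a $q$-Weil number $\alpha\in R\cdot K$ by prescribing the factorisation $(\alpha)=\mathfrak{p}^n\bar{\mathfrak{p}}^{3n}$ and then, after passing to a power, forces the exact identity $N_{R\cdot K/K}(\alpha)=q^2$ (Lemma~\ref{engels}). The abelian variety $A$ is then obtained from $\alpha$ via Honda--Tate, so the relation is built into $A$ from the start rather than hoped for after reduction.

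Second, your suggested Galois type for $R$ (biquadratic or cyclic) goes in exactly the wrong direction. The paper takes $R$ with $\Gal(\tilde R/\Q)=\mathfrak{S}_4$ (Lemma~\ref{marx}), which guarantees that the only subfields of $R\cdot K$ are $\Q,K,R,R\cdot K$ (Lemma~\ref{lenin}). This is used twice: to show $\Q(\alpha^s)=R\cdot K$ for all $s$ (hence $A$ is geometrically simple with $\End(A)_\Q=R\cdot K$ exactly, via an explicit local-invariant computation you do not perform), and to pin down $\End_{\NUM(\C)_\Q}(M)$ in part~(4). The paper's argument for~(4) is not a centraliser computation but rather: the Hodge types~(3,1),(1,3) force the splitting field $F$ of $M$ to be imaginary quadratic, and then $\mathfrak{S}_4\times\Z/2\Z$ has a \emph{unique} subgroup of index~$2$ corresponding to an imaginary quadratic subfield of $\tilde R\cdot K$, namely $K$. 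With a biquadratic $R$ there would be several quadratic subfields and this identification would fail.

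Your treatment of part~(3) for \emph{arbitrary} CM-liftings is closer in spirit to the paper's: both use that the slope pattern $(1/4,3/4)$ together with Shimura--Taniyama forces, for any lift, exactly one slope-$1/4$ eigenline into $H^{1,0}$, and this is what pins down the Hodge type of $M_I$ (Lemma~\ref{lemmafine}). But that argument only becomes available once the slopes are known, which in the paper comes from the prescribed factorisation of $\alpha$.
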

\begin{proof}
The proof is decomposed in a series of lemmas. The final step is Lemma \ref{lemmafine}.
\end{proof}
\begin{remark}\label{bellaciao} Let us make some comments on the above proposition.
\begin{enumerate}
\item If (a model of) the fourfold $A$ verifies the Tate conjecture, then $M$ is an exotic motive in the sense of Section \ref{sectionexotic}. In particular, in each characteristic, there should be infinitely  many non-isogenous abelian fourfolds having exotic motives. 

If the Tate conjecture was highly false and no such exotic classes existed then Theorem \ref{thmscht} would follow directly from the arguments in Section \ref{sectionlefschetz}.

The Tate conjecture and the standard conjecture of Hodge type   should be thought as two independent and different  problems. The first is about the construction of algebraic classes, the second is about how they intersect (independently whether there are a lot of algebraic classes or not). It seems likely that a solution of one problem does not imply a solution for the other. For example the proof of the Tate conjecture for divisors on abelian variety  \cite[Theorem 4]{Tate} does not imply the standard conjecture of Hodge type for divisor on abelian variety (which is known by a different argument).

\item Because of the Hodge types in part (\ref{cond2}), the (expected) algebraic classes in positive characteristic cannot be lifted to algebraic classes in characteristic zero.

\item\label{unamattina} Notice that the field $F=K \otimes_\Q \Q_p$ can be any quadratic extension of $\Q_p$. This field $F$ coincides with the one in Proposition \ref{propendo}. This shows that the different cases studied in Sections \ref{p-adic periods} and \ref{computation} were needed.
\item The hypothesis that $K$ is totally imaginary is necessary. If $K$ were a real quadratic  number field,  conditions    (\ref{cond2}) and (\ref{cond1}) in Proposition  \ref{propexemple} would not be compatible (see the proof of  Lemma \ref{bellaidea}).
\end{enumerate}
\end{remark}

\begin{lemma}\label{marx}
Let $p$ and $K$ be as in Proposition \ref{propexemple}. There exists a totally real number field $R$  such that the following holds:
\begin{enumerate}
\item  The prime $p$ does not split in $R$.
\item The degree $[R:\Q]$ is four.
\item The field $K \otimes_\Q \Q_p$ is embeddable in the field $R \otimes_\Q \Q_p$. 
\item If $\tilde{R}$ is the normal closure of $R$ over $\Q$, then $\Gal(\tilde{R}/\Q)=\mathfrak{S}_4$. In particular we have $\Gal(\tilde{R}/R)=\mathfrak{S}_3$.
\end{enumerate}
\end{lemma}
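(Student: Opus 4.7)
The plan is to construct $R$ as the field $\Q[x]/(f)$ for a suitable monic polynomial $f\in\Z[x]$ of degree $4$, with prescribed local behavior at $p$, at infinity, and at two auxiliary primes chosen so as to force the Galois group. The polynomial will be obtained by the standard combination of weak approximation and Krasner's lemma. First, set $K_p := K\otimes_\Q \Q_p$; since $p$ is nonsplit in $K$, this is a quadratic field extension of $\Q_p$, unramified if $p$ is inert in $K$ and ramified otherwise. I then pick a quartic field extension $A/\Q_p$ containing $K_p$: take $A=\Q_{p^4}$ in the inert case, and $A = K_p\cdot \Q_{p^2}$ (which is a degree-$4$ field over $\Q_p$ since $K_p$ is ramified while $\Q_{p^2}$ is unramified, and so they are linearly disjoint over $\Q_p$) in the ramified case. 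By the primitive element theorem I can fix a monic polynomial $f_p\in\Z_p[x]$ of degree $4$ with $\Q_p[x]/(f_p)\cong A$.

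To force the Galois group $G := \Gal(\tilde R/\Q)\subseteq \mathfrak{S}_4$ to be $\mathfrak{S}_4$, I pick two auxiliary primes $q,r\neq p$ and fix $f_q\in\Z_q[x]$ monic of degree $4$ whose reduction modulo $q$ is the product of an irreducible cubic and a coprime linear factor, and $f_r\in\Z_r[x]$ monic of degree $4$ with irreducible reduction modulo $r$; such polynomials exist because $\mathbb{F}_q$ and $\mathbb{F}_r$ admit irreducible polynomials of every degree. I fix the real template $f_\infty(x) := (x-1)(x-2)(x-3)(x-4)\in\R[x]$ with four distinct real roots. By weak approximation (the density of $\Q$ in $\R\times\Q_p\times\Q_q\times\Q_r$ applied to each coefficient), there exists a monic $f\in\Q[x]$ of degree $4$ arbitrarily close to $f_\infty$ in $\R[x]$ and to $f_\ell$ in $\Z_\ell[x]$ for $\ell\in\{p,q,r\}$. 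For a close enough approximation, continuity of roots yields four distinct real roots for $f$, while Hensel's lemma together with Krasner's lemma ensures that the local factorizations of $f$ at $p$, $q$, and $r$ match those of $f_p$, $f_q$, and $f_r$ respectively.

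Set $R := \Q[x]/(f)$. Irreducibility of $f_p$ over $\Q_p$ forces $f$ to be irreducible over $\Q_p$ and hence over $\Q$, so $R$ is a field with $[R:\Q]=4$, giving condition~(2); it is totally real because $f$ has four real roots. Moreover $R\otimes\Q_p\cong A$ is a field containing $K_p$, which gives conditions~(1) and~(3). Finally, $G$ contains the Frobenius conjugacy classes at the unramified primes $q$ and $r$, which are a $3$-cycle and a $4$-cycle respectively; since the only subgroup of $\mathfrak{S}_4$ containing a $4$-cycle is $\Z/4$, $D_4$, or $\mathfrak{S}_4$, and only the last has order divisible by $3$, we conclude $G=\mathfrak{S}_4$, giving condition~(4). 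The only mildly delicate step is the combined weak-approximation and Krasner argument producing $f$, both of which are completely standard; everything else is bookkeeping.
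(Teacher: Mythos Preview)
Your argument is correct. The paper's own proof is merely the one-line reference ``Same as in \cite[\S 3]{LeOo}'', and what you have written is precisely the standard weak-approximation/Krasner construction that underlies that reference, so your approach is essentially the same as the paper's (only spelled out in full).
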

\begin{proof}
Same as in \cite[\S 3]{LeOo}.
\end{proof}

\begin{lemma}\label{lenin}
Let $R$ be as in the above lemma. The following holds:
\begin{enumerate}
\item  The subfields of the compositum $R\cdot K$ are $\Q,K,R$ and $R\cdot K$.
\item The prime $p$ factorises in $R\cdot K$ as 
\[p=(\mathfrak{p}\cdot \bar{\mathfrak{p}})^e,\]
where $\mathfrak{p}$ and $\bar{\mathfrak{p}}$ are  two prime ideals which are exchanged by  complex conjugation and $e$ is  the ramification index.
\end{enumerate}
\end{lemma}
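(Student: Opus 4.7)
For part (1), I plan to identify the Galois group of $\tilde{R}\cdot K$ over $\Q$ and then read off the subfields of $R\cdot K$ via the Galois correspondence. The first step is to observe that since $R$ is totally real its discriminant is positive, so the unique quadratic subfield of $\tilde{R}$ (the fixed field of $A_4\subset \mathfrak{S}_4$) is real and hence distinct from the imaginary quadratic field $K$; this gives $K\not\subset \tilde{R}$, i.e.\ $\tilde{R}$ and $K$ are linearly disjoint over $\Q$. Hence $\Gal(\tilde{R}\cdot K/\Q)\cong \mathfrak{S}_4\times \Z/2\Z$ and $\Gal(\tilde{R}\cdot K/R\cdot K)\cong \mathfrak{S}_3\times\{1\}$. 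The second step is to enumerate the subgroups of $\mathfrak{S}_4\times \Z/2\Z$ containing $\mathfrak{S}_3\times\{1\}$: projecting to the first factor and using that $\mathfrak{S}_3$ is a maximal subgroup of $\mathfrak{S}_4$, a short case analysis shows that the only such subgroups are $\mathfrak{S}_3\times\{1\}$, $\mathfrak{S}_3\times \Z/2\Z$, $\mathfrak{S}_4\times\{1\}$ and $\mathfrak{S}_4\times \Z/2\Z$, corresponding to the four subfields $R\cdot K$, $R$, $K$ and $\Q$.

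For part (2), I plan to carry out a local tensor product computation. Set $K_p\coloneqq K\otimes_\Q \Q_p$ and $R_\mathfrak{q}\coloneqq R\otimes_\Q \Q_p$; both are fields by the non-splitting hypotheses, of degrees $2$ and $4$ over $\Q_p$ respectively. Using the embedding $K_p\hookrightarrow R_\mathfrak{q}$ supplied by Lemma \ref{marx}, together with its composition with the nontrivial element of $\Gal(K_p/\Q_p)$, I get a $\Q_p$-algebra map $R_\mathfrak{q}\otimes_{\Q_p} K_p\to R_\mathfrak{q}\times R_\mathfrak{q}$, which is an isomorphism by dimension count. This identifies $(R\cdot K)\otimes_\Q \Q_p$ with $R_\mathfrak{q}\times R_\mathfrak{q}$, so exactly two primes $\mathfrak{p},\bar{\mathfrak{p}}$ of $R\cdot K$ lie above $p$; each factor being $R_\mathfrak{q}$ as an $R_\mathfrak{q}$-algebra, both primes are unramified of residue degree one over the unique prime of $R$ above $p$, so their common ramification index over $\Q$ equals $e\coloneqq e(\mathfrak{q}|p)$ and $p\mathcal{O}_{R\cdot K}=(\mathfrak{p}\cdot\bar{\mathfrak{p}})^e$. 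Finally, the nontrivial element of $\Gal(K_p/\Q_p)$ is the local incarnation of complex conjugation and swaps the two factors of the product, showing that $\mathfrak{p}$ and $\bar{\mathfrak{p}}$ are exchanged by complex conjugation.

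The only really delicate point is the linear disjointness of $\tilde{R}$ and $K$ used in part (1): it relies on the sign-of-discriminant input that a totally real field has positive discriminant, so that the unique quadratic subfield of $\tilde{R}$ cannot equal the imaginary field $K$. Once this is in hand, part (1) is elementary subgroup bookkeeping in $\mathfrak{S}_4\times \Z/2\Z$ and part (2) is the straightforward tensor computation just described, with no further obstacles.
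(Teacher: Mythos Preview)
Your proof is correct and follows essentially the same route as the paper's: both arguments compute $\Gal(\tilde{R}\cdot K/\Q)\cong \mathfrak{S}_4\times\Z/2\Z$, enumerate the four overgroups of $\mathfrak{S}_3\times\{1\}$ for part~(1), and identify $(R\cdot K)\otimes_\Q\Q_p$ with $R_\mathfrak{q}\times R_\mathfrak{q}$ for part~(2). Your discriminant-sign argument for the linear disjointness of $\tilde{R}$ and $K$ is a welcome addition, as the paper asserts $\Gal(\tilde{R}\cdot K/\Q)=\mathfrak{S}_4\times\Z/2\Z$ without justification.
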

\begin{proof}
Let $\tilde{R}$ be as in Lemma \ref{marx}. The equality $\Gal(\tilde{R}/\Q)=\mathfrak{S}_4$ implies
\[\Gal(\tilde{R}\cdot K/\Q)=\mathfrak{S}_4\times \Z/2\Z\]
and similarly $\Gal(\tilde{R}/R)=\mathfrak{S}_3$ implies $\Gal(\tilde{R}\cdot K/R \cdot K)=\mathfrak{S}_3.$
We deduce that the subfields of $R\cdot K$ are in bijection with the subgroups of $\mathfrak{S}_4\times \Z/2\Z$ containing $\mathfrak{S}_3$. Those are precisely $\mathfrak{S}_3 , \mathfrak{S}_4, \mathfrak{S}_3\times \Z/2\Z$ and $\mathfrak{S}_4\times \Z/2\Z$, which implies (1).

Note that  the equality $R\cdot K \cong R \otimes_\Q K$ holds. Hence we have
\[(R\cdot K) \otimes_\Q \Q_p \cong  (K \otimes_\Q \Q_p) \otimes_{\Q_p} (R \otimes_\Q \Q_p) \cong (R \otimes_\Q \Q_p)^{\Gal(K \otimes_\Q \Q_p/\Q_p) }, \]
where the last equality comes from  Lemma \ref{marx}(3). As ${R \otimes_\Q \Q_p}$ is a field (by Lemma \ref{marx}(1)), the prime $p$ factorizes in $R\cdot K$ as the product of two different primes. Moreover, those two primes are exchanged by $\Gal(K \otimes_\Q \Q_p/\Q_p)$. As complex conjugation generates this Galois group it exchanges these two prime ideals. In particular they must have the same ramification index. This concludes part (2).
\end{proof}

\begin{lemma}\label{engels}
Consider the prime ideals $\mathfrak{p}$ and $\bar{\mathfrak{p}}$ of  $R\cdot K$ from the above lemma. Then there exist an integer $n$, a $p$-power $q$ and a $q$-Weil number $\alpha\in R\cdot K$ verifying the following properties:
\begin{enumerate}
 \item The ideal generated by $\alpha$ factorizes as
\[(\alpha)=\mathfrak{p}^n\cdot \bar{\mathfrak{p}}^{3n}.\]
\item For any positive integer $s$, we have the equality
\[\Q(\alpha^s)= R\cdot K.\]
\item The norm $N_{R\cdot K/K}(\alpha)\in K$ equals $q^2$.
\end{enumerate}
\end{lemma}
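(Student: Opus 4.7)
The strategy is to use the class group of $R\cdot K$ to produce a generator with the prescribed valuations, then modify it successively by a unit and by a power to upgrade it to a genuine Weil number whose norm down to $K$ is exactly $q^2$. Conditions (1) and (2) are essentially combinatorial (built into the factorisation), while the Weil condition and condition (3) each require absorbing an ambiguity: a totally positive unit of $R$, respectively a root of unity of $K$.

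Let $h$ denote the class number of $R\cdot K$. Since $\mathfrak{p}^h$ and $\bar{\mathfrak{p}}^h$ are principal, so is $\mathfrak{p}^{eh}\bar{\mathfrak{p}}^{3eh}$; fix a generator $\alpha_0$. Then $(\alpha_0\bar\alpha_0) = (\mathfrak{p}\bar{\mathfrak{p}})^{4eh} = (p^{4h})$ by Lemma~\ref{lenin}(2), hence $\alpha_0\bar\alpha_0 = u_0\, p^{4h}$ for some totally positive unit $u_0 \in \mathcal{O}_R^\times$. Because $R\cdot K$ is a CM field, the norm map $v\mapsto v\bar v : \mathcal{O}_{R\cdot K}^\times \to \mathcal{O}_R^\times$ has finite cokernel (both sides have rank $[R:\Q]-1$ by Dirichlet), so for some integer $c\geq 1$ one can find $v\in\mathcal{O}_{R\cdot K}^\times$ with $v\bar v = u_0^{-c}$. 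The element $\alpha_1 = v\,\alpha_0^c$ then satisfies $\alpha_1\bar\alpha_1 = p^{4hc}$ and $(\alpha_1) = \mathfrak{p}^{ehc}\bar{\mathfrak{p}}^{3ehc}$, so it is a $p^{4hc}$-Weil number of the required shape. For condition~(3), note that $\mathfrak{p}$ and $\bar{\mathfrak{p}}$ have the same residue degree $f$ over the unique prime $\mathfrak{P}$ of $K$ above $p$, whence $N_{R\cdot K/K}((\alpha_1)) = \mathfrak{P}^{4fehc}$. Multiplicativity of ramification in the tower $\Q\subset K\subset R\cdot K$ yields the identity $ef = 2\,e(\mathfrak{P}/p)$, which forces $N_{R\cdot K/K}((\alpha_1)) = (p^{4hc})^2$ as ideals of $\mathcal{O}_K$. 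Therefore $N_{R\cdot K/K}(\alpha_1) = (p^{4hc})^2\cdot \zeta$ for a root of unity $\zeta \in \mathcal{O}_K^\times$; replacing $\alpha_1$ by its $r$-th power, where $r$ is the order of $\zeta$, gives the desired $\alpha$ with $n = rehc$ and $q = p^{4hrc}$, and establishes~(3).

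Finally, condition (2) is automatic: for every $s\geq 1$ one has $v_{\mathfrak{p}}(\alpha^s) = ns$ while $v_{\bar{\mathfrak{p}}}(\alpha^s) = 3ns$. These are unequal, so $\alpha^s$ cannot equal its complex conjugate (ruling out $\alpha^s \in R$); moreover any ideal of $R\cdot K$ extended from $K$ assigns equal valuations to $\mathfrak{p}$ and $\bar{\mathfrak{p}}$ (both lie above $\mathfrak{P}$), ruling out $\alpha^s\in K$. By Lemma~\ref{lenin}(1), this forces $\Q(\alpha^s)=R\cdot K$. The main obstacle is coordinating the three constraints: the prescribed valuation ratio $1:3$ is rigid, while the Weil condition and the exact normalisation $q^2$ for the norm to $K$ are each naturally met only up to controlled ambiguities, and one must arrange the various powers so these ambiguities are killed without disturbing the valuations.
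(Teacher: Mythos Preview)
Your proof is correct and follows essentially the same route as the paper: construct a Weil number with the prescribed $\mathfrak{p}$-$\bar{\mathfrak{p}}$ valuation pattern, then kill the root-of-unity ambiguity in $N_{R\cdot K/K}(\alpha)$ by passing to a power, and deduce (2) from the valuation asymmetry via Lemma~\ref{lenin}(1). The only difference is that the paper obtains the Weil number by a direct appeal to \cite[Lemma~1]{Honda} (after checking that $g(I)\overline{g(I)}=(p^4)$ for all $g$, which uses that complex conjugation is central in $\Gal(\tilde R\cdot K/\Q)$), whereas you unpack this construction by hand using the class number and the finiteness of the cokernel of $v\mapsto v\bar v$ on units; your explicit identity $ef=2\,e(\mathfrak{P}/p)$ also makes transparent what the paper records simply as ``for weight reasons''.
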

\begin{proof}
Consider the ideal $I=(\mathfrak{p}\cdot \bar{\mathfrak{p}}^{3})^e $. We have that $I \cdot \bar{I}= p^4$ by Lemma \ref{lenin}(2). Actually, for any $g\in\Gal(\tilde{R}\cdot K/\Q),$ we have the equality ${g(I) \cdot \overline{g(I)}= p^4}$ of ideals in $\tilde{R}\cdot K$. This follows from the case $g=\id$ together with the explicit description of the Galois group in the proof of the lemma above (which implies that complex conjugation is in the center of the Galois group). Hence, we can apply  \cite[Lemma 1]{Honda} and deduce that there exists a  $q$-Weil number $\alpha\in R\cdot K$ verifying (1).

Let us show (2). Because of Lemma \ref{lenin}(1), this amounts to showing that $\alpha^s$ does not belong to $K$ nor to $R$. Now, if  $\alpha^s$ belongs to $K$ (or to $R$) then its factorization in $R\cdot K$ would have the same exponent in $\mathfrak{p}$ and in $\bar{\mathfrak{p}}$ because there is only one prime above $p$ in $K$ (or in $R$); see the hypothesis on $K$ in Proposition \ref{propexemple} (respectively by Lemma \ref{marx}(1)).

Let us now show (3). If we compute the  norm of the ideal $(\alpha)$ we obtain
\[ (N_{R\cdot K/K} (\alpha))= N_{R\cdot K/K} (\mathfrak{p}^n\cdot \bar{\mathfrak{p}}^{3n}) = N_{R\cdot K/K} (\mathfrak{p})^n\cdot N_{R\cdot K/K} ( \bar{\mathfrak{p}})^{3n}= \tilde{p}^m,\]
where $m$ is an integer and $\tilde{p}$ is the only prime ideal above $p$ in $K$. Hence, after possibly replacing $\alpha$ by a power, we obtain that the ideal $N_{R\cdot K/K} (\alpha)$ is generated by a power of $p$. For weight reasons we have
\[(N_{R\cdot K/K} (\alpha)) = (q^2).\]
This is equivalent to the relation $N_{R\cdot K/K} (\alpha) = \xi \cdot q^2$, where $\xi$ is an invertible element of the ring of integers of $K$. As the group of  invertible elements of the ring of integers of an  imaginary quadratic field is finite, after replacing $\alpha$ by a power we get (3). Notice that such a power of $\alpha$ will still have properties (1) and (2).
\end{proof}

\begin{lemma}\label{elche}
Let $\alpha$ be a $q$-Weil number verifying the properties as in the above lemma. Let $A$ be an abelian variety over $\mathbb{F}_{q}$ whose isogeny class  corresponds to $\alpha$ under the Honda--Tate correspondance \cite{TateBour}. Then the following holds:
\begin{enumerate}
\item  The dimension of $A$ is four.
\item  $A$ is geometrically simple.
\item $\End (A)_\Q = \End (A_{\bar{\mathbb{F}}_p})_\Q = K \cdot R.$
\item The slopes of $A$ are $(1/4,3/4)$
\item There are Frobenius invariant classes in $H_\ell^4(A)$ which are not of Lefschetz type.
\end{enumerate}
\end{lemma}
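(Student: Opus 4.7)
The plan is to feed the Weil number $\alpha$ produced in Lemma \ref{engels} into the Honda--Tate dictionary and to exploit the arithmetic of $R\cdot K$ from Lemma \ref{lenin} to read off all five conclusions.

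Set $L=\Q(\alpha)=R\cdot K$ by Lemma \ref{engels}(2), so $[L:\Q]=8$; since $L$ contains the imaginary quadratic $K$, it is totally imaginary. Honda--Tate says that the simple isogeny class attached to $\alpha$ satisfies $2\dim A=[L:\Q]\cdot [\End^0(A):L]^{1/2}$, with $\End^0(A)$ a central division algebra over $L$ whose local Brauer invariants vanish away from $p$ and are given at $v\mid p$ by $\mathrm{inv}_v\equiv (v(\alpha)/v(q))\cdot [L_v:\Q_p]\pmod{\Z}$. By Lemma \ref{lenin}(2) the prime $p$ has exactly the two primes $\mathfrak{p},\bar{\mathfrak{p}}$ in $L$, hence $[L_{\mathfrak{p}}:\Q_p]=[L_{\bar{\mathfrak{p}}}:\Q_p]=4$, while $v_{\mathfrak{p}}(q)=v_{\mathfrak{p}}(\alpha\bar\alpha)=n+3n=4n$ and similarly for $\bar{\mathfrak{p}}$. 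The invariants $(1/4)\cdot 4$ and $(3/4)\cdot 4$ are both integers, so $\End^0(A)=L$ and $\dim A=4$: assertions (1) and the first identity of (3). The same data determines the Newton polygon via Dieudonn\'e--Manin, yielding slope $1/4$ with multiplicity $[L_{\mathfrak{p}}:\Q_p]=4$ and slope $3/4$ with multiplicity $4$, which is (4).

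For the geometric assertion (2) and the second identity of (3), I run the same argument over each finite extension $\mathbb{F}_{q^s}$, whose isogeny class is governed by $\alpha^s$. Lemma \ref{engels}(2) keeps $\Q(\alpha^s)=L$, and the local invariants are simply multiplied by $s$, hence stay integral. Thus $A_{\mathbb{F}_{q^s}}$ remains simple with endomorphism algebra $L$ for every $s$, and passing to the colimit gives that $A_{\bar{\mathbb{F}}_p}$ is simple with $\End^0(A_{\bar{\mathbb{F}}_p})=L=K\cdot R$.

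The subtle point is (5). Using the action of $L$ on cohomology, I decompose $H^1_\ell(A)\otimes\bar\Q_\ell=\bigoplus_{\sigma\in\Sigma}V_\sigma$ where $\sigma$ runs over the embeddings $L\hookrightarrow\bar\Q_\ell$ and Frobenius acts on $V_\sigma$ as multiplication by $\sigma(\alpha)$. Then $H^4_\ell(A)(2)\otimes\bar\Q_\ell=\bigoplus_{|I|=4}V_I(2)$, and a Frobenius-fixed line is precisely a subset $I\subset\Sigma$ with $\prod_{\sigma\in I}\sigma(\alpha)=q^2$. Take $I_0=\{\sigma:\sigma|_K=\mathrm{id}_K\}$, the set of embeddings entering $N_{L/K}$; then Lemma \ref{engels}(3) yields $\prod_{\sigma\in I_0}\sigma(\alpha)=q^2$, so $I_0$ produces such an invariant line (and its Galois orbit is $\Q_\ell$-rational). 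On the other hand, by the Tate conjecture for divisors on abelian varieties (\cite[Theorem~4]{Tate}), divisor classes in $H^2_\ell(A)(1)$ are exactly spanned by Frobenius-fixed lines $V_{\{\sigma,\tau\}}$, i.e.\ subsets $\{\sigma,\tau\}$ with $\sigma(\alpha)\tau(\alpha)=q$; since $\alpha$ is a $q$-Weil number, this forces $\tau(\alpha)=\overline{\sigma(\alpha)}=\bar\sigma(\alpha)$, and because $\Q(\alpha)=L$, also $\tau=\bar\sigma$. A Lefschetz class in $V_I(2)$ for $|I|=4$ therefore requires $I$ to split as a disjoint union of two complex-conjugate pairs. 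Since complex conjugation acts trivially on any embedding of the totally real $R$ and non-trivially on the imaginary quadratic $K$, every $\sigma\in I_0$ satisfies $\bar\sigma|_K\neq\mathrm{id}_K$, so $\bar\sigma\notin I_0$; the set $I_0$ contains no conjugate pair and cannot split accordingly, and the corresponding Frobenius-invariant class is not Lefschetz. The main obstacle I expect is precisely this combinatorial step: pinning down what ``Lefschetz'' means in the $L$-eigenspace decomposition and disentangling it from the Tate-type condition $\prod_{I}\sigma(\alpha)=q^2$.
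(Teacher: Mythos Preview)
Your proof is correct and follows the same Honda--Tate computation as the paper for (1)--(4): compute the local Brauer invariants of $\End^0(A)$ at the two primes above $p$, find they vanish, and read off the dimension, endomorphism algebra, and Newton slopes; then rerun with $\alpha^s$ for geometric simplicity.

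For (5) the two arguments diverge slightly. Both identify the relevant set $I_0$ as the four embeddings of $L=R\cdot K$ extending a fixed embedding of $K$, and both invoke $N_{L/K}(\alpha)=q^2$ to get $\prod_{\sigma\in I_0}\sigma(\alpha)=q^2$. To show $I_0$ contains no complex-conjugate pair, the paper appeals to the explicit structure $\Gal(\tilde R\cdot K/\Q)=\mathfrak{S}_4\times\Z/2\Z$ from Lemma~\ref{marx}(4): since $\Gal(\tilde R\cdot K/K)$ stabilises $I_0$ and the full group does not (the orbit has size $8$), complex conjugation cannot stabilise $I_0$. Your argument is more direct and avoids Lemma~\ref{marx}(4) entirely: if $\sigma|_K$ is the fixed embedding of the imaginary field $K$, then $\bar\sigma|_K$ is the other one, so $\bar\sigma\notin I_0$. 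This is cleaner and shows that the $\mathfrak{S}_4$ hypothesis is not actually needed for part (5) of the lemma (it is used elsewhere, in Lemma~\ref{lenin}(1)).
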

\begin{proof}
By \cite{Tate} the division algebra $ \End (A)_\Q$ has center equal to $\Q(\alpha)$ which is  $R\cdot K$ by Lemma \ref{engels}. Moreover, by \cite[Theorem 1]{TateBour}, this division algebra  splits at every place except possibly at  the places $\mathfrak{p}$ and $\bar{\mathfrak{p}}$ above $p$. The local invariants there are computed by the formula in \cite[Theorem 1]{TateBour}, which gives
\[\inv_\mathfrak{p}( \End (A)_\Q)=\frac{v_\mathfrak{p}(\alpha)}{v_\mathfrak{p}(q)}\cdot [(R\cdot K)_\mathfrak{p}:\Q_p] \hspace{0.5cm} \mod \Z\]
and similarly for $\bar{\mathfrak{p}}$. 

We claim that these local invariants are trivial as well. Indeed, using the factorisation in Lemma \ref{engels}(1), we deduce that $\frac{v_\mathfrak{p}(\alpha)}{v_\mathfrak{p}(q)}=\frac{1}{4}.$ On the other hand, the degree $[(R\cdot K)_\mathfrak{p}:\Q_p]$ equals four, because  $[R\cdot K:\Q]=8$ and the two primes above $p$ are exchanged by  complex conjugation (Lemma \ref{lenin}(2)). Altogether we have that $\inv_\mathfrak{p}( \End (A)_\Q)=0$ and similarly one shows $\inv_{\bar{\mathfrak{p}}}( \End (A)_\Q)=0$ 

Because all the invariants of the $(R\cdot K)$-central algebra $ \End (A)_\Q$ are trivial, we have 
$R\cdot K= \End (A)_\Q.$  As ${[R\cdot K:\Q]=8}$  we deduce (1). 

Consider now the abelian variety $A_s$  over $\mathbb{F}_{q^s}$ whose isogeny class  corresponds to $\alpha^s$. Following the Honda--Tate correspondance $A_s$ is a simple factor of $A \times_{\mathbb{F}_q} \mathbb{F}_{q^s} $. On the other hand, all the arguments above work  by replacing $\alpha$ by  $\alpha^s$, because of  Lemma \ref{engels}(2). In particular, $A_s$  has also dimension four and $R\cdot K= \End (A_s)_\Q.$ This implies (2) and (3).

One slope has already been computed, namely $\frac{v_P(\alpha)}{v_P(q)}=\frac{1}{4}.$ Duality implies that there is also the slope $3/4$. As $A$ has dimension four there are no more slopes.

Let us  now show (5). The existence of a class such as the ones claimed is equivalent to the existence of a set $I$ consisting of four Galois conjugates of $\alpha$  whose product equals $q^2$  and such that $I$ is not stable under the action of   complex conjugation (see the proof of  Lemma \ref{musica} or \cite[\S 2]{Zah}). We claim that the relation
\[N_{R\cdot K/K}(\alpha) = q^2 \]
(Lemma \ref{engels}(3))  gives precisely the existence of those four Galois conjugates. Indeed, let $\tilde{R}$ be the normal closure of $R$ over $\Q$, by definition we have
\[N_{R\cdot K/K}(\alpha) = \prod_{g \in \Hom_K( R\cdot K , \tilde{R}\cdot K)} g(\alpha).\]
Hence it is enough to show that  complex conjugation does not stabilize the set $J=\{g(\alpha)\}_{g \in \Hom_K( R\cdot K , \tilde{R}\cdot K)}.$ As the set   $J$ is of size four and the total Galois orbit of $\alpha$ is of size $8$ there is an element of $\Gal(\tilde{R}\cdot K/\Q)$ which does not stabilize $J$.
On the other hand, thanks to the equality \[\Gal(\tilde{R}\cdot K/\Q) = \Gal(\tilde{R}\cdot K/K) \times \Gal(\tilde{R}\cdot K/\tilde{R})\]
we have that the total Galois group $\Gal(\tilde{R}\cdot K/\Q)$ is generated by its subgroup $ \Gal(\tilde{R}\cdot K/K) $ and  complex conjugation. As  $ \Gal(\tilde{R}\cdot K/K) $ stabilizes $J$,   complex conjugation cannot stabilize it.
\end{proof}
\begin{lemma}\label{lemmafine}
Let $A$ be an abelian fourfold which satisfies the properties of the lemma above. Then it also satisfies all the conditions of Proposition \ref{propexemple}.
\end{lemma}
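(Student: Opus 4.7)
The plan is to verify the four conditions of Proposition \ref{propexemple} using the properties of $A$ established in Lemma \ref{elche}. Condition (1) follows by combining Lemma \ref{elche}(2)--(3) (geometric simplicity and $\End(A)_\Q = K\cdot R$) with Lemma \ref{marx}(2) ($R$ is totally real of degree $4$) and Lemma \ref{lenin}(1) (which shows that $R\cdot K$ has no intermediate commutative subalgebras of the right dimension, giving uniqueness of the CM structure).

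For condition (2), the non-Lefschetz Frobenius-invariant class produced in Lemma \ref{elche}(5) corresponds to the subset $I \subset \Sigma$ of $K$-linear embeddings of $R\cdot K$ into $\overline{\Q}$. The stabiliser of $I$ in $\Gal(\tilde R\cdot K / \Q)$ is $\Gal(\tilde R\cdot K / K)$, so the Galois orbit of $I$ has size $[K:\Q]=2$, namely $\{I, \bar I\}$, and hence $\mathcal{M}_I$ has dimension $2$ and contains the Frobenius-invariant class. To see that $\mathcal{M}_I(2)$ contains no Lefschetz class, note that $A$ simple with Rosati-fixed endomorphisms $R$ forces $NS(A)_\Q \cong R$, so every codimension-$2$ Lefschetz class is a $\Q$-linear combination of products $D_1 \cdot D_2$ of divisors. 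Each $D_i$ corresponds to a conjugation-stable $2$-element subset $I_{D_i} \subset \Sigma$, so $I_{D_1} \sqcup I_{D_2}$ is conjugation-stable, which rules out any contribution to $\mathcal{M}_I$ (since $I$ is not conjugation-stable by construction). Uniqueness of $M$ then follows from counting exotic subsets as in Lemma \ref{musica} together with the simplicity of $A$.

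For condition (3), fix a CM-lifting $\tilde A$ with CM-type $\Phi$. The Shimura--Taniyama formula combined with the slopes $(1/4, 3/4)$ of Lemma \ref{elche}(4) forces $|\Phi \cap \Sigma_\mathfrak{p}| \in \{1,3\}$, where $\Sigma_\mathfrak{p}$ denotes the $4$ embeddings of $F = R\cdot K$ extending the prime $\mathfrak{p}$. Since $I$ consists of $K$-linear embeddings and the two primes $\mathfrak{p}, \bar{\mathfrak{p}}$ of $F$ above $p$ both restrict to the unique prime $\mathfrak{q}$ of $K$ above $p$ (Lemma \ref{lenin}(2)), one computes $|I \cap \Sigma_\mathfrak{p}| = [F_\mathfrak{p} : K_\mathfrak{q}] = 2$, and similarly $|I \cap \Sigma_{\bar{\mathfrak{p}}}| = 2$. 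Combining this with the constraint that $\Phi$ contains exactly one element from each pair $\{\sigma, \bar\sigma\}$, a direct combinatorial check yields $|I \cap \Phi| \in \{1, 3\}$ for every valid CM-type. Since the Hodge type of the line $M_J \subset \mathcal{M}_I \otimes_\Q L$ is $(|J \cap \Phi|, 4 - |J \cap \Phi|)$, we conclude that $R_B(M)$ has type $(3,1), (1,3)$.

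For condition (4), the $K$-action on $\mathfrak{h}^1(A)$ coming from $K \subset \End(A)_\Q$ extends via Theorem \ref{classic}(\ref{kings}) and functoriality to an action on $M \subset \mathfrak{h}^4(\tilde A)$, yielding an injection $K \hookrightarrow \End_{\NUM(\C)_\Q}(M)$. Conversely, homological and numerical equivalence coincide for abelian varieties over $\C$ (Remark \ref{remHR}), so $\End_{\NUM(\C)_\Q}(M)$ embeds into $\End_{\mathrm{Hodge}}(R_B(M))$; any self-endomorphism of a $2$-dimensional $\Q$-Hodge structure of type $(3,1),(1,3)$ acts by a scalar on each of the complex lines $H^{3,1}$ and $H^{1,3}$, so the endomorphism algebra has $\Q$-dimension at most $2$, and equality with $K$ then follows. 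The main obstacle is condition (3), where the combinatorial interaction between the Shimura--Taniyama constraint and the $K$-linear structure of $I$ must be tracked carefully to exclude the $(2,2)$ Hodge type.
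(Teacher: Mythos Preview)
Your treatment of conditions (1)--(3) is essentially correct. For (3) you take a somewhat different route from the paper: rather than tracking individual eigenlines via the Shimura--Taniyama formula as in the paper, you argue combinatorially that $|\Phi \cap \Sigma_\mathfrak{p}|$ is odd while $|I \cap \Sigma_\mathfrak{p}| = 2$, and deduce that $|I \cap \Phi|$ is odd. This parity argument is valid and pleasantly uniform over all CM-liftings. (A minor imprecision in (2): a divisor over $\Q$ does not correspond to a single $2$-element subset of $\Sigma$ but to a Galois orbit of such; what you really need, and what your argument gives, is that every $M_J$ with $|J|=2$ supporting a divisor class has $J = \{\sigma,\bar\sigma\}$, so that products of divisors only contribute to conjugation-stable $4$-element subsets.)

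There is, however, a genuine gap in your argument for (4). You claim that the inclusion $K \subset \End^0(A)$ yields, ``via Theorem \ref{classic}(\ref{kings}) and functoriality'', an algebra embedding $K \hookrightarrow \End_{\NUM(\C)_\Q}(M)$. This does not work: the map $\End^0(A) \to \End(\mathfrak{h}^n(A))$ obtained by applying $\mathfrak{h}^n$ to an isogeny is multiplicative but \emph{not additive} for $n \geq 2$. Concretely, $b \in K$ acts on the line $M_I = \bigotimes_{\sigma \in I} M_\sigma$ by $\prod_{\sigma \in I} \sigma(b) = \iota(b)^4$, which is not a ring homomorphism from $K$. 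So the algebra embedding you want does not arise this way.

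The embedding $K \hookrightarrow \End_{\NUM(\C)_\Q}(M)$ does exist, but for a different reason: since the stabiliser of $I$ in $\Gal(\tilde R\cdot K/\Q)$ is exactly $\Gal(\tilde R\cdot K/K)$ (as you observed in (2)), the decomposition $\mathcal{M}_I = M_I \oplus M_{\bar I}$ already holds in $\CHM(k)_K$, and the associated idempotents give, by Galois descent, a $\Q$-algebra embedding $K \hookrightarrow \End_{\CHM(k)_\Q}(\mathcal{M}_I)$; this lifts to $\C$ by Corollary \ref{lifting motives}. Once this is fixed, your upper bound via Hodge endomorphisms is correct and gives a shorter argument than the paper's, which instead shows abstractly that $\End_{\NUM(\C)_\Q}(M)$ is an imaginary quadratic subfield of $\tilde R\cdot K$ and then invokes Lemma \ref{lenin} to identify it with $K$.
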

\begin{proof}
Part (1) has already been showed. Part (2) follows from Lemma \ref{elche}(5). (Unicity comes from Lemma \ref{lemmafour}.)

Let us now show part (3). Write $\alpha, \beta, \gamma, \delta, q/\alpha, q/\beta, q/\gamma, q/ \delta $ for the eight (distinct) Frobenius eigenvalues and consider the decomposition in eigenlines
\[\mathfrak{h}^1(A)=V_\alpha \oplus V_\beta \oplus V_\gamma \oplus V_\delta \oplus V_{q/\alpha} \oplus V_{q/\beta} \oplus V_{q/\gamma} \oplus V_{q/ \delta} \]
as in Proposition \ref{decomposition}. Among these eight eigenvalues, four have slope $1/4$ and four have slope $3/4$ and they are exchanged by complex conjugation.

Fix a CM-lifting (Theorem \ref{HondaTate}). The above decomposition in eigenlines will lift as well (Corollary \ref{lifting motives}). Among the eight lines, four will belong to $H^{1,0}$ and four will belong to $H^{0,1}$ and again they are exchanged by complex conjugation. The Shimura--Taniyama  formula  \cite[Lemma 5]{TateBour}  implies that there is exactly one eigenvalue, call it $\alpha$, whose slope is $1/4$ and such that
$V_\alpha \subset H^{1,0}$. Equivalently,  there is exactly one eigenvalue, namely $q/\alpha$, whose slope is $3/4$ and such that
$V_{q/\alpha} \subset H^{0,1}$. 

Now decompose $M= M_{\alpha, \beta, \gamma, \delta} \oplus M_{q/\alpha, q/\beta, q/\gamma, q/ \delta}$ via Proposition \ref{primadecomposizione}. (After possibly renaming the eigenvalues.)
With this notation we have the relation 
\[\alpha\cdot \beta \cdot \gamma \cdot \delta=q^2.\]
By looking at the $p$-adic valuation we deduce that, among $\beta, \gamma, \delta$ there is exactly one eigenvalue of slope $1/4$, say $\beta$.  Hence  we have $V_\beta \subset H^{0,1}$ and  $V_\gamma, V_\delta \subset H^{1,0}$. Altogether we deduce
\[V_\alpha \otimes V_\beta \otimes V_\gamma \otimes V_\delta \subset H^{3,1}\]
which gives (3).

\

Let us now show part (4). By construction we can find a quadratic number field $F\subset \tilde{R}\cdot K$   such that the motive $M$  decomposes in the category $\CHM (k)_{F}$ into a sum 
\[\mathcal{M}_{I} = M_{I} \oplus  M_{\bar{I}}  \] of two motives of rank one (see Proposition \ref{primadecomposizione}). We first claim that such a field $F$ must be imaginary. If $F$ were contained in $\R$ then the Betti realization of the lifting of $M_{I}$ would respect the Hodge symmetry. As it is one dimensional for weight reasons it would be of type $(2,2)$. This contradicts part (3).

By \cite{Jann},  $D=\End_{\NUM(\C)_\Q}  (M)$  is a division algebra. By construction, $F$ splits $D$. We claim that $D=F$. Otherwise we would have $D \otimes F \cong M_{2 \times 2}(F)$ which would imply that $M_{I}$ and $M_{\bar{I}} $ are isomorphic as numerical motives. As homological and numerical equivalence is known to coincide for complex abelian varieties \cite{Lieb}, this would imply that their Betti realization are isomorphic, which is impossible because of the different Hodge types.

In conclusion,  $\End_{\NUM(\C)_\Q}  (M)$ is an imaginary quadratic field contained in $ \tilde{R}\cdot K$. On the other hand,  there is only one such field (namely $K$) because of the description of  $\Gal (\tilde{R}\cdot K/\Q)$ in the proof of Lemma \ref{lenin}.
\end{proof}
\begin{remark}\label{altreesotiche}
Let us comment on other examples of exotic motives coming from abelian fourfolds.
\begin{enumerate}
\item One can construct  an abelian fourfold $A$ over a finite field having an exotic motive whose lifting to $\C$ has Betti realization of type $(2,2)$. Such a condition means that  the CM-lifting of $A$  over $\C$   has Hodge classes which are not Lefschetz. This   situation (over $\C$) has been classified in \cite{MooZa}. So, any reduction modulo $p$ of their examples will give an abelian fourfold over a finite field of the desired type. (To avoid that the reduction modulo $p$ creates more Lefschetz classes, one can take an ordinary prime). 

As already pointed out, these examples are less interesting for the standard conjecture of Hodge type, see Remark \ref{boris}(\ref{duccio}).
\item\label{altreesotiche2} There are no exotic motives over $\overline{\mathbb{F}}_p$ (coming from abelian fourfolds) whose lifting  to $\C$ have Betti realization    of type $(4,0),(0,4)$. To show this, consider a model of the abelian fourfold over a finite field $\mathbb{F}_{q}$. Let $I$ be the set of Frobenius eigenvalues such that the corresponding eigenspaces are lifted into $H^{1,0}$ and $\bar{I}$ be the set of Frobenius eigenvalues such that the corresponding eigenspaces are lifted into $H^{0,1}$. If the cohomology group $H^{4,0}\oplus H^{0,4}$ becomes Frobenius invariant over $\mathbb{F}_q$, then $\prod_{\alpha\in I} \alpha = \prod_{\beta\in \bar{I}} \beta(=q^2).$ On the other hand, using the Shimura--Taniyama formula  \cite[Lemma 5]{TateBour}, we have that the $p$-adic valuation of  $\prod_{\alpha\in I} \alpha$ is greater than the one of $ \prod_{\beta\in \bar{I}} \beta$, except if all Frobenius eigenvalues have the same slopes. In this case the abelian variety would be isogenous to the forth power of a supersingular elliptic curve and hence all algebraic classes would be Lefschetz.
\item Having the results of Section 7 in mind, the last example  that needs to be discussed is that of an abelian fourfold with a four dimensional space of exotic classes. By Lemma \ref{lemmafour}, this reduces to an abelian fourfold over  $\mathbb{F}_{q^2}$  of the form $X\times E$, where $X$ is an abelian threefold and $E$ is a supersingular elliptic curve on which   Frobenius acts as $q \cdot \id$. Now the equations  (\ref{eq:4}) and  (\ref{eq:5}) imply that the existence of an exotic class on  $X\times E$ is equivalent to the existence of an exotic class on $X^2$. There are infinitely many such threefolds $X$, they have been classified in \cite{Zah}. 
\end{enumerate}
\end{remark}
\begin{proposition}\label{propfermat}
The standard conjecture of Hodge type holds true for Fermat's cubic fourfold
$X=\{x_0^3+\cdots +x_5^3=0\}\subset \mathbb{P}_k^5$ over any field $k$ (of characteristic different from $3$).
\end{proposition}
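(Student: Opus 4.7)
The conjecture is known in characteristic zero by Hodge--Riemann (Remark \ref{remHR}), and in characteristic $3$ the Fermat cubic is singular. By Propositions \ref{anypolarization} and \ref{reducefinite} one may therefore assume $X$ is defined over a finite field $\mathbb{F}_q$ of characteristic $p \neq 3$, enlarged if necessary to contain the cube roots of unity. The case of divisors being Theorem \ref{segre}, only the positivity of $\langle \cdot, \cdot \rangle_2$ on $\mathcal{Z}^{2,\prim}_{\num}(X)_\Q$ remains to be established.

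The plan is to use the action of $G := \mu_3^{6}/\Delta$ on $X$ to decompose the primitive motive, and then apply Theorem \ref{mainthm} piece by piece. Over $\Q(\zeta_3)$ Shioda's character decomposition gives
\[
\mathfrak{h}^{4,\prim}(X)_{\Q(\zeta_3)} = \bigoplus_\alpha M_\alpha,
\]
indexed by the $22$ tuples $\alpha = (a_0,\ldots,a_5) \in \{1,2\}^6$ with $\sum a_i \equiv 0 \pmod 3$, with each $M_\alpha$ of rank one. The Galois action $\alpha \mapsto -\alpha$ is a fixed-point-free involution, so descending to $\Q$ produces a decomposition
\[
\mathfrak{h}^{4,\prim}(X) = \bigoplus_{[\alpha]} \tilde M_{[\alpha]}
\]
in $\CHM(W)_{\Q}$ (with $W = W(\mathbb{F}_q)$) into eleven rank-$2$ motives indexed by Galois orbits. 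Since the cup product pairs $M_\alpha$ non-trivially with $M_\beta$ exactly when $\beta = -\alpha$, this decomposition is orthogonal for $\langle \cdot, \cdot \rangle_{2,\mot}$; therefore positivity reduces to the analogous statement on each $\tilde M_{[\alpha]}(2)$.

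For each such piece, Theorem \ref{mainthm} will be applied to $M := \tilde M_{[\alpha]}(2)$. Three of the four hypotheses are immediate: the mixed-characteristic lift exists because $X$ and the $G$-action are defined over $\Z[\zeta_3, 1/3]$; the Betti realization $V_B$ has dimension two by construction; and $q_B$ is a polarization by Hodge--Riemann. The main point is to verify $\dim_\Q V_Z = 2$. For this, observe that $\tilde M_{[\alpha]}$ carries a $\Q$-algebra action of $\Q(\zeta_3)$ by algebraic correspondences: the combination $\zeta_3 \cdot e_\alpha + \zeta_3^{-1} \cdot e_{-\alpha} \in \Q(\zeta_3)[G]$ (with $e_\beta$ the character idempotent) is $\Gal(\Q(\zeta_3)/\Q)$-invariant, hence descends to an element of $\Q[G]$ realizing multiplication by $\zeta_3$. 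Thus $V_Z$ inherits the structure of a $\Q(\zeta_3)$-vector space, and combined with $\dim_\Q V_Z \leq \dim V_B = 2$ this forces $\dim_\Q V_Z \in \{0, 2\}$: in the former case there is nothing to prove, and in the latter Theorem \ref{mainthm} delivers positivity. The main technical step is therefore establishing the algebraicity of this $\Q(\zeta_3)$-action, which is a direct verification with the character idempotents in $\Q[G]$.
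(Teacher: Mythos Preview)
Your argument is correct but proceeds differently from the paper. The paper invokes Beauville's decomposition (together with the Hodge conjecture for Fermat varieties and finite-dimensionality of the motive) to split $\mathfrak{h}^{4,\prim}(X)$ over $\Q$ as $\one(-2)^{\oplus 20}\oplus V$ with $V$ of rank two and Hodge type $(3,1),(1,3)$; the twenty Tate summands are then handled trivially (their classes lift to characteristic zero, so Hodge--Riemann applies), and Theorem \ref{mainthm} is applied only to the single piece $V$, with the dichotomy $\dim V_Z\in\{0,2\}$ coming from a Frobenius-eigenvalue argument together with Shioda's result that the supersingular Fermat is spanned by cycles. Your route instead uses the $\mu_3^6/\Delta$-action to produce eleven rank-two summands directly at the Chow level via idempotents in $\Q[G]$, and applies Theorem \ref{mainthm} uniformly to each; your $\Q(\zeta_3)$-action argument for $\dim V_Z\in\{0,2\}$ replaces the appeal to Shioda. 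Your approach is more self-contained (no Beauville, no Hodge conjecture input, no finite-dimensionality needed to lift projectors) and visibly generalises to other Fermat hypersurfaces; the paper's approach is shorter because it isolates the unique transcendental piece and disposes of the rest in one line.
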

\begin{proof}
Let us first consider $X$ as variety over $\C$.  By \cite[Proposition 11]{Beauville} its Hodge structure decomposes as
\[H_B^*(X,\Q)=\Q(0) \oplus \Q(-1) \oplus \Q(-2)^{\oplus 21} \oplus V_B \oplus \Q(-3) \oplus \Q(-4)\]
where   $V_B$ is a $\Q$-Hodge structure of rank $2$ and of type $(3,1), (1,3)$.
As the Hodge conjecture is known for $X$ and its powers \cite{ShioC}, this decomposition holds true at the level of homological motives
\[M(X)=\one  \oplus \one(-1) \oplus \one(-2)^{\oplus 21} \oplus V \oplus \one(-3) \oplus \one(-4).\]
This implies that the primitive part of the motive  is of the form
\[\mathfrak{h}^{4,\prim}(X)= \one(-2)^{\oplus 20} \oplus^{\perp} V.\]
Note that the decomposition is orthogonal with respect to the cup product as the types of the Hodge structures   are different.
Finally, as the motive of $X$ is finite dimensional \cite[Lemma 5.2]{BLP}, this decomposition lifts to the level of Chow motives. (Alternatively, see Remark \ref{hypothesismainthm}(\ref{homvsrat}).)

Let us now work over $\overline{\mathbb{F}}_p$. (This is enough for our purpose, thanks to Proposition \ref{reducefinite}).  The positivity of the cup product on algebraic classes on the factor $\one(-2)^{\oplus 20}$ is clear as all these classes come from characteristic zero, see Remark \ref{remHR}. 
We are reduced to the study of algebraic classes on the two dimensional motive $V$.  As the characteristic polynomial of   Frobenius acting on $V$ is a rational polynomial of degree two, there are either zero or two rational solutions.  In the first case  the space of algebraic classes on $V$ is reduced to  zero hence  the standard conjecture of Hodge type holds trivially. In the second case the Fermat variety is supersingular and $V$ is spanned by algebraic cycles  \cite{ShioF}. In this case the standard conjecture of Hodge type holds true via Theorem \ref{mainthm}. (Note that there are infinitely many primes for which the non-trivial case occurs \cite[Theorem 2.10]{ShioF}). 
\end{proof}
\begin{remark}
Let us comment on applications and limits of Theorem \ref{mainthm}.
\begin{enumerate}
\item Theorem \ref{mainthm} cannot be applied to show the standard conjecture of Hodge type for abelian varieties of dimension at least five. Indeed, let $A$ be a simple abelian variety  of dimension $g$ and let $M_I\subset \mathfrak{h}^{2i}(A)$ be a factor as constructed in Proposition \ref{primadecomposizione}. By its very construction, the dimension of  $M_I$ is at least $g/i$ as $\Gal(\overline{\Q}/\Q)$ acts transitively on $\Sigma$, see Notation \ref{NotationCM}. Hence the rank of $M_I$ will never be two (except possibly in  middle degree).
\item It seems likely that using Theorem \ref{mainthm} one can show the standard conjecture of Hodge type for some special varieties as we did in Proposition \ref{propfermat} for  Fermat's  cubic fourfold.  On the other hand we do not know examples (other than abelian fourfolds) where this strategy applies  for a whole family of varieties   and we expect such examples to be rare\footnote{For example  the proof of Proposition \ref{propfermat} cannot apply to all cubic fourfolds as the smallest $\Q$-Hodge structure containing the $H^{1,3}$ of a cubic fourfold has in general dimension greater than two.}.  It is rather a miracle, based on the computations of Section 7, that for all abelian fourfolds only motives of rank two turn out to be significant.\end{enumerate}
\end{remark}
\end{appendices}

\bibliographystyle{alpha}	
\bibliography{masterbib}

\end{document}